\newcommand{\nn}{ {\nabla}  }
\newcommand{\pp}{ {\partial} }
\newcommand{\A}{\alpha }
\newcommand{\JJ}{{\mathcal J}}
\newcommand{\R} {\mathbb R}
\newcommand{\cuad}{{\sqcap\kern-.68em\sqcup}}
\newcommand{\I}{{\mathcal I}}
\newcommand{\dist}{{\rm dist}\, }
\newcommand{\foral}{\quad\mbox{for all}\quad}
\newcommand{\ve}{\varepsilon}
\newcommand{\be}{\begin{equation}}
\newcommand{\ee}{\end{equation}}
\newcommand{\equ}[1]{(\ref{#1})}
\newtheorem{lemma}{Lemma}[section]
\newtheorem{coro}{Corollary}[section]
\newtheorem{teo}{Theorem}
\newtheorem{prop}{Proposition}[section]
\newtheorem{corollary}{Corollary}[section]
\newtheorem{remark}{Remark}[section]
\newcommand{\bremark}{\begin{remark} \em}
\newcommand{\eremark}{\end{remark} }
\numberwithin{equation}{section}
\long\def\comment#1{\marginpar{\raggedright\small$\bullet$\ #1}}
\def\cb{\color{blue}}
\def\cb{}
\definecolor{g2}{rgb}{0,0.6,0}
\definecolor{r2}{rgb}{0.8,0,0}
\newcommand{\cat}{{\mathcal C}}
\newcommand{\FF}{\mathcal{F}}
\newcommand{\pd}[2]{\frac{\partial#1}{\partial#2}}
\begin{document}

\title{ Nonlocal $s$-minimal surfaces and Lawson cones }

\author{Juan D\'avila}
\address{\noindent J. D\'avila -
Departamento de Ingenier\'{\i}a Matem\'atica and CMM, Universidad
de Chile, Casilla 170 Correo 3, Santiago, Chile.}
\email{jdavila@dim.uchile.cl}

\author{Manuel del Pino}
\address{\noindent M. del Pino- Departamento de
Ingenier\'{\i}a  Matem\'atica and CMM, Universidad de Chile, Casilla
170 Correo 3, Santiago, Chile.} \email{delpino@dim.uchile.cl}

\author{Juncheng Wei}
\address{\noindent J. Wei -Department of Mathematics, University of British Columbia, Vancouver, B.C., Canada, V6T 1Z2 and Department of Mathematics, Chinese University of Hong Kong, Shatin, NT, Hong Kong.} \email{jcwei@math.ubc.ca}

\thanks{J. Wei is partially supported by NSERC of Canada. J. D\'avila and M. del Pino have been supported by Fondecyt and Fondo Basal CMM grants.
We would like to thank Alessio Figalli,  Jean-Michel Roquejoffre and   Enrico Valdinoci for useful discussions during the preparation of this paper. Part of this work was
concluded while J. D\'avila and M. del Pino  were visiting the PIMS center at UBC. They are grateful for the hospitality received }

\begin{abstract}
The nonlocal $s$-fractional minimal surface equation for  $\Sigma= \pp E$ where $E$ is an open set in $\R^N$  is given by
$$
H_\Sigma^ s (p) := \int_{\R^N}  \frac {\chi_E(x) - \chi_{E^c}(x)} {|x-p|^{N+s}}\, dx \ =\ 0 \foral p\in  \Sigma.
$$
Here $0<s<1$, $\chi$ designates characteristic function,  and the integral is understood in the principal value sense.
The classical notion of minimal surface is recovered by letting $s\to 1$. In this  paper we exhibit the first concrete examples (beyond the plane) of
 nonlocal $s-$minimal surfaces.  When $s$ is close to $1$, we first  construct a connected embedded $s$-minimal surface of revolution in $\R^3$, the {\bf nonlocal catenoid}, an analog of the standard catenoid $|x_3| = \log (r + \sqrt{r^2 -1})$. Rather than eventual logarithmic growth, this surface becomes asymptotic to the cone $|x_3|= r\sqrt{1-s}$. We also find a two-sheet embedded $s$-minimal surface asymptotic to the same cone, an analog to the simple union of two parallel planes.

 On the other hand, for any $0<s<1$, $n,m\ge 1$, $s-$minimal Lawson   cones  $|v|=\alpha|u|$,  $(u,v)\in \R^n\times \R^m$,  are found to exist. In sharp contrast with the classical case,  we prove their stability for small $s$ and $n+m=7$, which suggests that unlike the classical theory (or the case $s$ close to 1), the regularity of $s$-area minimizing surfaces may not hold true in dimension $7$.

\end{abstract}

\maketitle

\section{Introduction}

\subsection{Fractional minimal surfaces}

Phase transition models where the motion of the interface region is driven by curvature type flows arise in many applications. The standard flow by mean curvature of surfaces $\Sigma(t)$ in $\R^N$  is that in which the normal speed of each point $x\in \Sigma(t)$ is proportional to its mean curvature $H_{\Sigma(t)} (x) = \sum_{i=1}^{N-1} k_i(x)$ where the $k_i$'s designate the
principal curvatures, namely the eigenvalues of the second fundamental form. Evans \cite{evans} showed  that standard mean curvature flow for level surfaces of a function can be recovered as the limit of a discretization scheme in time where heat flow $u_t -\Delta u=0$ of suitable initial data  is used to connect consecutive time steps, which was introduced in \cite{mbo}. When standard diffusion is replaced by that of the fractional Laplacian   $u_t + (-\Delta)^{\frac s2}u =0$ in order
 to describe long range, nonlocal interactions between points in the two distinct phases by a Levy process,  Caffarelli and Souganidis \cite{cs}, see also Imbert \cite{imbert}, found that for $1\le s<2$
 flow by mean curvature is still recovered, while for $0<s<1$, the stronger nonlocal effect makes the surfaces evolve in normal velocity according to their {\em fractional mean curvature},
 defined for a surface $\Sigma =\pp E$ where $E$ is an open subset of $\R^N$ as
\begin{align}
\label{1}
 H_{\Sigma}^s (p):=
\int_{\R^N} \frac{\chi_E (x)-\chi_{ E^c}(x)} {|x-p|^{N+s}} \, d x \quad \hbox{for } p\in \Sigma.
\quad
\end{align}
Here $\chi$ denotes characteristic function, $E^c = \R^N \setminus E$  and the integral is understood in the principal value sense,
$$
 H_{\Sigma}^s (p)=\lim_{\delta\to 0} \int_{\R^N\setminus B_\delta(p)} \frac{\chi_E (x)-\chi_{E^c}(x)} {|x-p|^{N+s}} \, d x.
\quad
$$
This quantity is  well-defined  provided that $\Sigma$ is  regular near $p$. It agrees with usual mean curvature in the limit $s\to 1$ by the relation
\begin{equation}
\label{mc}
\lim_{s\to 1} (1-s)\,H_{\Sigma}^s (p)=   c_N H_{\Sigma} (p),
\end{equation}
see \cite{imbert}.
Stationary surfaces for the fractional mean curvature flow are naturally called fractional minimal surfaces.
We say that  $\Sigma$ is an {\em s-minimal surface} in an open set $\Omega$, if the surface $\Sigma\cap \Omega$ is sufficiently regular, and it
satisfies the {\em nonlocal minimal surface equation}
\begin{equation}
\label{ms1}
H_{\Sigma}^s (p)=0 \quad
\hbox{for all $p\in \Sigma\cap \Omega$.}
\end{equation}

For instance, it is clear by symmetry and definition \equ{1} that a hyperplane is a $s$-minimal surface in $\R^N$ for all $0<s<1$. Similarly,
the {\em Simons cone}
\begin{equation*}
C_m^m= \{(u,v)\in \R^m\times\R^m \ /\  |v|=|u|\}
\end{equation*}
is a $s$-minimal surface in $\R^{2m}\setminus\{0\}$.
As far as we know, no other explicit  minimal surfaces in $\R^N$ have been found in the literature. The purpose of this paper
is to exhibit a new class of non-trivial examples. The hyperplane is not just a minimal surface but also established in \cite{caffarelli-roquejoffre-savin} to be {\em locally area minimizing} in a sense that we describe next.

\medskip
Caffarelli,  Roquejoffre and Savin introduced in \cite{caffarelli-roquejoffre-savin} a nonlocal notion of surface area of $\Sigma =\pp E$ whose Euler-Lagrange equation corresponds to equation \equ{ms1}.  For $0<s<1$,
the $s$-perimeter of a measurable set $E \subset\R^N$ is defined as
$$
\I_s(E) = \int_E \int_{ E^c}\frac {dx\, dy}{ |x-y|^{N+ s} }.
$$
The above  quantity corresponds to a total interaction between points of $E$ and $E^c$, where the interaction density $1/{|x-y|^{N+ s} }$
is largest possible when the points $x\in E$ and $y\in E^c$ are both close to a given point of the boundary. $\I_s(E) $ has a simple representation in terms of the
usual semi-norm  in the fractional Sobolev space $H^{\frac s2}(\R^ N)$.
In fact,
\be
\I_s(E) = [ \chi_E ]_{H^{\frac s2}(\R^N)}  := \int_{\R^N} \int_{\R^N}  \frac { (\chi_E(x) - \chi_E(y) )^2 }{ |x-y|^{N+ s} }dx\, dy.
\label{pers2}\ee
Alternatively,
we can also write
$$
\I_s(E)   =  [ \chi_E ]_{W^{s,1}(\R^N)} =  \int_{\R^N} \int_{\R^N}  \frac { |\chi_E(x) - \chi_E(y) | }{ |x-y|^{N+ s} }dx\, dy.
$$
If $E$ is an open set and $\Sigma =\pp E$ is a smooth bounded surface
we have that
$$(1-s)\I_s(E) \to   c_N {\mathcal H}^{N-1} (\Sigma) = \int_{\R^N}  |\nn \chi_E|  $$
where the latter equality is classically understood in the sense of functions of bounded variation. $\I_s$ can also be achieved as the $\Gamma$-limit as $\ve \to 0$
of the nonlocal Allen-Cahn phase transition
functional $\int \frac \ve 2 |\nn^{\frac s2} u|^2 + \frac 1 {4\ve} (1-u^2)^2$ along functions that $\ve$-regularize  $\chi_E -\chi_{E^c}$.  See \cite{sv2,v2}.

\medskip
This nonlocal notion of perimeter  is localized to a bounded open set $\Omega$ by taking away the contribution of points of $E$ and $E^c$ outside $\Omega$, formally setting
 $$\I_s(E,\Omega) = \int_E \int_{ E^c}\frac {dx\, dy}{ |x-y|^{N+ s} }  - \int_{E\cap\,\Omega^c} \int_{ E^c \cap\,\Omega^c}  \frac {dx\, dy}{ |x-y|^{N+ s} }. $$
 This quantity makes sense, even if the last two terms above are infinite, by rewriting it in the form
$$
\I_s(E,\Omega) =  \int_{E\cap\,\Omega} \int_{E^c} \frac {dx\, dy}{ |x-y|^{N+ s} }  +  \int_{E\cap\, \Omega^c}
\int_{ E^c\cap\, \Omega} \frac {dx\, dy}{ |x-y|^{N+ s} }  .
$$
Again, if  $E$ is an open set with $\Sigma\cap \Omega$ smooth, $\Sigma=\pp E $. The usual
notion of perimeter is recovered by the relation
\begin{equation*}
\lim_{s\to 1}  (1-s) \I_s(E,\Omega) =  c_N {\mathcal H}^{N-1} (\Sigma \cap \Omega),
\end{equation*}
see \cite{savin-valdinoci}.  Let $h$ be a smooth function on $\Sigma$ supported in $\Omega$, and $\nu$ a normal vector field to $\Sigma$ exterior to $E$. For a sufficiently small number
$t$ we let  $E_{th}$ be the set whose boundary  $\pp E_{th} $ is parametrized as
$$
\pp E_{th} =  \{ x+ th(x) \nu(x) \ /\ x\in \pp E \}.
$$
 The first  variation of the perimeter along these normal perturbations yields precisely
$$
\frac d{dt} \I_s( E_{th}, \Omega) \Big |_{t=0}   =  - \int_{\Sigma} H^s_{\Sigma} h
 $$
and this quantity vanishes for all such $h$
if and only if \equ{ms1} holds. Thus $\Sigma = \pp E$ is an $s$-minimal surface in $\Omega$ if the first variation of perimeter for normal perturbations of $E$ inside $\Omega$
is identically equal to zero.

 \medskip
If $\Sigma=\pp E$ is a nonlocal minimal surface the
 {\em second variation} of the $s$-perimeter in $\Omega$ can be computed as
\begin{equation}
\label{j1}
\frac {d^2}{dt^2} Per_{s} (E_{th},\Omega) \Big|_{t=0} \ =\ -2 \int_\Sigma  \JJ^s_\Sigma [h]\, h.
\end{equation}
We call  $\JJ^s_\Sigma [h]$ the {\em fractional Jacobi operator}. It is explicitly computed as

\begin{align}
\label{nonlocal jac}
 \JJ^s_\Sigma [h](p)
=
\int_{\Sigma}
\frac{h(x)-h(p)}{|p-x|^{N+s}} dx
+
h(p)
\int_\Sigma
\frac{\langle \nu (p)- \nu(x), \nu (p) \rangle }{|p-x|^{N+s}} dx,
\quad  p \in \Sigma ,
\end{align}
where the first integral is understood in a pricipal value sense.
In agreement with formula \eqref{j1}, we say that an  $s$-minimal surface $\Sigma$ is {\em stable} in $\Omega$ if
\begin{equation*}
-\int_\Sigma  \JJ^s_\Sigma [h]\, h\, \ge \, 0\foral h\in C_0^\infty(\Sigma\cap\Omega).
\end{equation*}
Naturally we get the correspondence between this nonlocal operator and the usual Jacobi operator
\begin{align}
\label{asym jac}
\lim_{s\to 1} (1-s)\JJ^s_\Sigma [h]  = c_N  \JJ_\Sigma [h], \quad \JJ_\Sigma [h] = \Delta_\Sigma h  + |A_\Sigma|^2 h
\end{align}
where $\Delta_\Sigma$ is the Laplace-Beltrami operator and $|A_\Sigma|^2 =\sum_{i=1}^{N-1} k_i^2 $ where the $k_i$ are the principal curvatures.

\medskip
A basic example of a stable fractional minimal surface $\Sigma =\pp E$  is a {\em fractional minimizing surface}.
In \cite{caffarelli-roquejoffre-savin} the existence of fractional perimeter-minimizing sets is proven in the following sense:
let $\Omega$ be a bounded domain with Lipschitz boundary, and $E_0\subset \Omega^c$ a given set. Let $\mathcal F$ be the class of all sets $F$ with $F\cap\Omega^c = E_0$.  Then
  there exists a set $E\in \mathcal F$ with
$$
\I_s(E,\Omega)\ = \ \inf_{F\in \mathcal F} \I_s(F,\Omega) .
 $$
Moreover, $\pp E\cap \Omega$ is a $(N-1)$-dimensional set, which is a surface of class $C^{1,\alpha}$ except possibly on a singular set of Hausdorff dimension at most $N-2$.
Minimizers $E$ are proven to satisfy in a viscosity sense the { fractional minimal surface equation} \equ{ms1}. In fact, a hyperplane is minimizing in the above sense inside any bounded set.
No other example of embedded smooth fractional minimal surface in $\R^N$ (minimizing or not) is known.

\medskip

\subsection{Axially symmetric $s$-minimal surfaces}
After a plane, next in complexity  in $\R^3$  is the {\em axially symmetric case}, namely the case of a surface of revolution around the $x_3$-axis. In the classical case, the minimal surface equation reduces to  a simple ODE from which the catenoid $C_1$ is obtained:
$$
C_1= \{ (x_1,x_2,x_3)\ /\
|x_3| = \log ( r+ \sqrt{r^2 -1}), \quad r=\sqrt{x_1^2 + x_2^2}> 1\}.
$$
A main purpose of
this paper is the construction of an axially symmetric $s$-minimal surface $C_s$ for $s$close to 1 in such a way that $C_s\to C_1$ as $s\to 1$ on bounded sets. We call this surface
the {\em fractional catenoid}. A striking feature of the surface of revolution $C_s$ is that it becomes at main order as $r\to\infty$ a cone with small slope rather than having logarithmic growth. It is precisely in this feature where the strength of the nonlocal effect is felt.

The usual catenoid $C_1$ cannot be obtained by an area minimization scheme in expanding domains since it is linearly unstable, hence non-minimizing, inside any sufficiently large domain. It is unlikely that $C_s$ can be captured with a scheme based on the results in  \cite{caffarelli-roquejoffre-savin}. In fact, even worse, this is a highly unstable object compared with the classical case: there are
elements in an approximate kernel of its $s$-Jacobi operator that change sign infinitely many times. The Morse index of $C_s$ is infinite in any reasonable sense (unlike the standard
catenoid, whose Morse index is one).

\begin{teo}\label{teo1}{\em  \bf (The fractional catenoid)}
For all $0<s< 1$ sufficiently close to 1 there exists a connected surface of revolution $C_s$ such that if we set $\ve= (1-s)$ then
 $$ \sup_{x\in C_s\cap B(0,2)}  \dist(x,C_1) \le  c \frac { \sqrt{\ve}} { |\log\ve|}, $$
\medskip
and, for $r=\sqrt{x_1^2+ x_2^2}>2$ the set $C_s$ can be described as
$|x_3| = f(r),\quad  $ where
$$ f(r) = \left \{ \begin{matrix}  \log (r + \sqrt{r^2-1}) + O \left (\frac {r \sqrt{\ve}} { |\log\ve|}\right )    & \hbox{ if }\quad  r< \frac 1 {\sqrt{\ve}}\\   r \sqrt{\ve} + O(|\log\ve|) +
O \left (\frac {r  \sqrt{\ve} } { |\log\ve|}\right )  & \hbox{ if }\quad r> \frac 1 {\sqrt{\ve}}. \end{matrix} \right .$$

\end{teo}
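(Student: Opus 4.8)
For $\ve=1-s$ small the surface $C_s$ will be produced as a small normal graph over an explicit approximate surface $\Sigma_*=\Sigma_*(\alpha,\beta)$ depending on finitely many parameters, the nonlocal minimal surface equation \eqref{ms1} being solved by an infinite--dimensional Lyapunov--Schmidt reduction carried out entirely within the class of surfaces of revolution around the $x_3$-axis that are even in $x_3$. Throughout one uses the standard description of such a surface as $|x_3|=f(r)$ (equivalently, after a conformal/cylindrical change of variables, as a profile curve) and rewrites \eqref{ms1} as an equation for the perturbation of that profile. The asymptotics \eqref{mc} and \eqref{asym jac} are the principal quantitative inputs relating the nonlocal objects to their classical counterparts.

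\textbf{Step 1: the approximate solution and its error.} Fix a slope $\alpha$ of order $\sqrt\ve$ and a vertical shift $\beta$ of order $|\log\ve|$, and let $\Sigma_*$ be the surface of revolution whose profile equals a small dilation of the classical catenoid $\log(r+\sqrt{r^2-1})$ for $r\le \ve^{-1/2}$, equals the truncated cone $\alpha r+\beta$ for $r\ge 2\ve^{-1/2}$, and interpolates by a fixed smooth cut-off in the transition annulus $r\sim\ve^{-1/2}$; the dilation parameter and $(\alpha,\beta)$ are left free and will be tuned at the end. One then estimates $H^s_{\Sigma_*}$ in a weighted norm adapted to the three natural scales. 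On bounded sets the error is small relative to what the linear theory can absorb, because $H_{C_1}=0$ and $(1-s)H^s\to c_N H$; in the intermediate, nearly flat region $1\ll r\ll \ve^{-1/2}$ the error is dominated by the long--range contribution of the far conical ends, and this is exactly the mechanism that forces the transition from logarithmic growth to conical growth; in the conical region $r\gg\ve^{-1/2}$ the error is controlled because a cone of slope precisely $\sqrt{1-s}$ is, to principal order, an $s$-minimal Lawson cone, the residual being governed by the deviation of $\alpha$ from that slope together with the $O(|\log\ve|)$ matching corrections.

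\textbf{Step 2: linear theory.} The linearization of \eqref{ms1} at $\Sigma_*$ is the fractional Jacobi operator $\JJ^s_{\Sigma_*}$ of \eqref{nonlocal jac}, restricted to axially symmetric functions. Its two model operators are the $s$-Jacobi operator of the catenoid near the neck and that of the Lawson cone near infinity; the former is nondegenerate modulo the single (slowly growing) Jacobi field $Z$ generated by the dilation modulus, i.e.\ by the neck size. One writes $h=cZ+h^\perp$, projects the equation onto $\{Z\}^\perp$, and proves an a priori estimate $\|h^\perp\|_*\le C\,\|\JJ^s_{\Sigma_*}h^\perp\|_{**}$ in the weighted spaces, \emph{uniformly in $s$ close to $1$}. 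This is obtained by a blow-up/compactness argument performed separately in the catenoidal, intermediate and conical regions: in the first two one compares with the classical Jacobi operator of the catenoid, invoking nondegeneracy modulo $Z$; in the last one compares with the $s$-Jacobi operator of the cone; and in all of them one must keep careful track of the degeneration of $\JJ^s$ as $(1-s)\to0$ dictated by \eqref{asym jac}, so that the weights at the matching scale $r\sim\ve^{-1/2}$ are chosen compatibly.

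\textbf{Step 3: fixed point, reduced equation, and the main obstacle.} Given the invertibility of $\JJ^s_{\Sigma_*}$ on $\{Z\}^\perp$ and the smallness of both the error from Step 1 and the quadratic nonlinear remainder, a contraction mapping argument yields, for each admissible $(\alpha,\beta)$ and neck dilation, a solution $h=h(\alpha,\beta)$ of $H^s_{\Sigma_*+h\nu}=c(\alpha,\beta)\,Z$. It then remains to solve the scalar reduced equation $c(\alpha,\beta)=0$ (together with the interface compatibility at $r\sim\ve^{-1/2}$); its leading term is precisely the mismatch between the catenoid's logarithmic growth $\log(r+\sqrt{r^2-1})$ and the cone $\alpha r+\beta$, which one solves by an intermediate value / degree argument, forcing $\alpha=\sqrt\ve\,(1+o(1))$ and $\beta=O(|\log\ve|)$; the claimed correction size $O\!\big(r\sqrt\ve/|\log\ve|\big)$ is then read off directly from the weights used in Step 2. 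I expect the crux of the argument to be exactly this uniform-in-$s$ linear theory: $\JJ^s_{\Sigma_*}$ is nonlocal, is not self-similar across the matching region, degenerates as $s\to1$, and — since $C_s$ has infinite Morse index — genuinely loses coercivity along infinitely many oscillatory directions, so the weighted spaces and the blow-up analysis must be engineered to capture precisely the one--dimensional obstruction spanned by $Z$ and nothing more.
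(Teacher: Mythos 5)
Your proposal is a standard gluing/Lyapunov–Schmidt scheme, and it is a genuinely different route from the one taken in the paper. The paper does \emph{not} introduce free geometric parameters (slope $\alpha$, vertical shift $\beta$, neck dilation) to be tuned via a reduced equation. Instead, it constructs $\Sigma_0$ once and for all by taking the exact catenoid profile for $r\le\ve^{-1/2}$ and, for $r\ge\ve^{-1/2}$, the solution $f_\ve$ of the \emph{nonlinear} ODE $f_\ve''+\tfrac1r f_\ve'=\ve/f_\ve$ with initial data matching the catenoid at $r=\ve^{-1/2}$ (Section 3). This ODE is precisely the balance between classical mean curvature and the nonlocal two-sheet interaction, and energy/Emden--Fowler estimates show that its solution spontaneously develops the asymptotic slope $\sqrt\ve$ and the $O(|\log\ve|)$ offset, so there is nothing to tune. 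The error of this $\Sigma_0$ is then shown to be $O(\ve^{1/2}/|\log\ve|)$ in a weighted H\"older norm, and the linearized problem $\ve\,\mathcal J^s_{\Sigma_0}\phi=g$ is solved \emph{without projecting out any cokernel and without a reduced scalar equation} (Proposition~\ref{main linear prop}); the fixed point is applied directly.

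Two concrete concerns with your version. First, your Step~1 ansatz — a smooth cutoff interpolation between the explicit catenoid and an explicit cone $\alpha r+\beta$ across the annulus $r\sim\ve^{-1/2}$ — will in general not have a sufficiently small error there. In that region the catenoid's logarithmic growth has not yet flattened, the profile is still bending, and a hard interpolation introduces a classical mean curvature of the cutoff surface that is not killed by any choice of $(\alpha,\beta)$ and does not decay appropriately against the paper's weight $r^{1-\ve}$; the mechanism that makes the paper's error small is precisely that $F_\ve$ \emph{solves} $\Delta F_\ve=\ve/F_\ve$ beyond $r=\ve^{-1/2}$, so the dominant balance vanishes identically, leaving only lower-order geometric terms. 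You would need either to solve that ODE (as the paper does) or to replace the sharp cutoff by a genuinely matched asymptotic profile; a generic interpolation plus $(\alpha,\beta)$-tuning is not enough. Second, your identification of the one-dimensional obstruction as ``the dilation/neck-size Jacobi field $Z$'' is off. On the catenoid the translation field $Z_1(y)=y/\sqrt{y^2+1}$ is the bounded element of the Jacobi kernel; it is \emph{odd} in $x_3$ and the paper kills it by working in the class of functions even under $x_3\mapsto-x_3$ (Lemma~\ref{lemma jacobi catenoid}). The dilation field $Z_2$ grows logarithmically, so it is not a kernel element in the bounded class at all; it only enters through the decay properties of the linear theory and is handled internally (Proposition~\ref{prop exist frac cat decay}), not as a cokernel direction requiring a reduced equation. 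If you insist on a Lyapunov--Schmidt reduction in your free parameters you must re-derive the linear nondegeneracy statements consistently with that, whereas the paper avoids the issue entirely.
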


\begin{figure}
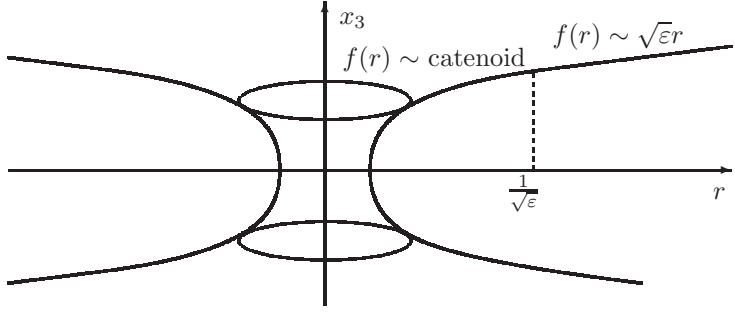

\def\JPicScale{0.6}
\include{fig1}
\label{fig1}
\caption{Fractional catenoid}
\end{figure}

As we have mentioned, a plane is an $s$-minimal surface for any $0<s<1$. In the classical scenario, so is the union of two parallel planes,
say $x_3=1$ and $x_3= -1$. This is no longer the case when $0<s<1$ since the nonlocal interaction between the two components deforms them and in fact equilibria
 is reached when the two components diverge becoming cones. Our second results states the existence
of a {\em two-sheet} nontrivial $s$-minimal surface $D_s$ for $s$ close to 1 where the components eventually become at main order the cone $x_3= \pm r\sqrt{\ve}$.
As in the $s$-catenoid, this is a highly unstable object.

\begin{teo}\label{teo2}{\em  \bf (The two-sheet $s$-minimal surface)}
For all $0<s< 1$ sufficiently close to 1 there exists a two-component surface of revolution $D_s = D_s^+\cup D_s^- $ such that if we set $\ve= (1-s)$ then
$D_s^\pm$ is the graph of the radial functions $x_3 =\pm f(r)$ where $f$ is a positive  function of class $C^2$ with $f(0)=1$, $f'(0)=0$, and

$$ f(r) = \left \{ \begin{matrix}  1+ \frac{\ve}4 r^2  + O \left (\ve r \right )    & \hbox{ if }\quad  r< \frac 1 {\sqrt{\ve}}\\   r \sqrt{\ve} + O(1) +
O \left (\ve r \right )  & \hbox{ if }\quad r> \frac 1 {\sqrt{\ve}}. \end{matrix} \right .$$

\end{teo}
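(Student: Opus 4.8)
\medskip
\noindent\textbf{Strategy of proof.}\ The plan is to recast the equation $H^s_{D_s}=0$ as a single integro--differential equation for the radial profile $f$, to produce an approximate solution by a two--scale ansatz built on the ``double cone'' $x_3=\pm r\sqrt\ve$, and to close the construction by a fixed point argument. Write $E=\{(x',x_3)\in\RRN:\ |x_3|<f(|x'|)\}$ and evaluate $H^s_{\pp E}$ at a point $p=(r,0,f(r))$ of the upper sheet. I would split
\[
H^s_{\pp E}(p)\;=\;\mathcal H_{\mathrm{self}}[f](r)\;+\;\mathcal H_{\mathrm{int}}[f](r),
\]
where $\mathcal H_{\mathrm{self}}$ is the fractional mean curvature of the single graph $\{x_3=f(|x'|)\}$ and $\mathcal H_{\mathrm{int}}$ collects the interaction with the lower sheet $\{x_3=-f(|x'|)\}$. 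Using the asymptotics of the fractional mean curvature as $s\to1$ (cf.\ \eqref{mc}), for a graph which is a controlled $C^2$--perturbation of a horizontal plane one has $\mathcal H_{\mathrm{self}}[f]=\frac{c_3}{1-s}\,\mathrm{div}_{\R^2}\!\big(\nn f/\sqrt{1+|\nn f|^2}\big)+R[f]$ with $R[f]$ a lower--order genuinely nonlocal remainder; and, since the integration kernel is integrable near $s=1$ and the two sheets stay a definite distance apart, $\mathcal H_{\mathrm{int}}[f]$ is a smooth bounded function whose leading term is $-\pi/f(r)$ (in the region carrying the mass of the kernel the lower sheet is seen as a plane at distance $2f(r)$). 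Hence $H^s_{\pp E}=0$ reads, at main order,
\[
\frac1r\Big(\frac{r f'}{\sqrt{1+f'^2}}\Big)'\;=\;\frac{(1-s)}{f}\big(\kappa_s+o(1)\big),\qquad f(0)=1,\ f'(0)=0,
\]
with $\kappa_s$ an explicit positive constant (and $f'(0)=0$ forced by smoothness at the axis; the value $f(0)=1$ can be imposed at the end by the scale invariance $H^s_{\lambda\Sigma}(\lambda p)=\lambda^{-s}H^s_\Sigma(p)$).

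\medskip
\noindent\textbf{Approximate solution.}\ Since $f\approx1$ forces $f'$ to be of size $\sqrt\ve$ everywhere, $\sqrt{1+f'^2}=1+O(\ve)$, and in the stretched variable $\rho=\sqrt\ve\,r$, $F(\rho):=f(\rho/\sqrt\ve)$, the leading equation is the autonomous planar problem
\[
F''+\frac{F'}\rho=\frac1F,\qquad F(0)=1,\ F'(0)=0
\]
(the explicit normalization constants conspiring to give right--hand side $1/F$, as is checked by a direct computation of $c_3$). I would first analyze this ODE: from $(\rho F')'=\rho/F$ one gets that $F$ is increasing with $1\le F\le 1+\tfrac14\rho^2$, hence global and positive, and a phase--plane/matching argument yields $F(\rho)=1+\tfrac14\rho^2+O(\rho^3)$ near $0$ and $F(\rho)=\rho+O(1)$ as $\rho\to\infty$ (the $O(1)$ term governed by the slowly oscillating homogeneous solutions $\cos(\log\rho),\ \sin(\log\rho)$ of $g''+g'/\rho+g/\rho^2=0$). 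Setting $f_0(r):=F(\sqrt\ve\,r)$ gives exactly the two regimes stated in the theorem, and a direct estimate shows the error $H^s_{\pp E_{f_0}}$ is small — of relative size $O(\ve)$, plus the remainder $R[f_0]$ — measured in norms with weights adapted to $r\lesssim 1/\sqrt\ve$ and $r\gtrsim 1/\sqrt\ve$.

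\medskip
\noindent\textbf{Fixed point.}\ Writing $f=f_0+\phi$ reduces the problem to $\mathcal L_\ve[\phi]=\mathcal E+\mathcal Q[\phi]$, where $\mathcal L_\ve$ is the fractional Jacobi operator \eqref{nonlocal jac} of the approximate surface — after rescaling, a uniformly--in--$\ve$ controlled perturbation of $\Delta_\rho+V(\rho)$ with $V=1/F^2$ — $\mathcal E$ is the error above, and $\mathcal Q$ is quadratic and small. I would restrict to axially symmetric $\phi$, which removes the nonradial, sign--changing approximate kernel responsible for the infinite Morse index, and then prove a uniform a priori estimate for $\mathcal L_\ve$ in weighted spaces by gluing the resolvent in the outer region $\rho\sim1$ (governed by $\Delta_\rho+V$, invertible modulo the known behavior at infinity) with that in the inner region $r\sim1$ (where $\mathcal L_\ve\sim(1-s)^{-1}$ times the linearized nonlocal mean curvature of a plane). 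A contraction mapping argument then produces a small $\phi$; nonlocal elliptic regularity upgrades $f$ to $C^2$, the bounds on $\phi$ give the stated expansions, and $f>0$ makes $D_s=D_s^+\cup D_s^-$ genuinely two--component.

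\medskip
\noindent\textbf{Main obstacle.}\ The delicate point is the uniform invertibility of $\mathcal L_\ve$: one must simultaneously control the genuinely nonlocal remainder $R$, the coupling between the two sheets hidden in $\mathcal H_{\mathrm{int}}$ (which depends nonlocally on the whole profile $f$), and the change of nature of $\mathcal L_\ve$ across the scale $r\sim1/\sqrt\ve$, while the full operator still has infinitely many negative directions — so the scheme must be confined to the radial class and carried out with weights compatible with the $\cos(\log\rho)$--type homogeneous solutions at infinity.
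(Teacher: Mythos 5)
Your plan follows essentially the same route as the paper's proof of Theorem~\ref{teo2}: the same initial profile coming from the ODE $\Delta f=\ve/f$ with $f(0)=1$, $f'(0)=0$, the same scaling $f(r)=F(\sqrt\ve\,r)$ reducing it to $F''+F'/\rho=1/F$, and the same perturbation/contraction scheme in weighted H\"older norms with the linear theory glued across the transition scale $r\sim\ve^{-1/2}$. One correction of substance: the sign-changing approximate-kernel elements responsible for the infinite Morse index are themselves radial and even --- they are exactly the $\cos(\log\rho)$, $\sin(\log\rho)$ modes you identify --- so restricting to axial symmetry does not remove them; in the paper what makes the linear theory close is not finiteness of index in the symmetric class but a projected equation together with a priori weighted estimates (and a Liouville-type step ruling out $a\cos(\log\rho)+b\sin(\log\rho)$ in the blow-up), which are indifferent to the index.
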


\begin{figure}
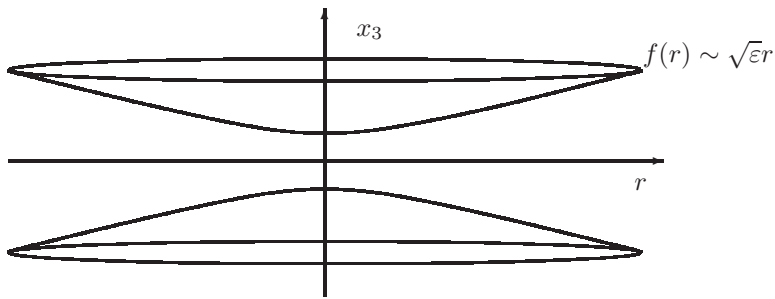

\def\JPicScale{0.6}
\include{fig4}
\label{fig2}
\caption{Two-sheet $s$-minimal surface}
\end{figure}

As we shall discuss later,
Theorem \ref{teo2} can be generalized to the existence of a $k$-sheet axially symmetric $s$-minimal surface constituted by the union of the graphs of $k$ radial functions
$x_3 = f_j(r)$, $j=1,\ldots,k$,
with
$$ f_1>f_2> \cdots > f_k $$
where asymptotically we have
\be\label{ass}
f_j(r) = a_j r \sqrt{\ve}  + O(\ve r) \quad\hbox{as } r\to +\infty. \ee
Here the constants $a_i$ are required to satisfy the constraints
\be
a_1> a_2>\cdots > a_k, \quad   \sum_{i=1}^ k a_i = 0 
\label{const}\ee
and the balancing conditions
\begin{align}
\label{system ai}
a_i = 2 \sum_{j\not=i} \frac{(-1)^{i+j+1}}{a_i-a_j}, \quad  \; \foral i = 1,\ldots,k.
\end{align}

A solution of the system \eqref{system ai} can be obtained by minimization of
$$
E(a_1,\ldots,a_k) = \frac12 \sum_{i=1}^k a_i^2 + \sum_{i\not= j} (-1)^{i+j}\log(|a_i-a_j|)
$$
in the set of $k$-tuples $a=(a_1,\ldots,a_k)$ that satisfy \equ{const}. If this minimizer or, more generally, a critical point $a$ of $E$ constrained to \equ{const}
is non-degenerate, in the sense that $D^2E(a)$ is non-singular, then an $s$-minimal surface with the required properties \equ{ass} can indeed be found. This condition is evidently satisfied
by $a=(1,-1)$ when $k=2$.

\medskip
The method for the proofs of the above results relies in a simple idea of obtaining a good initial approximation $\Sigma_0$ to a solution of the equation
$H_\Sigma =0 $ Then we consider the surface perturbed normally by a small function $h$, $\Sigma_h$. As we will see, regardless that $\Sigma_0$ is a minimal surface or not,
we can expand
$$ H_{\Sigma_h} =  H_{\Sigma_0}  +  J_s[h]  + N(h)$$
where
$N(h)$ is at main order quadratic in $h$. In the classical case, $N(h)$ depends on first and second derivatives of $h$ with various terms that can be qualitatively
described (see \cite{kapouleas}).
We shall see that if the approximation $\Sigma_0$ is properly chosen, in particular so that the error $ H_{\Sigma_0} $ is  small in $\ve =1-s$ and has suitable decay along the manifold,
 then this equation can be solved by a fixed point argument. To do so, we need to identify the functional spaces to set up the problem, that take into account the delicate issues of  non-compactness and strong long range interactions. These spaces should be such that a left inverse of $J_s$ can be found with good transformation properties, and  $N(h)$ has a small Lipschitz dependence for the corresponding norms. The latter issue is especially delicate, for $N(h)$ is made out of various pieces, all strongly singular integral nonlinear operators involving fractional derivatives up to the nearly second order. The transformation properties of these nonlinear terms have suitable analogs with to those found by Kapouleas \cite{kapouleas}, but the proofs in the current situation are considerably harder.

\medskip
 The procedure we set up in this paper, and the associated  computations,  apply in large generality, not just to the axially symmetric case. For instance most of the calculations actually apply to a general setting of finding as $s\to 1$ a connected surface with multiple ends that are eventually conic and satisfy relations \equ{const}, where the starting point is a
  multiple-logarithmic-end minimal surface. This paper sets the basis of the gluing arguments for the construction of fractional minimal surfaces, in a way similar that  the paper
  \cite{kapouleas} did for the construction by gluing methods of classical minimal and CMC surfaces. The fractional scenario makes the analysis considerably harder.

\subsection{Fractional Lawson cones}
The pictures associated to Theorems \ref{teo1} and \ref{teo2} resemble that of "one-sheet" and "two-sheet" revolution hyperboloids, asymptotic to a cone $|x_3| = r\sqrt{1-s}$. It is reasonable to believe that a cone of this form, with aperture close to $\sqrt{1-s}$ is a fractional minimal surface with a singularity at the origin.
We consider, more in general, for given $n,m\ge 1$, and $0<s<1$ the problem of finding a value $\alpha >0$ such that
the {\em Lawson cone}
\be\label{cmn}C_\alpha= \{(u,v)\in \R^m\times\R^n \ /\  |v|=\alpha |u|\}\ee
is a $s$-minimal surface in $\R^{m+n}\setminus\{0\}$. For the classical case $s=1$ this is easy:  since  $\Sigma=C_\alpha$ is the zero level set of the function
 $g(u,v) = |v|-\alpha |u|$
then $(u,v)\in C_\A$ we have
$$
H_\Sigma (u,v) = {\rm div}\, \left ( \frac{ \nn g }{|\nn g|}  \right ) =  \frac 1{\sqrt{1+ \A^2} }\, \left [\, \frac{n-1}{|v|} - \alpha \frac{m-1}{|u|}\right],
$$
and the latter quantity is equal to zero on $\Sigma$ if and only if $n=m=1$ and $\alpha =1$ or
$$  n\ge 2, \ m\ge 2, \quad \alpha = \sqrt{\frac{n-1}{m-1}}.$$ Following \cite{lawson}, we call this one the minimal Lawson cone $C_m^n$. For the fractional situation  we have the
following result.

\begin{teo}\label{teo3}
{\em (Existence of $s$-Lawson cones) }
For any given $m\ge 1$, $n\ge 1$, $0<s<1$, there is a unique $\alpha=\alpha(s,m,n)>0$ such that the cone $C_\alpha$ given by $\equ{cmn}$  is an $s$-fractional minimal surface.
We call this $C_m^n (s)$ the $s$-Lawson cone.

\end{teo}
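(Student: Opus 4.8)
The plan is to reduce everything to a single real function $\Phi$ of the aperture $\alpha$. Since the region $E_\alpha:=\{(u,v)\in\R^m\times\R^n : |v|<\alpha|u|\}$ is invariant under the dilations $x\mapsto\lambda x$, the substitution $x=\lambda y$ in \equ{1} gives $H^s_{C_\alpha}(\lambda p)=\lambda^{-s}H^s_{C_\alpha}(p)$, and $H^s_{C_\alpha}$ is moreover invariant under the natural $O(m)\times O(n)$--action, which is transitive on $C_\alpha\cap S^{m+n-1}$. Hence
\be\label{Phi-def}
H^s_{C_\alpha}(p)\ =\ |p|^{-s}\,\Phi(\alpha)\foral p\in C_\alpha\setminus\{0\},\qquad \Phi(\alpha):=H^s_{C_\alpha}(p_\alpha),
\ee
where $p_\alpha:=(e_1,\alpha e_1')$ with $e_1\in\R^m$, $e_1'\in\R^n$ fixed unit vectors. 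One first checks that the principal value defining $\Phi(\alpha)$ is finite and depends real--analytically on $\alpha\in(0,\infty)$: near $p_\alpha$ the cone is a smooth hypersurface whose curvature is bounded locally uniformly in $\alpha$, so the usual odd--symmetry cancellation for the principal value on a small ball $B_\delta(p_\alpha)$ together with $s<1$ gives a uniform $O(\delta^{1-s})$ remainder, while the contribution of $\{|x-p_\alpha|>\delta\}$ is an absolutely convergent integral, smooth in $\alpha$. Thus \emph{Theorem \ref{teo3} amounts to the assertion that $\Phi$ has exactly one zero in $(0,\infty)$}.

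To compute with $\Phi$ I would pass to bispherical coordinates $u=r\omega$, $v=\rho\sigma$ ($r,\rho>0$, $\omega\in S^{m-1}$, $\sigma\in S^{n-1}$, $dx=r^{m-1}\rho^{n-1}\,dr\,d\rho\,d\omega\,d\sigma$) and then substitute $\rho=\alpha\tau$. Setting $A(r,\omega_1):=|r\omega-e_1|^2\ge0$ and $B(\tau,\sigma_1):=|\tau\sigma-e_1'|^2\ge0$ this yields
\be\label{J-form}
\Phi(\alpha)\ =\ (1+\alpha^2)^{s/2}\,\alpha^{n}\,J(\alpha^2),\qquad J(\beta):=\mathrm{p.v.}\!\!\int_{S^{m-1}}\!\int_{S^{n-1}}\!\int_0^\infty\!\!\int_0^\infty \frac{\mathrm{sgn}(r-\tau)\,r^{m-1}\tau^{n-1}}{\bigl(A(r,\omega_1)+\beta\,B(\tau,\sigma_1)\bigr)^{\frac{m+n+s}{2}}}\,d\tau\,dr\,d\sigma\,d\omega .
\ee
Interchanging the pairs $(r,\omega)$ and $(\tau,\sigma)$ (which exchanges $A\leftrightarrow B$, swaps $m\leftrightarrow n$, and reverses $\mathrm{sgn}(r-\tau)$) gives the reflection identity $J_{m,n}(\beta)=-\beta^{-\frac{m+n+s}{2}}J_{n,m}(1/\beta)$, equivalently $\Phi_{m,n}(\alpha)=-\Phi_{n,m}(1/\alpha)$; in particular $\Phi_{m,m}(1)=0$, recovering the fact that the Simons cone $C_m^m$ is $s$--minimal. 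For the endpoints, a short asymptotic analysis of the $\tau$--integral in \equ{J-form} as $\beta\to0^+$ (the ``$r<\tau$'' part blows up while the ``$r>\tau$'' part stays bounded; heuristically, for small $\alpha$ the cone near $p_\alpha$ looks like the cylinder $\R^{m}\times S^{n-1}_{\alpha}$, whose ``inside'' is thin, forcing $H^s<0$) shows $\Phi(\alpha)<0$ for all small $\alpha>0$; by the reflection identity $\Phi(\alpha)>0$ for all large $\alpha$. Since $\Phi$ is continuous, the intermediate value theorem produces a zero $\alpha=\alpha(s,m,n)\in(0,\infty)$, and $C_\alpha$ is the desired $s$--Lawson cone $C^n_m(s)$.

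The remaining --- and main --- point is \emph{uniqueness}. Because $\Phi$ passes from negative to positive values, it suffices to prove $\Phi'(\alpha)>0$ at every zero of $\Phi$: a continuous function with this property has a single zero. The subtlety is that for \emph{fixed} $p$ the map $\alpha\mapsto H^s_{C_\alpha}(p)$ is manifestly nondecreasing (the integrand $\mathrm{sgn}(\alpha|u|-|v|)$ in \equ{1} is), but in \equ{Phi-def} the base point $p_\alpha$ itself moves with $\alpha$, and one must show the enlargement of $E_\alpha$ strictly dominates this displacement at a zero. In terms of \equ{J-form}, at a zero $\beta_0=\alpha_0^2$ of $J$ we have $\Phi'(\alpha_0)>0$ iff $J'(\beta_0)>0$, and
$$
J'(\beta_0)=-\tfrac{m+n+s}{2}\,\mathrm{p.v.}\!\!\int_{S^{m-1}}\!\int_{S^{n-1}}\!\int_0^\infty\!\!\int_0^\infty \mathrm{sgn}(r-\tau)\,r^{m-1}\tau^{n-1}\,\frac{B(\tau,\sigma_1)}{\bigl(A+\beta_0B\bigr)^{\frac{m+n+s}{2}+1}}\,d\tau\,dr\,d\sigma\,d\omega .
$$
Subtracting a multiple of the vanishing integral $J(\beta_0)=0$, one may replace the factor $\tfrac{B}{A+\beta_0B}$ by $g(A/B)-\lambda$, where $g(t):=(\beta_0+t)^{-1}$ is strictly decreasing and $\lambda$ is any constant; so $J'(\beta_0)$ equals, up to a negative factor, the integral of $\mathrm{sgn}(r-\tau)\,r^{m-1}\tau^{n-1}(A+\beta_0B)^{-\frac{m+n+s}{2}}$ against the weight $g(A/B)-\lambda$. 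The heart of the matter --- and the step I expect to be hardest --- is to show, after integrating out the sphere variables and using that $A(\cdot,\omega_1)$ is minimised at $r=1$ while $B(\cdot,\sigma_1)$ is minimised at $\tau=1$, that for a suitable $\lambda$ this weight is, on average against the positive measure $r^{m-1}\tau^{n-1}(A+\beta_0B)^{-\frac{m+n+s}{2}}\,d\tau\,dr\,d\sigma\,d\omega$, strictly larger on the half $\{r<\tau\}$ than on $\{r>\tau\}$; this forces $J'(\beta_0)>0$ and completes the proof. (The mechanism is the monotonicity of $g$ together with the fact that, relative to this measure, $A/B$ tends to be smaller where $r<\tau$; making the correlation quantitative, in the presence of the strongly singular kernel, is the real work.)
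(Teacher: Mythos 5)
Your reduction to a one--variable function $\Phi(\alpha)$ by scaling and $O(m)\times O(n)$--symmetry, followed by the intermediate value theorem, parallels the paper's existence argument; the paper's $H(\alpha)$ is $-\Phi(\alpha)$ because the paper takes the complementary region $E_\alpha=\{|z|>\alpha|y|\}$. Your reflection identity $\Phi_{m,n}(\alpha)=-\Phi_{n,m}(1/\alpha)$ is a genuine simplification for the large--$\alpha$ endpoint and for $n>m$; the paper instead proves $H(1)\le 0$ directly by exchanging $y_1\leftrightarrow z$ and restricts attention to $n\le m$. Up to these cosmetic differences the existence half is sound.

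The genuine gap is in uniqueness. You reduce it to showing $J'(\beta_0)>0$ at every zero $\beta_0$ of $J$, and then you explicitly decline to prove the decisive sign claim, calling it ``the real work'': namely that after subtracting $\lambda J(\beta_0)=0$ the reweighted integrand is, against the singular positive measure, strictly larger on $\{r<\tau\}$ than on $\{r>\tau\}$. This is not a routine verification --- it is precisely where all of the difficulty of uniqueness is concentrated, and no quantitative correlation inequality is supplied to close it; indeed near $r=\tau=1$ the kernel is borderline singular and the two halves nearly cancel, so it is not clear the derivative route goes through without substantially more structure. The paper avoids the derivative computation entirely with a global comparison argument: writing $(y_1,z)\in\R^{2n}$ (with $y_1$ the first $n$ components of $y$ and $y_2\in\R^{m-n}$ a spectator) in spherical coordinates, one rotates by a single angle $\theta=\arctan\alpha_1-\arctan\alpha_2$ to produce, for every pair $\alpha_1>\alpha_2$, a rotation $R$ with $R(p_{\alpha_1})=p_{\alpha_2}$ and $R(E_{\alpha_1})\subsetneq E_{\alpha_2}$, whence the monotonicity of $\chi_E-\chi_{E^c}$ under strict inclusion gives $H(\alpha_1)<H(\alpha_2)$ for \emph{all} $\alpha_1>\alpha_2$, not merely at zeros. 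You would either need to adopt that construction or actually establish the correlation inequality behind $J'(\beta_0)>0$; as written, the uniqueness half of the theorem is not proved.
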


A notable different between classical and nonlocal cases is that in the latter, a nontrivial minimal cone in $\R^n$
$$C_1^{n-1}(s)\, =\, \left \{ (x',x_{n}) \in \R^{n}\ /\ |x_{n}|  = \alpha_n(s) |x'|\, \right \}, $$
with $n\ge 3$ does exist. This is not true in the classical case. The bottomline is that when aperture becomes very large ($\alpha$ small), in the standard case mean curvature
approaches 0, while the nonlocal interaction between the two pieces of the cone makes its fractional mean curvature go to $-\infty$.
For $n=2$, $C_1^2(s)$  is precisely the $s$-minimal cone that represents at main order the asymptotic behavior of the revolution $s$-minimal surfaces of Theorems \ref{teo1} and \ref{teo2}. Letting
$\ve = 1-s\to 0$,
we have, as suspected $$\alpha_2(s)= \sqrt{\ve} + O(\ve), $$ so that the two halves of the minimal cone become planes. In the opposite limit, $s\to 0$, there is no collapsing.
In fact, if $n\le m$ we have
$$
\lim_{s\to 0} \alpha(s,m,n) = \alpha_0
$$
where $\alpha_0>0 $ is the unique number $\alpha $ such that
$$
\int_{\alpha}^\infty
\frac{t^{n-1}}
{(1 + t^2  )^\frac{m+n}2}
d t - \int_0^{\alpha}
\frac{t^{n-1}}
{(1 + t^2  )^\frac{m+n}2} dt = 0.
$$
An interesting analysis of asymptotics for the fractional perimeter $\I_s$ and assoaciated  $s$-minimizing surfaces as $s\to 0$ is contained in \cite{figalli}.

\medskip
Minimal cones are important objects  in the regularity theory of classical minimal surfaces and Bernstein type results for minimal graphs.  Simons \cite{simons} proved that {\em no stable minimal cone exists in dimension $N\le 7$, except for hyperplanes}.  This result  implies that locally area minimizing surfaces must be smooth outside a closed set of Hausdorff dimension at most $N-8$. He also proved that the cone $C_4^4$ (Simons' cone) was stable, and conjectured its minimizing character. This  was proved in a deep work by Bombieri, De Giorgi and Giusti \cite{bdg}.

 \medskip
Savin and Valdinoci \cite{savin-valdinoci} proved the nonexistence of  fractional minimizing cones in $\R^2$, which implies regularity of fractional minimizing surfaces except
for a set of Hausdorff dimension at most $N-3$, thus improving the original result in \cite{caffarelli-roquejoffre-savin}. Figalli and Valdinoci \cite{fv} prove that, in every dimension, Lipschitz nonlocal minimal surfaces are smooth, see also \cite{bfv}. Also, They extend to the nonlocal setting a famous theorem of De Giorgi stating that the validity of Bernstein's theorem as a consequence of the nonexistence of singular minimal cones in one dimension less.

\medskip
In \cite{caffarelli-valdinoci}, Caffarelli and Valdinoci  proved that regularity of non-local minimizers holds up to a $(N-8)$-dimensional set, whenever $s$ is sufficiently close to $1$. Thus,  there remains a conspicuous gap between the best general regularity result found so far and the case $s$ close to 1.
Our second results concerns this issue. Its most interesting feature is that, in strong contrast with the classical case, when $s$ is sufficiently close to zero,  Lawson cones {\bf are all  stable} in dimension $N=7$, which suggests that a regularity theory up to a $(N-7)$-dimensonal set should be the best possible for general $s$.


\begin{teo} {\em (Stability of $s$-Lawson cones) }
\label{thm stability}
There is a $s_0>0$ such that  for each $s\in (0,s_0)$, all minimal cones $C_m^n(s)$ are  unstable if $N= m+ n\leq 6$ and
stable if  $N= 7$.
\end{teo}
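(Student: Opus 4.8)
The plan is to reduce the stability question to a one-dimensional spectral problem via separation of variables adapted to the cone geometry, and then to analyze the relevant Hardy-type constant in the limit $s\to 0$. First I would parametrize $C_\alpha = \{|v|=\alpha|u|\}\subset \R^m\times\R^n$ by the link coordinates on $\Sigma\cap S^{N-1}$ together with the radial variable $\rho = \sqrt{|u|^2+|v|^2}$, so that a general test function decomposes as $h(\rho,\theta)=\sum_k \psi_k(\rho)\,\varphi_k(\theta)$ where $\{\varphi_k\}$ are eigenfunctions of an appropriate operator on the link. The key structural fact is that, because $C_\alpha$ is a cone, the fractional Jacobi quadratic form $-\int_\Sigma \JJ^s_\Sigma[h]\,h$ from \eqref{nonlocal jac} is invariant under the scaling $h(x)\mapsto h(\lambda x)$ up to a power of $\lambda$; this forces the radial part of the problem to be of Hardy type, i.e. the form restricted to functions $\rho^{-\gamma}\varphi_k(\theta)$ (or to the Mellin transform in $\rho$) reduces to a number $\Lambda_s(\gamma,k)$, and stability is equivalent to positivity of $\Lambda_s$ over the admissible range of $\gamma$ (the line $\mathrm{Re}\,\gamma = \frac{N-s}{2}-1$ dictated by the homogeneity of the kernel) and over all $k$. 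I would first isolate the worst mode, which by the usual convexity/monotonicity considerations is $k=0$ (the mode corresponding to translations along the cone axis, or more precisely the lowest link eigenvalue), so that the problem becomes: is the one-dimensional fractional Hardy form associated with $C_m^n(s)$ positive?

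Next I would compute the relevant constant explicitly in terms of $s$, $m$, $n$ and the aperture $\alpha(s,m,n)$. The second-variation form on the cone, after integrating out the link and using the already-known first-variation identity (the cone is $s$-minimal by Theorem \ref{teo3}, so the mean-curvature-type term is tied to $\alpha$), should reduce to an expression of the schematic form
\[
Q_s[h] \;=\; c(m,n,s)\int_0^\infty\!\!\int_0^\infty \frac{(\psi(\rho)-\psi(\tau))^2}{|\rho-\tau|^{1+s}}\,K\!\left(\tfrac{\rho}{\tau}\right)\frac{d\rho\,d\tau}{(\rho\tau)^{?}} \;-\; V_s\int_0^\infty \frac{\psi(\rho)^2}{\rho^{1+s}}\,d\rho,
\]
where the potential weight $V_s$ is an explicit integral over the link of the normal-variation-of-normal term in \eqref{nonlocal jac}, evaluated on $C_{\alpha(s,m,n)}$. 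Passing to Mellin variables turns positivity into the inequality $H_s(\gamma) \ge V_s$ for all $\gamma$ on the critical line, where $H_s$ is the (known) symbol of the one-dimensional fractional Laplacian-type operator with the cone kernel. At this point I would take $s\to 0$: by the asymptotics for $\alpha(s,m,n)\to\alpha_0$ recorded after Theorem \ref{teo3}, both $H_s$ and $V_s$ have finite limits $H_0(\gamma)$, $V_0$, and the critical line tends to $\mathrm{Re}\,\gamma = \frac{N}{2}-1$. The claim $N\le 6$ unstable / $N=7$ stable then amounts to the sharp numerical inequality: $\min_\gamma H_0(\gamma) < V_0$ for $N\le 6$ and $\min_\gamma H_0(\gamma)\ge V_0$ for $N=7$, with the two sides given by explicit ratios of Gamma functions / Beta-type integrals (the $s\to 0$ limits of $\I_s$-type integrals, cf. \cite{figalli}). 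I would verify this inequality by direct estimation of the relevant $\Gamma$-function quotients, noting the jump occurs precisely between $N=6$ and $N=7$, exactly one dimension below the classical Simons threshold $N=8$.

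Finally, since positivity of the symbol on the critical line is needed for \emph{all} link modes $k$ and the above handles only the bottom mode, I would close the argument by a monotonicity lemma: the link eigenvalues $\lambda_k$ are increasing in $k$ and enter $Q_s$ only through a shift that makes the form strictly more positive, so positivity for $k=0$ implies it for all $k$; conversely, instability is witnessed already by $k=0$, and to make ``instability'' rigorous (a genuine negative-value test function, not merely a borderline one) I would take a compactly supported truncation of the critical Mellin mode $\rho^{-\gamma}\varphi_0$ in $C_0^\infty(\Sigma\setminus\{0\})$ and check, via a standard cutoff estimate controlling the error by the logarithmic capacity, that $-\int_\Sigma\JJ^s_\Sigma[h]h<0$ for $s$ small when $N\le 6$. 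The main obstacle, as in all such computations, is step two: obtaining the potential weight $V_s$ and the kernel $K$ in closed enough form that the $s\to 0$ limit can be taken and the Gamma-function inequality verified sharply — this requires carefully handling the principal-value singularity in \eqref{nonlocal jac} and the nonintegrable behavior of the cone kernel near the vertex, and it is where the argument is genuinely delicate rather than routine. A secondary difficulty is justifying that $k=0$ is indeed the destabilizing mode uniformly in $s$ near $0$, which relies on the explicit dependence of $\alpha(s,m,n)$ on the parameters.
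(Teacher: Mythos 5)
Your overall strategy --- reduce the second variation on the cone to a sharp fractional Hardy inequality, identify the critical exponent $\beta=\tfrac{N-2-s}{2}$, take $s\to 0$ via the limiting aperture $\alpha_0$, and compare the Hardy constant against the potential arising from the $\nu$-variation term in \eqref{nonlocal jac} --- matches the paper's plan (Proposition~\ref{prop hardy ineq} and the corollary after it, with the paper's $H(m,n,s)$ and $A_0(m,n,s)^2$ in the roles of your $H_s(\gamma)$ and $V_s$). But two of your steps do not go through as stated. First, your reduction to the bottom link mode $k=0$ rests on an asserted monotonicity lemma (``the link eigenvalues enter $Q_s$ only through a shift that makes the form strictly more positive''), and this is far from clear here: the kernel $|x-y|^{-N-s}$ does not separate on a cone into a radial operator plus $\rho^{-2}\Delta_{\mathrm{link}}$ the way $\Delta_\Sigma$ does, so the modes $\varphi_k$ are coupled through the off-diagonal part of the bilinear form. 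The paper avoids separation of variables entirely via the ground-state substitution $\phi = w\psi$, $w(x)=|x|^{-\beta}$: the identity
\[
\frac12\int_{\Sigma}\int_{\Sigma}\frac{(\phi(x)-\phi(y))^2}{|x-y|^{N+s}}\,dx\,dy
=\int_{\Sigma}\frac{H(m,n,s)}{|x|^{1+s}}\,\phi^2\,dx
+\frac12\int_{\Sigma}\int_{\Sigma}\frac{(\psi(x)-\psi(y))^2\,w(x)w(y)}{|x-y|^{N+s}}\,dx\,dy
\]
holds for \emph{all} $\phi\in C_0^\infty(\Sigma\setminus\{0\})$, so no decomposition by link eigenvalue and no worst-mode argument are required; stability is exactly the single scalar inequality $H(m,n,s)\ge A_0(m,n,s)^2$.

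Second, and more seriously, your claim that the $s\to 0$ comparison reduces to ``explicit ratios of Gamma functions / Beta-type integrals'' checkable by ``direct estimation'' is unsubstantiated and in fact does not hold. The integration over $S^{m-1}\times S^{n-1}$ does produce $\Gamma$-function prefactors $A_{m-2}A_{n-2}$, but $H(m,n,0)$ and $A_0(m,n,0)^2$ remain genuine definite integrals over $r\in(0,1)$ and the polar angles whose integrands involve the limiting aperture $\alpha_0$, and $\alpha_0$ is defined by the transcendental equation in Proposition~\ref{prop alpha s=0}, not an algebraic number for general $m\ne n$. There is no closed form, and the paper closes this step by numerical quadrature (Table~1) together with continuity in $s$. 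The $N=7$ margins are thin (e.g.\ $1.5833$ vs.\ $1.5318$ for $(m,n)=(6,1)$, and $0.4477$ vs.\ $0.4288$ for $(4,3)$), so no crude asymptotic estimate of ``$\Gamma$-quotients'' would separate the stable from the unstable cases. To complete your argument you would have to either carry out those numerics, or supply an analytic proof of the $N\ge 7$ threshold that the paper itself does not have.
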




\medskip
Besides the reults in \cite{simons,bdg}, we remark that
for $N>8$ the cones $C_m^n$ are all area minimizing. For $N=8$ they are area minimizing if and only if $|m-n|\le 2$.
These facts were established by Lawson \cite{lawson}  and Simoes \cite{simoes},
{\cb see also \cite{miranda,concus-miranda,benarros-miranda,davini}.}

\medskip
The rest of this paper will be devoted to the proofs of Theorems \ref{teo1}--\ref{thm stability}.
The proof of Theorem~\ref{teo2} is actuallly a simpler variation of that of Theorem~\ref{teo1}. We will just concentrate in the proof of Theorem~\ref{teo1}, whose  scheme we explain in Section~\ref{sect scheme}. There we  shall
isolate the main steps in the form of intermediate results  which we prove in the subsequent sections.
The proofs of Theorems~\ref{teo3} and \ref{thm stability} rely on explicit computations of singular integral quantities, and are carried out in Sections~\ref{sect exist unique} and \ref{sect stability}.

We leave for the Appendix self contained proofs of asymptotic formulas \eqref{mc}, \eqref{asym jac} in Section~\ref{sect asymptotics},  and the computation of first and second variations of the $s$-perimeter in Section~\ref{sect jacobi}.

\medskip




\section{Scheme of the proof of Theorem \ref{teo1}}
\label{sect scheme}

In this section we shall outline the proof of Theorem 1, isolating the main steps whose proofs are delayed to later sections.
We look for a set $E \subseteq \R^3$ with smooth $\Sigma=\pp E $ such that
\be
H_\Sigma^s (x) :=
\int_{\R^3} \frac{\chi_E(y) - \chi_{E^c}(y)}{|x-y|^{3+s}}\, d y = 0 ,
\foral x \in \Sigma
\label{xx}\ee
where $0<s<1$, $1-s$ is small and the integral is understood in a principal value sense sense.

We look for $E$ in the form of a solid of revolution around the $x_3$-axis.
More precisely,  let us represent points in space by
$x= (x',x_3)$ with $x'\in \R^2$,  and denote $r=|x'|$.
We shall construct a first approximation for $E$  of the form
\begin{align}
\label{e1}
E_0 = \{  \ x = (x',x_3) \in \R^2 \times \R: |x'| <R \text{ or } |x'|\geq R, |x_3|>f(x) \ \},
\end{align}
where $f$ is a positive and increasing function on $[R,\infty)$.


From now on we let $\ve = 1-s$.
As we will demonstrate later, for an appropriate class of sets $E$ equation \equ{xx}  formally resembles
\begin{align}
\label{xx1}
- 2H_{\Sigma}(x) +  \frac {\ve}  {|x_3|} =0 .
\end{align}

We will obtain the surface $\Sigma$ and the corresponding set $E$ by first constructing an initial surface $\Sigma_0 = \partial E_0$ that is an approximate solution of  \equ{xx1} and then perturbing it.

For the construction of $\Sigma_0$ we take the standard catenoid parametrized as
$$
|x_3| = f_C(r) ,\quad r=|x'| \geq 1,
$$
where
\begin{align}
\label{catenoid}
f_C(r) = \log ( r + \sqrt{r^2-1}), \quad r\geq 1.
\end{align}
If we describe $\Sigma= \partial E$ with $E$ as in \eqref{e1}
and assume that for $r$ large  $f'(r)$ is small, then equation \eqref{xx1} is approximated by
\begin{align}
\label{eqf}
\Delta f = \frac{\ve}{f} .
\end{align}
This motivates us to define $f_\ve (r)$ as solution of the initial value problem
\begin{align}
\label{f eps}
\left\{
\begin{aligned}
& f_\ve'' + \frac1r f_\ve'  =  \frac {\ve}  {f_\ve} , \quad r > \ve^{-\frac 12}
\\
&f_\ve (\ve^{-\frac 12}) =  f_C(\ve^{-\frac 12}), \quad f_\ve' (\ve^{-\frac 12}) =  f_C'(\ve^{-\frac 12}).
\end{aligned}
\right.
\end{align}
Let
\begin{align}
\label{def Feps}
F_\ve(r) := f_C(r) +  \eta( r- \ve^{-\frac 12} ) ( f_\ve (r) - f_C(r)) ,
\quad r\geq 1,
\end{align}
where $\eta \in C^\infty(\R)$ is a cut-off function with
\begin{align}
\label{def eta1}
\eta(t) = 0 \quad\text{for } t< 0, \quad
\eta(t) = 1 \quad\text{for } t>1.
\end{align}
We define  the  surface $\Sigma_0$ by
\begin{align}
\label{def sigma0}
\Sigma_0 = \{ |x_3| =   F_\ve(r), r\geq 1 \}.
\end{align}
Then
$$
\Sigma_0 = \partial E_0, \quad
E_0 = \{ r<1, \text{ or } r\geq 1 \text{ and } |x_3|\geq F_\ve(r) \} .
$$

Next we perturb the surface $\Sigma_0$ in the normal direction. For this, let $\nu_{\Sigma_0}(x)$ be the unit normal vector field on $\Sigma_0$  such that $\nu_3(x)x_3 \ge 0$. We consider a  function $h$ defined on $\Sigma_0$, and define

$$
\Sigma_h = \{ x +  h(x)\nu_{\Sigma_0}(x)\ /\ x\in \Sigma_0\} .
$$
If $h$ is small in a suitable norm, then $\Sigma_h$ is an embedded surface that can be written as $\Sigma_h = \partial E_h$ for a set $E_h$ that is close to $E_0$.
We can expand, for a point $x\in \Sigma_0$ and $x_h = x + h (x) \nu_{\Sigma_0}(x)$:
\begin{align}
\label{eq29}
H^s_{\Sigma_h}(x_h) = H^s_{\Sigma_0}(x) + 2 \JJ^s_{\Sigma_0}(h)(x) + N(h)(x) ,
\end{align}
where $\JJ^s_{\Sigma_0}$ is the {\em nonlocal Jacobi operator} given by
\begin{align*}
\JJ^s_{\Sigma_0}(h)(x)
=
\int_{\Sigma_0}
\frac{h(y)-h(x)}{|x-y|^{3+s}} dy
+ h(x)
\int_{\Sigma_0}
\frac{\langle \nu_{\Sigma_0} (x)- \nu_{\Sigma_0}(y), \nu_{\Sigma_0} (x) \rangle }{|x-y|^{3+s}} dy,
\end{align*}
for $ x \in \Sigma_0$, and $N(h)$ is defined by equality \eqref{eq29}.

The objective is then to find $h$ such that
\begin{align}
\label{eq30}
H^s_{\Sigma_0} + 2 \JJ^s_{\Sigma_0}(h) + N(h) = 0 .
\end{align}
We note that, assuming $h$ is smooth and bounded,
$$
\text{p.v.}
\int_{\Sigma_0}
\frac{h(y)-h(x)}{|x-y|^{3+s}} dy
= \frac1\ve \frac\pi2 \Delta_{\Sigma_0}h(x) + O(1)
$$
as $\ve\to0$, where $\Delta_{\Sigma_0}$ is the Laplace-Beltrami operator on $\Sigma_0$ (see Lemma~\ref{conv lapl}).
Therefore it is more convenient to rewrite \eqref{eq30} as
$$
\ve H^s_{\Sigma_0} + 2 \ve \JJ^s_{\Sigma_0}(h) + \ve N(h) = 0 \quad\text{in }\Sigma_0.
$$

It is natural to expect that $h$ has linear growth, and therefore we will work with weighted H\"older norms allowing such behavior.
For $0<\alpha<1$ and $\gamma\in\R$, we define norms for functions defined on $\Sigma_0$ or $\R^2$ as follows:
\begin{align*}
\nonumber
[f]_{\gamma,\alpha}
& =
\sup_{ x\not=y} \, \min(1+|x|,1+|y|)^{\gamma+\alpha} \frac{|f(x) - f (y)|} {|x-y|^\alpha},\\
\|f\|_{\gamma,\alpha} &= \| (1+|x|)^\gamma f\|_{L^\infty} + [f]_{\gamma,\alpha},
\end{align*}
and
\begin{align}
\label{norm st}
\| h \|_* = \| (1+|x|)^{-1} h \|_{L^\infty} + \| \nabla h \|_{L^\infty} + \| (1+|x|) D^2 h\|_{L^\infty} + [D^2 h]_{1,\alpha}.\end{align}
Then we look for a solution $h$ of \eqref{eq30} with  $\|h\|_*<\infty$ and measure $\ve \JJ_{\Sigma_0}^s(h)$ in the norm
\begin{align}
\label{norm RHS}
\| f \|_{1-\ve,\alpha+\ve}
&= \| (1+|x|)^{1-\ve} f\|_{L^\infty} + [f]_{1-\ve,\alpha+\ve}
\end{align}
More explicitly,
\begin{align*}
\| f \|_{1-\ve,\alpha+\ve}
&= \| (1+|x|)^{1-\ve} f\|_{L^\infty} + \sup_{x\not=y} \min(1+|x|,1+|y|)^{1+\alpha} \frac{|f(x)-f(y)|}{|x-y|^{\alpha+\ve}} .
\end{align*}

An outline of the proof of Theorem~\ref{teo1} is the following.
In Section~\ref{sect error}, using estimates for $f_\ve$ obtained in Section~\ref{sect ode},
we will prove:
\begin{prop}
\label{prop error}
For $\ve>0$ sufficiently small we have
\begin{align}
\label{estimate E}
\| \ve H^s_{\Sigma_0} \|_{1-\ve,\alpha+\ve} \leq
\frac{C\ve^{\frac12}}{|\log\ve|} .
\end{align}
\end{prop}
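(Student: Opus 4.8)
The goal is to estimate the error $\ve H^s_{\Sigma_0}$ of the approximate catenoid $\Sigma_0$ in the weighted H\"older norm $\|\cdot\|_{1-\ve,\alpha+\ve}$, getting a bound $C\ve^{1/2}/|\log\ve|$. The first step is to localize: the surface $\Sigma_0$ is built piecing together the exact catenoid $C_1$ (for $1\le r\le \ve^{-1/2}$), the solution $f_\ve$ of the ODE \eqref{f eps} (for $r\ge \ve^{-1/2}$), glued through the cutoff $\eta$ in the transition window $\ve^{-1/2}\le r\le \ve^{-1/2}+1$. Accordingly I would split the estimate into three regions. In each region I need the pointwise size of $\ve H^s_{\Sigma_0}$ and the corresponding H\"older seminorm, weighted by the appropriate powers of $(1+|x|)$.

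The main analytic input is the reduction, valid for surfaces of this type, that $\ve H^s_{\Sigma_0}$ agrees at main order with $-2\ve H_{\Sigma_0} + \ve^2/|x_3|$ plus controlled lower-order remainders --- this is exactly the formal relation \eqref{xx1} multiplied by $\ve$, and the rigorous version of this expansion (with decay control along the surface) is what the earlier sections provide. So the plan is: (i) In the inner region $1\le r\le \ve^{-1/2}$, where $\Sigma_0$ is literally the minimal catenoid, one has $H_{\Sigma_0}=0$ identically, so $\ve H^s_{\Sigma_0}$ reduces to the nonlocal correction $\ve^2/|x_3| = \ve^2/f_C(r) + (\text{remainder})$. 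Since $f_C(r)=\log(r+\sqrt{r^2-1})\ge c$ on $r\ge 2$ and the weight is $(1+|x|)^{1-\ve}\sim r$, the worst contribution is near $r\sim \ve^{-1/2}$ where $f_C\sim |\log\ve|$, giving $\ve^2\cdot \ve^{-1/2}/|\log\ve| = \ve^{3/2}/|\log\ve|$, comfortably below the claimed bound; near $r\sim 1$ one uses that $|x_3|\to 0$ but the region is compact and the singularity is integrable in the principal-value sense, and a direct bound gives $O(\ve^2)$. (ii) In the transition window one must control the error introduced by the cutoff: here $F_\ve - f_C = \eta(f_\ve-f_C)$, and derivatives of $\eta$ are $O(1)$ but act on $f_\ve-f_C$, which by the ODE estimates of Section~\ref{sect ode} is $O(\ve^{1/2}/|\log\ve|)$ together with its first two derivatives being small at $r=\ve^{-1/2}$ (this is forced by the matching initial conditions in \eqref{f eps}). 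Feeding this into the expansion of mean curvature (which is linear in $F_\ve''$, $F_\ve'$ to leading order and the window has unit length at scale $r\sim\ve^{-1/2}$) yields an error of size $O(\ve\cdot \ve^{1/2}/|\log\ve|)$ in $H_{\Sigma_0}$, hence $O(\ve^{3/2}/|\log\ve|)$ after multiplying by $\ve$ --- and since $(1+|x|)^{1-\ve}\sim\ve^{-1/2}$ there, the weighted bound is $O(\ve/|\log\ve|)$, again within budget. (iii) In the outer region $r\ge \ve^{-1/2}$, $\Sigma_0$ is the graph of $f_\ve$ solving $\Delta f_\ve = \ve/f_\ve$ exactly; the mean curvature of the graph $x_3=f_\ve(r)$ is not exactly $\tfrac12\Delta f_\ve$ but differs by terms involving $(f_\ve')^2$ and the full minimal surface operator, so $-2H_{\Sigma_0}+\ve/|x_3|$ is a remainder quadratic in $f_\ve'$ and its derivatives; the estimates for $f_\ve$ (notably $f_\ve'\sim\sqrt\ve$ and $f_\ve\sim\sqrt\ve\, r$ for $r\gg\ve^{-1/2}$) make this remainder $O(\ve\cdot(f_\ve')^2/f_\ve)\sim O(\ve\cdot\ve/(\sqrt\ve r)) = O(\ve^{3/2}/r)$ and after multiplying by $\ve$ and weighting by $r^{1-\ve}$ one gets $O(\ve^{5/2}r^{-\ve})$, which is small; additionally there is the nonlocal-versus-local discrepancy $\ve H^s_{\Sigma_0}-(-2\ve H_{\Sigma_0}+\ve^2/|x_3|)$, which the earlier expansion bounds with the required decay. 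Assembling the three regions and taking the max gives \eqref{estimate E}.

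For the H\"older seminorm $[\ve H^s_{\Sigma_0}]_{1-\ve,\alpha+\ve}$ I would argue similarly: within each region $\ve H^s_{\Sigma_0}$ is a smooth function of $r$ whose derivative can be estimated by differentiating the above expressions (using the higher-order control on $f_\ve$ and on the catenoid), and an interpolation-type bound $|g(x)-g(y)|\le \|g'\|_\infty^{1-(\alpha+\ve)}\,(2\|g\|_\infty)^{\alpha+\ve}\,|x-y|^{\alpha+\ve}$ combined with the pointwise bounds converts the sup and gradient estimates into the seminorm estimate with the correct weight $\min(1+|x|,1+|y|)^{1+\alpha}$. Across region boundaries (the two endpoints of the cutoff window) one uses that $F_\ve$ is $C^2$ by construction and that the two pieces match to the needed order, so no extra jump is created.

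\textbf{Main obstacle.} The delicate point is not region (i) or the outer algebra but the transition window and, more fundamentally, establishing the rigorous expansion $\ve H^s_{\Sigma_0}=-2\ve H_{\Sigma_0}+\ve^2/|x_3|+(\text{remainder with decay})$ with remainder small enough in the $\|\cdot\|_{1-\ve,\alpha+\ve}$ norm --- this requires careful analysis of the principal-value integral defining $H^s$ near the ``waist'' $r\sim 1$ (where $|x_3|$ is small and the catenoid is strongly curved) and in the conic region $r\gg\ve^{-1/2}$ (where long-range interactions between the two sheets $x_3=\pm F_\ve(r)$ are felt), and is where the factor $1/|\log\ve|$ ultimately comes from, through the size of $f_\ve-f_C$ in the cutoff region which is itself governed by the ODE analysis of Section~\ref{sect ode}. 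I would therefore organize the proof so that the single quantitative ingredient driving the whole bound is the estimate $\|f_\ve-f_C\|_{C^2([\ve^{-1/2},\ve^{-1/2}+1])}\le C\ve^{1/2}/|\log\ve|$, and then the three-region bookkeeping above is routine.
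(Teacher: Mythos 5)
Your high-level organization (split into catenoid region, transition window, outer region; feed in the ODE estimates from Section~\ref{sect ode}; control the nonlocal-vs-local discrepancy) is broadly in line with what the paper does. The paper splits $H^s_{\Sigma_0} = I_i + I_o$ (Lemmas~\ref{lemma Rest1}, \ref{lemma R2 simple}) and then decomposes $\ve H^s_{\Sigma_0} = E_1+\cdots+E_5$, which plays the role of your three regions.

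However, there is a quantitative error in your region (i) that propagates to a wrong conclusion about where the bound $\ve^{1/2}/|\log\ve|$ comes from. You write that on the exact catenoid $\ve H^s_{\Sigma_0}$ reduces to $\ve^2/|x_3|$ and compute a contribution of order $\ve^{3/2}/|\log\ve|$, ``comfortably below the claimed bound.'' This is off by a factor of $\ve$. The correct normalization is $\ve H^s_{\Sigma_0} \approx -2\pi H_{\Sigma_0} R^\ve + \ve\,\mathrm{Rest}_1 + \ve I_o$ with $I_o \sim \pi/R^{1-\ve}$ (Lemma~\ref{lemma R2 simple}), i.e.\ the nonlocal sheet-to-sheet interaction enters $\ve H^s_{\Sigma_0}$ with a \emph{single} factor of $\ve$, not $\ve^2$. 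Equation~\eqref{xx1} asserts $H^s\approx 0 \Leftrightarrow -2H + \ve/|x_3|\approx 0$, but that does not mean $H^s = -2H + \ve/|x_3|$; the correct statement is that $\ve H^s$ (one factor of $\ve$) is what has the form $-2\pi H + \pi\ve/R + (\text{error})$. In the catenoid region $H_{\Sigma_0}=0$, $R\sim\log r$, so $\ve H^s_{\Sigma_0}\sim \ve/\log r$, and the weighted supremum $\sup_{r\le\ve^{-1/2}} r^{1-\ve}\,\ve/\log r = \ve^{1/2}/|\log\ve|$ is attained at $r\sim\ve^{-1/2}$ and \emph{equals} the claimed bound. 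This is precisely the term $E_4 = \ve I_o (1-\eta_\ve)$ in the paper's proof, and it is the dominant contribution.

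As a consequence, your concluding claim that ``the single quantitative ingredient driving the whole bound is $\|f_\ve-f_C\|_{C^2([\ve^{-1/2},\ve^{-1/2}+1])}\le C\ve^{1/2}/|\log\ve|$'' is incorrect on two counts: first, that gluing discrepancy is actually $O(\ve/|\log\ve|)$ (from the matching initial conditions in \eqref{f eps} and $\Delta f_\ve = O(\ve/|\log\ve|)$, $\Delta f_C = O(\ve^2)$, as noted in the paper's estimate of $E_1$), so the transition window contributes only $O(\ve^{1/2}/|\log\ve|)$ \emph{after weighting} and is not worse than the other regions; second and more importantly, the bound is not ``routine bookkeeping'' once you have this gluing estimate --- the rate is genuinely set by the nonlocal interaction across the catenoid neck, which your expansion dropped. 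A correct proof must isolate and estimate the outer contribution $\ve I_o$ on the catenoid with the weight $r^{1-\ve}$, which is what Lemma~\ref{lemma R2 simple} and the treatment of $E_4$, $E_5$ (including the derivative estimates \eqref{der1}, \eqref{est E4} for the H\"older seminorm) are for. Your plan would need to be revised to put this term at the center; otherwise you would believe you have slack of order $\ve$ that is not there.
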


The next result is about invertibility of the operator $ \ve \JJ^s_{\Sigma_0} $ on a weighted H\"older space.

\begin{prop}
\label{main linear prop}
There is a linear operator that to a function $f$ on $\Sigma_0$ such that $f$ is radially symmetric and symmetric with respect to $x_3=0$ with $\|f\|_{1-\ve,\alpha+\ve}<\infty$, gives a solution $\phi$ of
\begin{align*}
\ve\mathcal J^s_{\Sigma_0}(\phi) = f \quad \text{in } \Sigma_0 .
\end{align*}
Moreover $\phi$ has the same symmetries as $f$ and
\begin{align*}
\|\phi\|_* \leq C \|f\|_{1-\ve,\alpha+\ve}.
\end{align*}

\end{prop}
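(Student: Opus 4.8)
The plan is to prove Proposition \ref{main linear prop} by reducing the nonlocal Jacobi operator $\ve\mathcal J^s_{\Sigma_0}$, acting on radial and $x_3$-symmetric functions, to a perturbation of a model operator for which an explicit inverse with good weighted estimates is available. Parametrize $\Sigma_0$ by arclength-like coordinates along the generating curve, so that $h$ becomes a function of a single real variable. Using the asymptotic expansion
\begin{align*}
\ve\,\text{p.v.}\int_{\Sigma_0}\frac{h(y)-h(x)}{|x-y|^{3+s}}\,dy = \tfrac{\pi}{2}\Delta_{\Sigma_0}h(x) + O(\ve)\|h\|_*,
\end{align*}
together with the analogous expansion of the potential term $h(x)\int_{\Sigma_0}\langle\nu(x)-\nu(y),\nu(x)\rangle|x-y|^{-3-s}\,dy$, which converges to $\tfrac{\pi}{2}|A_{\Sigma_0}|^2 h(x)$ plus a term of order $\ve/f^2$ coming from the far part of the integral (the piece responsible for \eqref{xx1}), one writes
\begin{align*}
\ve\mathcal J^s_{\Sigma_0}(h) = \tfrac{\pi}{2}\bigl(\Delta_{\Sigma_0}h + |A_{\Sigma_0}|^2 h\bigr) + \ve\,\mathcal R(h),
\end{align*}
where $\mathcal R$ is a remainder that is lower order in the $\|\cdot\|_*$ norm when measured in $\|\cdot\|_{1-\ve,\alpha+\ve}$, but — and this is the delicate point — $\mathcal R$ is still a nonlocal operator of near-second order, so the ``perturbation'' is not compact and must be controlled by a norm that already sees two derivatives with the correct weights. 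The bulk of the work is to show that on radial, $x_3$-symmetric functions the leading part $\tfrac{\pi}{2}\mathcal J_{\Sigma_0}$ is invertible with the loss of regularity and weight encoded in the passage from $\|\cdot\|_{1-\ve,\alpha+\ve}$ to $\|\cdot\|_*$, uniformly in $\ve$.

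For the model operator I would proceed as follows. On $\Sigma_0$ the curvature-squared $|A_{\Sigma_0}|^2$ behaves, in the catenoidal region $1\le r\lesssim\ve^{-1/2}$, like that of the catenoid (decaying like $r^{-4}$ along the generating curve, where $r\sim e^{|x_3|}$), and in the conic region $r\gtrsim\ve^{-1/2}$ like $\ve/f^2\sim r^{-2}$. So the radial Jacobi operator is, in the arclength variable $t$ along the generator, $L\phi = \phi'' + (\text{curvature terms})\phi$ with a potential that is integrable near the catenoid neck and decays like $t^{-2}$ at infinity. The two Jacobi fields of the catenoid (the one coming from vertical translations, bounded, and the one coming from dilations, growing logarithmically) are explicit; in the conic region the homogeneous equation $\phi''\approx 0$ has the constant and the linear solution. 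One matches these across $r\sim\ve^{-1/2}$ to build a Green's function $G(t,\tau)$ for $L$ and defines $\phi(t) = \int G(t,\tau)f(\tau)\,d\tau$. The point is to check that with $f\in\|\cdot\|_{1-\ve,\alpha+\ve}$, i.e. $|f(r)|\lesssim (1+r)^{-(1-\ve)}$ with a matching H\"older bound, the resulting $\phi$ has at most linear growth, $|\nabla\phi|$ bounded, $(1+|x|)D^2\phi$ bounded and $D^2\phi$ H\"older with weight $1$ — exactly $\|\phi\|_*<\infty$. This is a direct but careful computation with the Green's representation, separating the contributions from $\tau\lesssim\ve^{-1/2}$ and $\tau\gtrsim\ve^{-1/2}$; the weight $1-\ve$ (rather than $1$) on $f$ is precisely what is needed so that $\int^r \tau\cdot\tau^{-(1-\ve)}\,d\tau\sim r^{1+\ve}/\ve$ does not blow up faster than linearly after one divides appropriately — one tracks the $\ve$'s to see the estimate is uniform. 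Schauder estimates for the fractional operator (interior estimates in the weighted norms, of the kind needed throughout the paper) upgrade the $L^\infty$ control to the full $\|\cdot\|_*$ control.

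The final step is to absorb the remainder $\ve\mathcal R$. Having the bounded right inverse $T$ of $\tfrac{\pi}{2}\mathcal J_{\Sigma_0}$ with $\|Tf\|_*\le C\|f\|_{1-\ve,\alpha+\ve}$, one seeks $\phi$ solving $\ve\mathcal J^s_{\Sigma_0}\phi=f$ in the form $\phi = T(f - \ve\mathcal R\phi)$, i.e. solving the fixed point problem $\phi = T f - \ve\, T\mathcal R\phi$. For this one needs the mapping estimate $\|\ve\mathcal R\phi\|_{1-\ve,\alpha+\ve}\le C\ve^{\theta}\|\phi\|_*$ for some $\theta>0$ (or at least with a small constant), which follows from the asymptotic expansions of the singular integrals defining $\mathcal J^s$ carried one order further than in Lemma \ref{conv lapl}, using the decay of the geometry of $\Sigma_0$ along the manifold — this is where one pays attention that $\mathcal R$, though nonlocal and of near-second order, gains a genuine power of $\ve$ or of the weight when tested against the target norm. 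Then $\ve T\mathcal R$ is a contraction on the ball of $\|\cdot\|_*$-bounded functions for $\ve$ small, Banach fixed point gives $\phi$ with $\|\phi\|_*\le C\|f\|_{1-\ve,\alpha+\ve}$, and the symmetries are preserved because $T$, $\mathcal R$ and the whole construction commute with rotations and with $x_3\mapsto -x_3$. I expect the main obstacle to be the construction and estimation of the Green's function for the model radial Jacobi operator with $\ve$-dependent matching across the two regimes, and in particular verifying that all bounds — growth of $\phi$, the weighted second-derivative control, and the smallness of $\ve\mathcal R$ — are uniform as $\ve\to 0$; the nonlocality of $\mathcal R$ means the perturbative step is not a soft compactness argument but requires the quantitative singular-integral expansions that are, as the authors note, the technically hardest part of the paper.
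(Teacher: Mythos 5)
There is a genuine gap in the proposal, and it is in the identification of the model operator. You decompose
$\ve\mathcal J^s_{\Sigma_0}(h)=\tfrac{\pi}{2}\bigl(\Delta_{\Sigma_0}h+|A_{\Sigma_0}|^2h\bigr)+\ve\mathcal R(h)$
and then try to close a contraction by showing $\|\ve\mathcal R\phi\|_{1-\ve,\alpha+\ve}\le C\ve^\theta\|\phi\|_*$.
That estimate is false, because the leading potential in $\ve\mathcal J^s_{\Sigma_0}$ at large $r$ is \emph{not} the curvature term. The potential $a_\ve(x)=\ve\int_{\Sigma_0}\frac{1-\langle\nu(x),\nu(y)\rangle}{|x-y|^{3+s}}dy$ splits (Lemma~\ref{lemma a epsilon}) into a same-sheet contribution $\pi|A_{\Sigma_0}|^2|x'|^\ve$ and a cross-sheet contribution $\pi\ve F_\ve^{-2+\ve}(1+o(1))$ for $|x|\ge\ve^{-1/2}$. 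In the conic region $F_\ve\sim\sqrt\ve\, r$, so the cross-sheet term is $\sim 1/r^2$, while $|A_{\Sigma_0}|^2\sim\ve/r^2$ is a full factor of $\ve$ smaller. You attribute the $r^{-2}$ tail to $|A_{\Sigma_0}|^2$ (``behaves \ldots\ like $\ve/f^2\sim r^{-2}$''), which is off by that factor of $\ve$, and consequently the cross-sheet piece $\pi\ve F_\ve^{-2+\ve}\phi$ is buried in your remainder. For $\phi$ with $\|\phi\|_*<\infty$, $|\phi|\lesssim(1+|x|)$, so $(1+|x|)^{1-\ve}\,\pi\ve F_\ve^{-2+\ve}|\phi|\lesssim 1$: the remainder maps $\|\cdot\|_*$ to $\|\cdot\|_{1-\ve,\alpha+\ve}$ with an $O(1)$ constant, not $O(\ve^\theta)$. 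The perturbation is not small; it is a genuinely critical $r^{-2}$ Schr\"odinger potential at the same scale as the Laplacian.

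The same mistake shows up in your Green's function step: with the correct potential, the homogeneous solutions at infinity are $\cos(\log r)$ and $\sin(\log r)$ (this is exactly what the paper uses in the blow-up at the end of Lemma~\ref{l14}), not the ``constant and the linear solution'' of $\phi''\approx0$. Those oscillatory solutions change the structure of the Green's function and the matching across $r\sim\ve^{-1/2}|\log\ve|$, and they are why the paper's model is $\Delta+\ve\eta_\ve/F_\ve^{2}$ (Proposition~\ref{prop4}) rather than $\Delta_{\Sigma_0}+|A_{\Sigma_0}|^2$. Once the cross-sheet term is put where it belongs --- into the model operator rather than the remainder --- your outline is closer in spirit to the paper's actual scheme (invert the ODE model, treat the nonlocal-versus-local difference as a perturbation via a fixed point, as in Lemma~\ref{prop slow r2}), but two further ingredients you omit are then unavoidable: the apriori estimate must be obtained by a blow-up/compactness argument rather than a direct Green's function bound (Lemma~\ref{l14}), and there is an approximate cokernel direction coming from the log-growing Jacobi field $Z_2$, which forces a projected equation $\ve\mathcal J^s_{\Sigma_0}\phi=f-cf_0$ and a final argument to kill the projection constant (Propositions~\ref{prop exist frac cat decay} and the end of the proof of Proposition~\ref{main linear prop}).
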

The proof is given in Section~\ref{sect linear}, based on preliminaries in Sections~\ref{sect linear1} and \ref{sect linear2}.

In Section~\ref{sect q} we obtain the estimate
\begin{prop}
\label{prop Nh}
There is $C$ independent of $\ve>0$ small such that for $\|h_i\|_* \leq \sigma_0 \ve^{\frac12}$, $i=1,2$ we have
\begin{align}
\label{est N1}
\ve\| N(h_1) - N(h_2)\|_{1-\ve,\alpha+\ve}\leq C \ve^{-\frac12} (\|h_1\|_*+\|h_2\|_* ) \|h_1-h_2\|_*.
\end{align}
\end{prop}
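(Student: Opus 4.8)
The plan is to prove Proposition~\ref{prop Nh} by going back to the defining identity \eqref{eq29}, namely $N(h) = H^s_{\Sigma_h}(x_h) - H^s_{\Sigma_0}(x) - 2\JJ^s_{\Sigma_0}(h)(x)$, and estimating the Lipschitz dependence of each of the three pieces of the fractional mean curvature as a function of $h$. The first step is to write down an explicit formula for $H^s_{\Sigma_h}(x_h)$ in terms of the parametrization $y\mapsto y + h(y)\nu_{\Sigma_0}(y)$ of $\Sigma_h$: after the change of variables the integrand becomes a singular kernel depending on $x-y$, on the increments $h(x)-h(y)$, on $\nabla h$ at the two points (through the Jacobian of the parametrization and through $\nu_{\Sigma_h}$), and on the geometry of $\Sigma_0$ (its normal, second fundamental form, and the splitting of $\R^3$ into the two phases near $\Sigma_0$). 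I would organize this so that $N(h)$ appears as a sum of strongly singular integral operators, each of the schematic form $\int Q(x,y,h(x),h(y),\nabla h(x),\nabla h(y),D^2h)\,|x-y|^{-3-s}\,dy$ where $Q$ vanishes to second order in $(h,\nabla h, D^2 h)$ and is smooth in its arguments in the relevant range. This is the analog of the decomposition of $N(h)$ used by Kapouleas \cite{kapouleas} in the classical case, as indicated in the discussion following \eqref{eq30}.

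The second step, which is the technical heart, is to estimate $\ve[N(h_1)-N(h_2)]$ in the norm $\|\cdot\|_{1-\ve,\alpha+\ve}$. The key structural fact to exploit is that each singular-kernel piece carries the factor $\ve$ which, after splitting the domain of integration at the scale $|x-y|\sim 1$ (or $|x-y| \sim \mathrm{dist}$ to the relevant part of $\Sigma_0$), produces the bound $\ve\int \cdots \sim O(1)$ on the $L^\infty$ part and correspondingly on the H\"older seminorm part; this is precisely the mechanism already used for the linear operator in Proposition~\ref{main linear prop} and for the error in Proposition~\ref{prop error}, and the same scaling of the kernel governs the nonlinear terms. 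To get the Lipschitz bound I would write $Q(\cdots,h_1,\cdots) - Q(\cdots,h_2,\cdots)$ using the fundamental theorem of calculus along the segment joining the arguments associated with $h_1$ and those with $h_2$; since $Q$ vanishes to second order, the resulting expression is a sum of terms each linear in $(h_1-h_2, \nabla(h_1-h_2), D^2(h_1-h_2))$ and linear in one of $(h_i,\nabla h_i, D^2 h_i)$, with a smooth bounded coefficient (here the hypothesis $\|h_i\|_*\le \sigma_0\ve^{1/2}$ guarantees we stay in the region where $\Sigma_{h_i}$ is embedded and all coefficients are controlled, and it also gives the extra smallness). The growth weights must be tracked carefully: the $\|\cdot\|_*$ norm allows linear growth of $h$, quadratic-type behavior after taking two factors, and $|x-y|^{-3-s}$ together with the weight $\min(1+|x|,1+|y|)$ in the definition \eqref{norm st} and \eqref{norm RHS} must be balanced so that a net weight $(1+|x|)^{1-\ve}$ (resp.\ the $[\cdot]_{1-\ve,\alpha+\ve}$ seminorm) emerges; the bookkeeping gives the loss $\ve^{-1/2}$ in \eqref{est N1} because one of the two $h_i$-factors is measured in the $\ast$-norm at size $\ve^{1/2}$ and the scaling of the region $r\sim\ve^{-1/2}$ (where $\Sigma_0$ transitions from catenoidal to conical) contributes the compensating power.

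The third step is to separate the regimes in $r=|x'|$ along $\Sigma_0$: near the neck $r\sim 1$, where $\Sigma_0$ is genuinely the catenoid and the classical Kapouleas-type estimates apply almost verbatim, giving bounds uniform in $\ve$; and in the far region $r\gg 1$, where the kernel, the slowly varying geometry of the near-conical surface, and the weights conspire as above. In each regime I would split $\int_{\Sigma_0}$ into the singular part $|x-y|\le \frac12\min(1+|x|,1+|y|)$, where a Taylor expansion of $h$ and the $[D^2h]_{1,\alpha}$-control of $\|h\|_*$ absorbs the principal-value singularity, and the outer part $|x-y|\ge \frac12\min(\cdots)$, where the kernel is integrable and one uses $L^\infty$ and growth bounds directly. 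The H\"older seminorm estimate is obtained by the standard device of, given $x\ne \bar x$, comparing the integrals: for $|x-\bar x|$ smaller than the distance to the singularity one differentiates the kernel, and for $|x-\bar x|$ larger one estimates the two integrals separately; the extra fractional exponent $\ve$ in $[\cdot]_{1-\ve,\alpha+\ve}$ is exactly what is produced by $\ve$ times a logarithmically-divergent-in-$\ve$ integral, as in Proposition~\ref{prop error}. The main obstacle, as the authors themselves emphasize, is the careful Lipschitz control of the several strongly singular nonlinear pieces simultaneously in these weighted H\"older norms — in particular verifying that every term genuinely carries two factors that are small in $\|\cdot\|_*$ and that the interaction between the $|x-y|^{-3-s}$ singularity, the principal-value cancellations, the growth weights, and the $\ve$-prefactor never loses more than the stated power $\ve^{-1/2}$; this is where the analogy with \cite{kapouleas} guides the structure but the fractional, nonlocal nature makes the actual estimates substantially harder, and it is deferred to Section~\ref{sect q}.
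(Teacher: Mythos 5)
Your plan captures the correct broad strategy — write $N(h)$ as a strongly singular integral operator whose kernel vanishes to second order in $(h,\nabla h,D^2h)$, prove the Lipschitz bound by the fundamental theorem of calculus for the second differential, and track the weighted H\"older norms carefully — and this is indeed how the paper begins (see the expansion $N_i(h_1)-N_i(h_2)=\int_0^1\int_0^1 D^2 H_i(\ldots)\,ds\,dt$ in Lemma~\ref{lemma Ni}). However, there are three concrete gaps that keep your sketch from becoming a proof.

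First, your proposed split of the domain at fixed scale $|x-y|\sim 1$ is not the right cutoff. The paper's decomposition $H^s_{\partial E_h}=H_i(h)+H_o(h)$ is made with the \emph{anisotropic, $x$-dependent} cylinder $C(x)$ of radius $\delta|x|$ and height $\ve^{1/2}|x|/100$ (see \eqref{cut N}): the radius grows linearly with $|x|$, while the height is shorter by a factor $\ve^{1/2}$. This scaling is chosen precisely so that inside $C(x)$ the surfaces $\Sigma_0$ and $\Sigma_h$ are genuine graphs over the tangent plane at $x$ and the Taylor expansion absorbs the singularity, while outside $C(x)$ the nonlocal interaction between the two sheets of the surface can be estimated by hand. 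With a fixed-scale split the inner Taylor argument fails at large $|x|$ (the surface curves by $O(1)$ on the scale $|x|$), and the weight $(1+|x|)^{1-\ve}$ does not emerge.

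Second, your explanation of the $\ve^{-1/2}$ loss is misattributed. You suggest it is tied to the transition region $r\sim\ve^{-1/2}$ of $\Sigma_0$; in the paper the loss comes entirely from the \emph{outer} piece, which satisfies the worse estimate $\|N_o(h_1)-N_o(h_2)\|_{1-\ve,\alpha+\ve}\le C\ve^{-3/2}(\|h_1\|_*+\|h_2\|_*)\|h_1-h_2\|_*$ of Lemma~\ref{lemma No}, while the inner piece satisfies the better bound $\ve^{-1}$ of Lemma~\ref{lemma Ni}. Without the inner/outer split you cannot isolate where the loss occurs, and you have no mechanism to prevent it from appearing also in the singular part.

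Third, and most seriously, you write that the fundamental theorem of calculus produces ``a sum of terms each linear in $(h_1-h_2,\nabla(h_1-h_2),D^2(h_1-h_2))$ and linear in one of $(h_i,\nabla h_i,D^2h_i)$, with a smooth bounded coefficient.'' The coefficients are \emph{not} controllable in the way this suggests, because the linearized graph map $g_i=DG_X(h)[h_i]$ involves derivatives of $h$: it has the form $g_i=\tilde h_i\,(1+Q_0(X,t,\tilde h,D_t\tilde h))$ (see \eqref{decomp DG}--\eqref{def Q}), so the second-difference quotient $B(g_i)(X,z)$, which controls the H\"older modulus in $x$, is not bounded by $\|h_i\|_*/|X|\cdot|z|$ times a smooth coefficient. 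The paper overcomes this by decomposing $g_i=\bar g_i+\tilde g_i$ where $\bar g_i$ satisfies a clean bound $\|\bar g_i\|_b\le C\|h_i\|_*$ and $\tilde g_i=\tilde h_i\,Q_0$ has the special structure $Q_0(X,t_0(X),\cdot,\cdot)=0$, $D_hQ_0(X,t_0(X),\cdot,\cdot)=0$, $D_\xi Q_0(X,t_0(X),\cdot,\cdot)=0$; the cancellation at $t_0(X)$ is then exploited term by term in the lengthy case analysis of Lemma~\ref{lemma n2}. Without noticing this subtlety and the compensating vanishing of $Q_0$, the H\"older seminorm estimate, which is the heart of Proposition~\ref{prop Nh}, does not close.
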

Here $\sigma_0>0$ is small and fixed.

With these results we can give a

\begin{proof}[Proof of Theorem~\ref{teo1}]
We need a solution $h$ to \eqref{eq30}
which we look for in the Banach space
$$
X = \{ h \in C^{2,\alpha}_{loc}(\Sigma_0) , \ \|h\|_* <\infty\},
$$
with norm  $\|\  \|_* $. Consider also the Banach space
$$
Y = \{ f \in C^{\alpha+\ve}_{loc} , \ \|f\|_{1-\ve,\alpha+\ve} <\infty\},
$$
with norm $\| \ \|_{1-\ve,\alpha+\ve}$.
In both spaces we restrict functions to be axially symmetric and symmetric with respect to $x_3 = 0$.

Let $T$ be the linear operator constructed in  Proposition~\ref{main linear prop}. Then we reformulate  \eqref{eq30} as
$$
2 h = A(h) := T ( -\ve H^s_{\Sigma_0} - \ve N(h) ) .
$$
We claim that for $\ve >0$ small, $A$ is a contraction on the ball
$$
B = \{ h\in X : \|h\|_* \leq M \frac{\ve^{\frac12}}{|\log\ve|} \} ,
$$
if we choose $M$ large.
Indeed,  for $h\in B$, by \eqref{estimate E} and \eqref{est N1}
\begin{align*}
\|A(h)\|_* &\leq C \|\ve H^s_{\Sigma_0}\|_{1-\ve,\alpha+\ve} + C \| \ve N(h)\|_{1-\ve,\alpha+\ve}\\
&\leq  \frac{\ve^{\frac12}}{|\log\ve|} ( C+ \frac{M^2}{|\log\ve|} )
\leq M  \frac{\ve^{\frac12}}{|\log\ve|} ,
\end{align*}
if we take $M=2C$  then let $\ve>0$ be small.
Next, for $h_1$, $h_2\in B$,
\begin{align*}
\|A(h_1)-A(h_2)\|_* \leq C \ve^{-\frac12} ( \|h_1\|_* + \|h_2\|_*) \|h_1 - h_2\|_*.
\end{align*}
But  $\ve^{-\frac12} ( \|h_1\|_* + \|h_2\|_*)  \leq \frac{C}{|\log\ve|} $ and so $A$ is a contraction on $B$ for $\ve>0$ small.
\end{proof}


\section{The ODE of the initial approximation}
\label{sect ode}
The purpose of this section is to analyze the solution $f_\ve(r)$  of \eqref{f eps}, which is used in the construction of the initial approximation. Thanks to \eqref{catenoid} we have
\begin{align}
\label{initial cond2}
\left\{
\begin{aligned}
f(\ve^{-\frac12})
& = C(\ve^{-\frac12}) = \frac12 |\log\ve| + \log 2 + O(\ve)
\\
f'(\ve^{-\frac12})
& = C'(\ve^{-\frac12}) = \sqrt \ve ( 1 + O(\ve)) .
\end{aligned}
\right.
\end{align}
Note that $f_\ve'(r)\geq 0$ so in particular
\begin{align}
\label{lower bound}
f_\ve(r)\geq f_\ve(\ve^{-\frac12})
\quad \text{for all } r \geq r^{-\frac12}.
\end{align}
\begin{lemma}
\label{lemma 2.1}
We have
\begin{align}
\label{16}
C_1 |\log\ve| \leq |f_\ve(r)|\leq C_2 |\log\ve| , \quad
|f_\ve'(r)|\leq C \ve^{\frac12}
\end{align}
$$
|f_\ve''(r)|\leq  \frac{C}{r^2} + \frac{C \ve}{|\log\ve|^2}
$$
for $\ve^{-\frac12} \leq r \leq |\log\ve|\ve^{-\frac12}$.
\end{lemma}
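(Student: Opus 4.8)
The plan is to analyze the ODE \eqref{f eps} on the interval $[\ve^{-\frac12}, |\log\ve|\ve^{-\frac12}]$ by treating it as a perturbation of the catenoid equation, exploiting the fact that on this range $r$ is comparable to $\ve^{-\frac12}$ only up to a logarithmic factor and $f_\ve$ stays of size $|\log\ve|$. First I would record the basic monotonicity: since $f_\ve(\ve^{-\frac12})>0$, $f_\ve'(\ve^{-\frac12})>0$ and the right-hand side $\ve/f_\ve$ is positive as long as $f_\ve>0$, a standard comparison shows $f_\ve$ stays positive and $f_\ve'\ge 0$ on the whole interval, giving \eqref{lower bound} and the lower bound $f_\ve(r)\ge f_\ve(\ve^{-\frac12})\ge C_1|\log\ve|$. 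This already yields the left inequality in \eqref{16}.

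For the upper bounds, the key device is to rewrite \eqref{f eps} in integral form. Multiplying by $r$ and integrating,
\[
r f_\ve'(r) = \ve^{-\frac12} f_C'(\ve^{-\frac12}) + \ve \int_{\ve^{-\frac12}}^r \frac{t}{f_\ve(t)}\,dt .
\]
Using $f_\ve\ge C_1|\log\ve|$ and $r\le |\log\ve|\ve^{-\frac12}$, the integral term is bounded by $\ve\cdot \frac{1}{C_1|\log\ve|}\cdot \frac{r^2}{2}\le \frac{C}{|\log\ve|}\cdot\ve^{-1}\cdot|\log\ve|^2 = C\ve^{-1}|\log\ve|$, so after dividing by $r\ge \ve^{-\frac12}$ and recalling $\ve^{-\frac12}f_C'(\ve^{-\frac12}) = O(1)$ from \eqref{initial cond2}, we get $|f_\ve'(r)|\le C\ve^{\frac12}$, which is the second inequality in \eqref{16}. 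Integrating this bound from $\ve^{-\frac12}$ to $r\le |\log\ve|\ve^{-\frac12}$ gives $f_\ve(r)\le f_\ve(\ve^{-\frac12}) + C\ve^{\frac12}(r-\ve^{-\frac12})\le \frac12|\log\ve| + \log 2 + C|\log\ve| \le C_2|\log\ve|$, completing \eqref{16}. Finally, for $f_\ve''$ I would read it directly off the ODE: $f_\ve''(r) = \frac{\ve}{f_\ve(r)} - \frac1r f_\ve'(r)$; the first term is $\le \frac{\ve}{C_1|\log\ve|}\le \frac{C\ve}{|\log\ve|^2}$ (absorbing the constant), while the second is $\le \frac1r\cdot C\ve^{\frac12}$. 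To turn $\frac{\ve^{\frac12}}{r}$ into the claimed $\frac{C}{r^2}$ I would use the sharper pointwise bound from the integral identity above, namely $|f_\ve'(r)|\le \frac{C}{r} + \frac{C\ve r}{|\log\ve|}$, so that $\frac1r|f_\ve'(r)|\le \frac{C}{r^2} + \frac{C\ve}{|\log\ve|}\le \frac{C}{r^2} + \frac{C\ve}{|\log\ve|^2}$ after possibly enlarging constants, yielding the stated estimate on $f_\ve''$.

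The main obstacle is bootstrapping: the bounds on $f_\ve$, $f_\ve'$, $f_\ve''$ are mutually dependent, so one must set up the argument so that the a priori lower bound $f_\ve\ge C_1|\log\ve|$ is established first (purely from positivity and monotonicity, with no upper bound needed), and only then feed it into the integral identity to get the derivative bounds, and finally the upper bound on $f_\ve$ itself — all while making sure the constants $C$, $C_1$, $C_2$ do not degrade as the interval length grows like $|\log\ve|$. Care is also needed at the endpoint $r=\ve^{-\frac12}$ to justify that $f_\ve$ does not vanish before reaching $|\log\ve|\ve^{-\frac12}$, which follows since $f_\ve$ is increasing; and the initial data asymptotics \eqref{initial cond2} should be verified from the explicit formula \eqref{catenoid}, using $f_C(r) = \log(r+\sqrt{r^2-1})$ and $f_C'(r) = (r^2-1)^{-1/2}$ evaluated at $r = \ve^{-\frac12}$.
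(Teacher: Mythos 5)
Your argument is substantively the same as the paper's, just without the rescaling $f_\ve(r)=|\log\ve|\,\tilde f(\ve^{1/2}r)$: both hinge on the integral identity $rf_\ve'(r)=\ve^{-1/2}f_C'(\ve^{-1/2})+\ve\int_{\ve^{-1/2}}^r\frac{t}{f_\ve(t)}\,dt$ together with the a priori lower bound $f_\ve\ge C_1|\log\ve|$ from monotonicity, and the conclusion $|f_\ve'|\le C\ve^{1/2}$ does follow. Two points need repair, however.

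First, the displayed bound
\[
\ve\cdot\frac{1}{C_1|\log\ve|}\cdot\frac{r^2}{2}\le \frac{C}{|\log\ve|}\cdot\ve^{-1}\cdot|\log\ve|^2
\]
is wrong by a factor of $\ve$: substituting $r\le|\log\ve|\ve^{-1/2}$ gives $\ve\cdot\frac{1}{C_1|\log\ve|}\cdot\frac{|\log\ve|^2\ve^{-1}}{2}=\frac{|\log\ve|}{2C_1}$, and if you then divide this by $r\ge\ve^{-1/2}$ you get only $C\ve^{1/2}|\log\ve|$, not $C\ve^{1/2}$. The correct order of operations is the one you tacitly use later: divide $\frac{\ve r^2}{2C_1|\log\ve|}$ by $r$ to obtain $\frac{\ve r}{2C_1|\log\ve|}$, and only then invoke $r\le|\log\ve|\ve^{-1/2}$, which yields $\frac{\ve^{1/2}}{2C_1}$. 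The displayed chain should be rewritten accordingly.

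Second, the step ``$\frac{C\ve}{|\log\ve|}\le\frac{C\ve}{|\log\ve|^2}$ after enlarging constants'' is false: since $|\log\ve|>1$, the inequality runs the other way and no constant can repair it. What your argument actually delivers is $|f_\ve''(r)|\le\frac{C}{r^2}+\frac{C\ve}{|\log\ve|}$. This is in fact exactly what the paper's own proof yields after undoing the rescaling (the estimate $|\tilde f''(\tilde r)|\le\frac{C}{\tilde r^2}+\frac{C}{|\log\ve|^2}$ converts, via $f_\ve''(r)=|\log\ve|\,\ve\,\tilde f''(\ve^{1/2}r)$, to $\frac{C|\log\ve|}{r^2}+\frac{C\ve}{|\log\ve|}$). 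The extra $|\log\ve|$ in the denominator of the lemma's stated bound appears to be a typographical slip in the paper; you should state the bound you actually prove, $\frac{C}{r^2}+\frac{C\ve}{|\log\ve|}$, rather than manufacture a false inequality to match the printed statement.
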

\begin{proof}
We make the change of variables
$$
f_\ve(r) = |\log\ve| \tilde f(\ve^{\frac12} r),
$$
and then $\tilde f$ satisfies
\begin{align}
\label{11}
\Delta \tilde f = \frac{1}{|\log\ve|^2 \tilde f},
\end{align}
for $r\geq 1$, with initial conditions
\begin{align}
\label{initial tilde f}
\tilde f(1) & = \frac12 + O(\frac{1}{|\log\ve|}) ,
\qquad
\tilde f'(1) = \frac{1+O(\ve)}{|\log\ve|} .
\end{align}
Integrating once \eqref{11} we get
\begin{align}
\label{12}
r \tilde f'(r) - \tilde f'(1) = \frac{1}{|\log\ve|^2}
\int_1^r \frac{s}{\tilde f(s)} \, d s
\end{align}
for $r\geq 1$. By \eqref{lower bound}
\begin{align}
\label{13}
\tilde f(r)\geq \frac12 + O(\frac{1}{|\log\ve|})
\quad\text{for } r\geq 1.
\end{align}
Therefore from \eqref{12} and \eqref{13} we obtain
$$
\tilde f'(r) \leq \frac 1r
\left(
\frac C{|\log\ve|}
+
\frac {C r^2}{|\log\ve|^2}
\right)
\quad\text{for }r\geq 1.
$$
This implies
$$
\tilde f'(r) \leq \frac{C}{|\log\ve|},
\quad \text{for } 1 \leq r \leq |\log\ve|,
$$
and using  \eqref{initial tilde f} also
$$
\tilde f(r) \leq C ,
\quad\text{for } 1\leq r \leq |\log\ve|.
$$
To estimate $f_\ve''$ we note that
\begin{align*}
|\tilde f_\ve''(r)|
&\leq \frac1r |\tilde f'(r)| + \frac{1 }{|\log\ve|^2 \tilde f}
\\
&\leq
\frac{C}{r^2} + \frac{C }{|\log\ve|^2}
\quad\text{for } r\geq 1.
\end{align*}
\end{proof}

We study now the asymptotic behavior of $f_\ve(r)$ as $r\to \infty$.
For this let us write
\begin{align}
\label{eq f0}
f_\ve(r) = |\log\ve| f_0^{(\ve)}(\frac{ \ve^{\frac12} }{|\log\ve|} r) ,\quad
\quad \text{for } r \geq  \frac{1}{|\log\ve|} ,
\end{align}
for a new function $f_0^{(\ve)}$.
Then $f_0^{(\ve)}$ satisfies
$$
\Delta f_0^{(\ve)} = \frac{1}{f_0^{(\ve)}} \quad \text{for } r \geq \frac{1}{|\log\ve|}
$$
and from \eqref{initial cond2}
\begin{align*}
f_0^{(\ve)}( \frac{1}{|\log\ve|} )
& = \frac12 + \frac{\log 2}{|\log\ve|} + O(\frac{\ve}{|\log\ve|})
\\
[ f_0^{(\ve)} ] ' ( \frac{1}{|\log\ve|} )
& =  1 + O(\ve) ,
\end{align*}
as $\ve\to0$.
\begin{lemma}
\label{lemma est f0}
For any $r_0>0$ and $\ve>0$ small there is $C$ such that
\begin{align*}
& |f_0^{(\ve)}(r) -r| \leq C ,
\qquad
|[f_0^{(\ve)}]'(r)-1|  \leq \frac Cr ,
\\
& |[f_0^{(\ve)}]''(r)|  \leq \frac Cr
\end{align*}
for all $r\geq r_0$.
\end{lemma}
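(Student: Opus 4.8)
\emph{Plan of proof.} The cornerstone is the elementary observation that $f(r)=r$ is an \emph{exact} solution of the equation $\Delta f_0^{(\ve)}=1/f_0^{(\ve)}$ satisfied by $f_0^{(\ve)}$ on $\{r\ge 1/|\log\ve|\}$ (indeed $\Delta r=r''+\tfrac1r r'=\tfrac1r$). Thus $f_0^{(\ve)}$ is a perturbation of this explicit solution: setting $\psi:=f_0^{(\ve)}-r$ and subtracting the two equations gives $(r\psi')'=-\psi/f_0^{(\ve)}$, which in the variable $\tau=\log r$ (with $\dot{}\,=d/d\tau$, so $r\psi'=\dot\psi$ and $r^2\psi''=\ddot\psi-\dot\psi$) becomes the forced oscillator
\begin{align*}
\ddot\psi+\psi=\frac{\psi^2}{f_0^{(\ve)}},\qquad f_0^{(\ve)}=e^\tau+\psi .
\end{align*}
The homogeneous part $\ddot\psi+\psi=0$ has \emph{only bounded solutions} $\cos\tau,\sin\tau$; this is the structural reason the growth is exactly $r+O(1)$, and also why no comparison principle and no plain Gronwall estimate on the integrated equation — both of which see only the much weaker growth $r^2$ or $(\log r)^2$ — can capture the sharp rate. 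Moreover, the three asserted bounds amount to $\psi$, $\dot\psi$ and $\ddot\psi$ being bounded for $\tau\ge\log r_0$, the bound on $\ddot\psi$ being then automatic from the equation together with $f_0^{(\ve)}\ge\tfrac14$.

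First I would record coarse, uniform-in-$\ve$ information. From $r[f_0^{(\ve)}]'(r)=r_1[f_0^{(\ve)}]'(r_1)+\int_{r_1}^r s/f_0^{(\ve)}\,ds>0$ with $r_1=1/|\log\ve|$ one sees that $f_0^{(\ve)}$ is increasing, hence $f_0^{(\ve)}\ge f_0^{(\ve)}(r_1)\ge\tfrac14$; using monotonicity inside the integral, $f_0^{(\ve)}(r)[f_0^{(\ve)}]'(r)\ge (r^2-r_1^2)/(2r)$, i.e. $\tfrac12\big([f_0^{(\ve)}]^2\big)'\ge \tfrac r2-\tfrac{r_1^2}{2r}$, and integrating from $r_1$ yields the \emph{unconditional} linear lower bound $f_0^{(\ve)}(r)\ge r/2$ for $r\ge1$ (and, fed back into the integral, a linear upper bound $f_0^{(\ve)}(r)\le Cr$). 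On $[r_1,1]$ the rescaled form of Lemma~\ref{lemma 2.1} gives $f_0^{(\ve)}\asymp1$ together with bounds on its first two derivatives. These fix the coarse shape; the remaining task is to pin the constant to $1$.

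For that I would pass to the $\ve$-independent limit profile $\phi_0$, the solution of $\Delta\phi_0=1/\phi_0$ on $(0,\infty)$ with $\phi_0(0)=\tfrac12$ and $\rho\phi_0'(\rho)\to0$ as $\rho\to0^+$ (the regularized limit, as $\ve\to0$, of the data in \eqref{eq f0}). For $\phi_0$ the same multiplicative trick gives $\phi_0(\rho)^2\ge\tfrac14+\tfrac12\rho^2$ and $\phi_0'(\rho)\le\sqrt2$ \emph{unconditionally}, so the energy $E:=\psi^2+\dot\psi^2$ (now with $\psi=\phi_0-\rho$) obeys $\dot E=2\dot\psi\,\psi^2/\phi_0$, whence $\big|\tfrac{d}{d\tau}E^{-1/2}\big|\le 1/\phi_0\le\sqrt2\,e^{-\tau}$, an \emph{integrable} bound. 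Since the coarse estimates already give $E(\log\rho_0)=\rho_0^2\big[(\phi_0(\rho_0)/\rho_0-1)^2+(\phi_0'(\rho_0)-1)^2\big]<\tfrac12\rho_0^2$ for a large fixed constant $\rho_0$, integration shows $E^{-1/2}$ stays positive on $[\log\rho_0,\infty)$, so $E$ is bounded there — i.e. $\phi_0(\rho)-\rho$ is bounded and $\phi_0'(\rho)=1+O(1/\rho)\to1$. By continuous dependence — made rigorous near $\rho=0$ using the uniform coarse bounds on $[r_1,1]$ and the integral formulation — at a large fixed radius $R_*$ one then has $f_0^{(\ve)}(R_*)=R_*+O(1)$ and $[f_0^{(\ve)}]'(R_*)=1+o_\ve(1)$, so $E(\log R_*)<\tfrac14 R_*^2$ for $\ve$ small. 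Running the \emph{same} energy computation for $f_0^{(\ve)}$ on $[R_*,\infty)$, using $f_0^{(\ve)}\ge r/2$ so that $\big|\tfrac{d}{d\tau}E^{-1/2}\big|\le 2e^{-\tau}$, gives $E^{-1/2}(\tau)\ge E^{-1/2}(\log R_*)-2/R_*>0$, hence $\psi$ and $\dot\psi$ bounded for $r\ge R_*$, and $|r^2\psi''|=|\ddot\psi-\dot\psi|\le|\psi|+4\psi^2+|\dot\psi|\le C$; this yields all three bounds for $r\ge R_*$. On the fixed interval $r_0\le r\le R_*$ the coefficients of the ODE are bounded uniformly in small $\ve$ (as $f_0^{(\ve)}$ is bounded below by a positive constant there), so the coarse bounds give $|\psi|,|\psi'|,|\psi''|\le C(r_0)$, and since $1/r\ge 1/R_*$ on this interval these immediately imply the stated $1/r$-bounds.

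The main obstacle is precisely the marginal, bounded-kernel nature of the linearised operator $\ddot\psi+\psi$: the optimal rate $r+O(1)$ is invisible to monotone or Gronwall comparisons, so the argument must route through the oscillator reformulation, and the only delicate point there is to anchor the energy estimate at a radius where $E$ is small compared with the square of that radius — which is exactly what the unconditional linear lower bound and the passage to the clean limit profile $\phi_0$ are designed to provide.
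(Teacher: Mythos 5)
Your proof is correct, but it is a genuinely different route from the one in the paper, and the difference is worth spelling out.

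The paper's proof substitutes $f_0^{(\ve)}(r)=r\,\psi(\log r)$, arriving at the \emph{damped} Emden--Fowler equation $\psi''+2\psi'+\psi=1/\psi$, whose key feature is the dissipative term $2\psi'$: the Lyapunov functional $G=\tfrac12(\psi')^2+\tfrac12\psi^2-\log\psi-\tfrac12$ satisfies $G'=-2(\psi')^2\le 0$, so it is \emph{monotone decreasing} and no anchoring is needed — a coarse bound on $G(0)$ pins $\psi$ and $\psi'$ for all $t\ge0$. The exponential convergence $|\psi-1|+|\psi'|\le Ce^{-t}$ (needed to upgrade $f_0/r\to1$ to $f_0-r=O(1)$) is then extracted via a second Lyapunov functional $\tilde G$ together with the hyperbolicity of the fixed point $(1,0)$. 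You instead work with $\psi=f_0^{(\ve)}-r$ and the \emph{undamped} oscillator $\ddot\psi+\psi=\psi^2/f_0^{(\ve)}$, where the forcing term is small only because it carries the extra factor $1/f_0^{(\ve)}\sim 1/r=e^{-\tau}$; the associated energy $E=\psi^2+\dot\psi^2$ is not monotone, and the argument succeeds because $\bigl|\tfrac{d}{d\tau}E^{-1/2}\bigr|\le 1/f_0^{(\ve)}$ has integrable right-hand side. This buys transparency (you estimate the object you actually want, $f_0-r$, directly, and obtain $|\psi''|\le C/r^2$, slightly stronger than stated) at the cost of a delicate anchoring step: you must find a base radius $\rho_0$ with $E(\log\rho_0)<\tfrac12\rho_0^2$, and this is \emph{not} generic — it closes only because the coarse bounds $\sqrt{\tfrac14+\rho^2/2}\le\phi_0\le\tfrac12+\sqrt2\,\rho$ and $1/\sqrt2-o(1)\le\phi_0'\le\sqrt2$ yield
\begin{align*}
\Bigl(\tfrac{\phi_0(\rho_0)}{\rho_0}-1\Bigr)^2+\bigl(\phi_0'(\rho_0)-1\bigr)^2 \;\le\; 2(\sqrt2-1)^2+o(1)\;=\;6-4\sqrt2+o(1)\;\approx\;0.343\;<\;\tfrac12 .
\end{align*}
You present this as if it were immediate (``the coarse estimates already give\dots''), but the inequality $2(\sqrt2-1)^2<1/2$ is where the whole argument hangs: had one used the cruder range $\phi_0'\in[0,\sqrt2]$ without the asymptotic lower bound $\phi_0'\gtrsim 1/\sqrt2$, one would get a constant $>1/2$ and the energy argument would break. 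You should state the numerical inequality explicitly and note which coarse bounds produce it. With that caveat — and noting that the continuous-dependence transfer from $\phi_0$ to $f_0^{(\ve)}$ uses $r_1[f_0^{(\ve)}]'(r_1)\to0$ so the boundary-term contribution vanishes, which you correctly indicate — the proof is sound and gives an interesting alternative that avoids the hyperbolic-fixed-point analysis the paper performs.
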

\begin{proof}
Let us introduce the
Emden-Fowler change of variables
\begin{align}
\label{def psi}
f_0^{(\ve)} (r) = r \psi_\ve(t)
,
\qquad \text{where }r = e^t
\end{align}
for $t\geq  - \log|\log\ve|$. Then $\psi_\ve(t)>0$ and
\begin{align}
\label{eq psi}
\psi_\ve''+2\psi_\ve'+\psi_\ve = \frac{1}{\psi_\ve}
\quad \text{for } t\geq - \log|\log\ve|.
\end{align}
Let
$$
G_\ve(t) =
\frac12 (\psi_\ve')^2 + \frac12 \psi_\ve^2  - \log\psi_\ve
-\frac12
$$
and note that
\begin{align}
\label{energy decreasing}
G_\ve'(t)
= -2(\psi_\ve')^2 \leq 0 .
\end{align}
Using \eqref{16}
we see that $\psi_\ve(0) = O(1)$ and $\psi_\ve'(0)=O(1)$ as $\ve\to0$ and this implies that $G_\ve(0) = O(1)$ as $\ve\to 0$. Then by \eqref{energy decreasing} $G_\ve(t) \leq C$ for all $t\geq 0$ and all $\ve>0$ small.
This implies
that
\begin{align}
\label{simple bounds}
0< a\leq \psi_\ve(t) \leq b<\infty,
\quad
|\psi_\ve'(t)|\leq C
\quad\text{for all } t\geq 0 ,
\end{align}
and all $\ve>0$ small, for some uniform constants $0<a<b$ and $C>0$.

From \eqref{energy decreasing}
$$
\int_{0}^t \psi_\ve'(s)^2 \, d s = 2 G_\ve(0) - 2 G_\ve(t)
\leq C
$$
with $C$ independent of $\ve$ and $t\geq 0$. From this we see that
\begin{align}
\label{finite integral}
\int_{0}^\infty \psi_\ve^2(s) \, d s \leq C
\end{align}
with $C$ independent of $\ve$.
Using interpolation estimates (or elliptic estimates) for the equation for $Z_\ve = \psi_\ve'$:
$$
Z_\ve'' + 2 Z_\ve' + Z_\ve\Big(1+\frac1{\psi_\ve^2}\Big) = 0
$$
we have
$$
|\psi_\ve'(t)|=
|Z_\ve(t)|\leq C \Big( \int_{t-1}^{t+1} Z_\ve(s)^2 \, d s\Big)^{1/2}
\to 0
$$
as $t\to \infty$, by \eqref{finite integral}. We claim the convergence is uniform and exponential. To see this, define
$$
E_{2,\ve} = \frac12 (\psi_\ve'')^2 + \frac12 (\psi_\ve')^2
\Big(1+\frac{1}{\psi_\ve^2}\Big).
$$
Then
$$
E_{2,\ve}' = - 2 (\psi_\ve'')^2 - \Big(\frac{\psi_\ve'}{\psi}\Big)^3 .
$$
For $\alpha>0$ to be fixed later on consider
$$
\tilde G_\ve = \alpha E_{\ve} + E_{2,\ve} .
$$
Then
$$
\tilde G_\ve' =  - 2 (\psi_\ve'')^2 - \Big(\frac{\psi_\ve'}{\psi}\Big)^3 - 2\alpha (\psi_\ve')^2 .
$$
But by \eqref{simple bounds}
$$
-\Big(\frac{\psi_\ve'}{\psi}\Big)^3
\leq C (\psi_\ve')^2
$$
so that
$$
\tilde G_\ve' \leq -  2 (\psi_\ve'')^2 - (2\alpha-C) (\psi_\ve')^2.
$$
At this point we choose $\alpha$ so that $2\alpha = C$. We then obtain
\begin{align}
\label{Etilde prime}
\tilde G_\ve' \leq - (\psi_\ve'')^2 - (\psi_\ve')^2.
\end{align}
Using \eqref{eq psi} we note that for some $A>0$
\begin{align}
\label{ineq psi}
\Big(\frac1\psi_\ve -\psi_\ve\Big)^2
\leq 2 ( (\psi_\ve'')^2 + A (\psi_\ve')^2 ) .
\end{align}
Using again \eqref{simple bounds}
\begin{align*}
\tilde G_\ve
& =
\alpha \left[
\frac12 (\psi_\ve')^2 + \frac12 \psi_\ve^2  - \log\psi_\ve
-\frac12
\right]
+
\frac12 (\psi_\ve'')^2 + \frac12 (\psi_\ve)'^2
\Big(1+\frac{1}{\psi_\ve^2}\Big).
\\
& \leq C \left( (\psi_\ve'')^2 + (\psi_\ve')^2 + \Big(\frac1\psi_\ve -\psi_\ve \Big)^2\right)
\end{align*}
Combining \eqref{Etilde prime}, \eqref{ineq psi} and the last estimate we see that
$$
\tilde G_\ve \leq - C \tilde G_\ve' .
$$
This implies that
$$
\tilde G_\ve(t) \leq C e^{-\delta t}
\quad\text{for all }t \geq 0,
$$
for some constants $C$, $\delta>0$ independent of $\ve>0$ small.
From this we obtain
$$
|\psi_\ve'(t)| + |\psi_\ve(t)-1|
\leq C e^{-\delta t/2},
\quad\text{for all }t \geq 0.
$$
Then, after a fixed  $t_1$ independent of $\ve$, the point $\psi_\ve(t_1),\psi_\ve'(t_1)$ is sufficiently close to $(1,0)$.
Let
$$
v_1 = \frac1{\psi}, \quad
v_2 = \psi+\psi' .
$$
Then \eqref{eq psi} is equivalent to
\begin{align}
\label{system-ode}
\begin{aligned}
v_1 ' &= v_1 - v_1^2 v_2
\\
v_2' &= v_1 + v_2 .
\end{aligned}
\end{align}
For $t_1$ sufficiently large the point $(v_1(t_1), v_2(t_1))$ is sufficiently close to $(1,1)$, which is a hyperbolic stationary point of \eqref{system-ode}. The eigenvalues of the linearization at $(1,1)$ are $-1\pm i$ so that by applying a $C^1$ conjugacy to the linearization at $(1,1)$ we obtain
$$
| (v_1(t), v_2(t)) -(1,1) | \leq  C e^{-t}
\quad\text{for all }t\geq t_1.
$$
This implies
\begin{align}
\label{bounds psi}
|\psi_\ve'(t)| + |\psi_\ve(t)-1|
\leq C e^{-t},
\quad\text{for all }t \geq 0,
\end{align}
For the function $f_0^{(\ve)}$ we find
$$
|f_0^{(\ve)}(r)-r|\leq C ,
\qquad
|[f_0^{(\ve)}]'(r)-1| \leq \frac Cr
$$
for all $r\geq r_0$, for any $r_0>0$ fixed.
\end{proof}

\begin{coro}
\label{coro prop F}
We have the following properties of $F_\ve$:
\begin{align*}
& F_\ve(r) =
f_C(r) =  \log ( r + \sqrt{r^2-1}) = \log(2r) + O(r^{-2})
, && 1 \leq r \leq \ve^{-\frac12} ,
\\
& C_1|\log\ve|  \leq F_\ve(r) \leq C_2 |\log\ve|
, &&  \ve^{-\frac12} \leq r \leq \delta |\log\ve|\ve^{-\frac12} ,
\\
& F_\ve(r) =  \ve^{\frac12} r + O(|\log\ve|)
, &&  r \geq \delta |\log\ve|\ve^{-\frac12} ,
\end{align*}
\begin{align*}
& F_\ve'(r) =
C'(r) =  \frac1r + O(r^{-3})
, && 1 \leq r \leq \ve^{-\frac12} ,
\\
& F_\ve'(r) = O(\ve^{\frac12} )
, &&  \ve^{-\frac12} \leq r \leq \delta |\log\ve|\ve^{-\frac12} ,
\\
& F_\ve'(r) =  \ve^{\frac12}( 1 + O(\frac{|\log\ve|}{ \ve^{1/2} r} ) )
,&&
r \geq \delta |\log\ve|\ve^{-\frac12} ,
\end{align*}
\begin{align*}
&F_\ve''(r) = C''(r) = -\frac1{r^2} + O(r^{-4})
, && 1 \leq r \leq \ve^{-\frac12} ,
\\
& F_\ve''(r) = O (\frac{1}{r^2} + \frac{\ve}{|\log\ve|})
, &&  \ve^{-\frac12} \leq r \leq \delta |\log\ve|\ve^{-\frac12} ,
\\
& F_\ve''(r) = O(\frac{\ve^\frac12}{r})
, &&
r \geq \delta |\log\ve|\ve^{-\frac12} .
\end{align*}

\begin{align*}
&F_\ve'''(r) = C'''(r) = 2\frac1{r^3} + O(r^{-5})
, && 1 \leq r \leq \ve^{-\frac12} ,
\\
& F_\ve'''(r) = O ( \frac{\ve^{\frac12}}{r^2} )
, &&  \ve^{-\frac12} \leq r \leq \delta |\log\ve|\ve^{-\frac12} ,
\\
& F_\ve'''(r) = O(\frac{\ve^{\frac12}}{ r^2})
, &&
r \geq \delta |\log\ve|\ve^{-\frac12} .
\end{align*}
\end{coro}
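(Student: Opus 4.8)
The plan is to treat the three ranges of $r$ separately, reducing in each case the claimed expansions of $F_\ve$ and its first three derivatives either to the explicit catenoid $f_C$ of \eqref{catenoid} or to the estimates for $f_\ve$ already established in Lemmas \ref{lemma 2.1} and \ref{lemma est f0}; the one ingredient not already available is control of the third derivative, which I would obtain by differentiating the relevant ODE. Fix $\delta\in(0,1)$ so that the middle range lies inside the range $\ve^{-\frac12}\le r\le|\log\ve|\ve^{-\frac12}$ covered by Lemma \ref{lemma 2.1}. For the inner range $1\le r\le\ve^{-\frac12}$, \eqref{def Feps} and \eqref{def eta1} give $F_\ve\equiv f_C$, so everything follows from \eqref{catenoid}: writing $r+\sqrt{r^2-1}=2r\cdot\tfrac12(1+\sqrt{1-r^{-2}})$ and expanding $\sqrt{1-r^{-2}}=1-\tfrac1{2r^2}+O(r^{-4})$ yields $f_C(r)=\log(2r)+O(r^{-2})$, and differentiation gives $f_C'=(r^2-1)^{-1/2}$, $f_C''=-r(r^2-1)^{-3/2}$, $f_C'''=(2r^2+1)(r^2-1)^{-5/2}$, each of which Taylor-expands in $r^{-2}$ to the stated $\frac1r+O(r^{-3})$, $-\frac1{r^2}+O(r^{-4})$, $\frac2{r^3}+O(r^{-5})$.

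Next I would handle the narrow transition interval $I_\ve:=[\ve^{-\frac12},\ve^{-\frac12}+1]$, where the cut-off is active, by a perturbative estimate on $g:=f_\ve-f_C$. By \eqref{f eps} and \eqref{catenoid} one has $g(\ve^{-\frac12})=g'(\ve^{-\frac12})=0$; on $I_\ve$, using \eqref{f eps}, \eqref{16} and \eqref{initial cond2}, $|f_\ve''|\le\frac{C\ve}{|\log\ve|}+C\ve$ and $|f_C''|=r^{-2}\le\ve$, so $|g''|\le C\ve$ and hence $|g|+|g'|\le C\ve$ on $I_\ve$ after integrating twice. Differentiating the ODE in \eqref{f eps} and inserting these bounds gives $|f_\ve'''|\le C\ve^{3/2}$ on $I_\ve$, and $f_C'''=O(\ve^{3/2})$ there by the inner-range formula, whence $|g'''|\le C\ve^{3/2}$. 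Since $F_\ve=f_C+\eta(\cdot-\ve^{-\frac12})g$ with $\eta$ and all its derivatives bounded, $F_\ve^{(j)}-f_C^{(j)}$ is $O(\ve)$ for $j=0,1,2$ and $O(\ve^{3/2})$ for $j=3$ on $I_\ve$; as the error terms $r^{-2},r^{-3},r^{-4},r^{-5}$ of the inner-range expansions are $\asymp\ve,\ve^{3/2},\ve^2,\ve^{5/2}$ at $r=\ve^{-\frac12}$, those expansions persist through $I_\ve$, and for $r\ge\ve^{-\frac12}+1$ one has $F_\ve\equiv f_\ve$.

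For the middle range $\ve^{-\frac12}\le r\le\delta|\log\ve|\ve^{-\frac12}$ we have $F_\ve=f_\ve$ (up to the transition interval just treated), so the bounds on $F_\ve,F_\ve',F_\ve''$ are exactly those in \eqref{16} (the claimed $\ve/|\log\ve|$ being weaker than the $\ve/|\log\ve|^2$ there); for $F_\ve'''$ I would differentiate \eqref{f eps} to get $f_\ve'''=-\frac{\ve f_\ve'}{f_\ve^2}+\frac{f_\ve'}{r^2}-\frac{f_\ve''}{r}$ and bound the three terms by $C\ve^{1/2}r^{-2}$ using \eqref{16} together with $r\ge\ve^{-\frac12}$ and $r\le\delta|\log\ve|\ve^{-\frac12}$ (which yield $\tfrac\ve{|\log\ve|^2}\le\tfrac C{r^2}$, $\tfrac1r\le\ve^{1/2}$, $\tfrac{\ve r}{|\log\ve|^2}\le C\ve^{1/2}$). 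For the outer range $r\ge\delta|\log\ve|\ve^{-\frac12}$, again $F_\ve=f_\ve$, and by \eqref{eq f0} with $\rho:=\frac{\ve^{1/2}}{|\log\ve|}r\ge\delta$ one has $f_\ve(r)=|\log\ve|\,f_0^{(\ve)}(\rho)$; Lemma \ref{lemma est f0} (with $r_0=\delta$) gives $f_0^{(\ve)}(\rho)=\rho+O(1)$, $[f_0^{(\ve)}]'(\rho)=1+O(\rho^{-1})$, $[f_0^{(\ve)}]''(\rho)=O(\rho^{-1})$, and differentiating $[f_0^{(\ve)}]''+\rho^{-1}[f_0^{(\ve)}]'=1/f_0^{(\ve)}$ gives $[f_0^{(\ve)}]'''(\rho)=O(\rho^{-2})$. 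Since $\frac d{dr}=\frac{\ve^{1/2}}{|\log\ve|}\frac d{d\rho}$, this translates into $F_\ve=\ve^{1/2}r+O(|\log\ve|)$, $F_\ve'=\ve^{1/2}[f_0^{(\ve)}]'(\rho)=\ve^{1/2}(1+O(\frac{|\log\ve|}{\ve^{1/2}r}))$, $F_\ve''=\frac\ve{|\log\ve|}[f_0^{(\ve)}]''(\rho)=O(\frac{\ve^{1/2}}r)$ and $F_\ve'''=\frac{\ve^{3/2}}{|\log\ve|^2}[f_0^{(\ve)}]'''(\rho)=O(\frac{\ve^{1/2}}{r^2})$.

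I do not expect any deep difficulty here: the only points not immediate from the two preceding lemmas are the differentiated-ODE estimates for the third derivatives and the bookkeeping on the short transition interval $I_\ve$, where $F_\ve$ is genuinely a convex combination of $f_C$ and $f_\ve$. The latter is the most delicate part, but it reduces to elementary integration once one uses that $f_\ve$ and $f_C$ agree to first order at $r=\ve^{-\frac12}$, so the matching error and all its derivatives are controlled on the unit-length interval on which $\eta$ is non-constant.
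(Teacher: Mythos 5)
Your approach --- casework on the three ranges, reducing the inner range to the explicit catenoid, the middle and outer ranges to Lemmas~\ref{lemma 2.1} and~\ref{lemma est f0}, and differentiating the ODE for third-derivative control --- is exactly the route the paper's two-sentence proof sketches; you are more thorough than the paper in explicitly treating the transition interval $I_\ve=[\ve^{-\frac12},\ve^{-\frac12}+1]$, about which the paper is silent.

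There is, however, a computational slip in that extra step. With the bounds you establish, $|g|+|g'|+|g''|\le C\ve$ and $|g'''|\le C\ve^{3/2}$, the Leibniz expansion $(\eta g)'''=\eta'''g+3\eta''g'+3\eta'g''+\eta g'''$ yields $F_\ve'''-f_C'''=O(\ve)$, not the $O(\ve^{3/2})$ you assert: the first three terms are each only $O(\ve)$. Even the sharper bounds $|g|,|g'|,|g''|=O(\ve/|\log\ve|)$ (coming from $\Delta g=\ve/f_\ve+O(\ve^2)=O(\ve/|\log\ve|)$ and zero Cauchy data at $\ve^{-\frac12}$) improve this only to $O(\ve/|\log\ve|)$, which is still much larger than $\ve^{3/2}$. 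In fact on $I_\ve$ the dominant piece of $(\eta g)'''$ is $3\eta'g''\asymp\ve/|\log\ve|$, so the stated bound $F_\ve'''=O(\ve^{\frac12}/r^2)$, which at $r\approx\ve^{-\frac12}$ reads $O(\ve^{3/2})$, does not actually follow on the transition interval. Your remark that the inner-range expansions ``persist through $I_\ve$'' is likewise too strong for $j\ge1$: $F_\ve^{(j)}-f_C^{(j)}=O(\ve)$ swamps the inner-range remainders $\asymp\ve^{1+j/2}$ there; what one should check is the weaker middle-range bounds, which are indeed recovered on $I_\ve$ for $j\le 2$, but not for $j=3$ by your argument. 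To be fair, this third-derivative behavior on $I_\ve$ is a soft spot in the paper's own statement and terse proof as well, but the $O(\ve^{3/2})$ you claim for $F_\ve'''-f_C'''$ is not what the Leibniz rule gives and should not be asserted.
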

\begin{proof}
The estimates for $F_\ve$, and first and second derivatives follow from the Lemmas~\ref{lemma 2.1} and \ref{lemma est f0}.
To estimate the third derivate we can differentiate the equation and use the previous estimates.
\end{proof}

It will be useful for later purposes to have also estimates for the elements in the linearization of \eqref{eq f0}. Namely consider
\begin{align}
\label{linearization f0}
\Delta z + \frac{1}{(f_0^{(\ve)})^2(r)} z =0 ,
\quad\text{for }
r \geq \frac 1{|\log\ve|} .
\end{align}
The function
\begin{align}
\label{def tilde z1}
\tilde z_1(r) = f_0^{(\ve)} - r [ f_0^{(\ve)}]'(r)
\end{align}
satisfies \eqref{linearization f0}, since the equation  \eqref{eq f0} is invariant by the scaling $f_\lambda(r) = \frac 1\lambda f(\lambda r)$, $\lambda>0$. We may construct a second independent solution $\tilde z_2$ of \eqref{linearization f0} by solving this equation with initial conditions
\begin{align*}
\tilde  z_2(r_0) = - \tilde z_1'(r_0),
\qquad
\tilde z_2'(r_0) = \tilde z_1(r_0).
\end{align*}
Here $r_0>0$ is fixed.

\begin{lemma}
\label{lemma kerne f0}
Fix $r_0>0$.
We have
$$
|\tilde z_i(r)| \leq C,
\quad
|\tilde z_i'(r)| \leq \frac{C}{r}
$$
for all $r\geq r_0$, $i=1,2$.
\end{lemma}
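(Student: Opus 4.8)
The plan is to reduce the linearized equation \eqref{linearization f0} to a bounded perturbation of the harmonic oscillator by the same Emden--Fowler substitution used for the equation itself. Setting $z(r)=w(t)$ with $r=e^{t}$ one has $\Delta z=r^{-2}\ddot w$, so, since $f_0^{(\ve)}(r)=r\psi_\ve(t)$ by \eqref{def psi}, equation \eqref{linearization f0} is equivalent to
\begin{equation*}
\ddot w+\frac{1}{\psi_\ve(t)^{2}}\,w=0 ,\qquad t\geq-\log|\log\ve| .
\end{equation*}
Writing $\psi_\ve(t)^{-2}=1+q_\ve(t)$, the exponential bound \eqref{bounds psi} together with the two-sided bounds \eqref{simple bounds} give $|q_\ve(t)|\leq Ce^{-t}$ for $t\geq0$ with $C$ independent of $\ve$, hence $\int_{t}^{\infty}|q_\ve|\leq Ce^{-t}$. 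Thus every solution solves $\ddot w+w=-q_\ve w$, a perturbation of $\ddot w+w=0$ with an $L^{1}$ coefficient.

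Next I would run a Levinson-type variation-of-parameters argument, uniform in $\ve$. Choosing $t_1$ independent of $\ve$ with $\int_{t_1}^{\infty}|q_\ve|\leq\frac12$, the identity
\begin{equation*}
w(t)=w(t_1)\cos(t-t_1)+\dot w(t_1)\sin(t-t_1)-\int_{t_1}^{t}\sin(t-\tau)\,q_\ve(\tau)\,w(\tau)\,d\tau
\end{equation*}
yields $\sup_{\tau\geq t_1}|w(\tau)|\leq 2\big(|w(t_1)|+|\dot w(t_1)|\big)$, and differentiating it gives the same bound for $\dot w$. On the compact interval $[\log r_0,t_1]$ the coefficient $\psi_\ve^{-2}$ is uniformly bounded by \eqref{simple bounds}, so Gronwall controls $|w|+|\dot w|$ there in terms of the data at $t=\log r_0$. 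Undoing the change of variables, any solution $z$ of \eqref{linearization f0} on $[r_0,\infty)$ obeys
\begin{equation*}
|z(r)|\leq C\big(|z(r_0)|+|z'(r_0)|\big),\qquad |z'(r)|\leq\frac{C}{r}\big(|z(r_0)|+|z'(r_0)|\big),\qquad r\geq r_0 ,
\end{equation*}
with $C=C(r_0)$ independent of $\ve$.

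It then remains to note that $\tilde z_1$ and $\tilde z_2$ have data at $r_0$ bounded uniformly in $\ve$. For $\tilde z_2$ this is built into its definition ($\tilde z_2(r_0)=-\tilde z_1'(r_0)$, $\tilde z_2'(r_0)=\tilde z_1(r_0)$) once it is known for $\tilde z_1$, and for $\tilde z_1$ the definition \eqref{def tilde z1} together with Lemma~\ref{lemma est f0} gives $|\tilde z_1(r_0)|=|f_0^{(\ve)}(r_0)-r_0[f_0^{(\ve)}]'(r_0)|\leq C$ and $\tilde z_1'=-r\,[f_0^{(\ve)}]''$, so $|\tilde z_1'(r_0)|\leq C$. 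Equivalently, in the Emden--Fowler variables $\tilde z_1(r)=-r\psi_\ve'(\log r)$, and since $\psi_\ve''=\psi_\ve^{-1}-\psi_\ve-2\psi_\ve'=O(e^{-t})$ by \eqref{eq psi} and \eqref{bounds psi}, this gives directly $|\tilde z_1(r)|\leq C$ and $|\tilde z_1'(r)|=|\psi_\ve'(\log r)+\psi_\ve''(\log r)|\leq C/r$ for all $r\geq r_0$. Applying the estimates of the previous paragraph to $z=\tilde z_2$ (and, along the ODE route, to $z=\tilde z_1$ as well) finishes the proof.

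The asymptotic analysis itself is classical; the only point requiring care is that all constants stay independent of $\ve$, and this is exactly what the $\ve$-independent exponential decay \eqref{bounds psi}, already established inside the proof of Lemma~\ref{lemma est f0}, and the uniform bounds \eqref{simple bounds} on the initial compact interval, provide. So I do not expect a genuine obstacle, only bookkeeping of the uniformity.
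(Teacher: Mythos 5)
Your proof is correct and follows essentially the same route as the paper: the direct formula $\tilde z_1 = -r\psi_\ve'(\log r)$ combined with \eqref{bounds psi} for $\tilde z_1$, and the Emden--Fowler change of variables turning \eqref{linearization f0} into a constant-coefficient oscillator with an exponentially small, $\ve$-uniform $L^1$ perturbation for $\tilde z_2$. Your variant $\ddot w + w = -q_\ve w$ with kernel $\cos t,\sin t$ and the paper's $\ddot\phi + 2\dot\phi + 2\phi = -q_\ve\phi$ with kernel $e^{-t}\cos t,e^{-t}\sin t$ are related by $\phi = e^{-t}w$, so the variation-of-parameters (Levinson) step you spell out is exactly the "perturbation of the correct linear combination" the paper refers to.
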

\begin{proof}
In terms of $\psi$ defined in \eqref{def psi}, we may write
$$
\tilde z_1(r) = - r \psi'(\log(r))
$$
so that the boundedness of $\tilde z_1$ is consequence of \eqref{bounds psi}.
For $\tilde z_2$, we may consider the equation
$$
\phi'' +2\phi' + 2\phi = g ,\quad\text{for } t\geq \log(r_0)
$$
with kernel  given by $\zeta_1(t) = e^{-t} \cos(t)$, $\zeta_2(t) = e^{-t} \sin(t)$. Then we may express $\tilde z_2$ as a perturbation of the correct linear combination of $\zeta_1$, $\zeta_2$.

\end{proof}

\section{Approximate equation and error}
\label{sect error}

The main result in this section is the proof of Proposition~\ref{prop error}, namely the estimate
$$
\| \ve H^s_{\Sigma_0} \|_{1-\ve,\alpha+\ve} \leq
\frac{C\ve^{\frac12}}{|\log\ve|} .
$$
For for $x\in \Sigma_0$ we compute $H_{\Sigma_0}^s (x) $
by splitting
\begin{align}
\label{spliiting}
H_{\Sigma_0}^s (x)  =
\int_{\R^3} \frac{\chi_{E_0}(y) - \chi_{ E_0^c}(y)}{|x-y|^{4-\ve}}\, d y
=
I_i + I_o,
\end{align}
where
$$
I_i = \int_{C_R(x)}
\frac{\chi_{E_0}(y) - \chi_{E_0^c}(y)}{|x-y|^{4-\ve}}\, d y ,\qquad
I_o = \int_{C_R(x)^c}
\frac{\chi_{E_0}(y) - \chi_{E_0^c}(y)}{|x-y|^{4-\ve}}\, d y ,
$$
are inner and outer contributions respectively.
The inner part is the integral on a cylinder $C_R(x)$ of radius $R$ centered at $x$ and the outer contribution the rest.
We take  $R$ as a function of $x\in \Sigma_0$, $x = (x',F_\ve(x'))$, defined by
\begin{align}
\label{def R}
R  = (1-\eta(|x'| - R_0) ) R_1 + \eta(|x'|-R_0) F_\ve(|x'|)
\end{align}
where $R_0>0$ is fixed large, $R_1>0$ is a small constant and $\eta$ is as in \eqref{def eta1}.

To define the cylinder, let $\Pi_1$, $\Pi_2$ be tangent vectors to $\Sigma_0$ at $x$, orthogonal and of length 1, and $\nu_{\Sigma_0}$ be the unit normal vector  to $\Sigma_0$ oriented such that $\nu_{\Sigma_0}(x) x_3>0$. Introduce coordinates $(t_1,t_2,t_3)$ in $\R^3$ by
$$
(t_1,t_2,t_3) \mapsto t_1 \Pi_1 + t_2 \Pi_2 + t_3 \nu_{\Sigma_0} .
$$
Define the cylinder of center $x$, radius $R$ and base plane the plane generated by $\Pi_1$, $\Pi_2$ as
$$
C_R(x) = \{ x+ t_1 \Pi_1 + t_2 \Pi_2 + t_3 \nu_{\Sigma_0}(x) : t_1^2+ t_2^2<R^2, |t_3|<R\} .
$$

For the computation of the inner integral,
we represent the surface $\Sigma_0$ near $x$ as the graph over its tangent plane at $x $.
More precisely, if $R_1>0$ in \eqref{def R} is chosen small and $\|h\|_*$ is small, there is a function $g=g_x:B_R(0) \subset \R^2$ to $\R$ of class $C^{2,\alpha}$ such that
\begin{align}
\label{repr g1}
\Sigma_0 \cap C_R(x)
=  \{ x + \Pi t + \nu_{\Sigma_0}  g(t):  |t|<R \} ,
\end{align}
where $t=(t_1,t_2)$ and
$$
\Pi = [\Pi_1, \Pi_2] .
$$
Then
$$
g(0) = 0, \quad \nabla g(0) = 0, \quad \Delta g(0) = 2 H_{\Sigma_0}(x),
$$
where $H_{\Sigma_0}$ is the mean curvature of $\Sigma_0$ at $x$.

In the following statements we use the notation
$$
[v]_{\alpha,D} = \sup_{x,y\in D, \; x\not=y} \frac{|v(x)-v(y)|}{|x-y|^\alpha}.
$$
\begin{lemma}
\label{lemma Rest1}
For $x\in \Sigma_0$ and $R=R(x)$ given by \eqref{def R} we have
\begin{align}
\label{part1}
I_i
=  -2\pi\frac{H_{\Sigma_0}(x) R^{\ve}}{\ve}  + Rest_1 ,
\end{align}
where
\begin{align}
\label{Rest1}
|Rest_1|
\leq
C [D^2 g]_{\alpha,B_R(0)} R^{1+\alpha-s} +
C
\|D^2 g\|_{L^\infty(B_R(0))}^3
R^{3-s} .
\end{align}
Here $C$ remains bounded as $s\to 1$ (i.e. $\ve\to0$).
\end{lemma}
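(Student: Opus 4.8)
The claim is a local expansion of the inner contribution $I_i$, obtained by writing $\Sigma_0$ as a graph $t\mapsto g(t)$ over its tangent plane at $x$ inside the cylinder $C_R(x)$, with $g(0)=0$, $\nabla g(0)=0$, $\Delta g(0)=2H_{\Sigma_0}(x)$. The strategy is to change variables in the integral defining $I_i$ to the normal coordinates $(t_1,t_2,t_3)$, perform the $t_3$-integration explicitly against the region $\{|t_3|<R,\ \mathrm{sign}(t_3)\,(\text{something}) \}$ cut out by $E_0$, and isolate the leading singular term coming from the second-order Taylor jet of $g$ at the origin. Concretely, for fixed $t=(t_1,t_2)$ the set $E_0\cap C_R(x)$ restricted to the vertical fiber is (after the normalization) essentially $\{t_3 < g(t)\}$ or $\{t_3>g(t)\}$ depending on the side, so that
\begin{align*}
\int_{|t_3|<R} \frac{\chi_{E_0}-\chi_{E_0^c}}{(|t|^2+t_3^2)^{\frac{4-\ve}2}}\,dt_3
= 2\int_0^{g(t)} \frac{dt_3}{(|t|^2+t_3^2)^{\frac{4-\ve}2}} + (\text{correction from }|t_3|\ \text{near}\ R),
\end{align*}
and the remaining $t_3$-integral over the complementary range contributes to $Rest_1$ with the stated decay. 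The first step, then, is to set up these coordinates carefully and record the Jacobian of the parametrization $t\mapsto x+\Pi t+\nu_{\Sigma_0}g(t)$, which is $\sqrt{1+|\nabla g|^2}=1+O(|\nabla g|^2)$.

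The second step is the core computation: expand $g(t)=\tfrac12 t^T D^2g(0)\,t + (g(t)-\tfrac12 t^T D^2g(0)\,t)$, and split accordingly. The quadratic part integrates, by symmetry and the identity $\int_{|t|<R} t_i t_j |t|^{-(4-\ve)}\,dt = \delta_{ij}\cdot \tfrac{\pi}{\ve}R^{\ve}\cdot(\text{const})$ in $\R^2$, to give exactly $-2\pi H_{\Sigma_0}(x)R^{\ve}/\ve$ at main order (the trace of $D^2g(0)$ being $2H_{\Sigma_0}(x)$); the factor $1/\ve$ is precisely the near-critical singularity of the kernel $|x-y|^{-(4-\ve)}$ integrated in three dimensions. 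The cubic remainder $g(t)-\tfrac12 t^TD^2g(0)t = O([D^2g]_{\alpha,B_R}|t|^{2+\alpha})$ produces an integrand of size $|t|^{2+\alpha}\cdot|t|^{-(4-\ve)}=|t|^{\alpha-2+\ve}$, integrable over $B_R(0)\subset\R^2$ with value $O([D^2g]_{\alpha,B_R}R^{1+\alpha-s})$, which is the first term of \eqref{Rest1}. The correction terms — from the $O(|\nabla g|^2)$ Jacobian factor, from replacing $\sqrt{|t|^2+g(t)^2}$ by $\sqrt{|t|^2+(\tfrac12 t^TD^2g\,t)^2}$ in the denominator, and from the part of the $t_3$-integration near the cylinder boundary $|t_3|\sim R$ — are all controlled by $\|D^2g\|_{L^\infty}^3 R^{3-s}$ after noting $|\nabla g(t)|\le \|D^2g\|_\infty|t|$ and $|g(t)|\le \tfrac12\|D^2g\|_\infty|t|^2$; the cube appears because the leading such correction is cubic in the curvature (one power from each of: the kernel perturbation, squared, times one from $g$, or equivalently the $O(|\nabla g|^2)$ Jacobian against the $O(g)$ integral). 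One must check that all these bounds are uniform as $s\to1$, i.e. that the constants do not blow up; this is clear since the only place $\ve^{-1}$ enters is the explicitly extracted main term, and all remainder integrals $\int_0^R \rho^{\alpha-2+\ve}\,d\rho$, $\int_0^R \rho^{2-\ve}\,d\rho$ etc.\ have $\ve$-independent bounds on $[0,R]$ with $R$ bounded.

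The main obstacle I anticipate is the bookkeeping of the \emph{boundary correction}: near $|t|=R$ the vertical fiber $\{|t_3|<R\}$ does not simply split as $\{t_3<g(t)\}$ versus its complement, because the cylinder is truncated at height $R$ while the graph and the true set $E_0$ may not coincide with the graph representation outside $C_R(x)$; one has to argue that the discrepancy between $\chi_{E_0}$ and the half-space $\{t_3<g(t)\}$ on $C_R(x)$ is supported where $|t_3|$ is comparable to $R$ (using that $g$ is a genuine graph with $|g|\lesssim \|D^2g\|_\infty R^2 \ll R$ for $R_1$ small) and there the kernel is bounded by $R^{-(4-\ve)}$ over a region of measure $O(R^3)$, giving $O(R^{\ve-1})$, absorbed — with a curvature weight — into the $\|D^2g\|_\infty^3 R^{3-s}$ term after using $\|D^2g\|_\infty R^2\lesssim 1$. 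A secondary technical point is justifying the principal-value manipulations: the $t_3$-integral against the odd part of $\chi_{E_0}-\chi_{E_0^c}$ is actually absolutely convergent once the $O(|t|^{2+\alpha})$ graph regularity is used, so no delicate cancellation in $t_3$ is needed, but the $t$-integral of the quadratic term is only conditionally convergent at the origin in the principal-value sense and must be handled by the standard symmetry argument (the p.v.\ of $\int t_it_j|t|^{-(4-\ve)}$ over an annulus is computed via polar coordinates, the angular integral killing off-diagonal terms). Once these points are in place the lemma follows by collecting terms.
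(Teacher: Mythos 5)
Your high-level plan is correct — isolate the quadratic jet of $g$ to get the $R^\ve/\ve$ term, bound the H\"older remainder of $D^2g$ to get the first error term, and bound the kernel remainder to get the cubic error term — and the scalings you write down are the right ones. But three of the technical points you flag as needing care are actually non-issues, and your proposed treatment of them is incorrect.

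First, there is no Jacobian factor $\sqrt{1+|\nabla g|^2}$. The quantity $I_i$ is a three-dimensional volume integral over the cylinder $C_R(x)$, not a surface integral over $\Sigma_0$. After rotating to the orthonormal frame $(\Pi_1,\Pi_2,\nu_{\Sigma_0}(x))$, the change of variables $y = x + \Pi t + t_3\nu_{\Sigma_0}(x)$ has unit Jacobian, and within $C_R(x)$ the set $E_0$ is exactly $\{t_3>g(t)\}$ (or $\{t_3<g(t)\}$, depending on orientation), so
\begin{align*}
I_i = \pm\int_{|t|<R}\int_{|t_3|<R} \frac{\mathrm{sign}(t_3-g(t))}{(|t|^2+t_3^2)^{\frac{4-\ve}{2}}}\, dt_3\, dt .
\end{align*}
No surface-area element appears.

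Second, the ``boundary correction near $|t_3|\sim R$'' is exactly zero, not merely small. Because the kernel is even in $t_3$, the fiber integral $\int_{-R}^{R}\mathrm{sign}(t_3-g(t))\,(|t|^2+t_3^2)^{-(4-\ve)/2}\,dt_3$ collapses by symmetry: the parts $\{g(t)<t_3<R\}$ and $\{-R<t_3<-g(t)\}$ cancel, leaving precisely $-2\int_0^{g(t)}(|t|^2+t_3^2)^{-(4-\ve)/2}\,dt_3$. The paper uses this identity directly; there is nothing to estimate at the cylinder's top and bottom, and consequently your accounting of the $\|D^2g\|_\infty^3 R^{3-s}$ term — which you attribute partly to the Jacobian and partly to this boundary — does not match what actually produces it. The cubic term comes solely from Taylor-expanding the inner $t_3$-integral to second order in $g(t)$: writing $\int_0^z(|t|^2+t_3^2)^{-(4-\ve)/2}\,dt_3 = z|t|^{-(4-\ve)} - (4-\ve)z^2\int_0^1(1-\tau)\tau z\,(|t|^2+(\tau z)^2)^{-(6-\ve)/2}\,d\tau$, the second piece is $O(\|D^2g\|_\infty^3|t|^6\cdot|t|^{-(6-\ve)})=O(\|D^2g\|_\infty^3|t|^\ve)$, which integrates to $O(\|D^2g\|_\infty^3 R^{2+\ve})$, i.e.\ $R^{3-s}$.

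Third, the $t$-integral of the quadratic term is absolutely convergent, not conditionally: the integrand $\frac12 D^2g(0)[t^2]\,|t|^{-(4-\ve)}=O(|t|^{-2+\ve})$ is integrable over $B_R(0)\subset\R^2$ and gives $R^\ve/\ve$ directly, with no principal-value argument required at the origin. Your proof would still get the right answer in the end, but it invites the reader to worry about cancellations that do not exist and to track error terms that are identically zero, which obscures the clean two-step structure: (i) factor the fiber exactly, (ii) Taylor-expand the one-dimensional $t_3$-integral in $g$.
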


The main contribution from the outer integral is given in the next result.
\begin{lemma}
\label{lemma R2 simple}
For $x = (x',F_\ve(x')) \in \Sigma_0$ and $R=R(x)$ given by \eqref{def R} we have
\begin{align}
\label{R2 simple}
|I_o|
\leq \frac{C}{R^{1-\ve}} ,
\end{align}
and if $|x'|\geq \ve^{-\frac12}$,
\begin{align}
\label{R2 far}
I_o
= \frac{ \pi }{R^{1-\ve}}\left( 1 + O(\ve^{\frac 12})  \right) .
\end{align}
\end{lemma}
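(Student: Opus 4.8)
The object $I_o$ is the integral of $|x-y|^{-(4-\ve)}$ over the complement of the cylinder $C_R(x)$, with the sign $\chi_{E_0}-\chi_{E_0^c}$. The plan is to estimate this in two regimes: the ``near field'' just outside the cylinder, where the sign structure is governed by the fact that $\Sigma_0$ is the graph of $\pm F_\ve$ and the integrand is still reasonably large, and the ``far field'' $|x-y|\gg R$, where one uses the almost-odd symmetry of $\chi_{E_0}-\chi_{E_0^c}$ with respect to the tangent plane at $x$ together with the decay of the kernel. For the crude bound \eqref{R2 simple} it suffices to dominate $|I_o|$ by $\int_{\{|x-y|>R\}}|x-y|^{-(4-\ve)}\,dy = C R^{-(1-\ve)}$ with $C = \frac{4\pi}{1-\ve}$ bounded as $\ve\to 0$; this requires no cancellation at all and is immediate.

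The content is the sharp asymptotics \eqref{R2 far} when $|x'|\geq\ve^{-1/2}$. First I would record that in this range, by Corollary~\ref{coro prop F}, $F_\ve$ is nearly linear with small slope: $F_\ve'(r)=O(\ve^{1/2})$ and $F_\ve''(r)=O(r^{-2}+\ve|\log\ve|^{-1})$, so the surface is almost flat on the scale $R=F_\ve(|x'|)$ and the tangent plane at $x$ is almost horizontal. Split $I_o$ as the contribution of the region $C_{2R}(x)\setminus C_R(x)$ plus the contribution of $C_{2R}(x)^c$. For the annular cylinder piece, the leading term comes from the two ``caps'' directly above and below $x$ along the normal direction: writing the integral in the coordinates $(t_1,t_2,t_3)$ adapted to $\nu_{\Sigma_0}(x)$, the set $E_0$ near $x$ agrees, to an error controlled by $\|D^2 g\|_{L^\infty}R^2 = O(R F_\ve'' \cdot R) = O(\ve^{1/2})\cdot(\text{geometric factors})$, with the half-space $\{t_3<0\}$ on the cylinder and with $\{|t_3|>R\}$-type slabs outside. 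One then computes the model integral
\[
\int_{\{t_1^2+t_2^2<R^2,\ |t_3|>R\}} \frac{dt}{(t_1^2+t_2^2+t_3^2)^{(4-\ve)/2}}
= \frac{\pi}{R^{1-\ve}} + O(1),
\]
which I would obtain by passing to polar coordinates in $(t_1,t_2)$ and integrating $t_3$ from $R$ to $\infty$; the two slabs $t_3>R$ and $t_3<-R$ contribute equally, but because $E_0$ occupies only one side of $\{t_3=0\}$ locally, only one slab survives with a net sign, giving $\pi/R^{1-\ve}$ at main order. The remaining region $C_{2R}(x)^c$ contributes $O(R^{-(1-\ve)})$ with no further structure, but a closer look using the rough global shape of $E_0$ (two almost-parallel near-conical sheets) shows this is actually $O(\ve^{1/2}R^{-(1-\ve)})$: far away, the two sheets $x_3=\pm F_\ve(r)$ subtend, as seen from $x$, nearly the same solid angle of $E_0$ and $E_0^c$, and the defect is governed by the aperture $\sqrt{\ve}$ of the asymptotic cone. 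Collecting, $I_o = \pi R^{-(1-\ve)}(1+O(\ve^{1/2}))$.

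I expect the main obstacle to be the far-field estimate showing the error is genuinely $O(\ve^{1/2})$ relative to the main term rather than merely $O(1)$. This is where one must exploit the specific geometry of $E_0$ as a small-slope solid of revolution: one has to show that the ``mismatch'' between the solid region $E_0$ and the half-space below the tangent plane at $x$, integrated against the slowly-decaying kernel over scales from $R$ out to infinity, accumulates only to size $\ve^{1/2}R^{-(1-\ve)}$. The natural way to do this is to compare, for each dyadic annulus $2^j R < |x-y| < 2^{j+1}R$, the measure of the symmetric difference of $E_0$ and the tangent half-space within that annulus; using $F_\ve'=O(\ve^{1/2})$ and $|x'|\geq \ve^{-1/2}$ one bounds that symmetric difference by $\ve^{1/2}$ times the annulus volume, and the geometric series in $j$ converges because $4-\ve>3$. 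The crude bound \eqref{R2 simple}, by contrast, is essentially free and should be dispatched in a line or two at the start.
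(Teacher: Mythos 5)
Your bound \eqref{R2 simple} is fine and matches the paper's (rescale by $R$ and observe the rescaled integrand is absolutely integrable and uniformly bounded).

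The argument for \eqref{R2 far}, however, has a genuine gap: the comparison set you use in the far field is wrong, and this is not a cosmetic issue. You propose to compare $E_0$ against the \emph{tangent half-space} at $x$, and to bound the symmetric difference in each dyadic annulus by $\ve^{1/2}$ times the annulus volume. But $E_0$ is the region outside \emph{both} sheets of $\Sigma_0$, and the lower sheet $x_3 = -F_\ve(r)$ sits at normal distance $\approx 2F_\ve(|x'|)=2R$ from $x$. So in the very first dyadic annuli beyond the cylinder, $E_0\,\Delta\,\{t_3>0\}$ contains the entire region $\{t_3<-2R\}$ --- a half-space, not a thin cone --- whose measure is a constant fraction of the annulus volume, not $O(\ve^{1/2})$ of it. The resulting contribution is of the same order $R^{-(1-\ve)}$ as the main term, so you cannot conclude a relative error $O(\ve^{1/2})$ this way. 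The paper's remedy is to compare $E_0/R$ not with a half-space but with the slab complement $\{|z_3|>1\}$: after rescaling, $\tilde x_R = (x'/R, 1)$ lies on the boundary of this slab, the symmetric difference between $E_0/R$ and $\{|z_3|>1\}$ really \emph{is} a thin cone of aperture $O(\ve^{1/2})$ (by $F_\ve'=O(\ve^{1/2})$), and the tail $|z-\tilde x_R|\ge\delta_0\ve^{-1/2}$ is trivially $O(\ve^{1/2})$. Your dyadic-annulus machinery is fine; you just must run it against the slab complement, not the tangent half-space.

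Relatedly, your model computation is off. The constant $\pi$ does not come from $\int_{\{t_1^2+t_2^2<R^2,\,|t_3|>R\}}|t|^{-(4-\ve)}\,dt$ (which equals $c_\ve R^{-(1-\ve)}$ for some $c_\ve\ne\pi$, and in any case is not what $I_o$ computes). It comes from the identity
\[
\chi_{\{|z_3|>1\}}-\chi_{\{|z_3|<1\}}
=\bigl(\chi_{\{w_3>0\}}-\chi_{\{w_3<0\}}\bigr)+2\,\chi_{\{w_3<-2\}},\qquad w=z-\tilde x_R,
\]
where the first bracket integrates to zero against the kernel over $\{|w|>1\}$ by symmetry, and the surviving term $2\int_{\{w_3<-2\}}|w|^{-(4-\ve)}\,dw$ evaluates to $\pi(1+O(\ve))$. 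The essential geometric input is that the \emph{other} sheet sits at normal distance exactly $2R$, which your model $\{|t_3|>R\}$ (symmetric about $t_3=0$) does not encode. Your remark that ``only one slab survives by asymmetry'' gestures at this, but as stated the cancellation goes the wrong way and would yield $0$, not $\pi$.
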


%
%

By \eqref{part1} and \eqref{R2 far} we see that the equation  $H_{\Sigma_0}^s(x)=0$ takes the form
$$
-2 H_{\Sigma_0}(x) +\frac{\ve}{R} \approx 0,
$$
which motivates \eqref{xx1}.

%

\begin{lemma}
\label{lemma g}
Let $x \in \Sigma_0 $, and write $x =(x' ,F_\ve(x'))$, $r =|x'|$.
There is $\delta_0>0$ and $g:B_{\rho}(0) \to \R$ of class $C^{2,\alpha}$ such that
$$
\Sigma_0 \cap C_\rho(x)
=  \{ x + \Pi t + \nu  g(t):  |t|<\rho \} .
$$
where $\rho = \delta_0 r$.
In particular $g$ is well defined in $B_R(0)$ where $R$ is defined in \eqref{def R}.
Moreover $g$ satisfies
\begin{align*}
\|g\|_{ L^\infty( B_R(0) ) } &
\leq
\begin{cases}
C \ve^{\frac32} r
& \text{if } r \geq \delta|\log\ve| \ve^{-\frac12}
\\
C \frac{\ve^{\frac12} |\log\ve|}{r}
& \text{if } \ve^{-\frac12} \leq r \leq \delta|\log\ve| \ve^{-\frac12}
\\
C\frac{\log(r)^2}{r^2}
& \text{if } r \leq \ve^{-\frac12}
\end{cases}
\end{align*}
\begin{align*}
\| D g \|_{L^\infty(B_R(0))} &
\leq \begin{cases}
C \ve^{\frac12} & \text{if } r \geq \ve^{-\frac12}
\\
\frac{C}r & \text{if } R_0 \leq r \leq \ve^{-\frac12}
\end{cases}
\end{align*}

\begin{align*}
\| D^2 g \|_{B_R(0)}
\leq
\begin{cases}
\frac{C \ve^{\frac12}}{r} & \text{if } r \geq  \ve^{-\frac12}
\\
\frac C{r^2} & \text{if }  r\leq \ve^{-\frac12}
\end{cases}
\end{align*}

\begin{align}
\label{est holder g}
[D^2 g]_{\alpha,B_R} &
\leq
\begin{cases}
\frac{C\ve^{\frac12}}{r^{1+\alpha}}
& \text{if } r \geq  \ve^{-\frac12}
\\
\frac{C}{r^{2 +\alpha}}
& \text{if } r \leq  \ve^{-\frac12} .
\end{cases}
\end{align}
\end{lemma}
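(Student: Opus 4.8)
The plan is to obtain the representation of $\Sigma_0$ as a graph over its tangent plane at $x$ by the implicit function theorem, and then to read off the estimates on $g$ and its derivatives from the corresponding estimates on $F_\ve$ already collected in Corollary~\ref{coro prop F}. Concretely, write a point of $\Sigma_0$ near $x = (x', F_\ve(r))$, $r = |x'|$, in the parametrization $(\rho\omega, F_\ve(\rho))$ with $\rho>0$, $\omega \in S^1$, and express this point in the orthonormal frame $(\Pi_1,\Pi_2,\nu)$ centered at $x$. Since $\nabla F_\ve$ is small --- of size $O(\ve^{1/2})$ for $r \geq \ve^{-1/2}$ and $O(1/r)$ for $R_0 \leq r \leq \ve^{-1/2}$ by Corollary~\ref{coro prop F} --- the tangent plane at $x$ is a small rotation of the horizontal plane, and the map $(\rho,\omega)\mapsto t=(t_1,t_2)$ (the projection onto the tangent plane) is, on the scale $|x - y| \lesssim \delta_0 r$, a diffeomorphism onto a neighborhood of $0$ containing $B_\rho(0)$ with $\rho=\delta_0 r$, $\delta_0$ small but fixed. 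Inverting it defines $g = g_x$ with $g(0)=0$, $\nabla g(0)=0$.

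Next I would derive the pointwise bounds. The key observation is scaling covariance: in the far region $r \geq \delta|\log\ve|\ve^{-1/2}$ one has $F_\ve(r) = \ve^{1/2} r + O(|\log\ve|)$ with $F_\ve'(r) = \ve^{1/2}(1 + O(|\log\ve|/(\ve^{1/2}r)))$ and $F_\ve''(r) = O(\ve^{1/2}/r)$, so $\Sigma_0$ is a small perturbation of the exact cone $x_3 = \ve^{1/2} r$; the cone itself is a ruled surface and its graph over a tangent plane at distance $\lesssim \delta_0 r$ has size $O(\ve^{3/2} r)$ (the curvature of the cone is $\sim \ve^{1/2}/r$ times the aperture factor $\ve^{1/2}$, and $g \sim \text{curvature} \cdot \rho^2 \sim (\ve^{3/2}/r)(\delta_0 r)^2 = \delta_0^2\ve^{3/2} r$), which gives the first case. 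In the intermediate region the analogous computation uses $F_\ve \sim |\log\ve|$, $F_\ve' = O(\ve^{1/2})$, $F_\ve'' = O(1/r^2 + \ve/|\log\ve|)$, and the dominant curvature contribution over a ball of radius $\rho = \delta_0 r \lesssim \delta |\log\ve|$ yields $g = O(\ve^{1/2}|\log\ve|/r)$ (combining $(1/r^2)\cdot r^2 \sim 1$ corrected by the slope, and noting the $L^\infty$ bound over $B_R$ with $R \leq F_\ve \sim|\log\ve|$ brings in the stated factor); in the catenoidal region $r \leq \ve^{-1/2}$ one uses $F_\ve = f_C$, $f_C' = 1/r + O(r^{-3})$, $f_C'' = -1/r^2 + O(r^{-4})$, whose second fundamental form is $O(1/r^2)\cdot(\text{unit normal factors})$, so the graph over the tangent plane on $B_R(0)$, $R \leq R_1$ fixed or $R \sim F_\ve \sim \log r$, has size $O(\log(r)^2/r^2)$. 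The derivative bounds $\|Dg\|_\infty$ and $\|D^2g\|_\infty$ follow the same way: $\nabla g$ measures the deviation of the normal of $\Sigma_0$ from $\nu(x)$, which over a region of diameter $\delta_0 r$ is controlled by $\|D^2 F_\ve\|_\infty \cdot \delta_0 r$ plus $\|\nabla F_\ve\|_\infty$ at scale, giving $O(\ve^{1/2})$ resp. $O(1/r)$; and $D^2 g$ is comparable to the second fundamental form of $\Sigma_0$, which is $O(\ve^{1/2}/r)$ resp. $O(1/r^2)$ by the formula for curvatures of a surface of revolution together with Corollary~\ref{coro prop F}.

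For the H\"older estimate \eqref{est holder g} I would argue by rescaling: for $r \geq \ve^{-1/2}$ set $\tilde g(\tau) = r^{-1}\, g(r\tau)$ for $|\tau| < \delta_0$, which is the graph representation of the rescaled surface $r^{-1}\Sigma_0$; by the scaling structure of equation \eqref{eq f0} and the uniform bounds of Lemma~\ref{lemma est f0} the rescaled surface has $C^{2,\alpha}$ norm bounded uniformly by $C\ve^{1/2}$ on the relevant ball (the $\ve^{1/2}$ being the aperture of the limiting cone), so $[D^2\tilde g]_{\alpha} \leq C\ve^{1/2}$, and undoing the scaling multiplies the H\"older seminorm by $r^{-(1+\alpha)}$, yielding $[D^2 g]_{\alpha, B_R} \leq C\ve^{1/2} r^{-(1+\alpha)}$. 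For $r \leq \ve^{-1/2}$ the same rescaling is performed with the catenoid $f_C$, whose rescaled graph has uniformly bounded $C^{2,\alpha}$ norm (order $1$, not $\ve^{1/2}$), giving $[D^2 g]_{\alpha, B_R}\leq C r^{-(2+\alpha)}$.

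The main obstacle, and the only genuinely delicate point, is making the passage from the vertical-graph description $x_3 = F_\ve(|x'|)$ to the tangent-plane-graph description $g$ quantitatively uniform across the three regions simultaneously --- in particular verifying that the projection onto the tangent plane stays a diffeomorphism on a ball of radius proportional to $r$ (not smaller) in the catenoidal region where the surface, though smooth, has curvature $\sim 1/r^2$ and the ball radius $R_1$ there is merely a small fixed constant rather than $\sim r$. This requires checking that the Jacobian of the change of variables is bounded below on $B_{\delta_0 r}(0)$, which follows from $\|\nabla F_\ve\|_\infty \ll 1$ and $\|D^2 F_\ve\|_\infty \cdot \delta_0 r \ll 1$ on that ball, both of which hold by Corollary~\ref{coro prop F} once $\delta_0$ is chosen small; the bookkeeping is routine but must be done with care so that the constants do not degenerate as $\ve \to 0$.
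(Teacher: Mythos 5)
Your overall strategy --- obtain the graph representation via the implicit function theorem in variables rescaled by $r=|x'|$, and then read off the bounds from Corollary~\ref{coro prop F} by unscaling --- is the same as the paper's proof in Appendix~\ref{app}, which implements this via an explicit contraction mapping $T(\Phi) = \Phi - A\mathcal{F}(\Phi,X,h)$ in scaled variables $y = x + r_0 B\bar y$, $t = r_0 \bar t$, $g = r_0 \bar g$, deriving $\|\bar\Phi-\bar\Phi_0\|_{C^{2,\alpha}}\leq C\ve^{1/2}$ uniformly and then unscaling. You also correctly flag the real subtlety, namely keeping the Jacobian of the projection uniformly invertible as $r$ and $\ve$ vary.

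However, your heuristic derivation of the $L^\infty$ bound has two compensating errors that should be flagged. The nonzero principal curvature of the model cone $x_3 = \ve^{1/2}r$ is $\sim\ve^{1/2}/r$, not $\ve^{3/2}/r$ --- there is no extra ``aperture factor.'' Separately, the $L^\infty$ bound in the lemma is over $B_R(0)$ with $R$ from \eqref{def R}, which in the far regime equals $F_\ve(r)\sim\ve^{1/2}r$, much smaller than $B_{\delta_0 r}(0)$; on $B_{\delta_0 r}(0)$ the size of $g$ is only $O(\delta_0^2\ve^{1/2}r)$, not $O(\ve^{3/2}r)$. Your computation $(\ve^{3/2}/r)(\delta_0 r)^2$ happens to give $\ve^{3/2}r$ because the spurious extra $\ve^{1/2}$ in the curvature cancels against the $(\delta_0 r/R)^2\sim\ve^{-1}$ overcounting in the ball. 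The same conflation of $\rho=\delta_0 r$ with $R$ appears in the intermediate regime, where the stated inequality $\rho = \delta_0 r \lesssim \delta|\log\ve|$ is false (for $r\sim\ve^{-1/2}$ one has $\delta_0 r\sim\delta_0\ve^{-1/2}\gg|\log\ve|$); the bound there really comes from restricting to $B_R$ with $R=F_\ve(r)\sim|\log\ve|$. The correct chain is: $D^2 g$ at scale $r$ is comparable to the second fundamental form, so $\|D^2g\|_{L^\infty(B_R)}\lesssim\ve^{1/2}/r$, and then $\|g\|_{L^\infty(B_R)}\lesssim\|D^2g\|_{L^\infty} R^2$ with the \emph{smaller} $R$ from \eqref{def R} produces the stated bounds.
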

(Proof in Appendix \ref{app}).

\begin{proof}[Proof of Lemma~\ref{lemma Rest1}]
We compute
$$
I_i = \int_{C_R(x)} \frac{\chi_{E_0}(y) - \chi_{E_0^c}(y)}{|x-y|^{4-\ve}}\, d y
=
-2
\int_{|t|<R}
\int_0^{g(t)}
\frac{1}{(|t|^2 + t_3^2)^{\frac{4-\ve}{2}}}
dt_3 \, d t ,
$$
expanding
$$
\int_0^{z}
\frac{1}{(|t|^2 + t_3^2)^{\frac{4-\ve}{2}}}
dt_3
= \frac{z}{|t|^{4-\ve}}
-(4-\ve) z^2 \int_0^1
(1-\tau) \frac{\tau z}{(|t|^2 + (\tau z)^2)^{\frac{6-\ve}{2}}} \, d\tau .
$$
Then
$$
I_{i}
=
I_{i,1}+I_{i,2}+I_{i,3}
$$
where
\begin{align*}
I_{i,1}
&=
-2
\int_{|t|<R}
\frac{\frac12 D^2 g(0)[{t}^2]}{|t|^{4-\ve}} \, d t
\\
I_{i,2}
&=
-2
\int_{|t|<R}
\frac{g(t)- \frac12  D^2 g(0)[{t}^2]}{|t|^{4-\ve}} \, d t
\\
I_{i,3}
&=2(4-\ve)
\int_{|t|<R}
g( t)^2
\int_0^1 (1-\tau)
\frac{\tau g(t)}{(|t|^2 + (\tau g(t))^2)^{\frac{6-\ve}{2}}} \, d\tau
\, d t ,
\end{align*}
and $D^2 g$ denotes the Hessian matrix of $g$.
Then
$$
I_{i,1} = - \pi \frac{ \Delta g(0) R^{\ve}}{\ve}
=
-2\pi\frac{ H(x) R^{\ve}}{\ve} .
$$
We estimate
\begin{align}
\nonumber
|I_{i,2}|
&\leq
2
\int_{|t|<R}
\frac{|g(t)- \frac12  D^2 g(0)[{t}^2]|}{|t|^{4-\ve}} \, d t
\\
\label{I i2}
&\leq
C [D^2 g]_{B_R(0),\alpha} \int_{|t|<R, t\in \R^2} |t|^{\alpha-2+\ve} d t
\leq  C [D^2 g]_{B_R(0),\alpha} R^{\alpha+\ve} .
\end{align}
Using $|g(t)|\leq \|D^2 g\|_{L^\infty(B_R(0))} |t|^2$, we can bound $I_{i,3}$
\begin{align}
\label{I3}
|I_{i,3}|
&\leq
C
\|D^2 g\|_{L^\infty}^3
\int_{|t|<R, t\in\R^2}
|t|^{\ve}dt
\leq
C
\|D^2 g\|_{L^\infty}^3
R^{2+\ve} .
\end{align}
This proves \eqref{Rest1}.
\end{proof}

\begin{proof}[Proof of Lemma~\ref{lemma R2 simple}]
Let $x \in \Sigma_0$, $x = (x',F_\ve(x'))$.
We change variables $y =  R z$ and write $\tilde x_R = x/R$
\begin{align*}
\int_{C_R(x)^c}  \frac{\chi_{E_0}(y) - \chi_{E_0^c}(y)}{|x-y|^{4-\ve}}\, d y
&=
\frac{1}{R^{1-\ve}}
\int_{C_1(\tilde x_R)^c}
\frac{\chi_{E_0/R}(z) - \chi_{E_0^c/R}(z)}{|\tilde x_R-z|^{4-\ve}}\, d z ,
\end{align*}
where $C_1(\tilde x_R)$ denotes the cylinder of radius 1 centered at $\tilde x_R$ and base plane given by the tangent plane to $\partial E_0/R$ at $\tilde x_R$.
Then \eqref{R2 simple} follows since
$$
\left|
\int_{C_1(\tilde x_R)^c}
\frac{\chi_{E_0/R}(z) - \chi_{E_0^c/R}(z)}{|\tilde x_R-z|^{4-\ve}}\, d z
\right|
\leq C.
$$

To obtain the second estimate we first note that for any $\delta_0 >0$ fixed,
$$
\left|
\int_{|\tilde x_R-z|\geq \delta_0 \ve^{-\frac12}}
\frac{\chi_{E_0/R}(z) - \chi_{E_0^c/R}(z)}{|\tilde x_R-z|^{4-\ve}}\, d z
\right|
\leq C \ve^{\frac 12},
$$
and therefore we need to prove
$$
\left|
\int_{C_1(\tilde x_R)^c, |\tilde x_R-z|\leq \delta_0 \ve^{-\frac12}}
\frac{\chi_{E_0/R}(z) - \chi_{E_0^c/R}(z)}{|\tilde x_R-z|^{4-\ve}}\, d z
- \pi  \right|\leq C \ve^{\frac12}.
$$
We note that
$$
\int_{C_1(\tilde x_R)^c,|z-\tilde x_R|\leq \delta_0 \ve^{-\frac12} }
\frac{\chi_{[|z_3|>1]}-\chi_{[|z_3|<1]}}{|z-\tilde x_R|^{4-\ve}} \, d z
= \pi+ O(\ve^{\frac12}) .
$$
(here $z=(z',z_3)$, $z'\in \R^2$, $e_3=(0,0,1)$).
Indeed,
$$
\int_{C_1(\tilde x_R)^c,|z-\tilde x_R|\leq \delta_0 \ve^{-\frac12} }
\frac{\chi_{[|z_3|>1]}-\chi_{[|z_3|<1]}}{|z-\tilde x_R|^{4-\ve}} \, d z
$$
$$
=
\int_{|z-\tilde x_R|>1,|z-\tilde x_R|\leq \delta_0 \ve^{-\frac12} }
\frac{\chi_{[|z_3|>1]}-\chi_{[|z_3|<1]}}{|z-\tilde x_R|^{4-\ve}} \, d z
$$
since by symmetry the difference of the two integrals is zero.
Since
$$
\int_{|z-\tilde x_R|\geq \delta_0 \ve^{-\frac12} }
\frac{\chi_{[|z_3|>1]}-\chi_{[|z_3|<1]}}{|z-\tilde x_R|^{4-\ve}} \, d z
= O(\ve^{\frac12})
$$
we get
\begin{align*}
& \int_{C_1(\tilde x_R)^c,|z-\tilde x_R|\leq \delta_0 \ve^{-\frac12} }
\frac{\chi_{[|z_3|>1]}-\chi_{[|z_3|<1]}}{|z-\tilde x_R|^{4-\ve}} \, d z
\\
& = \int_{|z-\tilde x_R|>1 }
\frac{\chi_{[|z_3|>1]}-\chi_{[|z_3|<1]}}{|z-\tilde x_R|^{4-\ve}} \, d z
+O(\ve^{\frac12}) \\
&=\pi +O(\ve^{\frac12}) .
\end{align*}
Therefore
\begin{align*}
& \left|
\int_{C_1(\tilde X_R)^c, |\tilde X_R-Z|\leq \delta_0 \ve^{-\frac12}}
\frac{\chi_{E_0/R}(Z) - \chi_{E_0^c/R}(Z)}{|\tilde X_R-Z|^{4-\ve}}\, d Z
- \pi \right|
\\
& \leq
\left|
\int_{C_1(\tilde X_R)^c, |\tilde X_R-Z|\leq \delta_0 \ve^{-\frac12}}
\frac{ \chi_{E_0/R}(Z) -\chi_{[|z_3|>1]} + \chi_{[|z_3|<1]}- \chi_{E_0^c/R}(Z) ) }{|\tilde X_R-Z|^{4-\ve}}\, d Z
\right|
+
C \ve^{\frac12}.
\end{align*}

Note that the point $\tilde x_R$ has the form  $\tilde x_R= (\frac {x'}R,1)$.
Inside the region $C_1(\tilde x_R)^c \cap\{z: |\tilde x_R-z|\leq \delta_0 \ve^{-\frac12}\}$,  $\partial E_0$ can be represented by
$$
|z_3| = \frac 1R F_\ve( R|z'|)
$$
By Corollary~\ref{coro prop F} we have
$$
|\frac{d}{dr} ( \frac 1R F_\ve( R r) ) | \leq C \ve^{\frac12}   ,
$$
in
$C_1(\tilde x_R)^c \cap\{z: |\tilde x_R-z|\leq \delta_0 \ve^{-\frac12}\}$.
Let us consider the upper part, namely
$C_1(\tilde x_R)^c \cap\{z: |\tilde x_R-z|\leq \delta_0 \ve^{-\frac12}\} \cap \{ z_3 >0\}$.
Inside this region,  the symmetric difference of the two sets $E_0/R$ and $|z_3|>1$ is contained in the cone
$$
\tilde x_R + \{ (z',z_3) \in \R^2\times \R : |z'|\leq \delta_0 \ve^{-\frac12}, |z_3|\leq C \ve^{\frac12}|z'| \} .
$$
Therefore we can estimate
\begin{align*}
& \left|
\int_{C_1(\tilde x_R)^c, |\tilde x_R-z|\leq \delta_0 \ve^{-\frac12} , z_3>0}
\frac{ \chi_{E_0/R}(z) -\chi_{[|z_3|>1]} + \chi_{[|z_3|<1]}- \chi_{E_0^c/R}(z) ) }{|\tilde x_R-z|^{4-\ve}}\, d z
\right|
\\
&\leq
\int_{\frac1{10}\leq|z'|\leq \delta_0 \ve^{-\frac12}, |z_3|\leq C \ve^{\frac12}|z| }
\frac{1}{|z|^{4-\ve}} d Z
\leq C  \ve^{\frac12}.
\end{align*}
The integral over $C_1(\tilde x_R)^c \cap\{z: |\tilde x_R-z|\leq \delta_0 \ve^{-\frac12}\} \cap \{ z_3 <0\}$ can be handled similarly.
\end{proof}

\begin{proof}[Proof of Proposition~\ref{prop error}]
Let $x \in \Sigma_0$,  $x=(x',F_\ve(x'))$ where $|x'|\geq 1$.
Let $R=R(x)$ be given by \eqref{def R}.

By \eqref{spliiting}, \eqref{part1} we can write
\begin{align*}
\ve H_{\Sigma_0}^s(x) = -2\pi H_{\Sigma_0} R^\ve + \ve Rest_1 + \ve I_o .
\end{align*}
Since $\Sigma_0$ is a minimal surface for $r=|x|\leq \ve^{-\frac12}$, we have
\begin{align*}
\ve H_{\Sigma_0}^s(x)
&=E_1 + E_2 + E_3 + E_4 + E_5 ,
\end{align*}
where
\begin{align*}
E_1 &=\pi R^\ve \eta_\ve ( -2 H_{\Sigma_0} +\frac{\ve}{R})
\\
E_2
&= -2 \ve
\int_{|t|<R}
\frac{g(t)- \frac12  D^2 g(0)[{t}^2]}{|t|^{4-\ve}} \, d t
\\
E_3
&=\ve 2(4-\ve)
\int_{|t|<R}
g( t)^2
\int_0^1 (1-\tau)
\frac{\tau g(t)}{(|t|^2 + (\tau g(t))^2)^{\frac{5+s}{2}}} \, d\tau
\, d t
\\
E_4 &=
\ve  I_o (1-\eta_\ve)
\\
E_5 &=
(\ve  I_o- \frac{\pi \ve}{R^s} )
\eta_\ve ,
\end{align*}
and $\eta_\ve(r)=\eta(r-\ve^{-\frac12})$ with  $\eta$ is the cut-off function \eqref{def eta1}.
Here $g$ is a function such that we have the representation of $\Sigma_0$ near $X$ as the graph of $g$ over the tangent plane of $\Sigma_0$ at $X$, as in \eqref{repr g1}.


We start with $E_1$.
For $r\geq \ve^{-\frac12}+1$, $F_\ve$ satisfies $\Delta F_\ve = \frac{\ve}{F_\ve}$, so
\begin{align*}
E_1
&=
\pi F_\ve^\ve
\left(  \Delta F_\ve(1- \frac{1}{\sqrt{1+(F_\ve')^2}}) + \frac{(F_\ve')^2 F_\ve''}{(1+(F_\ve')^2)^{3/2}} \right) .
\end{align*}
But for this range $F_\ve'(r) = O(\ve^{\frac12})$, $F_\ve''(r)= O(\frac{\ve^{\frac12} }{r})$,  $F_\ve(r) \leq C \ve^{\frac12}r$ if $r\geq \delta \ve^{-\frac12} |\log\ve|$ and $F_\ve(r) \leq C |\log\ve| $ if $\ve^{-\frac12} r\leq \delta \ve^{-\frac12} |\log\ve|$ , so
$$
\sup_{r\geq \ve^{-1/2}+1} r^{1-\ve}|E_1| = O(\ve^{\frac32}) ,
\quad \text{as } \ve\to0.
$$

For  $r \in [\ve^{-\frac12},\ve^{-\frac12}+1]$ we have $\Delta f_\ve= O(\frac{\ve}{|\log\ve|})$, $ \Delta f_C = O(\ve^2)$, and so $(f_\ve-f_C)'=O(\frac{\ve}{|\log\ve|})$, $f_\ve-f_C=O(\frac{\ve}{|\log\ve|})$ in this region.
Then for these $r$
\begin{align*}
-\Delta F_\ve + \frac{\ve}{F_\ve} &=
-\eta_\ve \frac{\ve}{f_\ve} + \frac{\ve}{\eta_\ve f_\ve + (1-\eta_\ve) f_C}
- (1-\eta_\ve) \Delta f_C - 2 \eta_\ve' (f_\ve-f_C)' - \Delta \eta_\ve (f_\ve-f_C)
\\
&= O(\frac{\ve}{|\log\ve|}).
\end{align*}
It follows that
$$
\sup_{r \in [\ve^{-\frac12},\ve^{-\frac12}+1]} r^{1-\ve} |E_1| = O(\frac{\ve^{\frac12}}{|\log\ve|}).
$$
To estimate the H\"older part of the norm, i.e. $[E_1]_{1-\ve,\alpha+\ve}$, it is enough to show that
$$
\sup_{r \geq \ve^{-1/2} } r^{2-\ve} |E_1'(r)| \leq C \frac{\ve^{\frac12}}{|\log\ve|} ,
$$
and the computation is analogous to the previous one.

We estimate  $E_2 $.
By \eqref{I i2}, we need to estimate
$$
\ve \|[D^2 g]_{B_R(0),\alpha} R^{\alpha+\ve} \|_{1-\ve,\alpha+\ve}
$$
where $R=R(x)$, $g=g_x$, $x=(x',F_\ve(x')) \in \Sigma_0$.
In the regime $r = |x'| \geq \delta |\log\ve|\ve^{-\frac12}$ we have
\begin{align}
\label{2.12}
R = F_\ve(r) \leq C \ve^{\frac12} r ,
\end{align}
and by \eqref{est holder g}
\begin{align}
\label{D2 g Calpha}
[D^2 g]_{\alpha,B_R(0)}
\leq \frac{C \ve^{\frac12}}{r^{1+\alpha}}
\end{align}
Therefore
$$
\sup_{r \geq \delta |\log\ve|\ve^{-1/2}}
r^{1-\ve} \left(
\ve   [D^2 g]_{\alpha,B_R(0)} R^{\alpha+\ve}
\right)
\leq C  \ve^{\frac{3+\alpha}{2}} .
$$
In the region  $ \ve^{-\frac12} \leq r \leq\delta |\log\ve|\ve^{-\frac12}$ we have
$$
R = F_\ve(r) = O(|\log\ve|)
$$
and \eqref{D2 g Calpha} still holds. Hence
$$
\sup_{\ve^{-1/2} \leq r \leq \delta |\log\ve|\ve^{- 1/2}}
r^{1-\ve} \left(
\ve   [D^2 g]_{\alpha,B_R(0)} R^{\alpha+\ve}
\right)
\leq C  \ve^{\frac{3+\alpha}{2}} |\log\ve|^\alpha.
$$
Finally for $r\leq \ve^{-\frac12}$, $R = \log(r)+O(1)$ and
$$
[D^2 g]_{\alpha,B_R(0)}
\leq \frac{C}{r^{2+\alpha}}
$$
so
$$
\sup_{r\leq \ve^{-\frac12}} r^{1-\ve} \left(
\ve   [D^2 g]_{B_R(0),\alpha} R^{\alpha+\ve}
\right)
\leq C \ve.
$$
It follows that
$$
\| |x|^{1-\ve} E_2 \|_{L^\infty} \leq C \ve.
$$

We estimate the $C^\alpha$ norm of $E_2$. For this let $x_{1}=(x_1',F_\ve(x_1'))$, $x_{2}=(x_2',F_\ve(x_2'))\in \Sigma_0$, $R_i = R(x_i)$, and $g_i : B_{R_i} \to \R$ be such that $\Sigma_0$ can be represented as a graph of $g_i$ over its tangent plane at $x_i$.

We can assume that $|x_{1}| \leq |x_{2}|$
and $R_1 \leq R_2$ (if $R_2 \leq R_1$ the argument is the same). We can also assume
$|x_{1} -x_{2}|\leq \frac{1}{10}|x_{1}|$.

Let us write
$$
E_{1}(x_{1}) - E_2(x_{2}) = E_{1,1} + E_{1,2}  ,
$$
where
\begin{align*}
E_{1,1}
& =
\ve  \int_{|t|<R_1}
\frac{g_1(t)-\frac12 D^2 g_1(0)[t^2] -(g_2(t)-\frac12 D^2 g_2(0)[t^2] ) }{|t|^{4-\ve}} \, d t
\\
E_{1,2} & = -
\ve  \int_{R_1<|t|<R_2}
\frac{g_2(t)-\frac12 D^2 g_2(0)[t^2]}{|t|^{4-\ve}} \, d t .
\end{align*}

Assume $|x_{1}-x_{2}| \leq R_1$.
Then note that by the same computation as in Lemma~\ref{lemma Rest1}
and writing $\bar R =|x_{1}-x_{2}|$,
\begin{align*}
& \left|
\int_{|t|\leq \bar R}
\frac{g_1(t)-\frac12 D^2 g_1(0)[t^2] -(g_2(t)-\frac12 D^2 g_2(0)[t^2] ) }{|t|^{4-\ve}} \, d t\right|
\\
& \leq
C( [D^2 g_1]_{\alpha,B_{\bar R}(0)} + [D^2 g_2]_{\alpha,B_{\bar R}(0)} ) \bar R^{1+\alpha-s}  \\
&
\leq  C \frac{\ve^{\frac32}}{|x_{1}|^{1+\alpha}}
|x_{1}-x_{2}|^{\alpha+\ve} ,
\end{align*}
where we have used \eqref{est holder g}.
Let us estimate the integral over $\bar R \leq |t| \leq R_1$.
For this note that from Appendix \ref{app}
$$
\|D^2(g_1-g_2)\|_{L^{\infty}(B_{R_1})} \leq \frac{C\ve^{\frac12}}{|x_1|^2}|x_1-x_2|
$$
if $|x_1|\geq \delta |\log\ve|\ve^{-\frac12}$. In this case we see that
\begin{align*}
& \left|
\int_{\bar R \leq |t|\leq R_1}
\frac{g_{1,0}(t)-\frac12 D^2 g_{1}(0)[t^2] -(g_{2}(t)-\frac12 D^2 g_{2}(0)[t^2] ) }{|t|^{4-\ve}} \, d t
\right|
\\
& \leq
\frac{C\ve^{\frac12}}{|x_{1}|^2}
|x_{1}-x_{2}|
\int_{\bar R \leq |t|\leq R_1} |t|^{-1-s} \, dt
\\
& \leq
\frac{C\ve^{\frac12}}{|x_{1}|^{2}}
|x_{1}-x_{2}|^{\alpha+\ve}
\int_{\bar R \leq |t|\leq R_1} |t|^{-s-\alpha-\ve} \, dt
\\
& \leq
\frac{C\ve^{\frac12}}{|x_{1}|^{2}}
|x_{1}-x_{2}|^{\alpha+\ve}
R_1^{2-s-\alpha-\ve}
=\frac{C\ve^{\frac12}}{|x_{1}|^{2}}
|x_{1}-x_{2}|^{\alpha+\ve}
R_1^{1-\alpha}
.
\end{align*}
Then recalling that $R_1 = O( \ve^{\frac12}|X_{1}|)$
\begin{align*}
& \ve  \left|
\int_{\bar R \leq |t|\leq R_1}
\frac{g_{1}(t)-\frac12 D^2 g_{1}(0)[t^2] -(g_{2}(t)-\frac12 D^2 g_{2}(0)[t^2] ) }{|t|^{4-\ve}} \, d t
\right|
\\
&\leq
C\frac{\ve^{\frac{4-\alpha}{2}}}{|x_{1}|^{1+\alpha} } |x_{1}-x_{2}|^{\alpha+\ve}.
\end{align*}
Therefore
$$
|E_{1,1}|\leq C\frac{\ve^{\frac32}}{|x_1|^{1+\alpha} }
|x_1-x_2|^{\alpha+\ve},
$$
if $|x_1|\geq \delta|\log\ve|\ve^{-\frac12}$. The other cases can be handled similarly.

For the second term we have
$$
\left|
\int_{R_1<|t|<R_2}
\frac{g_2(t)-\frac12 D^2 g_2(0)[t^2]}{|t|^{4-\ve}} \, d t
\right|
\leq C [D^2 g_2]_{\alpha,B_{R_2}(0)}
(R_2^{1+\alpha-s} - R_1^{1+\alpha-s}).
$$
But we can estimate
$$
|R_2 -R_1| \leq C \frac{\ve^{\frac12}}{|x_1|^{\alpha-1}}
|x_1-x_2|^\alpha
$$
if $|x_1|\geq \ve^{-\frac12}$, and
$$
|R_2 -R_1| \leq C \frac{|x_1-x_2|^\alpha}{|x_1|^{\alpha}}
$$
if $|x_1|\leq \ve^{-\frac12}$.
Summarizing
$$
|E_2(x_1)-E_2(x_2)|  \leq C\ve \frac{|x_1-x_2|^{\alpha+\ve}}{|x_1|^{1+\alpha}}.
$$

Let us consider  $E_3$.
By \eqref{I3}
$$
|E_3|\leq C \ve \|D^2 g \|_{L^\infty(B_R)}^3 R^{3-s}.
$$
In the region  $|x| \geq \delta |\log\ve|\ve^{-\frac12}$  we use \eqref{2.12} and
$$
\|D^2 g\|_{L^\infty(B_R(0))}
\leq \frac{C \ve^{\frac12}}{|x|}
$$
to obtain
$$
\sup_{|x|\geq \delta |\log\ve|\ve^{-\frac12}}
|x|^{1-\ve} \left(
 \ve \|D^2 g\|_{L^\infty(B_R(0))}^3  R^{2+\ve}
\right) \leq C \ve^{\frac 72}.
$$
When  $ \ve^{-\frac12} \leq |x| \leq\delta |\log\ve|\ve^{-\frac12}$ we get
$$
\sup_{\ve^{-\frac12}\leq |x|\leq \delta |\log\ve|\ve^{-\frac12}}
|x|^{1-\ve} \left(
 \ve \|D^2 g\|_{L^\infty(B_R(0))}^3  R^{2+\ve}
\right) \leq C \ve^{\frac 72} |\log\ve|^2.
$$
Also
$$
\sup_{|x| \leq \ve^{-\frac12}}
|x|^{1-\ve} \left(
 \ve \|D^2 g\|_{L^\infty(B_R(0))}^3  R^{2+\ve}
\right) \leq C \ve.
$$
Similar computations as before show that if  $|x_{1}| \leq |x_{2}|$  and $|x_{1} -x_{2}|\leq \frac{1}{10}|x_{1}|$ then
$$
|E_3(x_1)-E_3(x_2)|  \leq C\ve \frac{|x_1-x_2|^{\alpha+\ve}}{|x_1|^{1+\alpha}}.
$$

To estimate $E_4 =  \ve  I_o (1-\eta_\ve)   $ we use \eqref{R2 simple}:
\begin{align*}
|\ve  I_o (1-\eta_\ve)|
\leq \frac{C \ve}{R^{1-\ve}} ,
\end{align*}
and note that it is supported in $r\leq \ve^{-\frac12}+1.$
But in this range $ R = \log(|x'|) +O(1) $ and this implies
$$
\sup_{x} |x|^{1-\ve} |E_4|
\leq C \ve \sup_{|x|\leq \ve^{-\frac12}} \frac{|x|^{1-\ve}}{|\log(|x|)|}
\leq \frac{C \ve^{\frac12}}{|\log\ve|}.
$$

To estimate the H\"older norm of $E_4$,
we actually claim that
\begin{align}
\label{der1}
\left| D_{x} E_4(x) \right|
\leq \frac{C}{|x| \log(|x|)^2}.
\end{align}
To obtain \eqref{der1}, we write $x= (x',F_\ve(x'))$ and write $x'=(x_1,x_2)$. We compute
$$
D_{x_i}
\int_{C_R(x)^c}
\frac{\chi_{E_0}(y) - \chi_{E_0^c}(y)}{|x-y|^{4-\ve}}\, d y
= B_1 + B_2 + B_3
$$
with
\begin{align*}
B_1 & =
- (4-\ve) \int_{C_R(x)^c}
\frac{\chi_{E_0}(y) - \chi_{E_0^c}(y)}{|x-y|^{5+s}}
\langle x-y,D_{x_i} x \rangle
\, d y
\\
B_2 & =
-
\int_{\partial C_R(x)}
\frac{\chi_{E_0}(y) - \chi_{E_0^c}(y)}{|x-y|^{4-\ve}}
\langle \nu , \frac{y-x}{|y-x|}  \rangle \, d y D_{x_i} R
\\
B_3 &=
-\int_{\partial C_R(x)}
\frac{\chi_{E_0}(y) - \chi_{E_0^c}(y)}{|x-y|^{4-\ve}}
\nu_i \, d y ,
\end{align*}
where $\nu = (\nu_1,\nu_2,\nu_3) $ is the unit exterior normal to $C_R(x)$.
But $D_{x_i} x = e_i + (0,0,D_{x_i} F_\ve)$ and so $B_1$ can be combined with $B_3$. Indeed
$$
B_1 = B_{1,1}  + B_{1,2}
$$
where
\begin{align*}
B_{1,1} &=
- (4-\ve) \int_{C_R(x)^c}
\frac{\chi_{E_0}(y) - \chi_{E_0^c}(y)}{|x-y|^{5+s}}
\langle x-y,e_i \rangle
\, d y
\\
B_{1,2} &= - (4-\ve) \int_{C_R(x)^c}
\frac{\chi_{E_h}(y) - \chi_{E_h^c}(y)}{|x-y|^{5+s}}
\langle x-y, (0,0,D_{x_i} F_\ve ) \rangle
\, d Y .
\end{align*}
But
\begin{align*}
B_{1,1}
&=
- \int_{C_R(x)^c}
D_{y_i}\frac{\chi_{E_0}(y) - \chi_{E_0^c}(y)}{|x-y|^{4-\ve}}
\, d Y
\\
&=
2 \int_{\partial E_h \setminus C_R(x)}\frac{1}{|x-y|^{4-\ve}} \nu_i \, d y
+
\int_{\partial C_R(x)} \frac{\chi_{E_h}(y) - \chi_{E_h^c}(y)}{|x-y|^{4-\ve}} \nu_i \, d y ,
\end{align*}
where we have used the unit normal $\nu$ pointing up on $\partial E_0$, and the exterior unit normal to $C_R(x)$.
Therefore
$$
B_1 + B_2 + B_3 =
2 \int_{\partial E_0 \setminus C_R(x)}\frac{1}{|x-y|^{4-\ve}} \nu_i \, d y + B_{1,2} + B_2 .
$$
We now estimate
\begin{align*}
\int_{\partial E_0 \setminus C_R(x)}\frac{1}{|x-y|^{4-\ve}} \nu_i \, d y
= \int_{\partial E_0 \cap B_\rho(x) \setminus C_R(x)} \ldots +
\int_{\partial E_0 \setminus  B_\rho(x)} \ldots
\end{align*}
where we take $\rho = |x|/100$.
For the outside integral we have
\begin{align*}
\left|
\int_{\partial E_0 \setminus  B_\rho(x)}
\frac{1}{|x-y|^{4-\ve}} \nu_i \, d y
\right| \leq C \rho^{-1-s} \leq C |x|^{-1-s} .
\end{align*}
For the inner region, we observe that $|\nu_i(y)|\leq \frac{C}{|y|} \leq \frac C{|x|}$
and so
\begin{align*}
\left|
\int_{\partial E_0 \cap B_\rho(x) \setminus C_R(x)}
\frac{1}{|x-y|^{4-\ve}} \nu_i \, d y
\right|
\leq \frac{C}{|x| \log(|x|)^{2-\ve}} .
\end{align*}
For $B_2$ we also get
$$
|B_2|\leq  \frac{C}{|x| \log(|x|)^{2-\ve}}
$$
since $D_{x_i} R = O(1/|x_0|)$ while the integral is $O (\frac{C}{ \log(|x_0|)^2})$.
Hence
$$
|B|\leq \frac{C}{|x_0| \log(|x_0|)^2}.
$$
and combined with the estimate for $A$ yields \eqref{der1}.

Using \eqref{der1} we can estimate the H\"older norm of $E_4$.
For this let $x_1,x_2\in \Sigma_0$, $x_i = (x_i',F_\ve(x_i'))$, $R_i =R(x_i)$. We can assume that $|x_1| \leq |x_2|$, $R_1 \leq R_2$ and also
$|x_1 -x_2|\leq \frac{1}{10}|x_1|$.

Then
\begin{align*}
|E_4(x_1) - E_4(x_2)|
& \leq C \ve \frac{|x_{0,1}|^{1+\alpha}}{|x_{0,1}| \log(|x_{0,1}|)^2 } |x_{0,1}-x_{0,2}|
\\
& \leq C \ve \frac{|x_{0,1}|}{ \log(|x_{0,1}|)^2 } |x_{0,1}-x_{0,2}|^\alpha
\\
& \leq C \frac{\ve^{\frac12}}{|\log\ve|}|x_{0,1}-x_{0,2}|^\alpha .
\end{align*}
Therefore
$$
\|E_4\|_{1-\ve,\alpha+\ve} \leq \frac{C \ve^{\frac12}}{|\log\ve|}.
$$

To estimate $E_5 =\ve (  I_o- \frac{\pi }{R^s} ) \eta_\ve$ we use \eqref{R2 far} to obtain
\begin{align*}
|E_5 |
\leq C\frac{\ve^{\frac32}}{R^s}.
\end{align*}
Since
$R = F_\ve(x') = \ve^{\frac12} r +O(|\log\ve|) $ for  $r \geq \delta|\log\ve| \ve^{-\frac12} $, where $r=|x'|$,
then
$$
\sup_{ r \geq \delta|\log\ve| \ve^{-\frac12}}
r^{1-\ve}
|E_5(r) |  \leq  C \ve.
$$
Also
$$
\sup_{ \ve^{-\frac12} \leq r \leq \delta|\log\ve| \ve^{-\frac12}}
r^{1-\ve}
|E_5 |
\leq C
\sup_{ \ve^{-\frac12} \leq r \leq \delta|\log\ve| \ve^{-\frac12}}
\frac{\ve r^{1-\ve}}{|\log\ve|}
\leq \frac{C \ve^{\frac12}}{|\log\ve|}.
$$

We estimate the H\"older estimate for $E_5$.
Let us write $x=(x',F_\ve(x'))$, $x'=(x_1,x_2)$, $r=|x'|$.
We claim that
\begin{align}
\label{est E4}
|\frac{d}{d x_i} E_5|\leq  C\frac\ve{r^{2-\ve}}
\quad \text{for } r\geq \delta |\log\ve|\ve^{-\frac12}.
\end{align}
As in the proof of Lemma~\ref{lemma R2 simple} we can  rewrite $E_5$ as
$$
E_5=
\frac{\ve}{R^{1-\ve}}\eta_\ve J ,
$$
where
$$
J=\int_{C_1(\tilde x_R)^c}
\frac{\chi_{E_0/R}(z) - \chi_{E_0^c/R}(z) }{|\tilde x_R -z|^{4-\ve}} \, d z ,
$$
$\tilde x_R = x/R$ and  $C_1(\tilde x_R)$ is the cylinder of radius 1 centered at $\tilde x_R$ and base plane given by the tangent plane to $\partial E_0/R$ at $\tilde x_R$.
Then
$$
\frac{d}{d x_i} E_5 = D_1 + D_2 + D_3
$$
where
\begin{align*}
D_1&=
-\frac{\ve(1-\ve)}{R^{2-\ve}} \frac{d R}{d x_i} ç\eta_\ve J ,
\qquad D_2 =
\frac\ve{R^{1-\ve}} \eta_\ve' \frac{d r}{d x_i} J ,
\\
D_3&=
\frac\ve{R^{1-\ve}}  \eta_\ve \frac{d J}{d x_i} .
\end{align*}
Let us estimate $D_3$. By a translation
$$
J = \int_{C_x^c} \frac{\chi_{E_x}-\chi_{E_x^c}}{|z|^{4-\ve}} \, dz
$$
where $C_x$ is the cylinder centered at the origin, with base a unit disk on a plane parallel to the tangent plane to $\Sigma_0$ at $x$, and unit height, and $E_x=(E_0 - x)/R$.

We can write
\begin{align*}
\frac{d J}{d x_i}
&= -2 \int_{\partial E_x \setminus C_x} \frac{1}{|z|^{4-\ve}}
\nu(z) \cdot ( \frac1{R^2} \frac{d R}{d x_i} z +\frac1R \frac{d x}{d x_i}) \, dz
\\
& \qquad + \int_{\partial C_x \cap E_x} \frac{1}{|z|^{4-\ve}}
\nu(z) \cdot ( \frac1{R^2} \frac{d R}{d x_i} z +\frac1R \frac{d x}{d x_i}) \, dz
\\ &\qquad
-
\int_{\partial C_x \cap E_x^c} \frac{1}{|z|^{4-\ve}}
\nu(z) \cdot ( \frac1{R^2} \frac{d R}{d x_i} z +\frac1R \frac{d x}{d x_i}) \, dz ,
\end{align*}
where for points on $\partial E_x$ $\nu$ represents the unit normal vector pointing into $E_0$, and on $\partial C_x$, $\nu$ points oust side of $C_x$. This follows from the transport theorem in the form
$$
\frac{d}{dt} \int_{T_t(U)} f(y) \, d y =
\int_{\partial T_t(U)} f(y) \nu(y) \cdot  (D_t T_t)(T_t^{-1}(y)) \, d y ,
$$
where $\nu$ points to the exterior of $T_t(U)$.
In our case $E_x = T_{x'}(E_0)$ where $T_{x'}(y) = \frac1R(y-x)$, $x=(x',F_\ve(x'))$. Hence $D_{x_i} T_{x'}(T_{x'}^{-1}(z)) = -\frac{1}{R^2} z \frac{d R}{d x_i}
-\frac1R \frac{d x}{d x_i}$.

We claim that  for $|x'|\geq \ve^{-\frac12}$ we have:
\begin{align}
\label{dJ dxi}
\frac{d J}{d x_i} = O(\frac{1}{r^2} ),
\end{align}
$r=|x'|$.
Indeed, we compute with detail the case when $|x'|\geq \delta |\log\ve|\ve^{-\frac12}$. For these $x'$, $R = \ve^{\frac12} r +O(|\log\ve|)$, $\frac{d R}{d x_i} = O(\ve^{\frac12})$.
Then
\begin{align*}
&\left|\int_{\partial E_x \setminus C_x \cap [z_3>-1] \cap [|z| \geq \ve^{-\frac12}/100]}
\frac{\nu(z) \cdot z}{|z|^{4-\ve}}
\, d z \right|
\\
& \leq
\int_{\partial E_x \setminus C_x \cap [z_3>-1] \cap [|z| \geq \ve^{-\frac12}/100]}
\frac{1}{|z|^{4-\ve}}
\, d z
= O (\ve).
\end{align*}
Inside the ball $|z|\leq \ve^{-\frac12}/100$, we have $\nu(z)\cdot z = O(\ve^{\frac12}) |z|$. Then
\begin{align*}
\int_{\partial E_x \setminus C_x \cap [z_3>-1] \cap [|z|\leq \ve^{-\frac12}/100]}
\frac{\nu(z) \cdot z}{|z|^{4-\ve}} \, d z = O (\ve^{\frac12}) .
\end{align*}
The estimate in the lower half are similar,  and therefore
\begin{align*}
\frac1{R^2} \frac{d R}{d x_i}
\int_{\partial E_x \setminus C_x} \frac{1}{|z|^{4-\ve}}
\nu(z) \cdot z  dz = O(\frac{1}{r^2})
\end{align*}
where $r=|x'|$.

In the upper half we have
\begin{align*}
\left|
\int_{\partial E_x \setminus C_x \cap [z_3>-1] \cap [|z| \geq \ve^{-1/2} /100} \frac{ \frac{d x}{d x_i} \cdot\nu(z)  }{|z|^{4-\ve}}  \, dz  \right|
& \leq  C
\int_{\partial E_x \setminus C_x \cap [z_3>-1] \cap [|z| \geq \ve^{-1/2} /100} \frac{dz }{|z|^{4-\ve}}
\\
&= O ( \ve) .
\end{align*}
For the integral over $|z|\leq \ve^{-\frac12}/100$, notice that before the  change of variables $y \mapsto z = (y-x)/R$, we have
$$
\left|\nu_{\Sigma_0} (y)\cdot \frac{d x}{d x_i}(x')\right|\leq C\frac{\ve^{\frac12}}{|x|}|y-x|
$$
for $y\in \Sigma_0$, $|y-x|\leq |x|/100$, since $\nu_{\Sigma_0} (y)\cdot \frac{d x}{d x_i}(x')$ vanishes at $y=x$ and has derivative of order $\frac{\ve^{\frac12}}{|x|}$. After the change of variables this implies
$$
| \frac{d x}{d x_i} \cdot\nu(z) |\leq C \frac{\ve^{\frac12}}{r} |z|.
$$
Therefore
$$
\int_{\partial E_x \setminus C_x \cap [z_3>-1] \cap [|z| \leq \ve^{\frac12} /100} \frac{ \frac{d x}{d x_i} \cdot\nu(z)  }{|z|^{4-\ve}}  \, dz = O ( \frac{\ve^{\frac12}}{r}) .
$$
The estimate in the lower half are similar,  and therefore
\begin{align*}
\frac1R
\int_{\partial E_x \setminus C_x} \frac{1}{|z|^{4-\ve}}
\nu(z) \cdot \frac{d x}{d x_i} \, dz
= O(\frac{1}{r^2}) .
\end{align*}

For the last 2 terms in $\frac{dJ}{dx_i}$ we observe that
\begin{align*}
\int_{\partial C_x \cap E_x} \frac{1}{|z|^{4-\ve}}
\nu(z) \cdot  z   \, dz -
\int_{\partial C_x \cap E_x^c} \frac{1}{|z|^{4-\ve}}
\nu(z) \cdot  z \, dz = O(\ve^{\frac12})
\end{align*}
since most of the integral cancels by symmetry, except a region of area $O(\ve^{\frac12})$ and similarly
\begin{align*}
\int_{\partial C_x \cap E_x} \frac{1}{|z|^{4-\ve}}
\nu(z) \cdot  \frac{d x}{d x_i} \, dz
-
\int_{\partial C_x \cap E_x^c} \frac{1}{|z|^{4-\ve}}
\nu(z) \cdot  \frac{d x}{d x_i} \, dz = O(\frac{\ve^{\frac12}}{r}) .
\end{align*}
This implies and the previous estimates imply the claim \eqref{dJ dxi} (the range $\ve^{-\frac12} \leq r \leq \delta \ve^{-\frac12}|\log\ve|$ is analogous).

The estimates for $D_1$, and $D_2$ are analogous, and since $R \approx \ve^{\frac12} |x_0|$ we obtain \eqref{est E4}.

Using \eqref{est E4}, we can show, as was done before, that
\begin{align*}
|E_5(x_1) - E_5(x_2)|
& \leq C \frac{\ve^{\frac12}}{|\log\ve|}|x_{0,1}-x_{0,2}|^\alpha .
\end{align*}
$x_1,x_2\in \Sigma_0$, with $x_i = (x_i',F_\ve(x_i'))$,
$|x_i'|\geq \ve^{-\frac12}$, and  $|x_1| \leq |x_2|$, $|x_1 -x_2|\leq \frac{1}{10}|x_1|$.
It follows that
$$
\|E_5\|_{1-\ve,\alpha+\ve} \leq C\frac{\ve^{\frac12}}{|\log\ve|}.
$$
\end{proof}

\section{Limit problem in $\Sigma_0$}
\label{sect linear1}

We want to build  a right inverse for the operator
$$
L_{0} (h)  =  \Delta h +  \frac{\ve} {F_\ve(r)^2} \eta_\ve( r)  h  ,
$$
which arises as the linearization of the approximate problem \eqref{eqf}. Here $\eta_\ve$ is any family continuous cut-off functions with $\eta_\ve(r)=0$ for $r\leq \ve^{-\frac12}$ and $\eta_\ve(r)=1$ for $r\geq \delta |\log\ve|\ve^{-\frac12}$, where $\delta>0$ is a sufficiently small number.

We then consider the equation
\be
L_0(\phi) = g, \quad\text{in }\R^2,
\label{xx3}\ee
and work in the class of radial functions.

\begin{prop}
\label{prop4}
Let $1\leq \gamma<2$.
If $\ve>0$ is small there is $C>0$ such that for $g$ radially symmetric with $ \|(1+|x|)^{\gamma} g\|_{L^\infty} < + \infty$  there exists a radially symmetric solution of \equ{xx3} $\phi = T(g)$ with $\|(1+|x|)^{\gamma-2}\phi\|_{L^\infty} < +\infty$ that defines a linear operator of $g$ with
$$
\| |x|^{\gamma-2}\phi\|_{L^\infty}\ \le \ C\,  \|(1+|x|)^{\gamma} g\|_{L^\infty},
$$
and $\phi(0)=0$.
\end{prop}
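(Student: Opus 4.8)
The strategy is to construct the solution explicitly by the variation of parameters formula based on the two linearly independent solutions of the homogeneous ODE $L_0 z = 0$ identified earlier in the excerpt, and then to establish the weighted bound by a careful case analysis according to the three regimes $1\le r\le \ve^{-1/2}$, $\ve^{-1/2}\le r\le \delta|\log\ve|\ve^{-1/2}$, and $r\ge \delta|\log\ve|\ve^{-1/2}$. Recall that for radial functions the operator is $L_0(\phi) = \phi'' + \frac1r\phi' + V(r)\phi$ with $V(r) = \frac{\ve}{F_\ve(r)^2}\eta_\ve(r)$. In the inner region $V\equiv 0$, so the homogeneous solutions are essentially $1$ and $\log r$ (scaled to match at $r=\ve^{-1/2}$); in the outer region the equation is, up to the scaling \eqref{eq f0}, the linearization \eqref{linearization f0} of the Emden--Fowler-type profile problem, whose kernel elements $\tilde z_1,\tilde z_2$ were estimated in Lemma~\ref{lemma kerne f0} to satisfy $|\tilde z_i(r)|\le C$ and $|\tilde z_i'(r)|\le C/r$. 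The point is that these two behaviors patch together across $r\sim\ve^{-1/2}$ so that $L_0$ genuinely has a fundamental system $z_1,z_2$ on $(0,\infty)$ with $z_1$ bounded and $z_2$ behaving like $\log r$ for $r\lesssim\ve^{-1/2}$ and like a bounded function for $r\gtrsim\ve^{-1/2}$, with Wronskian $W(r)=z_1z_2'-z_1'z_2$ satisfying $r W(r)\equiv \text{const}\ne 0$ (since the first-order term is $\frac1r\phi'$).

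\textbf{Key steps.} First I would fix the fundamental system: let $z_1$ be the solution with $z_1(0)=1$, $z_1'(0)=0$ (which is bounded throughout, gluing the constant $1$ in the inner region to a bounded multiple of a kernel element in the outer region, using that $V$ is continuous, nonnegative, and integrable near $r=\ve^{-1/2}$), and let $z_2$ be a second independent solution normalized so that $z_2(r)\sim \log r$ near $r=\ve^{-1/2}$ and $rW(r)= c_0$ for a fixed nonzero constant $c_0$ (one can take $c_0=1$). I would record the pointwise bounds $|z_1(r)|\le C$, $|z_2(r)|\le C(1+\log(1+r))$ in fact $|z_2(r)|\le C|\log\ve|$ for $r$ up to the outer scale, using Lemma~\ref{lemma kerne f0} together with the scaling $r\mapsto \frac{\ve^{1/2}}{|\log\ve|}r$ from \eqref{eq f0}. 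Second, I would write the solution by variation of parameters,
\begin{align*}
\phi(r) = z_2(r)\int_0^r \frac{z_1(t)\,g(t)\,t}{c_0}\,dt - z_1(r)\int_0^r \frac{z_2(t)\,g(t)\,t}{c_0}\,dt,
\end{align*}
which automatically gives a radial function, solves \eqref{xx3}, is linear in $g$, and satisfies $\phi(0)=0$ (both integrals vanish at $r=0$ and $z_2(r)t$ is integrable near $0$ because $z_2$ is at worst logarithmic there — note the profile problem is posed on $r\ge 1/|\log\ve|$, but one extends $V$ by $0$ on $[0,\ve^{-1/2}]$, so near the origin $z_2\sim\log r$ and $z_2(t)\,t\to 0$). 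Third, I would estimate. Using $|g(t)|\le \|(1+|t|)^\gamma g\|_{L^\infty}(1+t)^{-\gamma}$ with $1\le\gamma<2$, the integrand $z_1(t)g(t)t$ is $O(t^{1-\gamma})$ (integrable, since $\gamma<2$), so $\int_0^r z_1 g\, t\,dt = O(r^{2-\gamma})$ for $r\ge 1$; multiplying by $|z_2(r)|$ gives the needed $O(r^{2-\gamma})$ up to a $|\log\ve|$ which is absorbed differently in each regime — more precisely in the outer region $z_2$ is bounded and $\int^r z_1 g t$ is $O(r^{2-\gamma})$, directly yielding $\|(1+|x|)^{\gamma-2}\phi\|_\infty\le C\|(1+|x|)^\gamma g\|_\infty$. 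The second term $z_1(r)\int_0^r z_2 g\,t\,dt$: here $z_1$ is bounded and $z_2(t)g(t)t = O(t^{1-\gamma}\log(1+t))$, still integrable for $\gamma<2$, giving $\int_0^r z_2 g\,t\,dt = O(r^{2-\gamma}\log r)$; since $r^{\gamma-2}\cdot r^{2-\gamma}\log r$ is not bounded one must be slightly more careful — in fact one should split the integral at $r/2$ and use that on $[r/2,r]$ we have $\log t\sim \log r$ while $\int_{r/2}^r t^{1-\gamma}dt = O(r^{2-\gamma})$ but then $z_1(r)$ is still just bounded; the cleanest fix is to note $z_1(r)\to$ const and bound the tail by comparing with $z_2(r)$ times the first integral, i.e. use the identity $z_1(r)z_2(t)-z_2(r)z_1(t) = O(|z_1(r)|\,|t-r|\,\sup|z_2'|+\dots)$ for $t$ near $r$, so that the combination of the two terms of the variation of parameters formula near the diagonal has an extra gain. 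This diagonal cancellation is standard for second-order ODEs and I would carry it out to kill the spurious $\log r$.

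\textbf{Main obstacle.} The routine part is the integrability bookkeeping; the genuine difficulty is \emph{uniformity in $\ve$} of the constant $C$. The function $V = \frac{\ve}{F_\ve^2}\eta_\ve$ is small pointwise but its support extends to $r\sim\delta|\log\ve|\ve^{-1/2}$, and $z_2$ grows logarithmically to size $|\log\ve|$ on that range, so one must verify that the products $|z_1(r)z_2(t)| + |z_2(r)z_1(t)|$ that appear never produce a factor worse than $r^{2-\gamma}$ after the weight is applied. The resolution is the rescaling \eqref{eq f0}: in the variable $\rho = \frac{\ve^{1/2}}{|\log\ve|}r$ the outer equation becomes the $\ve$-independent problem $\Delta w + w/(f_0^{(\ve)})^2 = 0$ with the $\ve$-uniform kernel bounds of Lemma~\ref{lemma kerne f0}, and the inner region contributes only the explicit $\{1,\log r\}$ system; matching at $r=\ve^{-1/2}$ (where $F_\ve \sim \frac12|\log\ve|$ by Corollary~\ref{coro prop F}) introduces only bounded $\ve$-uniform matching constants. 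Tracking the weights through this rescaling — being careful that the weight $(1+|x|)^\gamma$ transforms with powers of $\ve^{1/2}/|\log\ve|$ that must cancel between the $z_2(r)$ prefactor and the $\int z_1 g\,t\,dt$ factor — is where the real work lies, and it is exactly the kind of computation that the subsequent sections of the paper are set up to handle via the refined weighted-norm machinery; for this Proposition the $L^\infty$-weighted version stated here follows once the fundamental system and its $\ve$-uniform bounds are in hand.
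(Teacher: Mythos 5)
Your plan differs from the paper's: you build a global fundamental system $\{z_1,z_2\}$ of $L_0$ on $(0,\infty)$ by gluing $\{1,\log r\}$ on $r\le\ve^{-1/2}$ to (rescaled) kernel elements of the Emden--Fowler linearization, and write a single variation-of-parameters formula. The paper instead integrates forward from $\phi(0)=\phi'(0)=0$ in three stages: direct integration on $r\le\ve^{-1/2}$ (where $V=0$), a fixed-point contraction on the middle range $\ve^{-1/2}\le r\le\delta|\log\ve|\ve^{-1/2}$ that treats $V\phi$ as a small perturbation and works in the weighted norm $\sup_r r^{\gamma-2}|\phi|$, and variation of parameters with the rescaled kernel elements $z_i(r)=\tilde z_i(\ve^{1/2}r/|\log\ve|)$ \emph{only} on $r\ge\delta|\log\ve|\ve^{-1/2}$. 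The two formulas define the same function, so the final bound is of course true; the question is whether your estimation strategy reaches it.

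There is a genuine gap, and it sits exactly in the middle region. First, Lemma~\ref{lemma kerne f0} controls $\tilde z_i$ only for $\rho\ge r_0$ with $r_0>0$ \emph{fixed}; your matching at $r\sim\ve^{-1/2}$ corresponds to $\rho\sim 1/|\log\ve|\to 0$, where that lemma says nothing. So the ``bounded $\ve$-uniform matching constants'' you invoke are not supplied by the cited results, and one needs a separate argument on $\ve^{-1/2}\le r\le\delta|\log\ve|\ve^{-1/2}$ (the paper's contraction serves this role). Second, and more seriously, your Wronskian normalization forces $z_2\equiv\log r$ on the inner region, hence $z_2(r)\sim\tfrac12|\log\ve|$ at the matching point and $z_2=O(|\log\ve|)$ on the whole outer range; your claim ``in the outer region $z_2$ is bounded'' (meaning uniformly in $\ve$) is therefore false. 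Estimating $z_2(r)\int_0^r z_1 g\,t\,dt$ and $z_1(r)\int_0^r z_2 g\,t\,dt$ separately then costs a factor $|\log\ve|$, and comparing with $r^{2-\gamma}$ at $r\sim\delta|\log\ve|\ve^{-1/2}$ leaves a loss $|\log\ve|^{\gamma-1}$, divergent for $\gamma>1$. The cancellation that kills this is \emph{not} diagonal: it lives at $t\ll r$. One must regroup the $t\le\ve^{-1/2}$ contribution as
\begin{align*}
z_1(r)\,\phi(\ve^{-1/2})+\bigl[z_2(r)-z_1(r)\log(\ve^{-1/2})\bigr]\cdot\ve^{-1/2}\phi'(\ve^{-1/2}),
\end{align*}
observe that the bracket vanishes at $r=\ve^{-1/2}$, has derivative $\ve^{1/2}$ there, and grows like $\log(\ve^{1/2}r)$, and let the weight $(\ve^{1/2}r)^{\gamma-2}$ absorb that logarithm via $\sup_{\sigma\ge1}\sigma^{\gamma-2}\log\sigma<\infty$ --- this is precisely what the paper's display \eqref{integral formula} together with the initial-data estimates \eqref{initial cond1}, \eqref{initial cond2b} accomplishes. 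Your diagonal bound $|z_1(r)z_2(t)-z_2(r)z_1(t)|\lesssim |z_1(r)|\,|t-r|\sup|z_2'|$ only addresses $t$ near $r$ and does not touch this term.
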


\begin{proof}
Since all functions are radial, we have to solve
\begin{align}
\label{ode regions23}
\phi''+\frac{1}{r}\phi' + \frac{\ve } {F_\ve(r)^2} \eta_\ve( r)  \phi = g, \quad r>0 .
\end{align}
We solve this ODE with initial condition $\phi(0)=\phi'(0)=0$.
For $r\leq \ve^{-\frac12}$ we obtain directly
$$
|\phi(r)|\leq C r^{2-\gamma} \|(1+|x|)^{\gamma} g\|_{L^\infty},
\quad\forall r\geq 0.
$$
We also have
\begin{align}
\label{initial cond1}
|\phi(\ve^{-\frac12}+1)|
& \leq C \ve^{\frac{\gamma-2}2} \|(1+|x|)^{\gamma} g\|_{L^\infty},
\\
\label{initial cond2b}
|\phi'(\ve^{-\frac12}+1)| & \leq C \ve^{\frac{\gamma-1}2}\|(1+|x|)^{\gamma} g\|_{L^\infty} .
\end{align}
Let us estimate $\phi(r)$ for  $r\geq \ve^{-\frac12}+1$. First we deal with the region $ \ve ^{-\frac12}+1\leq r \leq \delta |\log\ve| \ve ^{-\frac12}$, where $\delta>0$ is to be fixed later on.
Let us rewrite \eqref{ode regions23} as
$$
\phi_{rr} + \frac1r \phi_r = \tilde g
,
\quad \text{for }  \ve^{-\frac12}+1 \leq r \leq \delta |\log\ve| \ve^{-\frac12}
$$
where
$$
\tilde g = g -  \frac{\ve } {F_\ve(r)^2} \eta_\ve( r) \phi,
$$
and let $r_0= \ve^{-\frac12}+1$. Integrating we find
\begin{align}
\label{integral formula}
\phi(r) =\phi(r_0)
+ r_0 \phi'(r_0) \log(\frac{r}{r_0})
+ \int_{r_0}^r \frac1t\int_{r_0}^t \tau \tilde g(\tau) \, d \tau d t .
\end{align}
Let us introduce  the norm
\begin{align*}
\|\phi\|
= \sup_{r\in I} r^{\gamma-2} |\phi(r)| ,
\end{align*}
where $I = [\ve^{-\frac12} + 1,\delta|\log\ve| \ve^{-\frac12}]$.
We have from \eqref{16}
\begin{align}
\label{main linear term region2}
|\frac{\ve  } {F_\ve(r)^2} |
\leq \frac{C \ve}{|\log \ve|^2} ,
\quad\text{for } \ve^{-\frac12}+1\leq r \leq \delta |\log\ve|\ve^{-\frac12} .
\end{align}
Using formulas \eqref{initial cond1}, \eqref{initial cond2b},\eqref{integral formula}, \eqref{main linear term region2}  we find
$$
\|\phi\| \leq C \delta^2  \|\phi\| + C \|(1+|x|)^{\gamma} g\|_{L^\infty}  .
$$
Then we can choose $\delta>0$ small so that for all $\ve>0$ small we find
$$
\|\phi\| \leq C \|(1+|x|)^{\gamma} g\|_{L^\infty} .
$$
This is the desired estimate in the region $\ve^{-\frac12} \leq r \leq \delta \ve^{-\frac12} |\log\ve|$.

Let us consider the range  $ r \geq r_1$ where $r_1 = \delta |\log\ve| \ve ^{-\frac12}$. Then by the previous step we have
$$
|\phi(r_1)|\leq C r_1^{2-\gamma} \|(1+|x|)^{\gamma} g\|_{L^\infty} ,
\quad |\phi'(r_1)|\leq C r_1^{1-\gamma }\|(1+|x|)^{\gamma} g\|_{L^\infty}  .
$$
We write the solution $\phi$ in terms of elements in the kernel of the linear operator $\Delta + \frac{ \ve}{f_\ve^2}$, where  $f_\ve $ is defined in \eqref{f eps}.
Let $\tilde z_i$ be the functions introduced in \eqref{def tilde z1} and
$$
z_i(r) = \tilde z_i\Big( \frac{\ve^{\frac12} r}{|\log\ve|}\Big) ,
\quad r\geq \frac{\delta |\log\ve|}{\ve^{\frac12}}.
$$
By Lemma~\ref{lemma kerne f0} we have
\begin{align}
\label{behav zi}
|z_i(r)| \leq C ,
\qquad
|z_i'(r) | \leq \frac Cr ,
\quad r\geq r_1.
\end{align}
We write now
\begin{align}
\label{formula phi}
\phi(r) = c_1 z_1(r) + c_2 z_2(r) + \phi_0(r),
\qquad r \geq r_1 ,
\end{align}
where $c_1$, $c_2$ are determined so that
$$
\phi(r_1) = c_1 z_1(r_1) + c_2 z_2(r_1) ,
\quad
\phi'(r_1) = c_1 z_1'(r_1) + c_2 z_2'(r_1)
$$
and
$$
\phi_0(r) = -z_1(r) \int_{r_1}^r \frac{z_2(s) h(s)}{W(s)} \, d s
+
z_2(r) \int_{r_1}^r \frac{z_1(s) h(s)}{W(s)} \, d s .
$$
Here $W = z_1' z_2 - z_1 z_2'$ is the Wronskian. Then $W = \frac{c}{r}$ for some $c$ and using \eqref{behav zi} gives $c = O(1)$.
Also by \eqref{behav zi} we see that
$$
|c_1|+|c_2| \leq C r_1^{2-\gamma}  \|(1+|x|)^{\gamma} g\|_{L^\infty}  .
$$
Using the estimates \eqref{behav zi} we obtain
$$
\sup_{r \geq  r_1} r^{\gamma-2} | \phi_0(r) |  \| \leq C \|(1+|x|)^{\gamma} g\|_{L^\infty}  .
$$
Therefore \eqref{formula phi} yields
$$
\sup_{r \geq  r_1} r^{\gamma-2} |\phi(r)|
\leq C \|(1+|x|)^{\gamma} g\|_{L^\infty} ,
$$
which is the desired estimate
\end{proof}

\section{Fractional exterior problem}
\label{sect linear2}

In this section we will construct a linear bounded operator that maps $f$ defined on $\Sigma_0$ to  $\phi$ defined also on $\Sigma_0$
with the property
\begin{align}
\label{eq50}
\ve \JJ_{\Sigma_0}^s(\phi)(x) = f(x) \quad \text{for } x\in \Sigma_0 , \ |x|\geq R,
\end{align}
where $R>0$ will be a large fixed constant.

\begin{prop}
\label{prop exterior slow}
If $R$ is fixed large,  there is a linear operator $f\mapsto \phi$ defined for radial, symmetric functions $f$ on $\Sigma_0$ with $ \|f\|_{1-\ve,\alpha+\ve}<\infty$, such that $\phi$ is radial, symmetric, satisfies \eqref{eq50} and
$$
\|\phi\|_{*}\leq C \|f\|_{1-\ve,\alpha+\ve}.$$
\end{prop}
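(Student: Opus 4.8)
The plan is to decompose the nonlocal Jacobi operator into a leading local part plus lower-order nonlocal remainders, and then to build the inverse for the exterior problem by combining the ODE inverse from Proposition~\ref{prop4} with a perturbation (fixed point) argument that controls the nonlocal tail.

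First I would make precise the asymptotic expansion of $\ve\JJ^s_{\Sigma_0}$ acting on functions defined on $\Sigma_0$ in the region $|x|\geq R$. Using the splitting into inner and outer contributions already set up in Section~\ref{sect error} (cylinder $C_R(x)$ of radius $R(x)$ as in \eqref{def R}), together with the representation of $\Sigma_0$ as a graph of $g=g_x$ over its tangent plane from Lemma~\ref{lemma g}, I expect
$$
\ve\JJ^s_{\Sigma_0}(\phi)(x) = \frac{\pi}{2}\,\Delta_{\Sigma_0}\phi(x) + \ve\, B_{\Sigma_0}(\phi)(x) + \ve\, c_{\Sigma_0}(x)\,\phi(x),
$$
where $\Delta_{\Sigma_0}$ is the Laplace--Beltrami operator (this is Lemma~\ref{conv lapl} quoted in Section~\ref{sect scheme}), $c_{\Sigma_0}(x)$ is the curvature potential coming from the $\langle\nu(x)-\nu(y),\nu(x)\rangle$ term — which by the estimates in Corollary~\ref{coro prop F} behaves like $\ve/F_\ve(r)^2$ to leading order in the relevant range — and $\ve B_{\Sigma_0}$ collects the genuinely nonlocal remainder, which I expect to gain in $\ve$ and to have better decay along the manifold. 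Since $\Sigma_0$ is a graph $x_3=F_\ve(r)$, the operator $\Delta_{\Sigma_0}$ written in the radial variable is a small perturbation of the flat $\Delta=\partial_{rr}+\frac1r\partial_r$ because $F_\ve'=O(\ve^{1/2})$ by Corollary~\ref{coro prop F}; thus, up to lower order, the principal part of $\ve\JJ^s_{\Sigma_0}$ is exactly $\frac\pi2 L_0$ with $L_0$ as in Section~\ref{sect linear1}.

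Next I would solve \eqref{eq50} by writing $\phi$ as the fixed point of the map
$$
\phi \longmapsto \frac{2}{\pi}\, T\!\left( f - \ve B_{\Sigma_0}(\phi) - \text{(error terms from replacing $\Delta_{\Sigma_0}$ by $\Delta$ and $c_{\Sigma_0}$ by its model)}\right),
$$
where $T$ is the radial ODE inverse from Proposition~\ref{prop4}, applied with weight exponent $\gamma=1-\ve$ (so that $\gamma-2 = -1-\ve$, consistent with the linear growth built into $\|\cdot\|_*$ via the term $\|(1+|x|)^{-1}h\|_{L^\infty}$). To close this one needs: (i) $T$ maps the $\|\cdot\|_{1-\ve,\alpha+\ve}$-type data into functions with $\|\cdot\|_*$ finite — this requires upgrading Proposition~\ref{prop4}, which only gives an $L^\infty$ weighted bound, to a Schauder-type bound on $\nabla\phi$, $D^2\phi$ and $[D^2\phi]_{1,\alpha}$; that upgrade is standard interior elliptic regularity for the ODE combined with the scaling-invariant structure (using the kernel estimates of Lemma~\ref{lemma kerne f0} in the outer region $r\geq\delta|\log\ve|\ve^{-1/2}$ exactly as in the proof of Proposition~\ref{prop4}); and (ii) the remainder operator $\ve B_{\Sigma_0}$ plus the curvature-correction errors must be a contraction from the $\|\cdot\|_*$ ball into the $\|\cdot\|_{1-\ve,\alpha+\ve}$ space with small norm — this is where I'd use that the nonlocal tail, after the cylinder cutoff, is controlled by $\int_{C_R(x)^c}|x-y|^{-3-s}\,dy$-type integrals which produce a factor $R(x)^{-1-s+\ve}\sim$ (decay) and an extra power of $\ve$, mirroring the estimates of Lemma~\ref{lemma R2 simple} and the bounds on $E_4$, $E_5$ in the proof of Proposition~\ref{prop error}.

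The main obstacle I anticipate is step (ii): controlling the nonlocal remainder $\ve B_{\Sigma_0}(\phi)$ in the strong norm $\|\cdot\|_{1-\ve,\alpha+\ve}$, in particular the H\"older seminorm with exponent $\alpha+\ve$. The difficulty is that $\phi$ is only as regular as $\|\phi\|_*$ allows (essentially $C^{2,\alpha}$ with $D^2\phi$ of size $(1+|x|)^{-1}$ or $\ve^{1/2}/r$), and the nonlocal operator differentiates nearly to second order, so one has to exploit cancellation in the principal-value integral $\int (\phi(y)-\phi(x))|x-y|^{-3-s}\,dy$ exactly as in the inner-integral expansion of Lemma~\ref{lemma Rest1}, subtracting the second-order Taylor polynomial and estimating the quadratic remainder by $[D^2\phi]_{\alpha}$. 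Getting the weights and the $\ve$-powers to line up so that the resulting operator norm is $o(1)$ as $\ve\to0$ — uniformly across the three regimes $r\leq\ve^{-1/2}$, $\ve^{-1/2}\leq r\leq\delta|\log\ve|\ve^{-1/2}$, $r\geq\delta|\log\ve|\ve^{-1/2}$ — is the delicate bookkeeping, but it parallels the already-completed estimates for the error terms $E_2,\dots,E_5$ and for $N(h)$ in Proposition~\ref{prop Nh}, so the same techniques apply. Once the contraction is established, the fixed point $\phi$ solves \eqref{eq50}, inherits the radial and even symmetries from $f$ (since $T$, $\Delta_{\Sigma_0}$ and $B_{\Sigma_0}$ all preserve them), and satisfies $\|\phi\|_*\leq C\|f\|_{1-\ve,\alpha+\ve}$, which is the assertion of Proposition~\ref{prop exterior slow}.
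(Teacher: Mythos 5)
The central move in your plan---inverting the \emph{local} operator $L_0=\Delta+W\eta_\ve$ by the ODE inverse $T$ of Proposition~\ref{prop4} and absorbing the difference $\ve\JJ^s_{\Sigma_0}-\tfrac\pi2 L_0$ into a contraction---fails at the level of operator boundedness, not bookkeeping. The spaces $X$ (norm $\|\cdot\|_*$, essentially weighted $C^{2,\alpha}$) and $Y$ (norm $\|\cdot\|_{1-\ve,\alpha+\ve}$, weighted $C^{\alpha+\ve}$) are tailored to the nonlocal operator of order $1+s=2-\ve$: because $L_\ve$ is of order strictly \emph{less} than $2$, it gains $\ve$ in H\"older exponent and maps $C^{2,\alpha}$ into $C^{\alpha+\ve}$. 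The Laplacian does not. For generic $\phi\in X$, $\Delta\phi$ is only $C^\alpha$, \emph{not} $C^{\alpha+\ve}$, so $L_0$ does not map $X$ into $Y$ at all, and consequently $\ve B_{\Sigma_0}=\ve\JJ^s_{\Sigma_0}-\tfrac\pi2 L_0-\ve c_{\Sigma_0}$ is not merely ``not small'' but unbounded from $X$ to $Y$. The analogy you draw with the estimates for $E_2,\dots,E_5$ and $N(h)$ does not transfer: those concern either a fixed function ($H^s_{\Sigma_0}$) or a genuinely quadratic operator giving extra powers of $\|h\|_*$, neither of which gives the H\"older gain of exponent $\ve$ needed here. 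Thus the obstacle you flag at the end as ``delicate bookkeeping'' is in fact structural.

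The paper circumvents this precisely by never decomposing $\ve\JJ^s_{\Sigma_0}$ into a local principal part plus a small remainder in these norms. It first reduces the problem on $\Sigma_0$ to a flat problem on $\R^2$ of the exact form $L_\ve(\tilde\phi)+\tfrac{\eta_\ve}{|x|^{2-\ve}}\tilde\phi+\ve B_1(\tilde\phi)+(\cdots)\tilde\phi=\tilde f$ (eq.~\eqref{eq91}), keeping the genuinely nonlocal operator $L_\ve$ intact as the principal part. Lemma~\ref{prop slow r2} inverts $L_\ve+W_\ve$ directly: the a~priori estimate is obtained by a compactness/blowup argument (not by perturbation), and existence comes from Lemma~\ref{solv medium decay}, which passes through the Riesz-potential representation $D_r h=c_{2,\ve}\,\text{p.v.}\!\int\frac{|x|-\langle y,x/|x|\rangle}{|x-y|^{2+\ve}}(W_\ve h-g)\,dy$ and only \emph{then} perturbs from the $\ve=0$ kernel at the level of a first-order (gradient) equation. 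The ODE inverse from Proposition~\ref{prop4} does enter, but as an auxiliary ingredient inside that gradient formulation (to invert $D_r-A_0$, i.e.\ $\Delta\tilde h+W\tilde h=W\tilde\psi$)---not as the inverse of the full second-order principal part. The only pieces treated as contractive corrections are $\ve B_1$ and $\ve b+a_\ve-\eta_\ve/|x|^{2-\ve}$, which genuinely are small since they compare the curved kernel on $\Sigma_0$ with the flat kernel on $\R^2$ and the exact potential with the model $\eta_\ve/|x|^{2-\ve}$.
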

Here the norms $\| \ \|_*$ and $\|\ \|_{1-\ve,\alpha+\ve}$ are the ones defined in \eqref{norm st}, \eqref{norm RHS}.

We will also need a version of this result for right hand sides with fast decay. Let $0<\tau<1$.

\begin{prop}
\label{prop fast decay}
If $R$ is fixed large,  there is a linear operator $f\mapsto \phi$ defined for $f$ radial, symmetric and  $\| |x|^{2+\tau-\ve} f\|_{L^\infty(\Sigma_0)}<\infty$, such that $\phi$ is symmetric, satisfies \eqref{eq50} and
$$
\| |x|^{\tau} \phi\|_{L^\infty(\Sigma_0)} \leq C\| |x|^{2+\tau-\ve} f\|_{L^\infty(\Sigma_0)}.
$$

\end{prop}

In order to prove Propositions~\ref{prop exterior slow} and \ref{prop fast decay}  we study first
\begin{align}
\label{exterior problem}
L_\ve(\phi) + W_\ve(r) \phi= f \quad \text{in } \R^2 ,
\end{align}
where
\begin{align}
\label{Leps}
L_\ve(\phi)(x) = \ve\, \frac2\pi \text{p.v.} \int_{\R^2} \frac{\phi(y)-\phi(x)}{|x-y|^{4-\ve}} \,  d y ,
\end{align}
and
$$
W_\ve(r) = \frac{\ve } {F_\ve(r)^{2-\ve}} \eta_\ve(r),
\quad r = |x|
$$
where
\begin{align}
\label{def eta eps}
\eta_\ve(r) = \eta(\ve^{-\frac12} r -1)
\end{align}
and $\eta$ is a smooth cut-off function with $\eta(t)=1$ for $t\geq 1$ and $\eta(t)=0$ for $t\leq 0$.

\medskip

We start  with a version of Proposition~\ref{prop exterior slow} for problem \eqref{exterior problem}.

\begin{lemma}
\label{prop slow r2}
There is a linear  operator that given a radial function $f $ in $\R^2$ such that $\|  f\|_{1-\ve,\alpha+\ve}<\infty$ produces a radial solution $\phi$ of \eqref{exterior problem} with the property
\begin{align}
\label{eq92}
\| \phi\|_{*} \leq C \|  f\|_{1-\ve,\alpha+\ve}.
\end{align}
\end{lemma}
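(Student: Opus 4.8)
The plan is to solve \eqref{exterior problem} for radial $f$ by treating $L_\ve$ as a fractional Laplacian-type operator on $\RR^2$ whose behavior we understand through an explicit fundamental solution, then correcting for the lower order potential term $W_\ve(r)\phi$, which is supported only in the region $r\geq \ve^{-\frac12}$ where it is small. First I would recall that, up to the normalization constant in \eqref{Leps}, $L_\ve = -\ve\,\frac2\pi\, c_{2,s}^{-1}(-\Delta)^{\frac{2-\ve}{2}}$ acting on $\RR^2$ with $s=1-\ve$, so the operator $L_\ve$ inverts against the Riesz kernel $|x|^{-(2-(2-\ve))}=|x|^{-\ve}$; more precisely, $L_\ve(|x|^{\beta})$ is a multiple of $|x|^{\beta-(2-\ve)}$ as long as $\beta$ stays in the admissible range. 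So the natural ansatz is $\phi = K_\ve * f$ with $K_\ve(x)\sim a_\ve |x|^{-\ve}$ the (radial) Riesz-type kernel normalized so that $L_\ve K_\ve = \delta_0$ in the scaling sense, and then check that convolution against a function with $\|f\|_{1-\ve,\alpha+\ve}<\infty$ gives a function $\phi$ that grows linearly with $\|\phi\|_* \lesssim \|f\|_{1-\ve,\alpha+\ve}$. The gain of $2-\ve$ derivatives turns the $|x|^{-(1-\ve)}$ decay of $f$ into the $|x|^{+1}$ growth allowed by $\|\cdot\|_*$, which is exactly the budget in \eqref{norm st}, and the H\"older part $[f]_{1-\ve,\alpha+\ve}$ with exponent $\alpha+\ve$ upgrades to control of $[D^2\phi]_{1,\alpha}$ by fractional Schauder estimates for $(-\Delta)^{\frac{2-\ve}{2}}$ applied at each scale $|x|\sim \rho$ after rescaling.

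Concretely, I would carry out the following steps. (1) Produce a particular radial solution $\phi_0$ of $L_\ve(\phi_0) = f$ in $\RR^2$ by the Riesz potential $\phi_0 = K_\ve * f$, exploiting radial symmetry to write everything as one-dimensional integrals, and prove the bound $\|\phi_0\|_*\le C\|f\|_{1-\ve,\alpha+\ve}$ with $C$ uniform as $\ve\to0$; here one splits the convolution integral into $|y|\lesssim |x|/2$, $||y|-|x||\lesssim |x|/2$, and $|y|\gtrsim 2|x|$, estimating each piece using the decay of $f$ and the (nearly scale-invariant) kernel $|x|^{-\ve}$. The key point, which must be checked carefully, is that the implied constants do not blow up as $\ve\to 0$ even though $\int |x|^{-\ve}$-type integrals are only logarithmically convergent; this works because the $\ve$ prefactor in \eqref{Leps} exactly compensates, producing finite limits (this is the same mechanism as in Lemma~\ref{conv lapl} and the asymptotics $L_\ve\to \frac\pi2\Delta$ of the excerpt). (2) Add a homogeneous correction: the radial kernel of $L_\ve$ on $\RR^2$ consists, at main order, of the constants and a function behaving like $|x|^{2-\ve}\sim|x|^2$; choosing the right multiple of the latter (and subtracting a constant) we can arrange $\phi(0)=0$ and kill the slowest-growing inhomogeneous mode if needed, staying within $\|\cdot\|_*$. (3) Treat the potential term $W_\ve\phi$ as a perturbation: set up the fixed point $\phi = \phi_0 - L_\ve^{-1}(W_\ve \phi)$ in the ball $\{\|\phi\|_*\le C\|f\|_{1-\ve,\alpha+\ve}\}$; since $W_\ve(r)= \ve F_\ve(r)^{-(2-\ve)}\eta_\ve(r)$ and $F_\ve(r)\ge c|\log\ve|$ on the support of $\eta_\ve$ (Corollary~\ref{coro prop F}), we have $\|(1+|x|)^{1-\ve} W_\ve\phi\|_{L^\infty}\lesssim \frac{\ve}{|\log\ve|^{2-\ve}}\|\phi\|_* \cdot \sup (1+|x|)^{2-\ve}\cdots$, and a short computation shows the operator $\phi\mapsto L_\ve^{-1}(W_\ve\phi)$ has norm $o(1)$ as $\ve\to0$ on $X$, so a Neumann series (contraction) gives the solution with the stated estimate.

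The main obstacle I anticipate is Step (1), specifically establishing the Schauder-type bound $[D^2\phi]_{1,\alpha}\le C\|f\|_{1-\ve,\alpha+\ve}$ with a constant independent of $\ve$ near $1$. The operator $L_\ve$ degenerates to $\frac\pi2\Delta$ as $\ve\to0$, so one is interpolating between a genuinely nonlocal regularity theory and the classical local one, and the weighted H\"older spaces must be calibrated so the constants match up across all scales $|x|\sim\rho$ ranging over $\rho\in[R,\infty)$. My approach would be to rescale $\phi_\rho(z)=\rho^{-1}\phi(\rho z)$ on the annulus $\{\tfrac12<|z|<2\}$, note $\phi_\rho$ satisfies $L_\ve(\phi_\rho)= \rho^{1-\ve} f(\rho z)$ there (plus a controlled tail from the nonlocal part evaluated on $|z|\ge 2$, which is bounded by $\|\phi\|_*$ via the already-established $L^\infty$ estimate), and then apply interior fractional Schauder estimates for $(-\Delta)^{\frac{2-\ve}{2}}$ on a fixed annulus --- here one needs the $\ve$-uniform version, which follows by tracking constants in the standard proof (e.g. via the extension characterization or a Campanato-type iteration) and using that $2-\ve$ stays bounded away from both $0$ and (by the constraint $\alpha+\ve<1$, say) the critical value. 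Once the rescaled estimate is $\ve$-uniform, undoing the scaling and summing over dyadic $\rho$ yields exactly the $[D^2\phi]_{1,\alpha}$ bound in $\|\cdot\|_*$. The verification of the tail terms and of the uniform Schauder constant is where the real work lies; everything else is bookkeeping with the weights in \eqref{norm st} and \eqref{norm RHS}.
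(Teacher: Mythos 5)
Your Step (3) contains a genuine gap: you propose to absorb the potential term $W_\ve\phi$ as a small perturbation and close a Neumann series, claiming $\phi\mapsto L_\ve^{-1}(W_\ve\phi)$ has $o(1)$ norm as $\ve\to 0$. This is false. The potential $W_\ve(r)=\ve F_\ve(r)^{-(2-\ve)}\eta_\ve(r)$ is at the \emph{critical Hardy scaling}: for $r\gtrsim \ve^{-1/2}|\log\ve|$ one has $F_\ve(r)\sim\ve^{1/2}r$, so $W_\ve(r)\sim r^{-(2-\ve)}$ up to an $\ve^{\ve/2}\to1$ factor, and hence if $\phi\sim r$ then $(1+|x|)^{1-\ve}W_\ve\phi\sim 1$. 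The norm of $W_\ve\phi$ in $\|\cdot\|_{1-\ve,\alpha+\ve}$ is $O(1)\|\phi\|_*$, not $o(1)$; your displayed inequality with the trailing $\cdots$ papers over exactly this point, since the remaining factor $\sup(1+|x|)^{2-\ve}/F_\ve(|x|)^{2-\ve}$ is of order one. A Neumann series therefore does not converge, and in fact it cannot: the operator $\Delta+r^{-2}$ has oscillatory homogeneous solutions $\cos(\log r),\,\sin(\log r)$ at infinity, so the problem is genuinely indefinite and not a perturbation of $L_\ve$ alone.

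The paper proceeds differently. The core of the argument is a compactness (blow-up) \emph{a priori} estimate, not a perturbation: one shows $\||x|^{-1}\phi\|_{L^\infty}\leq C\|(1+|x|)^{1-\ve}f\|_{L^\infty}$ by contradiction. Assuming sequences $\ve_i\to0$, $\phi_i$, $f_i$ with normalized $\phi_i$ and vanishing $f_i$, one takes a point $x_i$ where $|x_i|^{-1}|\phi_i(x_i)|\geq\frac12$, rescales by $r_i=|x_i|$, and passes to a subsequential limit. Depending on the asymptotic relation between $r_i$ and $\ve_i^{-1/2}|\log\ve_i|$, the limit $h$ is a radial solution of either $\Delta h=0$, $\Delta h+W h=0$ with $W$ bounded on a ball and $=|x|^{-2}$ outside, or $\Delta h+|x|^{-2}h=0$ on $\R^2\setminus\{0\}$; in every case the growth bound $|h(r)|\le r$ together with radiality forces $h\equiv0$, contradicting $|h(\hat x)|\geq\frac12$. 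Existence is then obtained by approximation (truncating $f$ and invoking Lemma~\ref{solv medium decay}, which handles fast-decaying right-hand sides by inverting a first-order integral formulation where the perturbation is $A_\ve-A_0$ -- a difference of two kernels -- and \emph{not} $W_\ve\phi$ itself), and the weighted $C^{2,\alpha}$ part of $\|\cdot\|_*$ is recovered by the uniform scaling/Schauder argument you outline. Your Steps (1)--(2) are in line with the paper and could plausibly be filled in; the failure is specifically in treating the Hardy-critical potential as a small perturbation in Step (3), which must instead be confronted nonperturbatively.
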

Then norms are the ones defined in \eqref{norm st}, \eqref{norm RHS} in the context of functions defined on $\R^2$.

For smooth bounded functions $h$, $L_\ve(h)$ has the expansion
$$
L_\ve(h) =  \Delta h(x) + O(\ve) \quad\text{as }\ve\to0,
$$
so  equation \eqref{exterior problem} can be considered a perturbation of
\begin{align*}
\Delta h  +W(x) h= g \quad\text{in }\R^2.
\end{align*}
where
$$
W(x) = \frac{\ve } {F_\ve(x)^2} \eta_\ve(x)
$$

The next lemma is a standard estimate for convolutions.
\begin{lemma}
\label{conv0}
Assume $\gamma,\beta<2$, $\gamma+\beta>2$. Let $\| (1+|x|)^\gamma f\|_{L^\infty} < \infty$. Then
$$
\left|\int_{\R^2} \frac{1}{|x-y|^\beta} f(y)\, d y \right|\leq C \| (1+|x|)^\gamma f\|_{L^\infty} (1+|x|)^{2-\beta-\gamma} .
$$
\end{lemma}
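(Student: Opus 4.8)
The plan is to estimate the convolution integral $\int_{\R^2} |x-y|^{-\beta} f(y)\,dy$ by splitting the domain of integration into three natural regions according to the relative size of $|y|$ and $|x|$. Write $r = |x|$ and assume $r$ large (the case of bounded $r$ being trivial since both exponents are integrable near $y=0$ and the bound $\|(1+|x|)^\gamma f\|_{L^\infty}<\infty$ with $\gamma<2$ guarantees absolute convergence at infinity once $\gamma+\beta>2$). The three regions are: (i) $|y|<r/2$, where $|x-y|\sim r$; (ii) $|y-x|<r/2$, where $|y|\sim r$ and the singularity of the kernel is active; and (iii) $|y|>r/2$ and $|y-x|>r/2$, the far region.

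In region (i), bound $|x-y|^{-\beta}\le C r^{-\beta}$ and $|f(y)|\le C\|(1+|x|)^\gamma f\|_{L^\infty}(1+|y|)^{-\gamma}$, so the contribution is at most $C r^{-\beta}\int_{|y|<r/2}(1+|y|)^{-\gamma}\,dy \le C r^{-\beta}\cdot r^{2-\gamma}$ since $\gamma<2$ makes the integral grow like $r^{2-\gamma}$; this gives exactly $C r^{2-\beta-\gamma}$ times the norm. In region (ii), use $|f(y)|\le C\|(1+|x|)^\gamma f\|_{L^\infty} r^{-\gamma}$ (as $|y|\sim r$ there), pulling it out, and then $\int_{|y-x|<r/2}|x-y|^{-\beta}\,dy \le C r^{2-\beta}$ because $\beta<2$ renders $|x-y|^{-\beta}$ locally integrable in $\R^2$; the product is again $C r^{2-\beta-\gamma}$ times the norm. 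In region (iii) both $|x-y|$ and $|y|$ are comparable, $|x-y|\ge c|y|$, so $|x-y|^{-\beta}|f(y)|\le C\|(1+|x|)^\gamma f\|_{L^\infty}|y|^{-\beta-\gamma}$, and since $\beta+\gamma>2$ the tail integral $\int_{|y|>r/2}|y|^{-\beta-\gamma}\,dy$ converges and equals $C r^{2-\beta-\gamma}$. Summing the three pieces yields the claimed bound.

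There is essentially no obstacle here: this is the standard Riesz-potential / convolution estimate, and the only care needed is to track that the three exponent conditions $\gamma<2$, $\beta<2$, $\gamma+\beta>2$ are used exactly once each — respectively in region (i) (to control the inner integral), region (ii) (to control the singular integral), and region (iii) (to ensure the tail converges). One may write the constant $C$ uniformly as long as $\gamma$ and $\beta$ stay in a compact subset of the admissible range, which is all that is needed in the applications (where $\beta = 4-\ve$ is not in range, so this lemma is applied with the kernel already regularized or with different exponents; in the uses within the paper $\beta<2$). I would present the argument compactly, in one displayed chain of inequalities per region, without belaboring the elementary polar-coordinate computations $\int_{|y|<\rho}|y|^{-a}\,dy = c\,\rho^{2-a}$ for $a<2$ and $\int_{|y|>\rho}|y|^{-a}\,dy = c\,\rho^{2-a}$ for $a>2$.
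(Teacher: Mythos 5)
Your proof is correct, and it is the standard three-region Riesz-potential estimate. The paper does not supply a proof at all (Lemma~\ref{conv0} is introduced with the sentence ``The next lemma is a standard estimate for convolutions'' and left unproved), so there is no alternative argument to compare against; your write-up fills in exactly the expected elementary computation, with each of the three hypotheses $\gamma<2$, $\beta<2$, $\gamma+\beta>2$ used once. The only small cleanup I would make is to drop the parenthetical speculation about $\beta=4-\ve$ in the final paragraph: in the paper the lemma is invoked with $\beta=1+\ve\in(1,2)$ (the kernel $\frac{|x|-\langle y,x/|x|\rangle}{|x-y|^{2+\ve}}$ is bounded by $|x-y|^{-1-\ve}$), which is squarely in the admissible range, so the aside is a distraction rather than a caveat.
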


\begin{lemma}
\label{solv medium decay}
Let $g$ be radial with $\| (1+|x|)^{\gamma-\ve} g \|_{L^\infty}<\infty$ where $\gamma \in (1,2)$.
Then for $\ve>0$ small \eqref{exterior problem}
has a radial solution $h$ depending linearly on $g$ with $h(0)=0$. Moreover
$$
\| (1+|x|)^{\gamma-2} h\|_{L^\infty} \leq C \| (1+|x|)^{\gamma-\ve} g \|_{L^\infty} .
$$
\end{lemma}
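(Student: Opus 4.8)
The statement to prove is Lemma~\ref{solv medium decay}: given $g$ radial with $\|(1+|x|)^{\gamma-\ve}g\|_{L^\infty}<\infty$, $\gamma\in(1,2)$, then for $\ve$ small the exterior equation \eqref{exterior problem}, namely $L_\ve(\phi)+W_\ve(r)\phi=f$, has a radial solution $h$ depending linearly on $g$, with $h(0)=0$ and the weighted bound.

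Let me think about how to prove this. The operator is $L_\ve(\phi) = \ve\frac{2}{\pi}\text{p.v.}\int \frac{\phi(y)-\phi(x)}{|x-y|^{4-\ve}}dy$, which is essentially $(-\Delta)^{(2-\ve)/2}$ up to constant, or rather $\ve(-\Delta)^{s}$ type... Actually $\frac{2}{\pi}(1-s)$ times the standard fractional Laplacian — wait $4-\ve = 2 + (2-\ve) = 2 + s + 1$? We're in $\R^2$, so $N=2$, and the kernel exponent is $N+s = 2+s$ where... no, $4-\ve = 4-(1-s) = 3+s$. Hmm, but $N+s$ in $\R^2$ would be $2+s$. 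So actually this is a fractional operator of order $2 - \ve$... Let me recompute: in $\R^N$ the $s$-fractional Laplacian has kernel $|x-y|^{-(N+s)}$. Here $N=2$ and exponent is $4-\ve = 4-(1-s) = 3+s$. So $N+\sigma = 3+s$ gives $\sigma = 1+s$, i.e. order $2\sigma/... $ hmm this is the fractional Laplacian of order $\sigma = 1+s \in (1,2)$, so $(-\Delta)^{(1+s)/2}$. As $s\to 1$, $\sigma\to 2$, recovering $-\Delta$. And the $\ve = 1-s$ factor in front makes $\ve L_{\text{frac}}\to$ const $\times(-\Delta)$. Good, that matches the stated expansion $L_\ve(h) = \Delta h + O(\ve)$.

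So the plan. First, I'd invert the "free" operator $L_\ve$ on radial functions using an explicit representation. Fractional Laplacians of radial functions and their inverses (Riesz potentials) are well understood. Specifically, I would write a candidate particular solution to $L_\ve(\phi) = g$ via convolution with the appropriate Riesz kernel $c|x|^{-(2-(1+s))} = c|x|^{-(1-s)} = c|x|^{-\ve}$ — wait, the Riesz kernel for $(-\Delta)^{\sigma/2}$ in $\R^N$ is $|x|^{\sigma - N}$; here $\sigma = 1+s$, $N=2$, so $|x|^{(1+s)-2} = |x|^{s-1} = |x|^{-\ve}$. So $\phi = c_\ve |x|^{-\ve} * (g/\ve)$ roughly. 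By Lemma~\ref{conv0} / Lemma~\ref{conv zero} (the convolution estimate), with $\beta = \ve$ and weight $\gamma - \ve$ on $g$: we need $\gamma - \ve < 2$ (true) and $\ve + (\gamma-\ve) = \gamma > 2$ — but $\gamma < 2$! So the naive Riesz potential doesn't converge at infinity. This is the first real obstacle: the right-hand side doesn't decay fast enough to apply the Riesz potential directly.

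The way around this, which I expect the authors use, is to \emph{not} invert $L_\ve$ alone but to exploit the potential term $W_\ve(r)\phi$. The combined operator $L_\ve + W_\ve$ should behave like $\Delta + W$, and $W(r)\sim \ve/F_\ve(r)^2 \eta_\ve \sim \ve/(\ve r^2)\cdot(\text{stuff}) = 1/r^2\cdot(\text{stuff})$ in the conic regime $r\gtrsim |\log\ve|\ve^{-1/2}$ — an inverse-square potential, exactly the kind that produces solutions with power growth $r^{\gamma-2}$ matching the weight. So the strategy: (i) solve the corresponding \emph{local} problem $\Delta h + W(x) h = g$ in $\R^2$ for radial functions — this is an ODE, solved essentially by the method already deployed in Proposition~\ref{prop4} (which handled $L_0 = \Delta + \frac{\ve}{F_\ve^2}\eta_\ve$); the kernel elements $z_1, z_2$ from Lemma~\ref{lemma kerne f0} and the variation-of-parameters formula give a right inverse $T_0$ with $\||x|^{\gamma-2}h\|_\infty \le C\|(1+|x|)^\gamma g\|_\infty$ — note the weight on $g$ there is $\gamma$ not $\gamma - \ve$, but since $|x|^\ve$ is locally bounded and at infinity $|x|^{-\ve}$ only helps, the $\gamma-\ve$ version follows with uniform constant as long as we're careful near $r = 0$. (ii) Then treat the full equation as a perturbation: write $L_\ve(\phi) + W_\ve\phi = g$ as $\Delta\phi + W\phi = g - (L_\ve - \Delta)\phi - (W_\ve - W)\phi =: g - \mathcal{R}(\phi)$, apply $T_0$, and set up a fixed point $\phi = T_0(g - \mathcal{R}(\phi))$.

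So concretely the key steps I would carry out, in order: \textbf{Step 1.} Record the ODE form of \eqref{exterior problem} for radial $\phi$ and recall from Section~\ref{sect linear1} / Proposition~\ref{prop4} the right inverse $T_0$ of the local operator $L_0 = \Delta + W$ with the bound $\||x|^{\gamma-2}T_0(g)\|_{L^\infty}\le C\|(1+|x|)^{\gamma-\ve}g\|_{L^\infty}$ for $\gamma\in(1,2)$ and $\ve$ small (re-deriving the minor adjustment for the $-\ve$ in the weight, which only requires $|x|^{-\ve}\le 1$ for $|x|\ge 1$ and a harmless constant for $|x|\le 1$, using $h(0)=0$). \textbf{Step 2.} Establish the perturbation estimate: for $h$ radial smooth with $\||x|^{\gamma-2}h\|_{L^\infty}+\|\nabla h\|_{L^\infty}+\||x|D^2h\|_{L^\infty}<\infty$ we have $\|(1+|x|)^{\gamma-\ve}(L_\ve(h)-\Delta h)\|_{L^\infty}\le C\ve\,(\text{that norm of }h)$; this uses the standard second-order Taylor expansion of $L_\ve$ acting on smooth functions with controlled second derivatives plus the $\ve$ prefactor, together with a tail estimate using the growth $|h|\lesssim|x|^{\gamma-2}$. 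Similarly $\|(1+|x|)^{\gamma-\ve}(W_\ve - W)h\|_{L^\infty}$ is small — actually $W_\ve$ involves $F_\ve^{2-\ve}$ vs $F_\ve^2$ in $W$, so $W_\ve - W = W(F_\ve^\ve - 1)(-1)\dots$, of size $O(\ve\log(F_\ve))\cdot W$, small. \textbf{Step 3.} Contraction mapping: the map $\phi\mapsto T_0(g - (L_\ve-\Delta)\phi - (W_\ve - W)\phi)$ on the Banach space $\{h\text{ radial}: h(0)=0,\ \||x|^{\gamma-2}h\|_{L^\infty}<\infty\}$ — with a slightly richer norm including $\nabla h$, $|x|D^2 h$ so that Step 2 applies — is a contraction for $\ve$ small, giving the solution $h$ linear in $g$ with the claimed bound.

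The main obstacle is \textbf{Step 2}, specifically showing $L_\ve - \Delta$ is genuinely a small (order $\ve$) perturbation in the \emph{weighted} norms on an unbounded domain where $h$ grows. The pointwise Taylor estimate $|L_\ve(h)(x) - \Delta h(x)| \le C\ve(\|D^2 h\|_{L^\infty(B_1(x))} + \text{far-field terms})$ is delicate because the far-field contribution $\ve\int_{|y-x|>1}\frac{|h(y)-h(x)|}{|x-y|^{4-\ve}}dy$ must be shown to be $O(\ve)\cdot\min(1+|x|)^{-(\gamma-\ve)}\times(\text{norm})$: since $|h(y)| \lesssim (1+|y|)^{\gamma-2}$ decays (as $\gamma<2$), the integral converges, but one must check the decay rate in $|x|$ is at least $(\gamma-\ve)$, and that the principal-value cancellation near $y=x$ doesn't cost powers of $\ve^{-1}$. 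This is essentially the content that makes $\ve L_\ve$ a good operator and is where the restriction "$\ve$ small" and the careful choice of function spaces (the $\|\cdot\|_*$-type norms controlling $\nabla h$ and $|x|D^2 h$) earn their keep. One also needs to be a little careful that the fixed point is taken in a space where both the output regularity of $T_0$ (which gives $C^2$-type control via the ODE) and the input requirements of the perturbation estimate are compatible; I would close this by a bootstrap: first get $\||x|^{\gamma-2}h\|_{L^\infty}$, then upgrade to derivative bounds using interior elliptic/ODE estimates on the equation $\Delta h = g - W h - (L_\ve-\Delta)h$ itself.
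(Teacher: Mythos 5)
Your proposal takes a genuinely different route from the paper. You perturb directly off the local operator: solve $\Delta h + Wh = g$ by the radial ODE theory of Proposition~\ref{prop4} and then absorb $L_\ve - \Delta$ and $W_\ve - W$ as small perturbations via a fixed point. The paper instead works with an \emph{integrated} (first-order) reformulation: equation \eqref{reformulation2} expresses the radial derivative $D_r h$ as a convolution of $W_\ve h - g$ against the differentiated Riesz kernel $\frac{x-y}{|x-y|^{2+\ve}}$; the perturbation to be estimated is then $A_\ve - A_0$, where both $A_\ve$ and $A_0$ are smoothing integral operators acting on $h$ \emph{itself}, not on its derivatives. This distinction matters. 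In your scheme, $(L_\ve - \Delta)\phi$ genuinely needs control of $D^2\phi$ (weighted), so your fixed-point space must be a weighted $C^{2}$ or $C^{2,\alpha}$ space, and you must prove that the local solution operator $T_0$ from Proposition~\ref{prop4} is bounded from a weighted $C^\alpha$ space into that weighted $C^{2,\alpha}$ space — i.e.\ a Schauder estimate for the ODE, which Proposition~\ref{prop4} does not provide as stated and which must be worked out (and must come with constants uniform in $\ve$). Your ``bootstrap'' suggestion at the end does not by itself resolve this: inside a contraction-mapping argument you cannot regularize the iterates after the fact; the norm of the ambient Banach space has to carry the derivative control from the start, and the map must be shown to preserve it. That is exactly the technical burden the paper's reformulation avoids: by working with the gradient formula, the contraction lives in the space $\|\varphi\|_a = \||x|^{\gamma-2}\varphi\|_\infty + \|(1+|x|)^{\gamma-1}\nabla\varphi\|_\infty$, one derivative cheaper, and the smallness of $A_\ve - A_0$ is obtained by a direct splitting of the integration domain without any second-order Taylor expansion of the kernel. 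So: your approach is not wrong in spirit, and you have correctly located where the hard work would be (your Step 2), but it requires a nontrivial additional ingredient — a uniform weighted Schauder theory for $T_0$, or equivalently for $L_\ve$ itself — that you have not supplied, whereas the paper's route closes with the tools already in hand.
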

\begin{proof}
Instead of looking directly for a solution of  \eqref{exterior problem} we will solve
\begin{align}
\label{reformulation2}
D_{r} h(x) =  c_{2,\ve} \,  \text{p.v.} \int_{\R^2} \frac{|x|-\langle y, \frac{x}{|x|} \rangle}{|x-y|^{2+\ve}}(W_\ve h-g) \, d y ,
\end{align}
for a radial function $h$ with $h(0)=0$. Here $D_r$ is the radial derivative.
In \eqref{reformulation2} the integral converges if $ \| (1+|x|)^{\gamma-\ve} (W_\ve h-g) \|_{L^\infty}<\infty$ by Lemma~\ref{conv0}.

Equation \eqref{reformulation2} is equivalent to
\begin{align}
\label{ref2b}
D_{r} h - A_\ve(h) = B_\ve(g)
\end{align}
where
\begin{align*}
A_\ve(h)(x) & = c_{2,\ve} \,  \text{p.v.} \int_{\R^2} \frac{|x|-\langle y, \frac{x}{|x|} \rangle}{|x-y|^{2+\ve}}W_\ve(y) h(y)\, d y
\\
\nonumber
B_\ve(g)(x)&= - c_{2,\ve} \,  \text{p.v.} \int_{\R^2} \frac{|x|-\langle y, \frac{x}{|x|} \rangle}{|x-y|^{2+\ve}}g(y) \, d y .
\end{align*}
Let $A_0$ be the operator
$$
A_0(h)(x) =  c_{2} \,  \text{p.v.} \int_{\R^2} \frac{|x|-\langle y, \frac{x}{|x|} \rangle}{|x-y|^{2}}W(y) h(y)\, d y .
$$
Then \eqref{ref2b} is equivalent to
\begin{align}
\label{eq81}
D_r h  - A_0(h) = A_\ve(h) - A_0(h) + B_\ve(g).
\end{align}

We claim that given $\psi$ radial in $\R^2$ with $\|(1+r)^{\gamma-1}\psi\|_{L^\infty}<\infty$ we can find a radial solution $h$ to
\begin{align}
\label{eqh1}
D_{r} h - A_0(h) = \psi
\end{align}
satisfying $h(0)=0$ and
\begin{align}
\label{eq80}
\| (1+r)^{\gamma-1} h'\|_{L^\infty} + \| r^{\gamma-2}  h\|_{L^\infty}  \leq C  \|(1+r)^{\gamma-1}\psi\|_{L^\infty}.
\end{align}

Indeed, we need to solve
\begin{align*}
h'(r) + \frac1r\int_0^r W(s) h(s) s\, d s = \psi(r)
\quad \text{for all } r>0.
\end{align*}
Let
$$
\tilde \psi(r) = \int_0^r \psi(s)\, d s, \quad \tilde h(r) = h(r)-\tilde \psi(r).
$$
Then we look for $\tilde h$ satisfying
$$
\tilde h'(r)  + \frac1r\int_0^r W(s) \tilde h(s)s \, d s = -
 \frac1r\int_0^r W(s) \tilde \psi(s) s\, d s
$$
which we write as
$$
\Delta \tilde h + W(r) \tilde h(r) = W(r) \tilde \psi(r) , \quad 0<r<\infty.
$$
We solve this equation using Proposition~\ref{prop4} and obtain
$$
\| ( 1+r^{\gamma-1}\tilde h'\|_{L^\infty} + \| r^{\gamma-2} \tilde h\|_{L^\infty} \leq C  \| (1+r)^{\gamma-2} \tilde \psi\|_{L^\infty} .
$$
Then $h=\tilde h+\tilde \psi$ satisfies \eqref{eqh1}, $h(0)=0$ and estimate \eqref{eq80}.

Let $T$ denote the operator that to a radial function $\psi\in L^\infty(\R^2)$ gives the radial solution $h$ to \eqref{eqh1} just constructed, and note that by \eqref{eq80}
\begin{align}
\label{eq80b}
\|T(\psi)\|_a \leq C  \|(1+r)^{\gamma-1}\psi\|_{L^\infty}.
\end{align}
where
\begin{align*}
\|\varphi\|_a=
\| |x|^{\gamma-2} \varphi\|_{L^\infty} + \|(1+|x|)^{\gamma-1} \nabla \varphi\|_{L^\infty} .
\end{align*}
We rewrite \eqref{eq81} as
\begin{align}
\label{ref2c}
h = T(  A_\ve(h)-A_0(h) +B_\ve(g) )
\end{align}
in the space $X = \{ h \in W^{1,\infty}_{loc}(\R^2): h \text{ is radial}, \|h\|_a<\infty\}$ with norm $\| \ \|_a$.

We solve \eqref{ref2c} by the contraction mapping principle.
Consider  the difference
$$
\int_{\R^2} \left( \frac{x_i-y_i}{|x-y|^2} W(y)  - \frac{x_i-y_i}{|x-y|^{2+\ve}} W_\ve(y) \right)  \varphi(y) d y
$$
where we assume that $\varphi $ is radial and $\|\varphi\|_a<\infty$.
Let
$$
D = \left\{ y : \max\Big( \frac{|x-y|}{|y|} , \frac{|y|}{|x-y|}\Big) \leq \ve^{-m} \right\}
$$
where $0<m<1$ is fixed.
Let us estimate the integral outside $D$.
Then we can estimate separately
$$
\int_{D^c} \frac{1}{|x-y|} \frac{1}{|y|^2}
\eta_\ve(y) \varphi(y) d y ,
\quad
\int_{ D^c} \frac{1}{|x-y|^{1+\ve}} \frac{1}{|y|^{2-\ve}}
\eta_\ve(y) \varphi(y) d y  ,
$$
respectively, since $\ve^{\frac12} r \leq C F_\ve(r)$ for all $r\geq \ve^{-\frac12}$.
First, for the integral over the region $|y|\geq |x-y|\ve^{-m}$
note that this condition is equivalent to $|y-(1+\delta)x|^2 \leq (\delta+\delta^2)|x|^2$ where $\delta = O(\ve^{2m})$ as $\ve\to 0$. So, for $y$ in this region $|y|\sim |x|$ and  hence
\begin{align*}
\Big|\int_{ \{|y|\geq  \ve^{-m} |x-y|\}} &\frac{1}{|x-y|^{1+\ve}} \frac{1}{|y|^{2-\ve}}
\eta_\ve(y) \varphi(y) d y  \Big|
\\
& \leq
\|\varphi\|_a
\frac{C}{(1+|x|)^{1-\ve}} \int_{|y-x| \leq C\delta^{1/2}|x|} \frac{1}{|x-y|^{1+\ve}}\, d y \\
&\leq \|\varphi\|_a \frac{C}{(1+|x|)^{1-\ve}} (\delta^{1/2}|x|)^{1-\ve} \leq  o(1) \|\varphi\|_a,
\end{align*}
as $\ve\to 0$.
Similarly
$$
\Big|\int_{\{|y|\geq \ve^{-m} |x-y|\}} \frac{1}{|x-y|} \frac{1}{|y|^{2}}
\eta_\ve(y) \varphi(y) d y  \Big|
\leq o(1) \|\varphi\|_a,
$$
as $\ve\to 0$.

Next, for the integral over the region $|y|\leq \ve^{m}|x-y|$, note that this condition is equivalent to $|y+\delta x|^2 \leq (\delta + \delta^2)|x|^2$ with $\delta = O(\ve^{2m})$ as $\ve\to 0$.
In this region $|x-y|\sim |x|$ and then we estimate
\begin{align*}
\int_{ \{|y|\leq \ve^m |x-y|\}} \frac{1}{|x-y|^{1+\ve}} \frac{1}{|y|^{2-\ve}}
\eta_\ve(y) \varphi(y) d y
&\leq \frac{C \|\varphi\|_a }{(1+|x|)^{1+\ve}} \int_{|y|\leq C \delta^{1/2}|x|} \frac{1}{|y|^{1-\ve}}dy\\
&\leq C \|\varphi\|_a \delta^{\frac{n-1+\ve}{2}}\leq o(1) \|\varphi\|_a .
\end{align*}
Similarly
$$
\int_{ \{|y|\leq \ve^{m}|x-y|\}} \frac{1}{|x-y|^{n-1}} \frac{1}{|y|^{2}}
\eta_\ve(y) \varphi(y) d y
\leq o(1) \|\varphi\|_a.
$$

Next consider the integrals inside $D$.
For this we let
\begin{align*}
A_1 &= \{ y \in \R^2: |y|\geq 2|x|\} \cap D
\\
A_2 &= \{ y \in \R^2 : |y|\leq 2|x|, |x-y|\geq |x|/2 \}\cap D
\\
A_3 &= \{ y \in \R^2: |x-y|\leq |x|/2\}\cap D.
\end{align*}
We have now to estimate
\begin{align*}
I&= \int_{D} \left( \frac{x_i-y_i}{|x-y|^{2}} W(y)  - \frac{x_i-y_i}{|x-y|^{2+\ve}} W_\ve(y)\right)  \varphi(y) d y \\
&= \int_{D}  \frac{x_i -y_i}{|x-y|^2} \left( 1 - \frac{F_\ve(y)^\ve}{|x-y|^{\ve}}   \right)g(y) d y ,
\end{align*}
where
$$
g(y) = \frac{\ve \eta_\ve(y)}{F_\ve(y)^2} \varphi(y) .
$$
Note that
$$
\|(1+|x|)^{\gamma}g\|_{L^\infty} \leq C \|\varphi\|_a.
$$
Inside $D$ we have $1 - \frac{F_\ve(y)^\ve}{|x-y|^{\ve}} =O (\ve |\log\ve|)$. We assume $|x|\geq 10$. In $A_1$, $|x-y|\sim|y|$ so
\begin{align*}
\left| \int_{A_1} \frac{x_i -y_i}{|x-y|^2} \left( 1 - \frac{F_\ve(y)^\ve}{|x-y|^{\ve}}   \right) g(y) d y\right|
& \leq
C\ve |\log\ve| \, \|\varphi\|_a  \int_{|y|\geq 2 |x|} \frac{1}{|y|^{1+\gamma}} dy
\\
&\leq C\ve |\log\ve| \, \|\varphi\|_a |x|^{1-\gamma}.
\end{align*}
For the integral in $A_2$ note that $|x-y|\sim|x|$ and hence
\begin{align*}
\left|\int_{A_2} \frac{x_i -y_i}{|x-y|^{2}}  \left( 1 - \frac{F_\ve(y)^\ve}{|x-y|^{\ve}}   \right)  g(y)  \right|
&\leq
C\ve |\log\ve| \,  \|\varphi\|_a |x|^{-1}
\int_{A_2} \frac{1}{|y|^{\gamma}}\, d y
\\
&\leq C\ve |\log\ve| \,  \|\varphi\|_a |x|^{1-\gamma}.
\end{align*}
For $y\in A_3$ we have $|y|\sim|x|$ and therefore
\begin{align*}
\left|\int_{A_3}  \frac{x_i-y_i}{|x-y|^{2}} \left( 1 - \frac{F_\ve(y)^\ve}{|x-y|^{\ve}}   \right)  g(y)\, d y \right|
&\leq C\ve |\log\ve| \,  \|\varphi\|_a  |x|^{-\gamma} \int_{|y-x|\leq |x|/2} \frac{1}{|x-y|}\, d y\\
&\leq C\ve |\log\ve| \,  \|\varphi\|_a  |x|^{1-\gamma} .
\end{align*}

Using the previous calculation we see that the map from $X$ to itself given by $T(  A_\ve(h)-A_0(h) +B_\ve(g) )$ is a contraction for $\ve>0$ small, and hence has a unique fixed point.
This fixed point satisfies
$$
\|h\|_a \leq C \| T( B_\ve(g) )\|_a \leq C  \|(1+r)^{\gamma-1}  B_\ve(g)\|_{L^\infty}
$$
by \eqref{eq80b}. Using then Lemma~\ref{conv0}
$$
\|h\|_a   \leq C \|(1+|x|)^{\gamma-\ve}g\|_{L^\infty}.
$$

We need to verify that $h$ solves also \eqref{exterior problem}. We define
$$
\tilde w_k (x)= \tilde c_{n,\ve} \,  \int_{\R^2} \frac{1}{|x-y|^{\ve}}(\frac{\eta_\ve(y)}{|y|^{2-\ve}}h-g) \eta_k(y)\, d y .
$$
and
$$
w_k(x) = \tilde w_k(x)- \tilde w_k(0)
$$
where $\eta_k$ is a sequence of smooth cut-off functions
with support in $B_{k^2}$, $\eta_k = 1$ in $B_k$ and $|D \eta_k|\leq 1/k^2$.
Hence $w_k$ are well defined and satisfy
$$
\ve\, \text{p.v.} \int_{\R^2}\frac{w_k(y)-w_k(x)}{|x-y|^{4-\ve}} \, d y = (g-\frac{\eta_\ve(x)}{|x|^{2-\ve}}h) \eta_k
\quad\text{in }\R^n.
$$
By Lemma~\ref{conv0}
\begin{align*}
|x|^{\gamma-1} |D_{x_i} w_k (x)|
&\leq C \| (1+|x|)^{\gamma-\ve} (g-\frac{\eta_\ve(x)}{|x|^{2-\ve}}h)  \eta_k \|_{L^\infty}
\\
&\leq  C  \|(1+|x|)^{\gamma-\ve}g\|_{L^\infty}.
\end{align*}
Then up to subsequence $w_k\to w$ uniformly on compact sets of $\R^2$, $\|(1+|x|)^{\gamma-1} D w\|_\infty \leq C  \|(1+|x|)^{\gamma-\ve}g\|_{L^\infty}$, and $w$ satisfies
\begin{align}
\label{eq61}
\ve\, \text{p.v.} \int_{\R^2}\frac{w(y)-w(x)}{|x-y|^{n+2-\ve}} \, d y = g-\frac{\eta_\ve(x)}{|x|^{2-\ve}}h \quad\text{in }\R^n.
\end{align}
From this equation
\begin{align*}
D_{x_i} w(x)&=
c_{n,\ve} \,  \int_{\R^n} \frac{x_i-y_i}{|x-y|^{n+\ve}}(\frac{\eta_\ve(y)}{|y|^{2-\ve}}h-g) \, d y =D_{x_i}h(x).
\end{align*}
Hence $w$ and $h$ differ by a constant and from \eqref{eq61} we see that $h$ solves \eqref{exterior problem}.

\end{proof}

\begin{proof}[Proof of Lemma~\ref{prop slow r2}]
The proof is based on the following apriori estimate for radial solutions $h$ of \eqref{exterior problem} such that $\||x|^{-1} h\|_{L^\infty}<\infty$:
\begin{align}
\label{eq91b}
\| |x|^{-1} h\|_{L^\infty}\leq C \|(1+|x|)^{1-\ve} g\|_{L^\infty} ,
\end{align}
and we claim it holds if $\ve>0$ is sufficiently small.

We argue by contradiction, assuming that there are sequences $\ve_i\to0$, radial functions $g_i$, $h_i$ solving \eqref{exterior problem} and satisfying
\begin{align}
\label{eq90b}
\| |x|^{-1} h_i\|_{L^\infty}=1, \quad \|(1+|x|)^{1-\ve_i} g_i\|_{L^\infty} \to0
\end{align}
as $i\to\infty$. Let $x_i \in \R^2$ be such that
$$
(1+|x_i|)^{-1} |h_i(x_i)|\geq\frac12.
$$
Assume first that $x_i$ remains bounded and, up to a subsequence $x_i\to x$ as $i\to\infty$. The bounds \eqref{eq90b} and standard estimates for $L_\ve$, uniform as $\ve\to 0$, show that $h_i$ is bounded in $C^{1,\alpha}_{loc}$. Therefore passing to a subsequence  we find $h_i\to h$ locally uniformly in $\R^2$. Let $\varphi \in C_0^\infty(\R^2)$. Multiplying \eqref{exterior problem} by $\varphi$ and integrating we find
$$
\int_{\R^2} h_i L_{\ve_i}(\varphi) + W_{\ve_i} h_i \varphi_i = \int_{\R^2} g_i \varphi.
$$
Taking the limit we find that $h$ is harmonic in $\R^2$. But also $|h(x)|\geq \frac12$, $h$ is radial  and $|h(r)|\leq r$ for all $r\geq 0$, which is impossible.

Suppose that $x_i$ is unbounded so that up to subsequence $r_i=|x_i| \to \infty$  as $i\to\infty$. Let
$$
\tilde h_i(x) = \frac{1}{r_i} h( r_i x ) , \quad \tilde g_i(x) = r_i^{1-\ve_i} g(r_i x)
$$
so that
$$
L_{\ve_i} (\tilde h_i)+ W_i(x)\tilde h_i = \tilde g_i \quad \text{in } \R^2 ,
$$
where
$$
W_i(x) = \frac{\ve_i \eta_{\ve_i}(r_i x) r_i^{2-\ve_i}}{F_{\ve_i}(r_i x) ^{2-\ve_i}}
$$
Also
$$
\| |x|^{-1} \tilde h_i\|_{L^\infty}=1, \quad \||x|^{1-\ve_i} \tilde g_i\|_{L^\infty}\to0
$$
as $i\to\infty$. Up to subsequence $\tilde h_i\to h$ locally uniformly in $\R^2$ and $x_i/r_i \to \hat x$. Moreover $|h(\hat x)|\geq\frac12$.


If $\ve_i^{-\frac12} |\log\ve_i| r_i^{-1} \to \infty$ as $i\to\infty$ then $W_i(x) \to 0$ uniformly on compact sets and we reach a contradiction as before.

If $\ve_i^{-\frac12} |\log\ve_i| r_i^{-1} \to R_0$, then $W_i(x) \to W(x)$ uniformly on compact sets where $W(x)$ is bounded for $|x|\leq R_0$ and $W(x) = \frac{1}{|x|^2}$ for $|x|\geq R_0$. Then $h$ solves
$$
\Delta h + W h = 0 \quad\text{in } \R^2
$$
with $|h(r) |\leq r$ for all $r\geq 0$. This implies $h\equiv 0$, a contradiction.

Finally, if $\ve_i^{-\frac12} |\log\ve_i| r_i^{-1} \to 0$, then $h$ satisfies
$$
\Delta h + \frac{1}{|x|^2} h = 0 \quad\text{in } \R^2 \setminus\{0\}
$$
with $|h(r) |\leq r$ for all $r>0$. Again this implies that $h$ is trivial.

Existence of a solution to \eqref{exterior problem} can be deduced from the solvability obtained in Lemma~\ref{solv medium decay} and the apriori estimate \eqref{eq91b}, with an approximation argument. Namely, let $g$ be radial with $\|(1+|x|)^{1-\ve}g \|_{L^\infty}<\infty$ and $\eta$ be a smooth cut-off function with $\eta(x)=1$ for $|x|\leq 1$, $\eta(x)=0$ for $|x|\geq2$. Thanks to  Lemma~\ref{solv medium decay} there is a radial solution $h_n$ of \eqref{exterior problem} with right hand side $g \eta(x/n)$. By  \eqref{eq91b} we have $\|(1+|x|)^{-1} h_n\|_{L^\infty}\leq C$ and by standard estimates $h_n$ is bounded is $C^{1,\alpha}_{loc}$. Up to subsequence $h_n$ converges to a solution $h$ satisfying
$$
\|(1+|x|)^{-1} h\|_{L^\infty}\leq C\|(1+|x|)^{1-\ve} g\|_{L^\infty}.
$$
Finally estimate \eqref{eq92} follows from a standard scaling argument and Schauder estimates for $L_\ve$, which is $(-\Delta)^{\frac{1+s}{2}}$ up to constant, and which are uniform as $\ve\to1$.
\end{proof}

Next we give a result analogous to Lemma~\ref{prop slow r2} but for functions with fast decay.

\begin{lemma}
There is a linear  operator that given a radial function $f $ in $\R^2$ such that $\| (1+|x|)^{2+\tau-\ve} f\|_{L^\infty}<\infty$  produces a solution $\phi$ of \eqref{exterior problem} with the property
\begin{align}
\label{est2b}
\| |x|^{\tau} \phi\|_{L^\infty} \leq C \| (1+|x|)^{2+\tau-\ve} f\|_{L^\infty}.
\end{align}
\end{lemma}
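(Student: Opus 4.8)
The plan is to deduce this from Lemma~\ref{solv medium decay} by choosing the decay exponent there to be exactly $\gamma = 2-\tau$. Since $0<\tau<1$ we have $\gamma = 2-\tau \in (1,2)$, so Lemma~\ref{solv medium decay} applies. Moreover the hypothesis we are given, $\|(1+|x|)^{2+\tau-\ve}f\|_{L^\infty}<\infty$, is \emph{stronger} than the hypothesis $\|(1+|x|)^{\gamma-\ve}f\|_{L^\infty}=\|(1+|x|)^{2-\tau-\ve}f\|_{L^\infty}<\infty$ required by that lemma, because $2-\tau-\ve \le 2+\tau-\ve$ and hence $\|(1+|x|)^{2-\tau-\ve}f\|_{L^\infty}\le \|(1+|x|)^{2+\tau-\ve}f\|_{L^\infty}$. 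Applying Lemma~\ref{solv medium decay} with this $\gamma$ therefore produces a radial solution $\phi$ of \eqref{exterior problem}, depending linearly on $f$, with $\phi(0)=0$ and
\[
\|(1+|x|)^{-\tau}\phi\|_{L^\infty} \;=\; \|(1+|x|)^{\gamma-2}\phi\|_{L^\infty} \;\le\; C\,\|(1+|x|)^{2-\tau-\ve}f\|_{L^\infty} \;\le\; C\,\|(1+|x|)^{2+\tau-\ve}f\|_{L^\infty}.
\]

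It remains only to pass from the weight $(1+|x|)^{-\tau}$ to the weight $|x|^{\tau}$ appearing in \eqref{est2b}. For $|x|\ge 1$ one has $|x|^{\tau}|\phi(x)| \le (1+|x|)^{\tau}(1+|x|)^{-\tau}|\phi(x)| \le \|(1+|x|)^{-\tau}\phi\|_{L^\infty}$, while for $|x|\le 1$, using that $\phi$ is continuous with $\phi(0)=0$ and therefore bounded near the origin, $|x|^{\tau}|\phi(x)| \le |\phi(x)| \le 2^{\tau}\|(1+|x|)^{-\tau}\phi\|_{L^\infty}$. Hence $\||x|^{\tau}\phi\|_{L^\infty} \le C\|(1+|x|)^{-\tau}\phi\|_{L^\infty} \le C\|(1+|x|)^{2+\tau-\ve}f\|_{L^\infty}$, which is the asserted estimate. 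Linearity of the operator $f\mapsto \phi$ is inherited from Lemma~\ref{solv medium decay}.

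Alternatively, one may give a self-contained proof that runs parallel to the proof of Lemma~\ref{prop slow r2}, replacing the weight $|x|^{-1}$ by $|x|^{\tau}$ throughout: first an a priori estimate $\||x|^{\tau}h\|_{L^\infty}\le C\|(1+|x|)^{2+\tau-\ve}f\|_{L^\infty}$ for radial solutions of \eqref{exterior problem} with $\||x|^{\tau}h\|_{L^\infty}<\infty$, proved by the same blow-up and rescaling argument, and then existence by truncating $f$ and invoking Lemma~\ref{solv medium decay}. The only genuinely new input compared with \eqref{eq91b} lies in the Liouville-type conclusions at the end of the blow-up: a radial harmonic function on $\R^2$ (or on $\R^2\setminus\{0\}$) with $|h(r)|\le C r^{-\tau}$ vanishes identically, and a radial solution of $\Delta h+\tfrac1{r^2}h=0$ on $\R^2\setminus\{0\}$ is a combination of $\cos(\log r)$ and $\sin(\log r)$, neither of which decays, so that $|h(r)|\le C r^{-\tau}$ again forces $h\equiv 0$.

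The main (and essentially only) point requiring care is the weight bookkeeping: one must check that the decay assumed on $f$ is enough to place the problem inside the admissible range $\gamma\in(1,2)$ of Lemma~\ref{solv medium decay} while still producing the prescribed decay $|x|^{-\tau}$ of $\phi$. This works precisely because the hypothesis $\|(1+|x|)^{2+\tau-\ve}f\|_{L^\infty}<\infty$ carries an extra factor $|x|^{2\tau}$ relative to what is strictly needed, making the choice $\gamma=2-\tau<2$ admissible; no perturbation or contraction argument beyond those already contained in Lemmas~\ref{solv medium decay} and \ref{prop slow r2} is needed.
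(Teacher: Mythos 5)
Your reduction to Lemma~\ref{solv medium decay} with $\gamma=2-\tau$ contains a sign error that kills the whole argument. Since $\gamma=2-\tau\in(1,2)$, that lemma produces a solution with $\|(1+|x|)^{\gamma-2}h\|_{L^\infty}=\|(1+|x|)^{-\tau}h\|_{L^\infty}<\infty$, which means $|h(x)|\le C(1+|x|)^{\tau}$: the solution is allowed to \emph{grow} like $|x|^{\tau}$. The conclusion you need, $\||x|^{\tau}\phi\|_{L^\infty}<\infty$, says $|\phi(x)|\le C|x|^{-\tau}$: \emph{decay}. These are opposite behaviors. Your weight-conversion paragraph then has a strict inequality in the wrong direction: $(1+|x|)^{\tau}(1+|x|)^{-\tau}|\phi(x)|=|\phi(x)|$, and this is $\le (1+|x|)^{\tau}\|(1+|x|)^{-\tau}\phi\|_{L^\infty}$, not $\le\|(1+|x|)^{-\tau}\phi\|_{L^\infty}$; the extra $(1+|x|)^{\tau}$ factor cannot be discarded, so the chain collapses.

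The gap is not cosmetic. Near infinity the operator in \eqref{exterior problem} behaves like $\Delta+r^{-2}$, whose radial homogeneous solutions are $\cos(\log r)$ and $\sin(\log r)$: bounded and oscillating, neither growing nor decaying. The growth exponent $+\tau$ and the decay exponent $-\tau$ sit on opposite sides of this indicial resonance at exponent $0$, and Fredholm solvability theory on one side does not transfer to the other. Lemma~\ref{solv medium decay} is built around the representation $\nabla\phi(x)=c_{2,\ve}\int\frac{x-y}{|x-y|^{2+\ve}}(\cdots)\,dy$, whose output is pinned to the mildly growing regime $|x|^{2-\gamma}$; there is no choice of $\gamma$ in its admissible range that produces decay. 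Your alternative sketch (a priori estimate via blow-up, then existence by truncating $f$ and invoking Lemma~\ref{solv medium decay}) runs into the same wall: for $f_n=f\eta(x/n)$, the solution $h_n$ given by Lemma~\ref{solv medium decay} need not lie in the space $\||x|^{\tau}h\|_{L^\infty}<\infty$ at all — for large $|x|$ it solves the homogeneous equation and is a priori only bounded — so the a priori estimate in that space cannot be applied to the approximating sequence without a further argument selecting the decaying branch. The paper's proof supplies exactly this missing ingredient: it works with the subtracted kernel $\bigl(\tfrac{x-y}{|x-y|^{2+\ve}}-\tfrac{x}{|x|^{2+\ve}}\bigr)$ (which is adapted to fast-decaying data), solves the $\ve=0$ problem explicitly for $r\ge\ve^{-1/2}$ by variation of parameters with $\cos(\log r),\sin(\log r)$ — integrating from $r$ to $\infty$ to pick out the $O(r^{-\tau})$ branch — glues it to the interior piece, and then runs a perturbation/contraction in the space $Y=\{\||x|^{\tau}\phi\|_{L^\infty}<\infty\}$. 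None of these steps can be bypassed by a reduction to Lemma~\ref{solv medium decay}.
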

\begin{proof}
Let $Y$ denote the space of radial functions in $\R^2$ satisfying $\| |x|^{\tau} \phi\|_{L^\infty} <\infty$.
We claim there exists $\phi\in Y$ that depends linearly on $f$ satisfying
\begin{align}
\label{int2a}
\nabla \phi(x) = c_{2,\ve} \int_{\R^2} \left( \frac{x-y}{|x-y|^{2+\ve}} -\frac{x}{|x|^{2+\ve}}\right) \left( f(y) - \frac{\eta_\ve(|y|)}{|y|^{2-\ve}} \phi(y) \right) \, d y
\end{align}
and the estimate \eqref{est2b}. This function is the desired solution.
Here $c_{2,\ve}\to\frac1{2\pi}$ as $\ve\to0$.

Similar to Lemma~\ref{conv0} we have the following estimate.
Assume $0<\beta<2$, $2<\gamma<3$ and $\gamma+\beta>2$. Let $\| (1+|x|)^\gamma f\|_{L^\infty} < \infty$. Then
$$
\left|\int_{\R^2} \left( \frac{x-y}{|x-y|^{\beta+1}}-\frac{x}{|x|^{\beta+1}}\right) f(y)\, d y \right|\leq C \| (1+|x|)^\gamma f\|_{L^\infty} |x|^{2-\beta-\gamma} .
$$
Using this estimate with  $\beta=1+\ve$ we see that the integral  \eqref{int2a} is well defined if $\|(1+|x|)^{2+\tau-\ve} f\|_{\infty}<\infty$ and $\phi\in Y$.

We treat \eqref{int2a} as a perturbation of the case $\ve=0$.
So first we consider the equation
$$
\Delta \phi + \frac{\eta_\ve}{r^2} \phi = f\quad \text{in } \R^2
$$
with $\eta_\ve$ as in \eqref{def eta eps},
for which we want to construct a solution such that
\begin{align}
\label{est phi sigma}
\| |x|^{\tau} \phi\|_{L^\infty(\R^2)}  \leq \| (1+|x|)^{2+\tau} f\|_{L^\infty(\R^2)}.
\end{align}

For $r\geq \ve^{-\frac12}+1$ the equation is given by
$$
\frac{1}{r}(r\phi')' + \frac{1}{r^2} \phi = f , \quad r \geq \ve^{-\frac12},
$$
hence we take $\phi$ of the form
$$
\phi(r) =  \cos(\log(r)) \int_r^\infty \sin(\log(t)) t f(t) d t - \sin(\log(r)) \int_r^\infty \cos(\log(t)) t f(t) d t
$$
for $r\geq \ve^{-\frac12}+1$.
From this formula we get directly
$$
\sup_{r\geq \ve^{-\frac12}} r^\tau |\phi(r)| \leq \| r^{2+\tau} f\|_{L^\infty}.
$$
For $0<r\leq\ve^{-\frac12}+1$ we define $\phi$ as the unique solution of the equation
$$
\frac{1}{r}(r\phi')' + \frac{\eta_\ve(r)}{r^2} = f , \quad r \leq \ve^{-\frac12}+1,
$$
with initial conditions at $\ve^{-\frac12}+1$ to make $\phi$ a global solution for $r\in (0,\infty)$.
Note that
$$
\phi(\ve^{-\frac12}) = O(\ve^{\frac\tau2}), \quad \phi'(\ve^{-\frac12})=O(\ve^{\frac{1+\tau}2}) .
$$
Let $r_0=\ve^{-\frac12}$.
Then for $r\leq r_0$ we can represent
$$
\phi(r) = c_1 + c_2 \log(\frac{r}{r_0}) + \int_r^{r_0} \frac 1s \int_s^{r_0} t f(t) dt ds ,
$$
where $c_1,c_2$ have to satisfy
$$
c_1 = \phi(r_0) = O(\ve^{\frac\tau2}), \quad c_2 = r_0 \phi'(r_0) =O(\ve^{\frac\tau2}).
$$
With this formula we can verify \eqref{est phi sigma}.
The previous solution satisfies
\begin{align*}
\phi(x) = \frac{1}{2\pi} \int_{\R^2} \log\frac{1}{|x-y|} \left (f(y) - \frac{\eta_\ve(|y|)}{|y|^2} \phi(y) \right) \, dy + A \log|x| + B
\end{align*}
where $A$, $B$ depend on $f$ and are such that $\phi(x)\to0$ as $|x|\to\infty$. Therefore for the gradient we have
\begin{align}
\nonumber
\nabla \phi(x)
& = \frac{1}{2\pi} \int_{\R^2} \frac{x-y}{|x-y|^2} \left (f(y) - \frac{\eta_\ve(|y|)}{|y|^2} \phi(y) \right) \, dy + A \frac{x}{|x|^2}
\\
\label{int1a}
& = \frac{1}{2\pi} \int_{\R^2} \left( \frac{x-y}{|x-y|^2} -\frac{x}{|x|^2}\right) \left (f(y) - \frac{\eta_\ve(|y|)}{|y|^2} \phi(y) \right) \, d y .
\end{align}
Let $\phi=T(f)$ denote the operator that associates the function $\nabla\phi$ constructed above, so that in particular \eqref{est phi sigma} and \eqref{int1a} hold. To find a solution of \eqref{int2a} it then suffices to find $\phi\in Y$ such that
$$
\nabla\phi = T ( B_\ve(f) + A_0(\phi) - A_\ve(\phi))
$$
where the operators $B_\ve$, $ A_0$, $A_\ve$ are defined as
\begin{align*}
B_\ve(f)(x) & =  c_{2,\ve} \int_{\R^2} \left( \frac{x-y}{|x-y|^{2+\ve}} -\frac{x}{|x|^{2+\ve}}\right) f(y) \, d y
\\
A_\ve(\phi)(x) &= c_{2,\ve} \int_{\R^2} \left( \frac{x-y}{|x-y|^{2+\ve}} -\frac{x}{|x|^{2+\ve}}\right) \frac{\eta_\ve(|y|)}{|y|^{2-\ve}} \phi(y) \, d y
\\
A_0(\phi)(x) &= c_{2,\ve} \int_{\R^2} \left( \frac{x-y}{|x-y|^{2}} -\frac{x}{|x|^{2}}\right)  \frac{\eta_\ve(|y|)}{|y|^{2}} \phi(y)  \, d y  ,
\end{align*}
and $\phi$ is defined from $\nabla \phi$ by integration such that $\lim_{|x|\to\infty}\phi(x)=0$ (here all functions are radial).
Similarly as in Lemma~\ref{solv medium decay} we can show that for $\ve>0$ small the map from $Y$ to $Y$ given by $\phi\mapsto T(B_\ve(f)+A_0(\phi)-A_\ve(\phi))$ is a contraction.

\end{proof}

For the proof of Proposition~\ref{prop exterior slow} we need an estimate of
$$
a_\ve(x) = \ve
\int_{\Sigma_0}
\frac{1-\langle \nu_{\Sigma_0}(y), \nu_{\Sigma_0} (y) \rangle }{|x-y|^{4-\ve}} dy .
$$

\begin{lemma}
\label{lemma a epsilon}
Let $x=(x',F_\ve(x'))\in \Sigma_0$. Then
\begin{align*}
a_\ve(x) & = \pi|A_{\Sigma_0}|^2 |x'|^{\ve} + O(\frac{\ve}{(1+|x|)^{2-\ve}}) + O(\frac{\ve}{\log(|x|)^{2-\ve}}) \chi_{|x|\leq \ve^{-\frac12}}
\\&\qquad+  \pi \frac{\ve}{F_\ve(x')^{2-\ve}}(1+o(1))\chi_{|x|\geq \ve^{-\frac12}} ,
\end{align*}
where $|A_{\Sigma_0}|$ is the norm of the second fundamental form of $\Sigma_0$ and $O()$, $o()$ are uniform  $x$ as $\ve\to0$.
\end{lemma}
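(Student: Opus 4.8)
The plan is to estimate the singular integral $a_\ve(x) = \ve\int_{\Sigma_0}\frac{1-\langle\nu_{\Sigma_0}(y),\nu_{\Sigma_0}(x)\rangle}{|x-y|^{4-\ve}}\,dy$ (note the integrand should pair $\nu(y)$ with $\nu(x)$, not $\nu(y)$ with itself) by the same cylinder-splitting device used for $H^s_{\Sigma_0}$ in Section~\ref{sect error}. Write $a_\ve(x) = \ve(A_i + A_o)$ where $A_i$ is the integral over the cylinder $C_R(x)$ and $A_o$ the complement, with $R=R(x)$ as in \eqref{def R}. The key identity is that near $x$ the quantity $1-\langle\nu_{\Sigma_0}(y),\nu_{\Sigma_0}(x)\rangle$ equals $\tfrac12|\nu_{\Sigma_0}(y)-\nu_{\Sigma_0}(x)|^2$, which in the graph representation $\Sigma_0\cap C_R(x)=\{x+\Pi t+\nu_{\Sigma_0}g(t)\}$ behaves like $\tfrac12|D^2g(0)[t]|^2 + $ (higher order), so the leading inner contribution is $\ve\int_{|t|<R}\tfrac{\frac12|D^2g(0)[t]|^2}{|t|^{4-\ve}}\,dt$, which evaluates — using that $\int_{|t|<R}\frac{|D^2g(0)[t]|^2}{|t|^{4-\ve}}\,dt = \frac{\pi}{\ve}\bigl(|D^2g(0)|^2+\tfrac12(\operatorname{tr}D^2g(0))^2\bigr)R^\ve\cdot\tfrac{1}{?}$, more precisely reduces after angular integration to a constant multiple of $\frac{|A_{\Sigma_0}|^2}{\ve}R^\ve$ — to $\pi|A_{\Sigma_0}|^2 R^\ve(1+o(1))$ since $|D^2g(0)|^2=|A_{\Sigma_0}(x)|^2$ at the tangency point. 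The remainder terms in the inner part are controlled exactly as in Lemma~\ref{lemma Rest1} by $[D^2g]_{\alpha,B_R}R^{\alpha-s}$ and $\|D^2g\|_{L^\infty}^4 R^{4-s}$ type quantities, which via the bounds of Lemma~\ref{lemma g} contribute $O(\ve(1+|x|)^{\ve-2})$ plus the $O(\ve\log(|x|)^{\ve-2})\chi_{|x|\le\ve^{-1/2}}$ term coming from the catenoidal regime where $\|D^2g\|\sim |x|^{-2}$ and $R\sim\log|x|$.

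Next I would handle the outer part $\ve A_o$. Here one must be careful: unlike the mean curvature computation, the integrand $1-\langle\nu(y),\nu(x)\rangle$ does not change sign, so there is no cancellation and $A_o$ is genuinely positive. For $|x|\ge\ve^{-1/2}$, rescaling by $R=F_\ve(x')$ as in the proof of Lemma~\ref{lemma R2 simple} and using $F_\ve'=O(\ve^{1/2})$, $F_\ve''=O(\ve^{1/2}/r)$ from Corollary~\ref{coro prop F}, the surface $\partial E_0/R$ near $\tilde x_R$ is a small perturbation of two horizontal planes $\{|z_3|=1\}$, on which $\nu=\pm e_3$ and $\langle\nu(y),\nu(x)\rangle=1$; the defect $1-\langle\nu(y),\nu(x)\rangle$ is $O(\ve)$ away from the region where the two sheets differ, and one gets $A_o = \frac{\pi}{R^{1-\ve}}(1+o(1))$ — wait, more carefully: the dominant contribution to $a_\ve$ in the far region comes precisely from the portion of $\Sigma_0$ on the \emph{opposite} sheet $x_3=-F_\ve$, where $\langle\nu(y),\nu(x)\rangle\approx-1$ so $1-\langle\nu(y),\nu(x)\rangle\approx 2$, giving after rescaling $\ve A_o = \pi\frac{\ve}{F_\ve(x')^{2-\ve}}(1+o(1))$. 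For $|x|\le\ve^{-1/2}$ one instead gets the $O(\ve/(1+|x|)^{2-\ve})$ bound since the opposite sheet is far (distance $\gtrsim\log|x|$) — this is where the $O(\ve\log(|x|)^{\ve-2})\chi_{|x|\le\ve^{-1/2}}$ and $O(\ve(1+|x|)^{\ve-2})$ terms come from, depending on whether the inner or opposite-sheet contribution dominates.

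The main obstacle will be keeping the three asymptotic regimes ($|x|\le\ve^{-1/2}$ catenoidal, $\ve^{-1/2}\le|x|\le\delta|\log\ve|\ve^{-1/2}$ transition, $|x|\ge\delta|\log\ve|\ve^{-1/2}$ conical) separated and correctly tracking which mechanism — inner curvature concentration giving $\pi|A_{\Sigma_0}|^2|x'|^\ve$, versus opposite-sheet nonlocal interaction giving $\pi\frac{\ve}{F_\ve^{2-\ve}}\chi_{|x|\ge\ve^{-1/2}}$ — produces the leading term in each, and verifying the error terms all fit under the claimed $O$'s. A second delicate point is justifying that the angular integral $\int_{S^1}\frac{|D^2g(0)[\omega]|^2}{?}\,d\omega$ produces exactly $|A_{\Sigma_0}|^2$ times the right constant; since at the tangency point $D^2g(0)$ is the shape operator in an orthonormal frame, $\int_{S^1}|D^2g(0)[\omega]|^2\,d\omega = \pi(k_1^2+k_2^2+ (k_1+k_2)^2) $ needs to be reconciled with $\pi|A_{\Sigma_0}|^2 = \pi(k_1^2+k_2^2)$ — in fact for a minimal surface $k_1+k_2=0$ so they agree, and in the far conical region $H_{\Sigma_0}$ is $O(\ve)$ so the cross term is negligible; this is why the statement has $|A_{\Sigma_0}|^2$ cleanly. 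The remaining steps — the Hölder-type remainder estimates and the rescaling bookkeeping in the outer integral — are routine given Corollary~\ref{coro prop F}, Lemma~\ref{lemma g}, and the techniques already deployed in Lemmas~\ref{lemma Rest1} and \ref{lemma R2 simple}.
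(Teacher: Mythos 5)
Your route is essentially the paper's route. Both split the integral into a near-field cylinder contribution, where the quadratic expansion of $\sqrt{1+|\nabla g|^2}-1$ concentrates and produces the $\pi|A_{\Sigma_0}|^2|x'|^\ve$ term, and a far-field contribution which for $|x'|\geq \ve^{-1/2}$ is dominated by the opposite sheet of $\Sigma_0$ (where $\langle\nu(y),\nu(x)\rangle\approx-1$ so the integrand is $\approx 2$), giving $\pi\ve F_\ve(x')^{\ve-2}(1+o(1))$, while for $|x'|\leq\ve^{-1/2}$ the separation $\approx 2\log|x'|$ of the sheets gives the $O(\ve\log(|x|)^{\ve-2})$ bound. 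The only organizational difference is that the paper first splits $a_\ve=a_\ve^++a_\ve^-$ into same-sheet and opposite-sheet pieces and then applies a cylinder split to $a_\ve^+$ and, separately, a disk-based cylinder split to $a_\ve^-$; you apply the cylinder split first and then separate the outer integral into the two sheets. These are functionally equivalent decompositions of the same domain.

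One concrete slip worth correcting: your ``second delicate point'' about the angular integral is a non-issue, and the rescue via near-minimality of $\Sigma_0$ is unnecessary. What arises from the expansion is $\sqrt{1+|\nabla g|^2}-1\approx\tfrac12|\nabla g(t)|^2\approx\tfrac12|D^2g(0)\,t|^2 = \tfrac12(k_1^2 t_1^2 + k_2^2 t_2^2)$, a \emph{vector} norm squared, whose angular integral is
$\int_0^{2\pi}(k_1^2\cos^2\theta + k_2^2\sin^2\theta)\,d\theta=\pi(k_1^2+k_2^2)=\pi|A_{\Sigma_0}|^2$
with no $(k_1+k_2)^2$ cross term. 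Your expression $\pi\bigl(k_1^2+k_2^2+(k_1+k_2)^2\bigr)$ would correspond (up to normalization) to averaging the Hessian \emph{form} squared $\bigl(D^2 g(0)[\omega,\omega]\bigr)^2=(k_1\cos^2\theta+k_2\sin^2\theta)^2$, which is a different object and is not what appears here. So the clean $|A_{\Sigma_0}|^2$ in the statement is automatic, not a consequence of $H_{\Sigma_0}=O(\ve)$.
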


\begin{proof}
Let $R_1>0$ and  write
$$
a_\ve = a_\ve^+ + a_\ve^-,
$$
where
\begin{align*}
a_{\ve}^\pm (x)&= \ve\int_{\Sigma_0^\pm } \frac{ 1- \langle  \nu_{\Sigma_0}(y), \nu_{\Sigma_0} (x) \rangle }{|x-y|^{4-\ve}} dy,
\end{align*}
and   $\Sigma_0^\pm$ is $\Sigma_0$ intersected with $x_3\geq 0$ or $x_3\leq 0$ respectively.
Let us split
$$
a_\ve^+= a_{1,\ve}^+ + a_{2,\ve}^+,
$$
where
\begin{align*}
a_{1,\ve}^+ &= \ve\int_{\Sigma_0^+ \cap C_{R_1}(x) } \frac{ 1- \langle  \nu_{\Sigma_0}(y), \nu_{\Sigma_0} (x) \rangle }{|x-y|^{4-\ve}} dy,
\\
a_{2,\ve}^+ &= \ve\int_{\Sigma_0^+ \setminus C_{R_1}(x) } \frac{ 1- \langle  \nu_{\Sigma_0}(y), \nu_{\Sigma_0} (x) \rangle }{|x-y|^{4-\ve}} dy ,
\end{align*}
and $C_{R_1}(x)$ is the cylinder with base the disk of radius $R_1$ on the tangent plane to $\Sigma_0$ at $x$, and height $R_1$, which  will be chosen later depending on $x$. Let $g:B_{R_1}(0)\subset \R^2\to\R$ be such that $\Sigma_0$ can be described as the graph of $g$ over the tangent plane at $X$. Then
$$
a_{1,\ve}^+ =\ve \int_{|t|\leq R_1} \frac{\sqrt{1+|\nabla g|^2}-1}{(|t|^2+g(t)^2)^{\frac{4-\ve}{2}}} \, d t .
$$
A calculation gives
$$
a_{1,\ve}^+(x) = \pi |A_{\Sigma_0}(X)|^2 R_1^\ve + O(\ve [D^2g]_{\alpha,B_{R_1}}R_1^{\alpha+\ve}).
$$
We choose now $R_1$ as follows. Recall that $x= (x',F_\ve(x'))$ and $|x|\sim|x'|$. If $|x'|\leq 100$ we take $R_1>0$ a fixed small constant so that the representation of $\Sigma_0 \cap C_{R_1}(x)$ by a graph is possible. If $|x'|>100$, we take $R = \delta|x'|$ with $\delta>0$ a small positive constant.
By estimates \eqref{est holder g}
$$
[D^2 g]_{\alpha,B_{R_1}}
\leq
\begin{cases}
\frac{C\ve^{\frac12}}{|x'|^{1+\alpha}}
& \text{if } |x'| \geq  \ve^{-\frac12}
\\
\frac{C}{|x'|^{2 +\alpha}}
& \text{if } |x'| \leq  \ve^{-\frac12} .
\end{cases}
$$
and therefore

------  X -- x

$$
a_{1,\ve}^+ = \pi|A_{\Sigma_0}|^2 |x'|^{\ve} + O(\frac{\ve}{(1+|x|)^{2-\ve}})
$$
as $\ve \to 0$. On the other hand a direct estimate gives
$$
a_{2,\ve}^+ = O(\frac{\ve}{(1+|x|)^{2-\ve}}) .
$$
Therefore
$$
a_\ve^+ = \pi |A_{\Sigma_0}|^2 |x|^\ve +  O(\frac{\ve}{(1+|x|)^{2-\ve}}).
$$

We can write explicitly
$$
a_{\ve}^-(x) = \ve \int_{|y|\ge 1}
\frac{ \sqrt{1+|\nabla F_\ve(y)|^2} + \frac{1-F_\ve'(x)F_\ve'(y)}{ \sqrt{1+|\nabla F_\ve(x)|^2}}}{(|x-y|^2+(F_\ve(x)+F_\ve(y))^2)^\frac{4-\ve}{2}} \, dy
$$
For $10\leq |x|\leq \ve^{-\frac12}$ we estimate
\begin{align*}
|a_{\ve}^-(x)| &\leq \ve C\int_{|y|\ge 1}
\frac{ 1}{(|x-y|^2+ F_\ve(x)^2 )^\frac{4-\ve}{2}} \, dy\\
&\leq\frac{C\ve}{F_\ve(x)^{2-\ve}}
\leq\frac{C\ve}{\log(x)^{2-\ve}} .
\end{align*}
For $|x|\geq\ve^{-\frac12}$ we split $a_\ve^- = a_{1,\ve}^- + a_{2,\ve}^-$ where
\begin{align*}
a_{1,\ve}^-(x) &= \ve\int_{\Sigma_0^- \cap \tilde C_{R_2}(x,0) } \frac{ 1- \langle  \nu_{\Sigma_0}(Y), \nu_{\Sigma_0} (X) \rangle }{|X-Y|^{4-\ve}} dY,
\\
a_{2,\ve}^-(x) &= \ve\int_{\Sigma_0^- \setminus \tilde C_{R_2}(x,0) } \frac{ 1- \langle  \nu_{\Sigma_0}(Y), \nu_{\Sigma_0} (X) \rangle }{|X-Y|^{4-\ve}} dY ,
\end{align*}
where $\tilde C_{R_2}(X)$ is the cylinder with base a disk of radius $R_2$ on $\R^2$ centered at $(x,0)$. We choose $R_2 = |\log\ve|^{-\frac12} |x|$. Then
\begin{align*}
|a_{2,\ve}^-(x)| &= \ve C\int_{|y-x|\geq R} \frac{ 1}{(|x-y|^2+ F_\ve(x)^2 )^\frac{4-\ve}{2}} \, dy\\
&\leq \frac{C \ve |\log\ve|}{|x|^{2-\ve}}.
\end{align*}
For $|x|\geq\ve^{-\frac12}$ and $|y-x|\leq R_2$, $F_\ve'(x)=O(\ve^{\frac12}) $ and $F_\ve(y) = F_\ve(x) + O(\ve^{\frac12} R) $ so
\begin{align*}
a_{\ve}^-(x)& =  \ve \int_{|x-y|\leq R}
\frac{2+O(\ve)}{(|x-y|^2+(F_\ve(x)+F_\ve(y))^2)^\frac{4-\ve}{2}} \, dy\\
&= \pi \frac{\ve}{F_\ve(x)^{2-\ve}}(1+o(1))
\end{align*}
where $o(1)\to0$ uniformly as $\ve\to0$.

\end{proof}

\begin{proof}[Proof of Propositions~\ref{prop exterior slow} and \ref{prop fast decay}]
The idea is to reduce problem \eqref{eq50} to one in $\R^2$.
Suppose that $\phi$ is a radial function on $\Sigma_0$, symmetric with respect to $x_3=0$ vanishing in $B_{2R}(0)$. Here $R>0$ is large and fixed, to be chosen later.
Since $\phi$ is symmetric with respect to $x_3=0$, we can  define $\tilde \phi$ globally in $\R^2$ by
$$
\tilde \phi(x) = \phi(x,\pm F_\ve(x)) , \quad |x| \geq R,
$$
and $\tilde \phi=0$ in $B_R(0)$.
Let $C_R$ be the cylinder
$$
C_R = \{ (x_1,x_2,x_3)\in\R^3: x_1^2+x_2^2<R^2\}.
$$
Then, for $X\in \Sigma_0$ of the form $X=(x,F_\ve(x))$ with $|x|\geq R$, we have
\begin{align*}
& \text{p.v.} \int_{\Sigma_0 \setminus C_R} \frac{\phi(Y)-\phi(X)}{|Y-X|^{4-\ve}} \, dY
\\
& = \text{p.v.}\int_{\R^2\setminus B_R} \frac{\tilde\phi(y)-\tilde\phi(x)}{(|x-y|^2 +(F_\ve(x)-F_\ve(y))^2)^{\frac{4-\ve}{2}}} \sqrt{1+|\nabla F_\ve(y)|^2}\, dy
\\
&\quad+\int_{\R^2\setminus B_R} \frac{\tilde\phi(y)-\tilde\phi(x)}{(|x-y|^2 +(F_\ve(x)+F_\ve(y))^2)^{\frac{4-\ve}{2}}} \sqrt{1+|\nabla F_\ve(y)|^2} \, dy
\end{align*}
Then we find for $|X|\geq R$, $X=(x,F_\ve(x))$,
$$
\text{p.v.}
\int_{\Sigma_0 } \frac{\phi(Y)-\phi(X)}{|Y-X|^{4-\ve}} \, dY
=
\text{p.v.}\int_{\R^2} \frac{\tilde \phi(y)-\tilde\phi(x)}{|y-x|^{4-\ve}}\, d y
+ b(x) \tilde \phi(x)
+ B_1(\tilde\phi)(x)
$$
where
\begin{align*}
b(x) & = \int_{B_R} \frac{1}{|x-y|^{4-\ve}}\,dy
-\int_{\Sigma_0 \cap C_R}\frac{1}{|(x,F_\ve(x))-Y|^{4-\ve}}\, d Y
\\
B_1(\tilde\phi)(x)&= \int_{\R^2\setminus B_R} \left( \tilde\phi(y)-\tilde\phi(x)\right) \left( \frac{\sqrt{1+|\nabla F_\ve(y)|^2}}{(|x-y|^2 +(F_\ve(x)-F_\ve(y))^2)^{\frac{4-\ve}{2}}}  - \frac{1}{|x-y|^{4-\ve}}\right) \, dy
\\
&\quad
+\int_{\R^2\setminus B_R} \frac{\tilde\phi(y)-\tilde\phi(x)}{(|x-y|^2 +(F_\ve(x)+F_\ve(y))^2)^{\frac{4-\ve}{2}}} \sqrt{1+|\nabla F_\ve(y)|^2} \, dy .
\end{align*}
Let
$$
a_\ve(X) = \ve
\int_{\Sigma_0}
\frac{1-\langle \nu_{\Sigma_0}(Y), \nu_{\Sigma_0} (X) \rangle }{|X-Y|^{3+s}} dY .
$$
Then \eqref{eq50} reads as
\begin{align}
\label{eq91}
L_\ve(\tilde\phi)
+ \frac{\eta_\ve}{|x|^{2-\ve}}\tilde \phi(x)
+ \ve B_1(\tilde\phi)(x)
+ ( \ve b(x) + a_\ve - \frac{\eta_\ve}{|x|^{2-\ve}} )\tilde \phi(x) = \tilde f(x)
\end{align}
where $\tilde f(x) = f(x,F_\ve(x))$ and $L_\ve$ is the operator \eqref{Leps}. We look for $\tilde \phi$ of the form $\tilde \phi = \eta \varphi$, where $\eta$ is a smooth radial cut-off function such that $\eta(x)=1$ for $|x|\geq 3R$ and $\eta(x)=0$ for $|x|\leq 2R$. Then we ask that $\varphi$ solves
\begin{align}
\label{eq90}
L_\ve(\varphi)
+ \frac{\eta_\ve}{|x|^{1-s}}\varphi
+ \ve B_2(\varphi)
+ \eta ( \ve b(x) + a_\ve - \frac{\eta_\ve}{|x|^{1-s}} )\varphi = \tilde f(x) \quad\text{in }\R^2 ,
\end{align}
where
$$
B_2(\varphi)(x) =
\ve \tilde \eta(x)\int_{\R^2} \varphi(y) \frac{\eta(y)-\eta(x)}{|x-y|^{4-\ve}}\, d y
+ \ve \tilde \eta(x) B_1[\eta\varphi](x),
$$
and where $\tilde \eta$ is another radial smooth cut-off function such that $\tilde \eta(x)=1$ for $|x|\geq 5R$, $\tilde \eta(x)=0$ for $|x|\leq 4R$.
If $\varphi$ solves \eqref{eq90}, then $\tilde \phi = \eta \varphi$ will satisfy \eqref{eq91} for $|x|\geq 5 R$. Let $T$ denote the operator constructed in Lemma~\ref{prop slow r2}, so that $\phi=T(f)$ is a radial solution to \eqref{exterior problem} satisfying the estimate \eqref{eq92}. Then we rewrite \eqref{eq90} as the fixed point problem
$$
\varphi = T(-\ve B_2(\varphi) - \eta ( \ve b(x) + a_\ve - \frac{\eta_\ve}{|x|^{1-s}} )\varphi +\tilde f).
$$
We can apply the contraction mapping principle by the following estimates
$$
\| \ve B_2(\varphi) \|_{1-\ve,\alpha} \leq o(1) \|\varphi\|_*
$$
$$
\|\eta ( \ve b(x) + a_\ve - \frac{\eta_\ve}{|x|^{2-\ve}} ) \varphi\|_{1-\ve,\alpha}\leq o(1) \|\varphi\|_*
$$
where $o(1)\to 0$ as $\ve\to0$ and $R\to\infty$, which can be proved using Lemma~\ref{lemma a epsilon}.

The proof of Proposition~\ref{prop fast decay} follows the same lines as the one of Proposition~\ref{prop exterior slow}.
\end{proof}

\section{Linear theory}
\label{sect linear}

The purpose here is to construct a linear operator $f \mapsto \phi$ which gives a solution to the problem
\begin{align}
\label{mainLeq}
\ve\mathcal J^s_{\Sigma_0}(\phi) = f \quad \text{in } \Sigma_0 ,
\end{align}
where $\mathcal J_{\Sigma_0}^s$ is the nonlocal Jacobi operator
$$
\mathcal J^s_{\Sigma_0}(\phi) (x)=
\text{p.v.} \int_{\Sigma_0} \frac{\phi(y)-\phi(x)}{|x-y|^{4-\ve}} \, d y+\phi(x) \int_{\Sigma_0} \frac{(\nu(x)-\nu(y))\cdot \nu(x)}{|x-y|^{4-\ve}} dy ,
$$
and $\Sigma_0$ is the surface defined in \eqref{def sigma0}.

The main result is stated in Proposition~\ref{main linear prop}, which we recall:
there is a linear operator that to a function $f$ on $\Sigma_0$ such that $f$ is radially symmetric and symmetric with respect to $x_3=0$ with $\|f\|_{1-\ve,\alpha+\ve}<\infty$, gives a solution $\phi$ of \eqref{mainLeq}. Moreover
$$
\|\phi\|_* \leq C \|f\|_{1-\ve,\alpha+\ve}.
$$
The norms $ \|\ \|_{1-\ve,\alpha+\ve}$ and $\| \ \|_*$ are defined in \eqref{norm RHS}, \eqref{norm st}.

As $\ve \to 0$, $\Sigma_0$ approaches the standard catenoid
$\cat$ on compact sets, which can be  described by  the parametrization
$$
y\in \R \mapsto \big( \sqrt{1+y^2} \cos(\theta) , \sqrt{1+y^2} \sin(\theta) , \log( y +  \sqrt{1+y^2} ) \big)
$$
with $y \in\R$, $\theta \in [0,2\pi]$.
Hence for smooth bounded $\phi$ we have
$$
\ve \mathcal J^s_{\Sigma_0}(\phi) \to \frac\pi2( \Delta_{\cat} \phi + |A|^2\phi )
$$
uniformly over compact sets
as $\ve \to0$, where $\Delta_\cat$ is the Laplace-Beltrami operator and $|A|$ the norm of the second fundamental form of $\cat$ (see Lemmas~\ref{conv lapl} and \ref{lemma conv A}).

Let us recall
the standard nondegeneracy property of the Jacobi operator $ \Delta_{\cat}  + |A|^2$ on the catenoid.
Linearly  independent elements in its kernel are the functions
\begin{align}
\label{Z1 Z2}
Z_1 (y) = \frac{y} { \sqrt{y^2+1}}, \quad  Z_2(y) = - 1 + \frac{y} { \sqrt{y^2+1}}\log( y+ \sqrt{y^2+1}) .
\end{align}
The knowledge of these elements in the kernel of $ \Delta_{\cat}  + |A|^2$ immediately yields


\begin{lemma}
\label{lemma jacobi catenoid}
If $\phi$ is a bounded axially symmetric solution of $\Delta_{\cat} \phi  + |A|^2\phi =0$ in $\cat$ then $\phi = c Z_1$ for some $c\in\R$.
\end{lemma}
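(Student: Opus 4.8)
The plan is to reduce the statement to the classical ODE analysis of the Jacobi operator on the catenoid written in the Emden--Fowler coordinate $y$. Since $\phi$ is assumed axially symmetric, the equation $\Delta_{\cat}\phi + |A|^2\phi = 0$ becomes a second order linear ODE in the variable $y\in\R$, which, in the parametrization
$$
y\mapsto \big( \sqrt{1+y^2}\cos\theta,\sqrt{1+y^2}\sin\theta,\log(y+\sqrt{1+y^2})\big),
$$
takes the form $(1+y^2)\phi'' + y\phi' + \dfrac{2}{1+y^2}\phi = 0$ after using that $|A_{\cat}|^2 = \dfrac{2}{(1+y^2)^2}$ and that the area element in this coordinate is $(1+y^2)^{1/2}$. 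The first thing I would do is verify by direct substitution that $Z_1(y) = y/\sqrt{1+y^2}$ solves this ODE; this is the geometric Jacobi field coming from the one-parameter family of vertical translations of $\cat$ (or equivalently from scaling), and the verification is a short computation.

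Next, I would invoke the standard fact that a second order linear ODE on an interval has a two-dimensional solution space, and that a second independent solution can be produced by reduction of order: $Z_2(y) = Z_1(y)\displaystyle\int^y \frac{ds}{(1+s^2)^{1/2}\,(Z_1(s))^2}$, which after integration gives precisely $Z_2(y) = -1 + \dfrac{y}{\sqrt{1+y^2}}\log(y+\sqrt{1+y^2})$ as in \eqref{Z1 Z2}. The key observation is then the asymptotic behavior: $Z_1$ is bounded (indeed $Z_1(y)\to \pm 1$ as $y\to\pm\infty$), whereas $Z_2(y)\sim \log(2y)$ grows logarithmically as $|y|\to\infty$, hence $Z_2$ is unbounded. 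Therefore any solution $\phi = c_1 Z_1 + c_2 Z_2$ that is bounded on $\cat$ must have $c_2 = 0$, which gives $\phi = c_1 Z_1$ and proves the Lemma.

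The only genuinely substantive point is controlling the asymptotics of $Z_2$ near both ends $y\to\pm\infty$ of the catenoid and checking that the bounded solution space is exactly one-dimensional; this is routine because the Wronskian $W = Z_1 Z_2' - Z_1' Z_2$ satisfies $W' = -\dfrac{y}{1+y^2}W$, so $W(y) = c\,(1+y^2)^{-1/2}$, which is nonzero, confirming that $Z_1,Z_2$ are indeed independent and span the solution space. No compactness or delicate estimate is needed here since the equation is an explicit ODE; the main obstacle, if any, is just being careful with the two ends of the catenoid and with the sign conventions in the parametrization, but this is entirely elementary. I would then remark that this nondegeneracy, modulo the kernel generated by $Z_1$, is what will be used together with the convergence $\ve\mathcal J^s_{\Sigma_0}\to \tfrac\pi2(\Delta_{\cat} + |A|^2)$ on compact sets to set up the a priori estimates for the full nonlocal linear operator in the next section.
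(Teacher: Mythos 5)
Your proposal is correct and follows essentially the same route as the paper: the paper simply cites the explicit kernel elements $Z_1,Z_2$ and observes that the bounded/unbounded dichotomy between them "immediately yields" the lemma, while you fill in the standard ODE details (the reduced equation $(1+y^2)\phi''+y\phi'+\tfrac{2}{1+y^2}\phi=0$, the reduction-of-order derivation of $Z_2$, the Wronskian $W=c(1+y^2)^{-1/2}$, and the asymptotics $Z_1\to\pm1$, $Z_2\sim\pm\log(2|y|)$). All of your computations check out against the paper's formulas \eqref{Z1 Z2}, so this is a valid, if more explicit, rendering of the same argument.
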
	



%
%
%

Let
$$
a_\ve(x) = \ve
\int_{\Sigma_0}
\frac{1-\langle \nu_{\Sigma_0}(y), \nu_{\Sigma_0} (x) \rangle }{|x-y|^{3+s}} dy
$$
and
$$
b_\ve(x) = a_\ve(x) \eta_\ve(x)
$$
where $\eta_\ve$ is smooth, radial, $\eta(x)=0$ for $|x|\geq \ve^{-\frac12}+1$, and $\eta(x)=1$ for $|x|\leq \ve^{-\frac12}$.

Let us write
$$
L_\ve(\phi)(x) = \ve \, \text{p.v.}\int_{\Sigma_0} \frac{\phi(y)-\phi(x)}{|x-y|^{4-\ve}}dy
$$
and consider the equation
\begin{align}
\label{frac cat}
L_\ve(\phi) +  b_\ve(x) \phi =  f \quad \text{in } \Sigma_0 .
\end{align}
%

We will consider from now only right hand sides $f:\Sigma_0 \to \R$ which are symmetric with respect to the plane $x_3=0$, and symmetric solutions  $\phi$.

Let $0<\tau<1$.

\begin{prop}
\label{prop exist frac cat}
For $\ve>0$ small there is a linear operator that takes   $f$ symmetric with respect to $x_3$ with  $\| y^{2+\tau -\ve} f \|_{L^\infty}<\infty$ to a  a symmetric  bounded  solution $\phi$ of \eqref{frac cat}. Moreover
$$
\|\phi\|_{L^\infty} \leq C \| y^{2+\tau-\ve} f\|_{L^\infty},
$$
\begin{align}
\label{grad}
\| (1+|y|)^{1+\tau} \nabla \phi\|_{L^\infty} \leq C \| y^{2+\tau-\ve} f\|_{L^\infty},
\end{align}
and  $ \lim_{|x|\to\infty} \phi(x)  $ exists.
\end{prop}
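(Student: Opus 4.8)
\textbf{Proof strategy for Proposition~\ref{prop exist frac cat}.}
The plan is to solve \eqref{frac cat} by a combination of a priori estimates and a limiting/approximation argument, exploiting the fact that as $\ve\to0$ the operator $L_\ve$ converges to $\frac\pi2\Delta_\cat$ and $b_\ve$ to $\frac\pi2|A|^2$ on compact sets, together with the nondegeneracy encoded in Lemma~\ref{lemma jacobi catenoid}. First I would establish the key a priori estimate: if $\phi$ is a symmetric bounded solution of \eqref{frac cat} with $\|y^{2+\tau-\ve}f\|_{L^\infty}<\infty$, then
$$
\|\phi\|_{L^\infty} + \|(1+|y|)^{1+\tau}\nabla\phi\|_{L^\infty} \le C \|y^{2+\tau-\ve}f\|_{L^\infty}
$$
with $C$ uniform in $\ve>0$ small. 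As usual this is proved by contradiction and blow-up: suppose there are $\ve_i\to0$, $f_i$, $\phi_i$ solving \eqref{frac cat} with $\|\phi_i\|_{L^\infty}=1$ and $\|y^{2+\tau-\ve_i}f_i\|_{L^\infty}\to0$. Pick points $x_i\in\Sigma_0$ where $|\phi_i(x_i)|\ge\frac12$. Using the uniform Schauder-type estimates for $L_\ve$ (it is $(-\Delta)^{\frac{1+s}{2}}$ up to a constant, with bounds uniform as $\ve\to1$), $\phi_i$ is bounded in $C^{1,\alpha}_{loc}$, so along a subsequence $\phi_i\to\phi$ locally uniformly, and $\phi$ is a bounded symmetric solution of $\Delta_\cat\phi+|A|^2\phi=0$ on $\cat$. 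If $x_i$ stays bounded, Lemma~\ref{lemma jacobi catenoid} forces $\phi=cZ_1$; but $Z_1$ is odd in $y$ hence not symmetric with respect to $x_3=0$, so $\phi\equiv0$, contradicting $|\phi(x_i)|\ge\frac12$. If $x_i\to\infty$, I would use the decay of the right hand side together with the exterior linear theory (Lemma~\ref{prop slow r2} and Proposition~\ref{prop exterior slow}, applied here with the fast-decay weight $2+\tau-\ve$) to conclude that $\phi_i$ must also decay at the scale $|x_i|$, contradicting $|\phi_i(x_i)|\ge\frac12$; more concretely, the weighted exterior estimate gives $|\phi_i(x)|\le C\|y^{2+\tau-\ve_i}f_i\|_{L^\infty}(1+|x|)^{-\tau}\to0$ for $|x|\ge R$, ruling out escape of mass to infinity.

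Once the a priori estimate is in hand, the gradient bound \eqref{grad} follows from it by rescaling: for $x_0$ with $|x_0|=r_0$ large, set $\tilde\phi(z)=\phi(x_0+r_0 z)$ on the rescaled surface, apply interior Schauder estimates for $L_\ve$ (uniform in $\ve$) on a unit ball, and translate back; near the origin one uses ordinary elliptic interior estimates. This yields $|\nabla\phi(x_0)|\le C r_0^{-1}\sup_{|x-x_0|\le r_0/2}|\phi| + C r_0 \|y^{2+\tau-\ve}f\|_{L^\infty}\cdot r_0^{-1-\tau}$, and combining with $\|\phi\|_{L^\infty}\le C\|y^{2+\tau-\ve}f\|_{L^\infty}$ and the fact that $\phi$ inherits the decay $|\phi(x)|\le C\|y^{2+\tau-\ve}f\|_{L^\infty}(1+|x|)^{-\tau}$ from the exterior theory, gives \eqref{grad}. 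The existence of $\lim_{|x|\to\infty}\phi(x)$ then comes from integrating $\nabla\phi$ along rays: since $|\nabla\phi(x)|\le C(1+|x|)^{-1-\tau}$ is integrable in $|x|$, $\phi$ has a radial limit at infinity (in fact the limit is finite, and by the decay estimate it is $0$ unless the kernel element $Z_1$-type behavior is present — but symmetry kills that, so actually the limit is $0$; it suffices for the statement to record that the limit exists).

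For existence of a solution itself, I would use the solvability from the exterior theory plus a Fredholm/approximation scheme. One natural route: for each $n$ solve the truncated problem with right hand side $f\eta(x/n)$, using Galerkin approximation or the direct method; the a priori estimate provides uniform bounds $\|\phi_n\|_{L^\infty}+\|(1+|y|)^{1+\tau}\nabla\phi_n\|_{L^\infty}\le C$, so up to subsequence $\phi_n\to\phi$ locally uniformly, and $\phi$ solves \eqref{frac cat} with the required bounds. To handle the possibility that the truncated problems are not directly solvable, I would instead first prove solvability for right hand sides supported in a fixed large ball by a degree or Fredholm argument for the operator $L_\ve+b_\ve$ acting between appropriate weighted spaces (which is a compact perturbation of the invertible $(-\Delta)^{\frac{1+s}{2}}$ on a bounded-decay scale), noting that the only obstruction would be a nontrivial symmetric bounded kernel element, which the a priori estimate excludes. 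The main obstacle I anticipate is the uniform-in-$\ve$ control of the nonlocal terms $b_\ve$ and of the error in replacing $L_\ve$ by $\Delta_\cat$ in the blow-up limit when $x_i\to\infty$ at an intermediate scale (comparable to $\ve^{-1/2}$ or $\ve^{-1/2}|\log\ve|$), where the limiting equation acquires the potential $|x|^{-2}$ or a bounded potential; this requires the careful splitting of $a_\ve$ from Lemma~\ref{lemma a epsilon} and the three-case analysis of the rescaled potential already carried out in the proof of Lemma~\ref{prop slow r2}, which I would reuse essentially verbatim, checking that in each of the three limiting regimes the resulting linear equation on $\R^2$ (or on $\cat$) has no nontrivial solution with $|\phi(r)|\le C(1+r)^{-\tau}$.
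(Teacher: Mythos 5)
Your outline of the compact-set blow-up, the scaling/Schauder derivation of \eqref{grad}, and the existence of the limit by integrating the gradient all match the paper's argument, but the crucial step — the a priori estimate when the contradiction points $x_i$ escape to infinity — has a genuine gap. You propose to conclude that $\phi_i$ decays like $(1+|x|)^{-\tau}$ by invoking ``the exterior linear theory,'' but Proposition~\ref{prop fast decay} (and the constructions in Section~\ref{sect linear2}) only produce \emph{one} solution with that decay; they are existence statements, not a priori estimates for all bounded solutions of the exterior equation on $\Sigma_0$. Since the problem is linear, any bounded homogeneous solution in the tail can be added, and these do not decay: depending on the scale of $x_i$ the blow-up limit solves $\Delta h=0$, for which a nonzero constant survives (the $x_3$-symmetry does not rule out constants), or $\Delta h+r^{-2}h=0$, for which $\cos\log r$, $\sin\log r$ survive. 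So the decay claim $|\phi_i(x)|\le C\|\cdot\|(1+|x|)^{-\tau}$ cannot be deduced from the exterior results as stated, and the blow-up at infinity alone is inconclusive. The paper resolves this by proving the a priori bound for the \emph{truncated} problem \eqref{truncated frac cat} with zero Dirichlet data on $\Sigma_0\setminus B_R$ (Lemma~\ref{lemma apriori trcat}): after forcing $\phi_n\to 0$ on compacts via Lemma~\ref{lemma jacobi catenoid}, the tail is controlled by a sliding comparison with the explicit supersolution $w=1-\delta|y|^{-\tau}$, and the vanishing of $\phi_n$ on $\partial B_{R_n}$ (where $w>0$) is exactly what makes this comparison close. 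The truncated problem — solved by minimization and the Fredholm alternative, then letting $R\to\infty$ — is therefore not a cosmetic device but the crux: without a boundary on which to compare, your direct-on-$\Sigma_0$ a priori estimate does not go through.

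A secondary remark: your aside that the limit at infinity must be $0$ is incorrect (you do hedge correctly that only existence of the limit is needed). The following Proposition~\ref{prop exist frac cat decay} subtracts a multiple of a special solution $\phi_0$ precisely because $\Lambda_\ve=\lim_{|x|\to\infty}\phi_0(x)$ is bounded away from zero; the symmetry kills the odd kernel element $Z_1$ but does not force the limit to vanish.
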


The counterpart of this result for the Jacobi operator $\Delta_\cat+|A|^2$, without assuming any symmetry on $f$ or $\phi$ is: if $\||y|^{2+\tau}f\|_{L^\infty}<\infty$ and $\int_\cat f Z_1=0$, there is a bounded solution $\phi$ of
$$
\Delta_\cat \phi + |A|^2 \phi=f \quad \text{in }\cat,
$$
and this solution is unique except a constant times $Z_1$. Moreover $\phi$ has limits at both ends, which have to coincide. In the nonlocal setting, to simplify we work with functions that are symmetric with respect to $x_3$, so in some sense the condition $\int_\cat f Z_1=0$ is automatic.

For the existence part in Proposition~\ref{prop exist frac cat}
 we study the truncated problem
\begin{align}
\label{truncated frac cat}
\left\{
\begin{aligned}
& L_{\ve}(\phi ) + b_\ve \phi  = f \quad \text{in } \Sigma_0 \cap B_R(0)
\\
& \phi = 0\quad\text{on } \Sigma_0 \setminus B_R(0)
\end{aligned}
\right.
\end{align}

Let
$$
\sigma = \frac{1+s}2 =1-\frac\ve2 .
$$
Given in $f\in L^2(\Sigma_0 \cap B_R(0))$ there is a weak solution $\phi \in H^{\sigma}(\Sigma_0)$  of
\begin{align*}
\left\{
\begin{aligned}
& - L_{\ve}(\phi )  = f \quad \text{in } \Sigma_0 \cap B_R(0)
\\
& \phi = 0\quad\text{on } \Sigma_0 \setminus B_R(0)
\end{aligned}
\right.
\end{align*}
By weak solution we mean $\phi \in H^\sigma(\Sigma_0)$, $\phi=0$ on $\Sigma_0 \setminus B_R(0)$ and
$$
\int_{\Sigma_0}\int_{\Sigma_0}\frac{(\phi(y)-\phi(x))(\varphi(y)-\varphi(x))}{|x-y|^{2+2 \sigma}} \, d y dx = \int_{\Sigma_0} f(x)\varphi(x)\, dx
$$
for all $\varphi \in H^{\sigma}(\Sigma_0)$ with $\varphi = 0 $ in $ \Sigma_0 \setminus B_R(0)$. This solution can be found by minimizing the functional
$$
\frac14\int_{\Sigma_0}\int_{\Sigma_0} \frac{(\phi(y)-\phi(x))^2}{|x-y|^{2+2\sigma}} \, d y d x - \int_{\Sigma_0} f(x) \phi(x)\, d x
$$
over the space $\{\phi\in H^{\sigma}(\Sigma_0): \phi=0\quad \text{on }\Sigma_0\setminus B_R(0)\}$. For $f$ locally bounded and $\ve>0$ small ($\sigma$ is close to 1), the solution belongs to $C^{1,\alpha}_{loc}$.

First we establish an apriori estimate for solutions of \eqref{truncated frac cat}.
\begin{lemma}
\label{lemma apriori trcat}
Suppose $f$ is symmetric and  $\| |y|^{2+\tau-\ve} f \|_{L^\infty}<\infty$.
There are $\ve_0,R_0,C>0$ such that for $0<\ve\leq \ve_0$, $R\geq R_0$, and any symmetric solution $\phi$ of \eqref{truncated frac cat} we have
$$
\|\phi\|_{L^\infty} \leq C \| |y|^{2+\tau-\ve} f\|_{L^\infty}.
$$
\end{lemma}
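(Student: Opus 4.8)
The plan is to argue by contradiction through a blow-up/compactness scheme, in the spirit of the a priori estimate in Lemma~\ref{prop slow r2}. If the estimate failed, one would produce sequences $\ve_n\to0$, $R_n\to\infty$, symmetric $f_n$ and symmetric solutions $\phi_n$ of \eqref{truncated frac cat} with $\|\phi_n\|_{L^\infty}=1$ and $\||y|^{2+\tau-\ve_n}f_n\|_{L^\infty}\to0$, and a point $x_n\in\Sigma_0\cap B_{R_n}$ with $|\phi_n(x_n)|\ge\frac12$; set $r_n=|x_n|$. Throughout one uses interior Schauder and Harnack estimates for $L_{\ve_n}$ that are uniform as the order $\sigma=1-\frac{\ve}{2}$ of the operator tends to $1$, together with Lemmas~\ref{conv lapl}, \ref{lemma conv A}, \ref{lemma a epsilon}, which give that on compact sets $L_\ve\to\frac\pi2\Delta$, $a_\ve\to\frac\pi2|A|^2$, and $\Sigma_0\to\cat$. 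The sign $b_\ve\ge0$ (since $1-\langle\nu(x),\nu(y)\rangle\ge0$) and the fact that $b_\ve$ is supported in $\{|x|\le\ve^{-1/2}+1\}$ will both be used.

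\textbf{The bounded case.} If $r_n$ stays bounded, one passes to the limit $\phi_n\to\phi$ in $C^1_{loc}$, where $\phi$ is bounded, axially symmetric, symmetric with respect to $x_3=0$, and solves $\Delta_\cat\phi+|A|^2\phi=0$ on $\cat$ (on compact sets $b_{\ve_n}=a_{\ve_n}$ for $n$ large). Moreover $|\phi(x_\infty)|\ge\frac12$ for $x_\infty=\lim x_n$, so $\phi\not\equiv0$. By the nondegeneracy of the catenoid's Jacobi operator (Lemma~\ref{lemma jacobi catenoid}), $\phi=cZ_1$; but $Z_1$ is \emph{antisymmetric} with respect to $x_3=0$ (it is odd in the parameter $y$), so $c=0$, a contradiction.

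\textbf{The escaping case.} The substantive case is $r_n\to\infty$, and here the main obstacle appears: a single rescaling by $r_n$ yields on $\mathbb R^2$ only a bounded harmonic, hence \emph{constant}, limit $c$ with $|c|\ge\frac12$, which is not in itself contradictory --- indeed the genuine solutions of Proposition~\ref{prop exist frac cat} have a nonzero limit at infinity --- because the right-hand side is measured in a weighted norm while the conclusion concerns the \emph{unweighted} sup norm. To extract the contradiction one analyzes the full radial profile of $\phi_n$: integrating the radial form of $L_{\ve_n}(\phi_n)+b_{\ve_n}\phi_n=f_n$ one shows that in each of the three regions --- the neck region, $\{1\lesssim|x|\lesssim\ve_n^{-1/2}\}$, and $\{|x|\gtrsim\ve_n^{-1/2}\}$ --- $\phi_n$ is, to leading order, affine in $\log|x|$, using (i) $\phi_n\to0$ uniformly near the neck (by applying the bounded-case argument to any putative sequence of bounded points where $|\phi_n|$ stays bounded below), (ii) that $b_{\ve_n}$ is an integrable perturbation of the flat Laplacian on its support, since $|A_{\Sigma_0}|^2\sim|A_\cat|^2$ decays fast, (iii) the fast decay $|f_n|\lesssim\||y|^{2+\tau-\ve_n}f_n\|_{L^\infty}|y|^{-2-\tau+\ve_n}$ with $2+\tau-\ve_n>2$, and (iv) the vanishing $\phi_n=0$ on $\{|x|=R_n\}$. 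Matching the three profiles with the data at the two ends (the neck value tending to $0$ and the value at $R_n$ equal to $0$) forces all the constants, and in particular $\phi_n(x_n)$, to tend to $0$, contradicting $|\phi_n(x_n)|\ge\frac12$. Propagating the decay of $f_n$ and the double vanishing of $\phi_n$ through the intermediate region carrying the induced potential $b_\ve$ is the technically delicate part.

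Once Lemma~\ref{lemma apriori trcat} is in place, Proposition~\ref{prop exist frac cat} follows in the usual way: existence for \eqref{truncated frac cat} from the variational argument already recorded, a uniform bound from the lemma, and a limit $R\to\infty$ using the uniform $C^{1,\alpha}_{loc}$ estimates for $L_\ve$; the gradient estimate \eqref{grad} and the existence of $\lim_{|x|\to\infty}\phi(x)$ then come from scaling and the representation of $\phi$ away from the core, as in the exterior analysis of Section~\ref{sect linear2}.
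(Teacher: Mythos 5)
You correctly identify the structure: a contradiction/compactness argument, with the bounded case resolved by the nondegeneracy of the catenoid Jacobi operator (and the symmetry eliminating $Z_1$). That part matches the paper. You also correctly diagnose the central difficulty in the escaping case: a rescaling by $r_n$ produces only a bounded harmonic, hence constant, limit, which is not contradictory since solutions of Proposition~\ref{prop exist frac cat} genuinely have a nonzero limit at infinity.

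However, your proposed resolution of the escaping case is not the paper's, and as sketched it has a real gap. The paper does not match profiles across three regimes; instead it closes the argument with a single comparison (``sliding'') against the explicit barrier $w=1-\delta|y|^{-\tau}$, for which one verifies $L_{\ve_n}(w)\leq -c_{\ve_n}\delta|y|^{-\tau-2+\ve_n}$ on $|y|\geq\bar R$ with $c_{\ve_n}$ bounded away from $0$, while $b_{\ve_n}w$ is a lower-order perturbation there because $b_{\ve_n}\sim|A_{\Sigma_0}|^2|x|^{\ve}\sim|x|^{-4+\ve}$ decays faster than $|y|^{-\tau-2+\ve}$ (recall $\tau<1$). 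Since $\phi_n\to0$ uniformly on $B_{\bar R}$ (the bounded-case step) and $\phi_n=0$ on $\partial B_{R_n}$, the maximum principle for the nonlocal operator gives $|\phi_n|\leq C(\sup_{B_{\bar R}}|\phi_n|+\||y|^{2+\tau-\ve_n}f_n\|_{L^\infty})w$ in the annulus, hence $\|\phi_n\|_{L^\infty}\to0$, the desired contradiction. Your route via ``integrating the radial form of $L_{\ve_n}(\phi_n)+b_{\ve_n}\phi_n=f_n$'' is not available: $L_\ve$ is a genuinely nonlocal operator and does not reduce to a radial ODE, so the step that you yourself call ``the technically delicate part'' — propagating the decay through the intermediate region by profile matching — is not just delicate but is exactly the missing idea. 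Without the barrier (or an equivalent nonlocal comparison), the argument does not close.
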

\begin{proof}
If the conclusion fails, there are sequences $\ve_n\to 0$, $R_n\to\infty$, $\phi_n$ solving \eqref{truncated frac cat} for some $f_n$ such that
$$
\|\phi_n\|_{L^\infty}=1 , \quad  \| |y|^{2+\tau-\ve_n} f_n \|_{L^\infty} \to 0
$$
as $n\to\infty$.
We show that for any $\rho>0$ fixed
$$
\sup_{\Sigma_0 \cap B_\rho(0)} |\phi_n|\to 0\quad \text{as }n\to\infty.
$$
If not, then passing to a subsequence, for some $x_n \in \Sigma_0 \cap B_\rho(0)$,
$$
|\phi_n(x_n)|\geq \delta>0.
$$
By standard estimates, $\phi_n$ is bounded in $C^{\alpha}_{loc}$. Hence by passing to a new subsequence, $\phi_n \to \phi$ locally uniformly as $n\to \infty$.  We pass to the limit in the weak formulation  and obtain a bounded symmetric  solution $\phi \not\equiv 0$ of
$$
\Delta_{\cat} \phi + |A|^2 \phi  = 0 \quad\text{in } \cat.
$$
But by Lemma~\ref{lemma jacobi catenoid} the only bounded solution is $c Z_1$, which is odd. Hence $\phi\equiv0$ and this is a contradiction.

We claim that
$$
\|\phi_n\|_{L^\infty(\Sigma_0\cap B_{R_n}(0))} \to 0
$$
as $n\to\infty$, which is a contradiction.

Indeed, let $w= 1  -  \delta |y|^{-\tau}$.
One can check that
$$
L_{\ve_n}(w) \leq  - c_{\ve_n}  \delta |y|^{-\tau-2+\ve_n}
$$
for $|y|\geq \bar R$ where $\bar R$ is large and fixed and $c_{\ve_n}$ converges to a positive constant as $\ve_n\to0$.
Next we choose $\delta >0$ such that $\inf_{\Sigma_0 \cap B_{\bar R}(0))} w>0$.
We claim that
\begin{align}
\label{qq1}
\phi_n \leq C (\|\phi\|_{L^\infty(\Sigma_0 \cap B_{\bar R}(0))} +
\||y|^{\tau+2-\ve_n} f_n \|_{L^\infty}) w
\end{align}
in $ \Sigma_0 \cap ( B_{R_n}(0) \setminus B_{\bar R}(0)) $.
Note that  \eqref{qq1} holds for $C$ large depending on $\phi_n$ because $\phi_n$ is bounded. The claim is that this holds for $C=C_0$ with
$$
C_0 = \max \left(  2 (\inf_{\Sigma_0 \cap B_{\bar R}(0))} w)^{-1} ,
\sup \frac{|f_n|}{c_{\ve_n} \delta |y|^{-\tau-2-+\ve_n}}\right)
$$
The comparison can be done by sliding.
\end{proof}

%
%
%
%

Using the Fredholm alternative, we deduce the following result.

\begin{lemma}
Suppose $f$ is symmetric and  $\| |y|^{2+\tau-\ve } f \|_{L^\infty}<\infty$.  For $0<\ve\leq \ve_0$ and $R\geq R_0$ there is a unique symmetric solution $\phi$ of \eqref{truncated frac cat}.
\end{lemma}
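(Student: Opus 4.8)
The plan is to combine the a priori estimate of Lemma~\ref{lemma apriori trcat} with the Fredholm alternative for the bounded linear operator associated to the truncated problem \eqref{truncated frac cat}. First I would set up the functional-analytic framework: on the Hilbert space
$$
H_{0,R} = \{\phi\in H^\sigma(\Sigma_0) : \phi = 0 \text{ on } \Sigma_0\setminus B_R(0),\ \phi \text{ symmetric w.r.t. } x_3=0\},
$$
the bilinear form $(\phi,\varphi)\mapsto \tfrac12\int_{\Sigma_0}\int_{\Sigma_0}\frac{(\phi(y)-\phi(x))(\varphi(y)-\varphi(x))}{|x-y|^{2+2\sigma}}\,dy\,dx$ is coercive, so $-L_\ve$ is invertible from $H_{0,R}$ to its dual with compact inverse $K$ (the embedding $H^\sigma(\Sigma_0\cap B_R)\hookrightarrow L^2$ being compact). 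Since $b_\ve$ is a bounded potential supported in $B_{\ve^{-1/2}+1}(0)$, multiplication by $b_\ve$ is a compact perturbation, so equation \eqref{truncated frac cat} is of the form $(I - S)\phi = Kf$ with $S$ compact on $H_{0,R}$ (restricted to the symmetric subspace, which is preserved because $b_\ve$ and $f$ are symmetric and $L_\ve$ commutes with the reflection $x_3\mapsto -x_3$).

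Next I would invoke the Fredholm alternative: existence of a solution for every right-hand side is equivalent to uniqueness for the homogeneous problem. Suppose $\phi$ is a symmetric solution of \eqref{truncated frac cat} with $f\equiv 0$. Then in particular $\||y|^{2+\tau-\ve}f\|_{L^\infty}=0$, and Lemma~\ref{lemma apriori trcat} (valid for $0<\ve\le\ve_0$, $R\ge R_0$) gives $\|\phi\|_{L^\infty}\le C\cdot 0 = 0$, hence $\phi\equiv 0$. Therefore the homogeneous problem has only the trivial symmetric solution, and the Fredholm alternative yields a unique symmetric solution $\phi$ of \eqref{truncated frac cat} for each symmetric $f$ with $\||y|^{2+\tau-\ve}f\|_{L^\infty}<\infty$. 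One should check $f\in L^2(\Sigma_0\cap B_R)$, which is immediate from the weighted bound and $2+\tau-\ve>0$, so that the weak solution is well-defined; local boundedness and $C^{1,\alpha}_{loc}$ regularity of $\phi$ follow from the interior estimates for $L_\ve$ already cited in the text (uniform as $\ve\to 1$).

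The only subtle point — and the one I would be most careful about — is ensuring that the a priori estimate applies to \emph{the} solution produced by Fredholm, i.e. that the argument is not circular: Lemma~\ref{lemma apriori trcat} is stated for arbitrary symmetric solutions of \eqref{truncated frac cat}, so it applies both to rule out nontrivial homogeneous solutions (giving uniqueness, hence existence via Fredholm) and, once existence is known, to bound the solution we have constructed. A second minor technical check is that the compactness of $S$ and the coercivity constants are genuinely independent of $R$ and of $\ve\in(0,\ve_0]$ — this is not needed for existence/uniqueness at fixed $R,\ve$ but is implicitly what lets one later pass $R\to\infty$; for the present lemma it suffices that everything works at fixed $R\ge R_0$ and $\ve\le\ve_0$. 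I do not expect any real obstacle here: the statement is essentially a packaging of the a priori bound plus standard Fredholm theory for the fractional Dirichlet problem, and the proof is short.
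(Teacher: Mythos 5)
Your proposal is correct and matches the paper's approach: the paper simply states ``Using the Fredholm alternative, we deduce the following result,'' relying on Lemma~\ref{lemma apriori trcat} for uniqueness of the homogeneous problem exactly as you do, and your write-up supplies the details (variational setup, compactness, regularity) that the paper leaves implicit.
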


\begin{proof}[Proof of Proposition~\ref{prop exist frac cat}]
We fix $0<\ve\leq \ve_0$ for $R\geq R_0$ and let $\phi_R$ be the solution of \eqref{truncated frac cat}. Then for a sequence $R_j\to\infty$, $\phi = \lim_{j\to\infty} \phi_{R_j}$ exists and is a solution of \eqref{frac cat}.
Estimate \eqref{grad} is obtained by scaling and the gradient estimates of Caffarelli and Silvestre \cite{caffarelli-silvestre-arma}.
Finally $ \lim_{|x|\to\infty} \phi(x)  $ exists because of  \eqref{grad}.
\end{proof}

We need a solvability theory with a constraint on the right hand side so that the solution decays. For this we consider the equation
\begin{align}
\label{eq70}
L_\ve(\phi) + b_\ve \phi = f -  c Z_2 \eta_1 \quad\text{in } \Sigma_0,
\end{align}
where $\eta_1 $ is a smooth radial symmetric cut-off function on $\Sigma_0$, such that $\eta_1 (x)=1$ for $|x|\leq A_1$, $\eta_1(x)=0$ for $|x|\geq A_1+1$ and $A_1$ is a fixed large constant.
The function $Z_2 \eta_1$ in the right hand side can be replaced by any $f_0$ with $f_0(x) = O (|x|^{-2-\tau+\ve})$, $\int_{\Sigma_0} f_0 Z_2 \not=0$.
\begin{prop}
\label{prop exist frac cat decay}
There is $\ve_0>0$ such that for all $0<\ve\leq\ve_0$ and any $f$  symmetric with respect to $x_3$ with  $\| |y|^{2+\tau-\ve } f \|_{L^\infty}<\infty$ there is a unique solution $\phi$, $c$ of \eqref{eq70} such that $\phi$ is symmetric and $\||y|^{\tau} \phi\|_{L^\infty}<\infty$.
Moreover
$$
\||y|^{\tau} \phi\|_{L^\infty} +|c|\leq C \| |y|^{2+\tau-\ve} f\|_{L^\infty}.
$$
\end{prop}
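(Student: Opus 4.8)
\medskip

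\noindent\textbf{Strategy.} The plan is to obtain the solution by correcting the bounded solution furnished by Proposition~\ref{prop exist frac cat} by a suitable multiple of one fixed bounded solution, chosen so as to kill the limit at infinity. For $f$ symmetric with respect to $x_3$ and $\||y|^{2+\tau-\ve}f\|_{L^\infty}<\infty$, write $\Phi(f)$ for the bounded symmetric solution of $L_\ve(\Phi(f))+b_\ve\Phi(f)=f$ given by Proposition~\ref{prop exist frac cat}; by \eqref{grad} it satisfies $\|\Phi(f)\|_{L^\infty}+\|(1+|y|)^{1+\tau}\nabla\Phi(f)\|_{L^\infty}\le C\||y|^{2+\tau-\ve}f\|_{L^\infty}$, and $\ell(f):=\lim_{|x|\to\infty}\Phi(f)(x)$ exists, with $f\mapsto\ell(f)$ linear and $|\ell(f)|\le C\||y|^{2+\tau-\ve}f\|_{L^\infty}$. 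Apply this to the compactly supported symmetric function $Z_2\eta_1$ to get $\chi:=\Phi(Z_2\eta_1)$ with limit $\ell_0:=\ell(Z_2\eta_1)$.

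The decisive point is that $\ell_0\neq0$, uniformly for small $\ve$. First I would show that as $\ve\to0$ the functions $\chi=\chi^{(\ve)}$ converge, on compact subsets of $\cat$ and in their limits at the two ends, to the bounded even solution $\chi^{(0)}$ of $\Delta_\cat\chi^{(0)}+|A|^2\chi^{(0)}=\frac{2}{\pi}Z_2\eta_1$ on the catenoid; this uses the convergence $\ve\mathcal J^s_{\Sigma_0}\to\frac{\pi}{2}(\Delta_\cat+|A|^2)$ from Lemmas~\ref{conv lapl} and \ref{lemma conv A}, the uniform decay \eqref{grad} (which forces the limits at infinity to pass to the limit), and the a priori bound of Lemma~\ref{lemma apriori trcat}; in particular $\ell_0^{(\ve)}\to\ell_0^{(0)}:=\lim_{|y|\to\infty}\chi^{(0)}$. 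Then a variation-of-parameters / Green's identity on $\cat$ — pairing the equation for $\chi^{(0)}$ against the even Jacobi field $Z_2$ of \eqref{Z1 Z2}, integrating over $\cat\cap\{|y|<T\}$, using $\Delta_\cat Z_2+|A|^2Z_2=0$, and letting $T\to\infty$ — identifies $\ell_0^{(0)}$ up to a nonzero explicit constant with $\int_\cat Z_2^2\eta_1>0$, the surviving boundary term coming from the logarithmic growth of $Z_2$ played against the linear growth of the area element at the ends, while the cross term vanishes because $\nabla\chi^{(0)}$ decays. Hence $\ell_0^{(0)}\neq0$ and $|\ell_0^{(\ve)}|\ge c_0>0$ for all small $\ve$.

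Granting this, set $c:=\ell(f)/\ell_0$ and $\phi:=\Phi(f)-c\,\chi$. Then $\phi$ is symmetric, $L_\ve(\phi)+b_\ve\phi=f-cZ_2\eta_1$, i.e.\ \eqref{eq70} holds, and $\lim_{|x|\to\infty}\phi(x)=\ell(f)-c\,\ell_0=0$. Since $|\ell_0|\ge c_0$ we get $|c|\le C\||y|^{2+\tau-\ve}f\|_{L^\infty}$, and as $\|\chi\|_{L^\infty}$ and $\|(1+|y|)^{1+\tau}\nabla\chi\|_{L^\infty}$ are bounded uniformly in $\ve$, both $\|\phi\|_{L^\infty}\le C\||y|^{2+\tau-\ve}f\|_{L^\infty}$ and $|\nabla\phi(x)|\le C(1+|x|)^{-1-\tau}\||y|^{2+\tau-\ve}f\|_{L^\infty}$ follow. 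The weighted bound $\||y|^{\tau}\phi\|_{L^\infty}\le C\||y|^{2+\tau-\ve}f\|_{L^\infty}$ is then free: $\phi$ is axially symmetric with vanishing limit at infinity, so integrating $|\nabla\phi|$ along a meridian of $\Sigma_0$ from $x$ out to infinity — where $|x|$ grows essentially linearly in arclength, $\Sigma_0$ being asymptotically conic — gives $|\phi(x)|\le C|x|^{-\tau}\||y|^{2+\tau-\ve}f\|_{L^\infty}$ for $|x|$ large, the bounded range being trivial.

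For uniqueness, if $(\phi_1,c_1)$ and $(\phi_2,c_2)$ both solve \eqref{eq70} for the same $f$ with $\phi_i$ symmetric and $\||y|^{\tau}\phi_i\|_{L^\infty}<\infty$, then $v:=(\phi_1-\phi_2)+(c_1-c_2)\chi$ is bounded, symmetric, and solves $L_\ve(v)+b_\ve v=0$; by the limiting form of the a priori estimate of Lemma~\ref{lemma apriori trcat} (the same contradiction–and–compactness argument run directly on $\Sigma_0$, the escape-to-infinity alternatives handled as in the proof of Lemma~\ref{prop slow r2}, together with the absence of a nonzero bounded symmetric Jacobi field on $\cat$ by Lemma~\ref{lemma jacobi catenoid}), $v\equiv0$ for small $\ve$. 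Hence $\phi_1-\phi_2=-(c_1-c_2)\chi$; the left side tends to $0$ at infinity while $\chi\to\ell_0\neq0$, so $c_1=c_2$ and then $\phi_1=\phi_2$. \emph{The main obstacle is precisely the uniform lower bound $|\ell_0^{(\ve)}|\ge c_0>0$}: it requires both the $\ve\to0$ limit analysis — in particular the continuity of the ``value at infinity'' in that limit, which rests on the uniform gradient decay \eqref{grad} — and the careful bookkeeping of the two catenoid-end boundary terms in the Green's identity.
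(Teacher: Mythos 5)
Your construction of the corrector $\chi=\Phi(Z_2\eta_1)$, the normalization $c=\ell(f)/\ell_0$, the weighted bound obtained by integrating the gradient estimate \eqref{grad} out to infinity along meridians, and the reduction of everything to the single nondegeneracy claim $\ell_0\neq 0$ all coincide with the paper's architecture. Where you genuinely depart is in proving $|\ell_0^{(\ve)}|\ge c_0>0$: the paper stays at the level of $\Sigma_0$, multiplying \eqref{frac cat} by $Z_2\eta_R$ with $R^{\ve}\to 1$ and computing the bilinear pieces $I_1,I_2$ directly for the fractional operator, obtaining $I_1 = a_\ve\Lambda_\ve+o(1)$ with $a_\ve$ bounded below and identifying the right-hand side with $\int_{\Sigma_0}Z_2^2\eta_1$. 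You instead first pass to the $\ve\to 0$ limit (using the uniform $L^\infty$ bound of Lemma~\ref{lemma apriori trcat}, the uniform gradient decay \eqref{grad}, the operator convergence of Lemmas~\ref{conv lapl}--\ref{lemma conv A}, and the symmetric nondegeneracy on $\cat$ of Lemma~\ref{lemma jacobi catenoid}) and then apply a \emph{classical} Green's identity on the catenoid against $Z_2$, reading $\ell_0^{(0)}$ off the surviving boundary term. This is a legitimate and arguably more transparent route: it replaces the delicate $\ve$-level bookkeeping with a standard integration by parts on $\cat$, at the price of having to justify that the ``value at infinity'' is continuous under the $\ve\to 0$ limit (which you correctly base on \eqref{grad}).

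There is, however, a genuine gap in your uniqueness argument. You pass from $(\phi_1-\phi_2,c_1-c_2)$ to $v=(\phi_1-\phi_2)+(c_1-c_2)\chi$, which is merely \emph{bounded}, and then assert $v\equiv 0$ from a ``limiting form of Lemma~\ref{lemma apriori trcat}''. That lemma (and the paper's uniqueness argument for this proposition) relies essentially on the decay hypothesis $\||y|^\tau\phi\|_{L^\infty}<\infty$: in the contradiction-and-compactness step one blows up around a point $x_n$ with $|x_n|\to\infty$, and the decay bound forces the rescaled limit to be a radial function on $\R^2\setminus\{0\}$ with $|\phi(x)|\le C|x|^{-\tau}$, hence zero. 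Your $v$ does not decay — it tends to the constant $(c_1-c_2)\ell_0$ at infinity — so the rescaled limit at large scales is only a \emph{bounded} radial harmonic function, which may be a nonzero constant, and no contradiction results. In other words, the nonexistence of nontrivial bounded symmetric solutions of $L_\ve v+b_\ve v=0$ on $\Sigma_0$ is not what Lemma~\ref{lemma apriori trcat} gives, and the proposed adaptation breaks exactly at the escape-to-infinity alternative. The fix is to avoid the substitution entirely: work with $w=\phi_1-\phi_2$, which retains the decay $\||y|^\tau w\|_{L^\infty}<\infty$ and solves $L_\ve w+b_\ve w=-(c_1-c_2)Z_2\eta_1$, first test against $Z_2\eta_{R}$ ($R\to\infty$, $R\ve^{1/2}\to 0$) to force $c_1=c_2$, and only then conclude $w\equiv 0$ from the homogeneous problem with decay — which is precisely the paper's argument.
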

\begin{proof}
First we prove existence.
For this we let $\phi_0$ be the solution of \eqref{frac cat} constructed in Proposition~\ref{prop exist frac cat} with right hand side $Z_2 \eta_1$. Then $\lim_{|x|\to\infty}\phi_0(x) = \Lambda_\ve$ exists. We claim that $\Lambda_\ve>0$ stays bounded and bounded away from 0 as $\ve\to 0$. To prove this,
let $Z_2$ be given as in \eqref{Z1 Z2}. Multiply \eqref{frac cat} by $Z_0 \eta_R$ where $\eta_R(x) = \eta(x/R)$ and $\eta$ is a radial, symmetric, smooth cut-off function such that $\eta(x)=1$ for $|x|\leq 1$, $\eta(x)=0$ for $|x|\geq 2$.
Then for $R\geq A_1 +1$ we find
\begin{align*}
& \ve \int_{\Sigma_0} \phi_0 (x) \int_{\Sigma_0} Z_2(y)\frac{\eta_R(y)-\eta_R(x)}{|x-y|^{4-\ve}} \, d y \, d x
+ \int_{\Sigma_0} \phi_0 \eta_R g_0
= \int_{\Sigma_0} Z_2^2 \eta_1
\end{align*}
where
$$
g_0=L_{\ve}(Z_2)+ b_{\ve} Z_2.
$$

Let us consider the first term
\begin{align*}
& \ve \int_{\Sigma_0(\ve)} \phi_0 (x) \int_{\Sigma_0(\ve)} Z_2(y)\frac{\eta_R(y)-\eta_R(x)}{|x-y|^{4-\ve}} \, d y \, d x
=I_1-I_2
\end{align*}
with
\begin{align*}
I_1 &  =
\frac12\ve\int_{\Sigma_0} \int_{\Sigma_0}
\phi_0(x)
\frac{(Z_2(y)-Z_2(x))(\eta_R(y)-\eta_R(x))}{|x-y|^{4-\ve}} \, d y d x
\\
I_2 &=
\frac12\ve\int_{\Sigma_0}\int_{\Sigma_0} Z_2(y) \frac{(\phi_0(y)-\phi_0(x))(\eta_R(y)-\eta_R(x))}{|x-y|^{4-\ve}} \, d y d x .
\end{align*}
Since $\phi_0(x) = \Lambda_\ve + O(R^{-1-\tau})$ for $|x|\geq R/4$ it is possible to show that
$$
I_1 = a_\ve \Lambda_\ve + o(1)
$$
where $a_\ve>0$ remains bounded and bounded away from 0  and $o(1)\to0$ as as $\ve\to0$ and $R\to\infty$ with $R^{\ve} \to 1$.
Indeed,  consider the regions $R_1=\{x \in \Sigma_0 : |x|\leq R/2\}$, $R_2 = \{R/2\leq |x| \leq 4R\}$, $R_3 = \{|x|\geq 4R\}$.
Then
$$
\ve\int_{x\in R_j}\int_{y\in R_j}
\phi_0(x)
\frac{(Z_2(y)-Z_2(x))(\eta_R(y)-\eta_R(x))}{|x-y|^{4-\ve}} \, d y d x
 = 0
$$
for $j=1,3$.
We have
\begin{align*}
& \left|
\ve\int_{x\in R_1}\int_{y\in R_2}
\phi_0(x)
\frac{(Z_2(y)-Z_2(x))(\eta_R(y)-\eta_R(x))}{|x-y|^{4-\ve}} \, d y d x
\right|
\\
& \leq 2 \ve \|\phi_0\|_{L^\infty} \log(R)
\int_{|x|\leq  R/4}\int_{|y|\geq R/2}
\frac{1}{|x-y|^{4-\ve}} \, d y d x
\\
& \leq C \ve \log(R) R^{-\ve}.
\end{align*}
and
\begin{align*}
&
\ve\int_{x\in R_2}\int_{y\in R_2}
\phi_0(x)
\frac{(Z_2(y)-Z_2(x))(\eta_R(y)-\eta_R(x))}{|x-y|^{4-\ve}} \, d y d x
\\
&= ( \Lambda_\ve + O(R^{-1-\tau}) )
\ve\int_{x\in R_2}\int_{y\in R_2}
\frac{(Z_2(y)-Z_2(x))(\eta_R(y)-\eta_R(x))}{|x-y|^{4-\ve}} \, d y d x
\\
&=
( \Lambda_\ve + O(R^{-1-\tau}) )
R^{\ve} \pi (1 + O(\ve) + O(R^{-1}) ).
\end{align*}
In the last integral we have rescaled by $R$ and used the expansion for $Z_2$.
Other terms in $I_1$ can be handled similarly.
Also, similar calculations show that $I_2\to 0$.

Now let  $\hat\phi$ be the solution of \eqref{frac cat} constructed in Proposition~\ref{prop exist frac cat} with right hand side $f$.
Let $\ell=\lim_{|x|\to\infty} \hat\phi(x)$, which exists by Proposition~\ref{prop exist frac cat}. Then $\phi = \hat\phi - \frac{\ell}{\Lambda_\ve} \phi_0$ satisfies
$$
\Lambda_\ve(\phi) + b_\ve \phi = f - \frac{\ell}{\Lambda_\ve} Z_2 \eta_1  .
$$
Moreover we have the estimates $|\ell|\leq C  \| |y|^{-2-\tau} f\|_{L^\infty}$ and
$$
\| |y|^\tau \phi\|_{L^\infty} \leq C \| |y|^{-2-\tau} f\|_{L^\infty}
$$
by \eqref{grad}.

Let us prove uniqueness. Suppose that for a sequence $\ve_n\to 0$ there is a nontrivial solution $\phi_n$, $c_n$ of \eqref{eq70} with $f=0$. We can assume
\begin{align}
\label{norma1}
\||y|^{\tau} \phi\|_{L^\infty} = 1.
\end{align}
To estimate $c_n$,
we test equation \eqref{eq70} with $Z_2 \eta_n$ where $\eta_n$ is a smooth cut-off function such that $\eta_n(r)=1$ for $r\leq R_n$ and $\eta_n(r)=0$ for $r\geq 2 R_n$, with
\begin{align}
\label{cond Rn}
R_n\to\infty \quad\text{and}\quad R_n \ve_n^{\frac12} \to0.
\end{align}
We get
\begin{align*}
& \ve_n \int_{\Sigma_0} \phi_n (x) \int_{\Sigma_0} Z_2(y)\frac{\eta_n(y)-\eta_n(x)}{|x-y|^{4-\ve_n}} \, d y \, d x
+ \int_{\Sigma_0} \phi_n \eta_n g_n
\\
& = 
- c_n \int_{\Sigma_0} b_{\ve_n} Z_2 \eta_n ,
\end{align*}
where
$$
g_n = L_{\ve_n}(Z_2)+ b_{\ve_n} Z_2.
$$
We claim that
$$
\ve_n \int_{\Sigma_0(\ve_n)} \phi_n (x) \int_{\Sigma_0(\ve_n)} Z_2(y)\frac{\eta_n(y)-\eta_n(x)}{|x-y|^{4-\ve_n}} \, d y \, d x
\to 0
$$
as $n\to \infty$.
Indeed
\begin{align*}
& \ve_n \int_{\Sigma_0(\ve_n)} \phi_n (x) \int_{\Sigma_0(\ve_n)} Z_2(y)\frac{\eta_n(y)-\eta_n(x)}{|x-y|^{4-\ve_n}} \, d y \, d x
\\
& \qquad =
\int_{\Sigma_0} \phi_n Z_2  L_{\ve_n}(\eta_n)
+ \ve_n \int_{\Sigma_0} \phi_n(x) \int_{\Sigma_0}
\frac{(Z_2(y)-Z_2(x))(\eta_n(y)-\eta_n(x))}{|x-y|^{4-\ve_n}}\, dy \, d  x .
\end{align*}
By calculation
\begin{align*}
L_{\ve_n}(\eta_n) (x) =
\begin{cases}
O(R_n^{\ve_n-2}) & \text{if } |x|\leq 10 R_n \\
O(\frac{\ve_n R_n^{4-\ve_n}}{|x|^{4-\ve_n}}) & \text{if } |x|\geq 10R_n
\end{cases}
\end{align*}
Then by \eqref{norma1}
\begin{align*}
\left|
\int_{\Sigma_0} \phi_n Z_2  L_{\ve_n}(\eta_n)
\right|
& \leq C R_n^{\ve_n-2} \int_{B_{R_n} \subset \R^2} |x|^{-\tau} \log(2+|x|)
\\
& \qquad + C \ve_n R_n^{4-\ve_n} \int_{B_{R_n}^c \subset \R^2} |x|^{\ve_n-4-\tau}\log(2+|x|)
\\
& \leq C R_n^{-\tau} \log(R_n) + C  \ve_n R_n^{2-\ve_n-\tau} \log(R_n)\to 0
\end{align*}
as $n\to\infty$ by \eqref{cond Rn}.
Similarly
\begin{align*}
\ve_n\int_{\Sigma_0}
\frac{(Z_2(y)-Z_2(x))(\eta_n(y)-\eta_n(x))}{|x-y|^{4-\ve_n}}\, dy
=
\begin{cases}
O(R_n^{\ve_n-2}) & \text{if } |x|\leq 10 R_n\\
O(\frac{\ve_n R_n^{4-\ve_n}}{|x_n|^{4-\ve_n}} \log(\frac{|x|}{R_n} )
& \text{if } |x|\geq 10R_n.
\end{cases}
\end{align*}
This implies
\begin{align*}
\left|\ve_n \int_{\Sigma_0} \phi_n(x) \int_{\Sigma_0}
\frac{(Z_2(y)-Z_2(x))(\eta_n(y)-\eta_n(x))}{|x-y|^{4-\ve_n}}\, dy \, d  x \right| \to 0
\end{align*}
as $n\to\infty$ as before.

We also have
\begin{align}
\label{claim 111}
\int_{\Sigma_0} \phi_n(y) \eta_n(y) g_n(y) \, d y \to 0
\end{align}
as $n\to \infty$. Indeed, $g_n = L_{\ve_n}(Z_2) + b_{\ve_n}Z_2 \to \frac\pi2 ( \Delta_\cat + |A|^2) Z_2 = 0 $ uniformly on compact sets (Lemmas~\ref{conv lapl} and \ref{lemma conv A}), so for any fixed $\rho>0$
\begin{align}
\label{int B rho}
\int_{\Sigma_0 \cap B_\rho } \phi_n(y) \eta_n(y) g_n(y) \, d y \to 0
\end{align}
as $n\to\infty$. For the integral in $\Sigma_0 \setminus B_\rho $,
we note that
$$
|L_{\ve_n}(Z_2)(x) = O(\log(|x|) |x|^{\ve_n-4})
$$
and
by Lemma~\ref{lemma a epsilon}
\begin{align*}
b_\ve(x) =
\begin{cases}
\pi|A_{\Sigma_0}|^2 |x|^{\ve} + O(\frac{\ve}{\log(|x|)^{2-\ve}}) & \text{for } |x|\leq \ve^{-\frac12}+1,\\
0 & \text{for }|x|\geq \ve^{-\frac12}+1.
\end{cases}
\end{align*}
In the region $|x|\leq \ve^{-\frac12}$, $\Sigma_0$ is the catenoid and hence
$$
|A_{\Sigma_0}|^2=O(|x|^{-4}).
$$
This implies that for $|x|\leq \ve^{-\frac12}$
$$
|b_{\ve_n}(x)|\leq C |x|^{\ve-4} + C \frac{\ve_n}{\log(|x|)^{2-\ve_n}}
$$
It follows that
$$
|g_n(x)| \leq C \log(|x|) |x|^{\ve-4} + C \frac{\ve_n}{\log(|x|)^{1-\ve_n}}
$$
Hence
$$
\left| \int_{\Sigma_0\setminus B_\rho} \phi_n(y) \eta_n(y) g_n(y) \, d y \right| \leq C \rho^{-2-\tau+\ve_n} \log(\rho) + C \ve_n R_n^{2-\tau}.
$$
Using this and \eqref{int B rho} we deduce the claim \eqref{claim 111}.

It follows that
$$
c_n\to0\quad\text{as } n\to\infty.
$$
As in Lemma~\ref{lemma apriori trcat}, $\phi_n\to 0$ uniformly on compact sets. Then by \eqref{norma1}
there is a point $x_n \in \Sigma_0$ such that
$$
(1+|x_n|)^\tau |\phi_n(x_n)| \geq \frac12.
$$
and $|x_n|\to \infty$.
By scaling and translating we obtain a non-trivial $\phi$ satisfying
$$
\Delta  \phi=0\quad \text{in }\R^2 \setminus\{0\}
$$
with
$$
|\phi(x)|\leq C |x|^{-\tau},
$$
which is impossible.
\end{proof}

\bigskip

Next we establish an a priori estimate for decaying solutions of \eqref{mainLeq}. We do not expect solutions of this problem to decay, but that this will be the case if $f$ satisfies a constraint. For this reason, instead of \eqref{mainLeq} we consider a projected equation
\begin{align}
\label{eq36}
\ve
\mathcal J_{\Sigma_0}^s(\phi)=f - c f_0\quad \text{in } \Sigma_0 .
\end{align}
where $f_0$ is an appropriate function.
For $f_0$ we can take almost any smooth function with compact support, but it will be important that
$$
\int_{\Sigma_0} f_0 Z_2 \not =0,
$$
and that we have a solution $\phi_0$ with $\|\phi_0\|_*<\infty$ of
$$
\ve \mathcal J_{\Sigma_0}^s(\phi_0)=f_0\quad\text{in } \Sigma_0.
$$
One possibility to achieve this is the following.
Let $R>0$ the number given in Proposition~\ref{prop exterior slow}.
For $\rho>R$ let $\eta_\rho(x) = \eta(x/\rho)$ where $\eta$ is a smooth radial cut-off function in $\R^3$, such that $\eta(x)=1$ for $|x|\leq 1$ and $\eta(x)=0$ for $|x|\geq 2$. Let $f_\rho = Z_2\eta_\rho$ and $\phi_\rho$ be the function constructed in Proposition~\ref{prop exterior slow}. We recall that it satisfies
$$
\ve \mathcal J_{\Sigma_0}^s(\phi_\rho)(X) = f_\rho(X)
\quad\text{for }X\in \Sigma_0, \ |X|\geq R,
$$
and the estimate
$$
\|\phi_\rho\|_* \leq C \|f_\rho\|_{1-\ve,\alpha+\ve}.
$$
Note that
$$
\|f_\rho\|_{1-\ve,\alpha+\ve} \leq C \rho\log(\rho).
$$
and that since $f_\rho$ is smooth, $\phi_\rho$ is also smooth. Using elliptic estimates we deduce that $\|\phi_\rho\|_{C^{2,\alpha}(B_R)} \leq C \rho \log(\rho)$. Let
$$
\tilde f_\rho = \ve \mathcal J_{\Sigma_0}^s(\phi_\rho).
$$
Then
$$
\int_{\Sigma_0} \tilde f_\rho Z_2 = \int_{\Sigma_0\cap B_R}  \ve \mathcal J_{\Sigma_0}^s(\phi_\rho)Z_2 +
\int_{\Sigma_0\setminus B_R} Z_2^2 \eta_\rho .
$$
Since
$$
\int_{\Sigma_0\cap B_R}  \ve \mathcal J_{\Sigma_0}^s(\phi_\rho)Z_2 = O (\rho \log(\rho)) , \quad  \int_{\Sigma_0\setminus B_R} Z_2^2 \eta_\rho = c \rho^2 \log(\rho)^2 (1+o(1))
$$
as $\rho\to \infty$, where $c>0$, we find that for $\rho>0$ large
$$
\int_{\Sigma_0} \tilde f_\rho Z_2  \not = 0.
$$
We fix $\rho$ large and take
\begin{align}
\label{def phi0}
\phi_0 = \phi_\rho, \qquad  f_0 = \tilde f_\rho.
\end{align}


\begin{lemma}
\label{l14}
Assume  $\| |x|^{2+\tau-\ve} f\|_{L^\infty(\Sigma_0)}<\infty$ and $\phi$, $c$ is a solution of \eqref{eq36} such that $\| |x|^{\tau} \phi \|_{L^\infty(\Sigma_0)}<\infty$. If $\ve$ is small enough, then there is $C$ independent of $f$, $\phi$, $c$ such that
$$
\| |x|^{\tau} \phi \|_{L^\infty(\Sigma_0)} +|c|\leq C \| |x|^{2+\tau-\ve} f\|_{L^\infty(\Sigma_0)}.
$$
\end{lemma}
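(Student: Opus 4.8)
The plan is to argue by contradiction with a blow‑up argument, following closely the scheme already used for Lemma~\ref{lemma apriori trcat} and Proposition~\ref{prop exist frac cat decay}. Suppose the estimate fails: then there are $\ve_n\to0$, functions $f_n$ symmetric with respect to $x_3$ with $\||x|^{2+\tau-\ve_n}f_n\|_{L^\infty(\Sigma_0)}\to0$, and solutions $\phi_n,c_n$ of \eqref{eq36} with $\phi_n$ symmetric, $\||x|^\tau\phi_n\|_{L^\infty(\Sigma_0)}<\infty$, normalized so that $\||x|^\tau\phi_n\|_{L^\infty(\Sigma_0)}+|c_n|=1$. The goal is to show that this sum in fact tends to $0$. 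There are two things to establish in turn: first that $c_n\to0$, and then that $\||x|^\tau\phi_n\|_{L^\infty(\Sigma_0)}\to0$.

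For the estimate of $c_n$ I would test \eqref{eq36} against $Z_2\eta_n$, where $Z_2$ is the Jacobi field in \eqref{Z1 Z2} and $\eta_n$ is a radial, $x_3$‑symmetric cut‑off equal to $1$ on $B_{R_n}$ and $0$ outside $B_{2R_n}$, with $R_n\to\infty$ and $R_n\ve_n^{1/2}\to0$. On the right side, $|\int_{\Sigma_0}f_n Z_2\eta_n|\le C\||x|^{2+\tau-\ve_n}f_n\|_{L^\infty}\to0$ (the weight $|x|^{-2-\tau+\ve_n}$ times the logarithmic growth of $Z_2$ is integrable on $\Sigma_0$ for $\tau>0$), while $\int_{\Sigma_0}f_0 Z_2\eta_n=\int_{\Sigma_0}f_0 Z_2$ for $n$ large since $f_0$ has compact support. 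For the left side $\ve_n\int_{\Sigma_0}\mathcal J^s_{\Sigma_0}(\phi_n)\,Z_2\eta_n$ I would symmetrize the principal‑value double integral and split it, exactly as in the proof of Proposition~\ref{prop exist frac cat decay}, into an $L_{\ve_n}(\eta_n)$‑term, a cross term built on $(Z_2(y)-Z_2(x))(\eta_n(y)-\eta_n(x))$, and a term $\int_{\Sigma_0}\phi_n Z_2\eta_n\,g_n$ with $g_n=L_{\ve_n}(Z_2)+a_{\ve_n}Z_2$; using $\||x|^\tau\phi_n\|_{L^\infty}\le1$, the decay bounds for $L_{\ve_n}(\eta_n)$ and the cross kernel from that proof, Lemma~\ref{lemma a epsilon} for $a_{\ve_n}$, and $g_n\to\frac\pi2(\Delta_{\cat}+|A|^2)Z_2=0$ on compact sets (Lemmas~\ref{conv lapl} and \ref{lemma conv A}), all three pieces go to $0$. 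Hence $c_n\int_{\Sigma_0}f_0 Z_2\to0$, and since $\int_{\Sigma_0}f_0 Z_2\ne0$ by \eqref{def phi0}, we conclude $c_n\to0$. With this, $\tilde f_n:=f_n-c_n f_0$ satisfies $\||x|^{2+\tau-\ve_n}\tilde f_n\|_{L^\infty(\Sigma_0)}\to0$ and $\ve_n\mathcal J^s_{\Sigma_0}(\phi_n)=\tilde f_n$, and from $\||x|^\tau\phi_n\|_{L^\infty}\le1$ and the Schauder estimates for $\ve_n\mathcal J^s_{\Sigma_0}$ uniform as $\ve_n\to0$, $\phi_n$ is bounded in $C^{\alpha}_{loc}$; passing to a subsequence $\phi_n\to\phi$ locally uniformly, with $\phi$ a bounded $x_3$‑symmetric solution of $\Delta_{\cat}\phi+|A|^2\phi=0$ on $\cat$, so $\phi=cZ_1$ by Lemma~\ref{lemma jacobi catenoid}; since $Z_1$ is odd in $x_3$ we get $\phi\equiv0$, i.e. $\|\phi_n\|_{L^\infty(\Sigma_0\cap B_{\bar R})}\to0$ for every fixed $\bar R$.

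The final step is to upgrade this local convergence to $\||x|^\tau\phi_n\|_{L^\infty(\Sigma_0)}\to0$, which I would do via the exterior theory of Section~\ref{sect linear2}. Reduce $\ve_n\mathcal J^s_{\Sigma_0}(\phi_n)=\tilde f_n$ on $\{|x|\ge R\}$ to an equation in $\R^2$ for $\tilde\phi_n(x)=\phi_n(x,\pm F_{\ve_n}(x))$ of the form $L_{\ve_n}(\tilde\phi_n)+W_{\ve_n}(x)\tilde\phi_n+(\text{small lower‑order nonlocal terms})=\tilde f_n$, as in the proof of Proposition~\ref{prop fast decay}. Decaying solutions of this exterior problem are unique for $\ve$ small and obey the a priori estimate underlying \eqref{est2b}, so with a fixed cut‑off $\eta$ equal to $1$ on $B_{2R}$ and $0$ outside $B_{3R}$ one gets $\||x|^\tau(1-\eta)\tilde\phi_n\|_{L^\infty}\le C(\||x|^{2+\tau-\ve_n}\tilde f_n\|_{L^\infty(\Sigma_0)}+\|\phi_n\|_{L^\infty(\Sigma_0\cap B_{3R})})\to0$, which combined with the previous step yields $\||x|^\tau\phi_n\|_{L^\infty(\Sigma_0)}\to0$, contradicting the normalization. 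The hard part will be the far‑field analysis: the computation in the $c_n$‑estimate, where the nonlocal tails and the curvature term $a_{\ve_n}$ — whose behaviour changes across $|x|\sim\ve^{-1/2}$, as recorded in Lemma~\ref{lemma a epsilon} — must be controlled uniformly in $\ve$, and the exterior uniqueness/estimate invoked in the last step, which is precisely where the passage from the catenoidal region $|x|\lesssim\ve^{-1/2}$ to the conical region $|x|\gtrsim\ve^{-1/2}$ is felt; the interior part of the argument is routine once the classification of bounded symmetric Jacobi fields on $\cat$ is in hand.
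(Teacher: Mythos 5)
The first three steps of your argument --- the contradiction setup, the estimate $c_n\to0$ obtained by testing against $Z_2\eta_n$, and the local convergence $\phi_n\to0$ on compact sets via Lemma~\ref{lemma jacobi catenoid} and the odd/even dichotomy --- match the paper's proof and are correct.

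The final step, however, has a genuine gap. You invoke uniqueness and ``the a priori estimate underlying \eqref{est2b}'' for decaying solutions of the $\R^2$ exterior problem, but nothing of the sort is established in Section~\ref{sect linear2}: the lemma that yields \eqref{est2b} is an \emph{existence} statement, proved by constructing one particular solution through a contraction mapping; it asserts no uniqueness and no a~priori bound for an arbitrary decaying solution, and these are not trivialities here because the model operator $\Delta+\tfrac{\eta_\ve}{r^2}$ has bounded oscillatory homogeneous solutions $\cos(\log r)$, $\sin(\log r)$. An $\ve$-uniform a~priori estimate for the fast-decay exterior problem would in fact require a blow-up argument, and that is precisely what the paper does directly at this point: since $\phi_n\to0$ locally and $\||x|^\tau\phi_n\|_{L^\infty}=1$, there is $x_n\in\Sigma_0$ with $|x_n|\to\infty$ and $(1+|x_n|)^\tau|\phi_n(x_n)|\ge\tfrac12$; rescaling by $|x_n|$ and passing to the limit produces a nontrivial radial solution of $\Delta\phi+\tfrac{\tilde\eta}{r^2}\phi=0$ in $\R^2$ (with $0\le\tilde\eta\le1$ and $\tilde\eta\equiv1$ for $r\ge m$) decaying like $|x|^{-\tau}$; the explicit form $\phi(r)=a\cos(\log r)+b\sin(\log r)$ for $r\ge m$ then forces $a=b=0$, the contradiction. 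Your route would go through only if you first proved the exterior uniqueness/estimate you appeal to, and proving that would essentially reproduce this blow-up argument --- so as written the proposal relies on a step the paper does not supply.
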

\begin{proof}

Assume by contradiction that there are sequences $\ve_n\to0$, $\phi_n$, $c_n$ solving \eqref{eq36} with right hand side $f_n$ such that
$$
\| (1+|x|)^{\tau} \phi_n \|_{L^\infty(\Sigma_0)}=1, \quad\| (1+|x|)^{2+\tau-\ve_n} f_n \|_{L^\infty(\Sigma_0)} \to 0
$$
as $n\to\infty$.
Recall that  $\Sigma_0 = \Sigma_0(\ve_n)$.

To estimate $c_n$, let $Z_2$ be given as in \eqref{Z1 Z2}.
We test equation \eqref{eq36} with $Z_2 \eta_n$ where $\eta_n$ is a smooth cut-off function such that $\eta_n(r)=1$ for $r\leq R_n$ and $\eta_n(r)=0$ for $r\geq 2 R_n$, with $R_n\to\infty$ and
$$
R_n<<\ve_n^{-\frac12}.
$$
We get
\begin{align*}
& \ve_n \int_{\Sigma_0(\ve_n)} \phi_n (x) \int_{\Sigma_0(\ve_n)} Z_2(y)\frac{\eta_n(y)-\eta_n(x)}{|x-y|^{4-\ve_n}} \, d y \, d x
+ \int_{\Sigma_0(\ve_n)} \phi_n(y) \eta_n(y) \JJ_{\Sigma_0}(Z_2)(y) \, d y
\\
& = \int_{\Sigma_0(\ve_n)} f_n Z_2 \eta_n - c_n \int_{\Sigma_0(\ve_n)} f_0 Z_2 \eta_n .
\end{align*}
By a calculation
$$
\ve_n \int_{\Sigma_0(\ve_n)} \phi_n (x) \int_{\Sigma_0(\ve_n)} Z_2(y)\frac{\eta_n(y)-\eta_n(x)}{|x-y|^{4-\ve_n}} \, d y \, d x
\to 0
$$
as $n\to \infty$, and
$$
\int_{\Sigma_0(\ve_n)} \phi_n(y) \eta_n(y) \JJ_{\Sigma_0}[Z_2](y) \, d y  \to 0
$$
as $n\to \infty$. It follows that
$$
c_n\to0\quad\text{as } n\to\infty.
$$

There is a point $x_n \in \Sigma_0(\ve_n)$ such that
$$
(1+|x_n|)^\tau |\phi_n(x_n)| \geq \frac12.
$$
If $x_n$ remains bounded, then up to subsequence $\phi_n\to\phi$ uniformly on compact sets of the catenoid $\cat$ and $\phi$ is a nontrivial solution of
$$
\Delta_{\cat}\phi + |A|^2 \phi = 0\quad\text{on }\cat
$$
with $|\phi(x)|\leq  (1+|x|)^{-\tau}$. By Lemma~\ref{lemma jacobi catenoid} $\phi$ must be zero, a contradiction.

Hence $x_n$ is unbounded.
By scaling and translating we obtain a non-trivial $\phi$ satisfying
$$
\Delta \phi + \frac{\tilde\eta}{r^2} \phi=0\quad \text{in }\R^2
$$
with
$$
|\phi(x)|\leq C |x|^{-\tau},
$$
where $0\leq \tilde \eta\leq 1$ is a radial, non-decreasing function  such that $\tilde \eta=1$ for all $|x|\geq m$, where $m\geq 0$.
For $r\geq m$ we get
$$
\phi(r) = a \cos(\log(r)) + b\sin(\log(r))
$$
but then $a=b=0$, so $\phi\equiv0$, a contradiction.
\end{proof}

\begin{proof}[Proof of Proposition~\ref{main linear prop}]
We want to solve \eqref{mainLeq} where $f$ is radial and symmetric such that $\|f\|_{1-\ve,\alpha+\ve}<\infty$.
First we reduce the problem to one where the right hand side has fast decay. Let $\bar\phi = \bar\phi(f)$ be the function constructed in Proposition~\ref{prop exterior slow} with right hand side $f$, namely $\bar\phi$ satisfies
$$
\ve\mathcal J_{\Sigma_0}^s(\bar\phi)(X)=f \quad X\in \Sigma_0, |X|\geq R
$$
where $R>0$ is fixed in this proposition.
Then we look for $\phi$ of the form $\phi = \phi_1 + \eta \bar\phi$ where $\eta \in C^\infty(\R^2)$ is a cut-off function such $\eta(x)=1$ for $|x|\geq 2R$, $\eta(x)=0$ for $|x|\leq R$. The function $\phi_1$ then needs to satisfy
$$
\ve\mathcal J_{\Sigma_0}^s(\phi_1)=f_1 \quad \text{in } \Sigma_0
$$
where
$$
f_1(x) = (1-\eta(x)) f(x) - \ve \int_{\Sigma_0} \bar\phi(y) \frac{\eta(y)-\eta(x)}{|y-x|^{4-\ve}} \, dy .
$$
Since the second term decays like $|x|^{-4+\ve}$ as $|x|\to\infty$,  $f_1$ has fast decay, meaning $\| (1+|x|)^{2+\tau-\ve} f\|_{L^\infty(\Sigma_0)}<\infty$.

In the sequel, we assume that $f$ is symmetric, radial with $\| (1+|x|)^{2+\tau-\ve} f\|_{L^\infty(\Sigma_0)}<\infty$.
First, we claim that it is possible to find a solution $\phi$, $c$ to \eqref{eq36}, which depends linearly on $f$ and such that
$$
\|(1+|x|)^\tau\phi\|_{L^\infty}+|c|\leq  C \|(1+|x|)^{2+\tau-\ve}f\|_{L^\infty}.
$$
We construct this solution by looking for it in the form
$$
\phi = \varphi + \eta_0 \psi
$$
and we ask that
\begin{align}
\label{problemcat2}
& L_\ve(\varphi)+b_\ve \varphi = - [L_\ve,\eta_0](\psi) + (1-\eta_0)f + c f_0
\quad\text{in } \Sigma_0
\\
\label{problpsi}
& L_\ve(\psi) + a_\ve \psi  = - a_\ve (1-\eta_\ve) \varphi +  f \quad\text{in }\Sigma_0 \setminus B_R(0)
\end{align}
Here
$$
[L_\ve,\eta](\psi) = L_\ve(\eta_0\psi)- \eta_0 L_\ve(\psi)
=\ve\, \text{p.v.} \int_{\Sigma_0} \psi(y)\frac{\eta_0(y)-\eta_0(x)}{|x-y|^{4-\ve}}\,dy ,
$$
and $R$ is the same as in Proposition~\ref{prop fast decay}.
The smooth cut-off functions,  $\eta_0$ and $\eta_\ve$ are
radial in $\R^3$ and such that
\begin{align*}
& \eta_0(x) = 0 \text{ for } |x|\leq R,
&&
\eta_0(x) = 1 \text{ for } |x|\geq 2R,
\\
& \eta_\ve(x) = 1 \text{ for } |x|\leq \ve^{-\frac12},
&&
\eta_\ve(x) = 0 \text{ for } |x|\geq \ve^{-\frac12}+1 .
\end{align*}
We rewrite this system as a fixed point problem as follows. Let $Y$ be the space $Y = \{ \varphi \in L^\infty(\Sigma_0): \| ( 1+|x|)^\tau \varphi\|_{L^\infty}<\infty\}$ with the norm $\|\varphi\|_Y =  \| ( 1+|x|)^\tau \varphi\|_{L^\infty}$.  Given $\varphi \in Y$ we solve \eqref{problpsi} using Proposition~\ref{prop fast decay} and obtain a solution $\psi = \psi(\varphi)$. With this $\psi$ we solve now problem \eqref{problemcat2} using  Proposition~\ref{prop exist frac cat decay} and obtain a solution $\tilde \varphi = \tilde \varphi(\varphi) \in Y$.  Let $T(\varphi) = \tilde \varphi(\varphi)$ denote the operator defined in this way, so that $T:Y\to Y $ is an affine linear operator.

We claim that $T$ is compact. Assume that $\varphi_n$ is a bounded sequence in $Y$, and let $\psi_n$ be the corresponding solution of \eqref{problpsi}. By Proposition~\ref{prop fast decay}  $\|\psi_n\|_Y \leq C $. Let $\tilde\varphi_n$, $c_n$ be the solution of \eqref{problemcat2} with $\psi$ replaced by $\psi_n$ and $c$ by $c_n$.
We claim that up to subsequence $\tilde \varphi_n$ converges in $Y$.
By standard regularity
$\tilde \varphi_n$ is bounded in $C^{1,\alpha}_{loc}(\Sigma_0)$ (any $0<\alpha<1$). Then for a subsequence (denoted the same), $\tilde \varphi_n\to\tilde\varphi$ uniformly on compact sets of $\Sigma_0$ as $n\to\infty$.
Let $\tau' \in (\tau,1)$. Then note that $[L_\ve,\eta][\psi_n]$ and  $(1-\eta_0) f + c_n f_0$  have fast decay uniform in $\ve$, more precisely
$$
\| (1+|x|)^{2+\tau'-\ve} (  - [L_\ve,\eta_0](\psi_n) + (1-\eta_0) f + c_n f_0 ) \|_{L^\infty} \leq C.
$$
By Proposition~\ref{prop exist frac cat decay}
$$
\| (1+|x|)^{\tau'} \tilde \varphi_n\|_{L^\infty} \leq C
$$
and hence also $ \| (1+|x|)^{\tau'} \tilde \varphi\|_{L^\infty} <\infty$.
It follows that for any $r>0$
\begin{align*}
& \limsup_{n\to\infty} \sup_{\Sigma_0\cap B_r(0)} (1+|x|)^\tau |\tilde \varphi_n -\varphi| = 0
\\
& \limsup_{n\to\infty} \sup_{\Sigma_0\setminus B_r(0)} (1+|x|)^\tau |\tilde \varphi_n -\varphi| \leq C r^{\tau-\tau'},
\end{align*}
so that $\limsup_{n\to\infty}\|\tilde\varphi_n-\varphi\|_Y\leq Cr^{\tau-\tau'}$.
Since $r$ is arbitrary, $\|\tilde\varphi_n-\tilde \varphi\|_Y\to0$ as $n\to\infty$. This proves that $T$ is compact.
By Lemma~\ref{l14} and the Fredholm alternative there is a unique solution of the system \eqref{problemcat2}, \eqref{problpsi} and hence we find a unique solution $\phi$ to \eqref{eq36}. Moreover
$$
\|(1+|x|)^\tau\phi\|_{L^\infty}+|c|\leq  C \|(1+|x|)^{2+\tau-\ve}f\|_{L^\infty} ,
$$
by Lemma~\ref{l14}.

Finally, we solve  \eqref{mainLeq} when $\|(1+|x|)^{2+\tau-\ve}f\|_{L^\infty}<\infty$. For this let $\phi_0$ be be defined by \eqref{def phi0}.  We look now for a solution $\phi$ of \eqref{mainLeq} of the form $\phi = \phi_1 + \alpha \phi_0$, where we want $\phi_1$ to have fast decay. Then  \eqref{mainLeq} is equivalent to
$$
\ve\JJ_{\Sigma_0}^s(\phi_1) = f -  \alpha f_0.
$$
Given $\alpha\in\R$, by the previous results we know that there exists $c_1=c_1(\alpha) $ and $\phi_1=\phi_1(\alpha) $ of fast decay solving
$$
\ve\JJ_{\Sigma_0}^s(\phi_1) = f -  (\alpha + c_1(\alpha) ) f_0.
$$
We claim that it is possible to choose $\alpha$ such that $c_1(\alpha)=0$.
For this, consider the function $Z_2$ of \eqref{Z1 Z2} and $\eta$ a smooth cut-off function on $\Sigma_0$ such that $\eta(x)=1$ for $|x|\leq \tilde R$ and $\eta(x)=0$ for $|x|\geq 2 \tilde R$ with $\tilde R$ such that $\tilde R\to\infty$ and $\ve\tilde R^2 \log(\tilde R)\to0$. By the same calculation as in Proposition~\ref{prop exist frac cat decay} we get
\begin{align}
\nonumber
& \ve \int_{\Sigma_0} \phi_1 (x) \int_{\Sigma_0} Z_2(y)\frac{\eta(y)-\eta(x)}{|x-y|^{4-\ve}} \, d y \, d x
+ \int_{\Sigma_0} \phi_1(y) \eta(y) \JJ_{\Sigma_0}(Z_2)(y) \, d y
\\
\label{eqalpha}
& = \int_{\Sigma_0} f Z_2 \eta -(\alpha+ c_1(\alpha)  ) \int_{\Sigma_0} f_0 Z_2 \eta .
\end{align}
For the first 2 terms, we have
\begin{align*}
\left|\ve \int_{\Sigma_0} \phi_1 (x) \int_{\Sigma_0} Z_2(y)\frac{\eta(y)-\eta(x)}{|x-y|^{4-\ve}} \, d y \, d x\right| & =o(1)\|(1+|x|)^\tau \phi_1\|_{L^\infty}
\\
&\leq o(1) ( \|(1+|x|)^{2+\tau-\ve}f\|_{L^\infty} + |\alpha| )
\end{align*}
and
\begin{align*}
\left|\int_{\Sigma_0} \phi_1(y) \eta(y) \JJ_{\Sigma_0}(Z_2)(y) \, d y\right| & =o(1)\|(1+|x|)^\tau \phi_1\|_{L^\infty}
\\
&\leq o(1) ( \|(1+|x|)^{2+\tau-\ve}f\|_{L^\infty} + |\alpha| )
\end{align*}
where $o(1)\to 0$ as $\tilde R\to\infty$ and $\ve\to0$.
Then the equation \eqref{eqalpha} for $\alpha$ is uniquely solvable if $\ve$ is small.
\end{proof}

\section{The nonlinear term}
\label{sect q}

Consider $h_1. h_2$ defined on $\Sigma_0$ with  $\|h_i\|_* \leq \sigma_0 \ve^{\frac12}$, where $\sigma_0>0$ is a small constant. The main result in this section is the following estimate stated in Proposition~\ref{prop Nh}:
$$
\ve\| N(h_1) - N(h_2)\|_{1-\ve,\alpha+\ve}\leq C \ve^{-\frac12} (\|h_1\|_*+\|h_2\|_* ) \|h_1-h_2\|_*.
$$
Note the ``extra'' $\ve^{-\frac12}$ in the left hand side.

We rewrite the fractional mean curvature in the following way.
For a point $x=(x',F_\ve(x')) \in \Sigma_0$  let $x_h = x + \nu_{\Sigma_0}(x) h(x)$ and  let $L_h(x)$ denote the half space defined by
$$
L_h(x)=\{ y \in\R^3: \langle y-x_h ,\nu_{\Sigma_h}(x_h)\rangle \geq 0 \},
$$
where $\nu_{\Sigma_h}$ is the unit normal vector to $\partial E_h$ pointing into $E_h$.
Then
$$
H_{E_h}^s(x_h) = 2 \int_{\R^3} \frac{\chi_{E_h}(y)- \chi_{L_h(x)}(y)}{|x_h-y|^{3+s}} \, dy
$$
which has the advantage that the integral is convergent.

To compute the previous integral restricted to a ball around $x$, let us represent $\Sigma_h$ near this point as a graph over the tangent plane to $\Sigma_0$ at $X$.
We start with
$r,\theta$  polar coordinates for $x\in\R^2$, i.e.
$
x=(r\cos\theta,r\sin\theta) $ and let
$\hat r = \frac{x'}{r} = (\cos\theta,\sin\theta)^T$,
$\hat \theta = (-\sin\theta,\cos\theta)^T$.
Given a point $x \in \Sigma_0$, $x = (x',F_\ve(x'))$ we let
\begin{align}
\label{pi1 pi2}
\Pi_1(x ) =
\frac{1}{\sqrt{1+ F_\ve'(x')^2}}
\left[
\begin{matrix}
\hat r\\
F_\ve'( x')
\end{matrix}
\right] ,
\quad
\Pi_2(x) = \left[
\begin{matrix}
\hat \theta
\\
0
\end{matrix}
\right] \in \R^3 ,
\end{align}
$$
\Pi = [\Pi_1,\Pi_2].
$$
The unit normal vector to $\Sigma_0$ at $X$ pointing up is then given by
\begin{align}
\label{nu sigma0}
\nu_{\Sigma_0}(X) =
\frac{1}{\sqrt{1+ F_\ve'( x')^2}}
\left[
\begin{matrix}
-  F_\ve'(x') \hat r\\
1
\end{matrix}
\right] .
\end{align}
Then we consider coordinates $t=(t_1,t_2)$ and $t_3$ defined by
$$
(t_1,t_2,t_3) \mapsto
\Pi_1(x) t_1 + \Pi_2(x)t_2 + \nu_{\Sigma_0}(x) t_3.
$$
Let
$$
R_x = \delta |x|
$$
where $\delta>0$ is a small fixed constant, and let us define
$t_0=t_0(x)$ such that $\Pi(x) t_0$ is the orthogonal projection of $x$ onto the plane generated by $\Pi_1(x)$, $\Pi_2(x)$.

Using the implicit function theorem (see Appendix~\ref{app}),
given $h$ on $\Sigma_0$ with $\|h\|_* \leq \sigma_0 \ve^{\frac12}$, we can represent $\partial E_h$ near $x_h = x + \nu_{\Sigma_0}(x) h(x)$ as
$$
\Pi(x)t+\nu_{\Sigma_0}(x) g_h(t) , \quad |t -t_0(x)|\leq 2 R_x
$$
where $g_h$ is of class $C^{2,\alpha}$ in the ball $B_{4 R_x}(t_0(x))$.
We call $G_x$ the operator defined by
\begin{align}
\label{def G}
g_h = G_x(h) .
\end{align}

Let
\begin{align}
\label{cut N}
\eta_x(t,t_3) = \eta(\frac{|t-t_0(x)|}{R_x}) \eta(\frac{100|t_3|}{\ve^{\frac12}|x|})
\end{align}
where $\eta \in C^\infty(\R)$ is such that $\eta(s)=1$ for $s\leq 1$ and $\eta(s)=0$ for $s\geq 2$. We also require $\eta'\leq 0$.

Let us write
$$
H_{\partial E_h}^s(x_h) = H_i(h)(x) +  H_o(h)(x)
$$
where
\begin{align*}
H_i(h)(x_h) &=  2 \int_{\R^3} \eta_x(y-x_h) \frac{\chi_{E_h}(y)- \chi_{L_h(x)}(y)}{|x_h-y|^{3+s}} \, d y\\
H_o(h)(x_h) &=  2 \int_{\R^3} (1-\eta_x(y-x_h)) \frac{\chi_{E_h}(y)- \chi_{L_h(x)}(y)}{|x_h-y|^{3+s}} \, dy .
\end{align*}

Let us explain the choice of cut-off function \eqref{cut N}.
For this, let us write
$$
D_{R_x}(x) = \{ \Pi(x) t + x : t\in\R^2,\ |t-t_0(x)|<R_x\} ,
$$
which is a 2-dimensional disk on the tangent plane to $\Sigma_0$ at $x$, centered at $x$, and of radius $R_x=\delta |x|$.
Let us call
$$
C(x)= \{ \Pi(x) t + t_3 \nu_{\Sigma_0}(x) + x : t\in\R^2,\ |t-t_0(x)|<R_x , \ |t_3|< \frac{\ve^{\frac12}|x|}{100}\} ,
$$
the cylinder with base the disk $D_{R_x}$ and height $\ve^{\frac12}|x|/100$, and
$$
\tilde C(x)= \{ \Pi(x) t + t_3 \nu_{\Sigma_0}(x) + x : t\in\R^2,\ |t-t_0(x)|<2 R_x , \ |t_3|< \frac{\ve^{\frac12}|x|}{50}\} ,
$$
which is a similar cylinder with twice the radius and height.
The cut-off function \eqref{cut N} is zero outside the $\tilde C(x)$, while it is one on $C(x)$.
Since we assume  $\|h\|_*\leq \sigma_0\ve^{\frac12}$, we have $\|D g_h\|_{L^\infty}= O(\ve^{\frac12})$ and then the set $\Sigma_h$ separates from $\Sigma_0$ in the $\nu_{\Sigma_0}(x)$ direction  an amount bounded by $O(\ve^{\frac12} 2 R_x) = O (\delta \ve^{\frac12}|x|)$ over the disk $D_{2 R_x}(x)$. By choosing $\delta<<100$ we achieve that the parts of $\Sigma_h$ and the plane $\partial L_h$ inside $\tilde C(x)$ are in fact contained in a cylinder with base $D_{2R_x}(x)$ but height $O(\delta \ve^{\frac12}|x|)$, which is much small than the height of $C(x)$.

We expand $H_i$, $H_0$
\begin{align*}
H_i(h)(x_h) = H_i(0)(x) + H_i'(0)(h)(x) + N_i(h)(x)\\
H_o(h)(x_h) = H_o(0)(x) + H_o'(0)(h)(x) + N_o(h)(x) .
\end{align*}

Estimate \eqref{est N1} will follow from similar estimates of $N_o(h)$ and $N_i(h)$, which we state in the next lemmas.
\begin{lemma}
\label{lemma Ni}
There is $C$ independent of $\ve>0$ small such that for $\|h_i\|_* \leq \sigma_0 \ve^{\frac12}$, $i=1,2$ we have
\begin{align*}
\| N_i(h_1) - N_i(h_2)\|_{1-\ve,\alpha+\ve}\leq \frac C\ve (\|h_1\|_*+\|h_2\|_* ) \|h_1-h_2\|_*.
\end{align*}
\end{lemma}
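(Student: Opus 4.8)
The plan is to estimate the inner nonlinear term $N_i(h) = H_i(h) - H_i(0) - H_i'(0)(h)$ by writing $H_i(h)$ explicitly as a double integral over the region cut off by $\eta_x$. Using the representation of $\partial E_h$ near $x_h$ as a graph $\Pi(x)t + \nu_{\Sigma_0}(x) g_h(t)$ with $g_h = G_x(h)$, and of the half-space $\partial L_h(x)$ as an affine graph $\Pi(x)t + \nu_{\Sigma_0}(x) \ell_h(t)$ (the tangent plane to $\Sigma_h$ at $x_h$), the difference $\chi_{E_h} - \chi_{L_h(x)}$ becomes, after integrating in the $t_3$ variable, a one-dimensional integral from $\ell_h(t)$ to $g_h(t)$. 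Thus $H_i(h)(x)$ is a nonlinear functional of $g_h(t) - \ell_h(t) =: u_h(t)$, which satisfies $u_h(t_0) = 0$, $\nabla u_h(t_0) = 0$, and has controlled second derivatives. The key point is that $H_i$ depends smoothly on $u_h$, so $N_i(h_1) - N_i(h_2)$ can be written as a remainder in a second-order Taylor expansion, hence controlled by $(\|u_{h_1}\| + \|u_{h_2}\|) \cdot \|u_{h_1} - u_{h_2}\|$ in suitable weighted norms.

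The steps I would carry out, in order: (1) Make precise the formula $H_i(h)(x) = -2\int_{|t-t_0|<2R_x} \big(\int_{\ell_h(t)}^{g_h(t)} (|t-t_0|^2 + \tau^2)^{-(4-\ve)/2}\, d\tau\big) \eta_x(\ldots)\, dt$ plus correction terms from the fact that $\eta_x$ depends on the ambient point, handling the geometric change of variables (Jacobian $\sqrt{1+|\nabla g_h|^2}$ etc.) — most of this mirrors Lemma~\ref{lemma Rest1}. (2) Show that the map $h \mapsto g_h = G_x(h)$ is, via the implicit function theorem of Appendix~\ref{app}, Lipschitz and in fact $C^2$ as a map between the relevant weighted H\"older spaces, with the quantitative bounds: $\|g_{h_1} - g_{h_2}\|_{C^{2,\alpha}(B_{2R_x})}$ controlled by $\|h_1 - h_2\|_*$ with the correct powers of $|x|$, and second-order differences controlled by $(\|h_1\|_* + \|h_2\|_*)\|h_1 - h_2\|_*$. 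This is where one uses $\|h_i\|_* \le \sigma_0 \ve^{1/2}$ to stay in the domain of the implicit function theorem and to keep $\|\nabla g_h\|_\infty = O(\ve^{1/2})$. (3) Expand the singular integrals. The inner integral $\int_{\ell_h}^{g_h} (|t-t_0|^2+\tau^2)^{-(4-\ve)/2} d\tau$ is a smooth function of $(g_h, \ell_h)$ away from $t = t_0$, and near $t=t_0$ one uses $u_h(t) = O(\|D^2 u_h\| |t-t_0|^2)$ to see that the linear-in-$h$ part is exactly $H_i'(0)(h)$ and the quadratic remainder is integrable against $|t-t_0|^{-4+\ve}$ after accounting for the $|t-t_0|^2$ vanishing; this yields a bound of the form $C \|D^2 u\|_\infty^2 R_x^{2+\ve}$ plus H\"older-type corrections, evaluated with $R_x = \delta|x|$ and the region-dependent bounds on $\|D^2 g\|$ from Lemma~\ref{lemma g}. (4) Assemble: in the regime $|x| \ge \ve^{-1/2}$ we have $\|D^2 g_h\| = O(\ve^{1/2}/|x| + \|h\|_*/|x|^2 \cdot(\ldots))$ and $R_x = \delta|x|$, giving, after multiplying by $\ve$ and the weight $(1+|x|)^{1-\ve}$, exactly the claimed $\frac{C}{\ve}(\|h_1\|_* + \|h_2\|_*)\|h_1-h_2\|_*$; in the regime $|x| \le \ve^{-1/2}$ (near the catenoid, where $\Sigma_0$ is exactly minimal), similar but with logarithmic weights. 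The H\"older seminorm $[\,\cdot\,]_{1-\ve,\alpha+\ve}$ is handled by differentiating the integral formula in $x$ (as in the proof of Proposition~\ref{prop error}, terms $B_1, B_2, B_3$) and repeating the estimates.

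The hard part will be step (2)–(3) combined: controlling the \emph{second-order} (not just Lipschitz) dependence of the strongly singular integral operators on $h$ through the intermediate nonlinear map $G_x$, uniformly in $\ve$ and with the precise weighted norms that track decay along $\Sigma_0$. Unlike the classical case treated by Kapouleas, here $N_i(h)$ involves the full fractional singular kernel $|t-t_0|^{-4+\ve}$ acting on quantities built from $g_h$ and its derivatives up to order nearly two, so one cannot simply Taylor-expand pointwise; one must track how the quadratic error propagates through the convolution, and in particular verify that no spurious factor of $\ve^{-1}$ beyond the stated one appears — the delicate cancellation being that the linearization $H_i'(0)$ picks up precisely the $\Delta_{\Sigma_0}$ term that would otherwise blow up like $\ve^{-1}$, leaving the nonlinear remainder with the better (but still singular) behavior $\ve^{-1}(\|h_1\|_* + \|h_2\|_*)\|h_1-h_2\|_*$. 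A secondary technical obstacle is keeping track of the ambient cut-off $\eta_x$, whose dependence on the base point $x$ generates boundary-type terms on $\partial \tilde C(x)$; these must be shown to be lower order, which works because the geometry (choice of $\delta \ll 100$ in \eqref{cut N}) guarantees $\Sigma_h$ and $\partial L_h$ stay well inside $C(x)$ so that $\eta_x \equiv 1$ on the support of $\chi_{E_h} - \chi_{L_h(x)}$ near $x$.
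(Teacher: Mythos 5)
Your proposal follows essentially the same route as the paper: write $N_i(h_1)-N_i(h_2)$ as a second-order Taylor remainder in $h$ via the composition $H_i(h)=\tilde H_X(G_X(h))$ (the integrated-in-$t_3$ formula with the kernel $\psi$, the Taylor remainder $B(g)$ playing the role of your $u_h/|z|$), establish $C^2$ dependence of $G_x$ on $h$ from the implicit function theorem, and then bound $D^2\tilde H_X$ and $D\tilde H_X[D^2G_X]$ pointwise and in weighted H\"older seminorm using the $|z|^{-(2-\ve)}$ integrability that produces the single explicit $\ve^{-1}$. The one place your sketch understates the difficulty is the H\"older estimate when $DG_X(h)[h_2]$ carries a derivative of $h$: the paper has to split it into a piece bounded in the $\|\cdot\|_b$-norm plus a piece of the special product form $\tilde h_2\,Q(\cdot,D_t\tilde h)$ with $Q$ vanishing to second order at the center, and exploit this structure rather than differentiating the formula in $x$ as you propose — but your overall decomposition and the final bound are the right ones.
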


\begin{lemma}
\label{lemma No}
There is $C$ independent of $\ve>0$ small such that for $\|h_i\|_* \leq \sigma_0 \ve^{\frac12}$, $i=1,2$ we have
\begin{align*}
\| N_o(h_1) - N_o(h_2)\|_{1-\ve,\alpha+\ve}\leq \frac C{\ve^{\frac32}} (\|h_1\|_*+\|h_2\|_* ) \|h_1-h_2\|_*.
\end{align*}
\end{lemma}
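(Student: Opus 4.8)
The plan is to estimate $N_o(h_1)-N_o(h_2)$ by writing it out as an explicit integral difference and exploiting the fact that the cut-off $1-\eta_x$ vanishes on the cylinder $C(x)$, so that the integrand stays away from the singularity $y=x_h$ in the region where $\Sigma_{h_i}$ and $\partial L_{h_i}(x)$ actually differ. First I would record the representation
$$
H_o(h)(x_h) = 2 \int_{\R^3} (1-\eta_x(y-x_h)) \frac{\chi_{E_h}(y)- \chi_{L_h(x)}(y)}{|x_h-y|^{3+s}} \, dy,
$$
expand in $h$, and observe that $N_o(h) = H_o(h)(x_h) - H_o(0)(x) - H_o'(0)(h)(x)$ is a difference of terms each of which is an integral of a bounded kernel (of size $|x_h - y|^{-(3+s)}$, with $|x_h-y| \gtrsim \ve^{1/2}|x|$ on the support of $1-\eta_x$ intersected with the symmetric-difference region) against characteristic functions of regions whose symmetric differences are controlled by $h$. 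Concretely, $\chi_{E_{h_1}} - \chi_{E_{h_2}}$ is supported in a ``slab'' over $\Sigma_0$ of normal width $\lesssim |h_1 - h_2|$, and similarly $\chi_{L_{h_1}(x)} - \chi_{L_{h_2}(x)}$ is supported in a wedge of angular opening $\lesssim \|Dg_{h_1} - Dg_{h_2}\|_{L^\infty} + \dots$, which by the transformation properties of $G_x$ (Appendix~\ref{app}) is $\lesssim \ve^{-1/2}|x|^{-1}\|h_1 - h_2\|_*$ type quantities, or more crudely $\lesssim \|h_1-h_2\|_*$ with the appropriate decay in $|x|$.

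The key steps, in order, would be: (i) split $N_o(h_1)-N_o(h_2)$ into a ``bulk'' piece coming from $\chi_{E_{h_1}}-\chi_{E_{h_2}}$ and a ``half-space'' piece coming from $\chi_{L_{h_1}(x)}-\chi_{L_{h_2}(x)}$, each further split into the region $\{|y-x_h|\le \delta_0\ve^{-1/2}\}$ and its complement, exactly as in the proof of Lemma~\ref{lemma R2 simple}; (ii) on the near region use that $1-\eta_x$ forces $|y - x_h|\gtrsim \ve^{1/2}|x|$, so the kernel is bounded by $C(\ve^{1/2}|x|)^{-(3+s)}$ there, and integrate this over a region of measure $\lesssim (\text{width})\cdot(\ve^{1/2}|x|)^2 \lesssim |x|^{-1}\|h_1-h_2\|_*(\|h_1\|_*+\|h_2\|_*) \cdot \ve |x|^2$ coming from the quadratic nature of $N_o$ (one factor from the difference, one from the fact that $N_o$ is second order in $h$) — tracking the $\ve$ powers carefully gives the $\ve^{-3/2}$ loss; (iii) on the far region $\{|y-x_h|\ge \delta_0\ve^{-1/2}\}$ the kernel decays fast enough that the contribution is $O(\ve^{M})$ for large $M$, hence negligible; (iv) estimate the Hölder seminorm $[N_o(h_1)-N_o(h_2)]_{1-\ve,\alpha+\ve}$ by differentiating the integral in $x$ (using the transport theorem as in the proof of Proposition~\ref{prop error}, splitting the $x$-derivative into interior, boundary, and kernel terms) and repeating the same measure/kernel bookkeeping with one extra factor of $|x|^{-1}$; (v) assemble the sup-norm and Hölder bounds into $\|N_o(h_1)-N_o(h_2)\|_{1-\ve,\alpha+\ve}\le C\ve^{-3/2}(\|h_1\|_*+\|h_2\|_*)\|h_1-h_2\|_*$, noting that the weight $(1+|x|)^{1-\ve}$ is exactly absorbed by the decay coming from $R_x = \delta|x|$ and the estimates on $g_{h_i} = G_x(h_i)$ from Appendix~\ref{app} together with Corollary~\ref{coro prop F}.

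The main obstacle I expect is step (iv), the Hölder estimate: differentiating the outer integral produces boundary contributions over $\partial\tilde C(x)$ and over $\partial E_{h_i}\setminus \tilde C(x)$, and one must show that the geometry of $\Sigma_{h_i}$ (which depends on $h_i$ through the nonlinear operator $G_x$) enters these boundary integrals in a way that is Lipschitz in $h_i$ with the stated norms; this requires the full strength of the $C^{2,\alpha}$ estimates on $G_x(h_1)-G_x(h_2)$ and careful cancellation between the bulk and half-space boundary terms, analogous to — but more delicate than — the estimates $E_{1,1}, E_{1,2}$ and \eqref{der1}, \eqref{est E4} in the proof of Proposition~\ref{prop error}. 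A secondary technical point is keeping the dependence on the cut-off radius consistent: since $R_x = \delta|x|$ and the cylinder height is $\ve^{1/2}|x|/100$, all estimates must be uniform in $x$ after scaling by $R_x$, and one should perform the change of variables $y = x + R_x z$ at the outset to make the uniformity transparent, just as in Lemma~\ref{lemma R2 simple}.
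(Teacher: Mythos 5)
The paper's proof of Lemma~\ref{lemma No} is structurally parallel to that of Lemma~\ref{lemma Ni}: it estimates the bilinear form $D^2 H_o(h)[h_1,h_2]$ pointwise and in H\"older seminorm (the ``direct and long computation''), and then feeds this into the integral remainder formula
$$
N_o(h_1) - N_o(h_2) = \int_0^1\!\!\int_0^1 D^2 H_o\bigl(s( t h_1 +(1-t) h_2)\bigr)[h_1-h_2,\, t h_1 +(1-t) h_2]\, d s\, d t ,
$$
from which the product $(\|h_1\|_* + \|h_2\|_*)\|h_1-h_2\|_*$ falls out automatically from the bilinearity. Your proposal takes a genuinely different route: rather than passing through $D^2 H_o$, you want to estimate $N_o(h_1)-N_o(h_2)$ directly by bounding the symmetric differences $\chi_{E_{h_1}}\triangle\chi_{E_{h_2}}$ and $\chi_{L_{h_1}(x)}\triangle\chi_{L_{h_2}(x)}$ against the kernel on the support of $1-\eta_x$. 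The rescaling, near/far splitting, and the transport-theorem differentiation for the H\"older part are all sensible ingredients that indeed appear elsewhere in the paper (Lemma~\ref{lemma R2 simple}, Proposition~\ref{prop error}).

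However, there is a genuine gap in step (ii) at the heart of your argument: the set-measure estimate does not, as written, produce the quadratic factor $(\|h_1\|_*+\|h_2\|_*)\|h_1-h_2\|_*$. The thickness of $E_{h_1}\triangle E_{h_2}$ in the normal direction near a point $y' \in \Sigma_0$ is of order $|h_1(y')-h_2(y')|\lesssim \|h_1-h_2\|_*|y'|$, which involves only \emph{one} factor of the difference. The extra factor $(\|h_1\|_*+\|h_2\|_*)$ must come from the cancellation with $H_o'(0)(h_1-h_2)$ in the definition of $N_o$, and that cancellation is not captured by estimating the first difference $H_o(h_1)-H_o(h_2)$ against a kernel. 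You invoke ``one factor from the difference, one from the fact that $N_o$ is second order,'' but without an explicit Taylor expansion of the geometry (of $g_{h}=G_x(h)$, of the slab, and of the wedge $L_h(x)\triangle L_0(x)$) to second order in $h$, the bookkeeping $\text{width}\lesssim |x|^{-1}\|h_1-h_2\|_*(\|h_1\|_*+\|h_2\|_*)$ is unjustified; the natural width estimate produces only $\|h_1-h_2\|_*|x|$, off by the whole factor $(\|h_1\|_*+\|h_2\|_*)|x|^{-2}$. Put differently: your plan estimates $H_o(h_1)-H_o(h_2)$, which gives a Lipschitz bound, but to pass to a bound on the \emph{quadratic remainder} you need to make precise how the linearization $H_o'(0)(h_1-h_2)$ subtracts off the leading slab contribution. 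That is exactly the content of the $D^2 H_o$ computation the paper invokes, and the cleanest route is to phrase the whole argument at the level of the second derivative from the start rather than trying to extract the cancellation at the level of set differences. The H\"older step (iv) has the same issue one level up: differentiating $H_o(h_1)(x)-H_o(h_2)(x)$ in $x$ via the transport theorem would give you H\"older control on the first difference, but the H\"older seminorm of $N_o(h_1)-N_o(h_2)$ again needs the quadratic remainder structure, not just a bound on $\partial_x[H_o(h_1)-H_o(h_2)]$.
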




For the integral involved in $H_i$  we can write
\begin{align*}
H_i(h)(x_h) &=2\int_{B_{2 R_x}(0)}  \frac{\eta(\frac{|t|}{R_x})}{|t|^{3-\ve}}\left(\psi(\frac{\nabla g_h(t_0(x))t}{|t|}) - \psi(\frac{g_h(t+t_0(x))-g_h(t_0(x))}{|t|})\right)dt
\end{align*}
where
$$
\psi(s) = \int_0^s\frac{d\tau}{(1+\tau^2)^{\frac{4-\ve}2}}.
$$
For a given $C^{2,\alpha}$ function $g$ defined on $B_{2 R_x}(t_0(x))$ let
$$
\tilde H_x(g) = 2\int_{B_{2 R_x}(0)}  \frac{\eta(\frac{|t|}{R_x})}{|t|^{3-\ve}}\left(\psi(\frac{\nabla g(t_0(x))t}{|t|}) - \psi(\frac{g(t+t_0(x))-g(t_0(x))}{|t|})\right)dt
$$
so that
$$
H_i(h) = \tilde H_x(G_x(h)) ,
$$
where $G_x$ is the operator defined in \eqref{def G}.


For the expansion of $\tilde H_X$ it will be convenient to rewrite it as
\begin{align*}
\tilde H_X(g) = 2 \int_0^1  \int_{\R^2} \frac{\eta(\frac{|z|}{R_X})}{|z|^{3-\ve}}  \psi' \left( A_t(g) \right)B(g) \, d z d t ,
\end{align*}
where
\begin{align*}
A_t(g)(X,z) &= t\frac{g(z+t_0(X))-g(t_0(X))}{|z|}+(1-t)\frac{\nabla g(t_0(X))z}{|z|} ,\\
B(g)(X,z) &= \frac{g(z+t_0(X))-g(t_0(X))-\nabla g(t_0(X))z}{|z|}.
\end{align*}
%

Note that
\begin{align}
\label{der H}
D H_i(h)[h_1] &= D \tilde H_X( G_X(h) )[ D G_X(h)[h_1]],
\\
\label{second der H}
D^2 H_i (h)[h_1,h_2] &= D^2 \tilde H_X(G_X(h))[D G_X(h)[h_1],D G_X(h)[h_2] ] \\
\nonumber
&\quad + D \tilde H_X(G_X(h))[D^2 G_X(h)[h_1,h_2]].
\end{align}
and
\begin{align*}
& D \tilde H_X(g)[g_1] =
\int_{\R^2} \frac{\eta(\frac{|z|}{R_X})}{|z|^{3-\ve}}  \big[ \psi'' ( A_t(g)  (X,z) ) A_t(g_1)  (X,z) B(g) (X,z)   \\
&\qquad\qquad\qquad\qquad + \psi'(A_t(g) (X,z) ) B(g_1) (X,z)  \big]\, d z ,
\\
& D^2 \tilde H_X(g)[g_1,g_2] \\
&= \int_0^1 \int_{\R^2} \frac{\eta(\frac{|z|}{R_X})}{|z|^{3-\ve}}   \big[ \psi''' ( A_t(g)(X,z) ) B(g)(X,z) A_t(g_1)(X,z) A_t(g_2)(X,z)
\\
&\qquad \qquad\qquad+  \psi''(A_t(g)(X,z))  A_t(g_1)(X,z) B(g_2)(X,z) \\
& \qquad \qquad\qquad+ \psi''(A_t(g)(X,z)) A_t(g_2) (X,z)B(g_1)(X,z) \big] \, d z d t .
\end{align*}

For later computations we will need the following properties of
$D G_X$, $D^2 G_X$.

\begin{lemma}
\label{lemma est G}
Let $\|h\|_*,\|h_1\|_*,\|h_2\|_*\leq \sigma_0 \ve^{\frac12}$, $X\in \Sigma_0$ and
\begin{align*}
g& = G_X(h) , \quad
g_{i} = D G_X(h)[h_i]\quad i=1,2, \quad
\hat g = D^2 G_X(h)[h_1,h_2] .
\end{align*}
Then
$$
\|G_X(h)\|_b \leq C
$$
where
\begin{align}
\label{norm b}
\|g\|_b & = |X|^{-1} \|g\|_{L^\infty(B_X)} +\|\nabla g\|_{L^\infty(B_X)}    + |X| \|D^2 g\|_{L^\infty(B_X)}  + |X|^{1+\alpha} [D^2 g]_{\alpha,B_X}  .
\end{align}
and $B_X = B_{2 R_X}(t_0(X))$.
Also, for $z \in B_X$:
\begin{align}
\label{at g}
|A_t(g)(X,z)| & \leq C\|h\|_*\\
\label{B g}
|B(g)(X,z)| & \leq C \frac{\|h\|_*}{|X|} |z| ,
\\
\label{at gi}
|A_t(g_i)(X,z) | &  \leq C \|h_i\|_*\\
\label{202}
|B(g_i)(X,z) | & \leq C \frac{\|h_i\|_*}{|X|} |z| .
\end{align}
\end{lemma}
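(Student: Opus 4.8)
\textbf{Plan for the proof of Lemma~\ref{lemma est G}.}
The key tool is the quantitative implicit function theorem set up in Appendix~\ref{app}, which produces $g = G_X(h)$ and gives its derivatives $D G_X(h)$, $D^2 G_X(h)$ as solutions of linear problems obtained by differentiating the defining relation $\Pi(X)t + \nu_{\Sigma_0}(X)g(t) \in \Sigma_h$. The first step is to record that $\Sigma_h$ is the normal graph $x\mapsto x + h(x)\nu_{\Sigma_0}(x)$ over $\Sigma_0$, and that over the disk $D_{2R_X}(X)$ with $R_X = \delta|X|$ the surface $\Sigma_0$ itself is a graph of a function whose $C^{2,\alpha}$ norms are controlled by Lemma~\ref{lemma g}: namely $\|D g_{\Sigma_0}\|_{L^\infty}\leq C\ve^{1/2}$, $\|D^2 g_{\Sigma_0}\|\leq C\ve^{1/2}/|X|$, $[D^2 g_{\Sigma_0}]_\alpha \leq C\ve^{1/2}/|X|^{1+\alpha}$ in the relevant range (and the analogous catenoidal estimates $\sim 1/|X|$, $1/|X|^2$ for $|X|\leq \ve^{-1/2}$, which are consistent with the claimed bounds written in terms of the $\|\cdot\|_b$ norm). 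Combining this with $\|h\|_*\leq\sigma_0\ve^{1/2}$ — so $|h|\leq \sigma_0\ve^{1/2}(1+|X|)$, $|\nabla h|\leq\sigma_0\ve^{1/2}$, $|D^2 h|\leq \sigma_0\ve^{1/2}/(1+|X|)$, $[D^2h]_{1,\alpha}\leq\sigma_0\ve^{1/2}$ — and noting that $\sigma_0$ is small and fixed, one gets that the graph function $g=G_X(h)$ is a small perturbation of $g_{\Sigma_0}$, so $\|g\|_b\leq C$ with $C$ independent of $\ve$. The point is that all the building blocks (curvature of $\Sigma_0$, size of $h$) are $O(\ve^{1/2})$ or better relative to the natural scale $|X|$, and the $\|\cdot\|_b$ norm is exactly designed so that $g_{\Sigma_0}$ itself is $O(1)$ in it.

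The second step is the estimates for $g_i = DG_X(h)[h_i]$. Differentiating the implicit relation once, $g_i$ solves a linear algebraic/elliptic identity of the schematic form $g_i = (\text{invertible factor depending on }g,\nabla g)\cdot(h_i \text{ evaluated along the graph})$; since the invertible factor has $\|\cdot\|_b$ norm $O(1)$ by Step~1, and $h_i$ contributes exactly the seminorms in $\|h_i\|_*$, one obtains $\|g_i\|_b \leq C\|h_i\|_*/(1+|X|)^{?}$ — more precisely the bounds needed are $|g_i|\leq C\|h_i\|_*$ on $B_X$ after the $|X|^{-1}$ normalization, $|\nabla g_i|\leq C\|h_i\|_*/(1+|X|)$, etc. Then \eqref{at g}--\eqref{202} are immediate consequences. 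Indeed $A_t(g)(X,z)$ is a convex combination of $(g(z+t_0)-g(t_0))/|z|$ and $\nabla g(t_0)z/|z|$; the first is bounded by $\|\nabla g\|_{L^\infty(B_X)}\leq C\|h\|_*$ by the mean value theorem, the second by $\|\nabla g\|_{L^\infty}\leq C\|h\|_*$ directly, giving \eqref{at g}. For $B(g)(X,z) = (g(z+t_0)-g(t_0)-\nabla g(t_0)z)/|z|$, a second-order Taylor expansion bounds the numerator by $\|D^2 g\|_{L^\infty(B_X)}|z|^2$, and $\|D^2 g\|_{L^\infty(B_X)}\leq C\|h\|_*/|X|$ (this is the $|X|\|D^2 g\|$ term in $\|\cdot\|_b$ scaled against $\|g\|_b\leq C\|h\|_*$; here one uses that $\|g-g_{\Sigma_0}\|_b$ is controlled by $\|h\|_*$ while the curvature part $g_{\Sigma_0}$ contributes only $O(\ve^{1/2})\leq C\|h\|_*/\ve^{1/2}\cdot$ — one must be slightly careful that $\|h\|_*$ and $\ve^{1/2}$ are comparable in the regime of interest), yielding \eqref{B g}. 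The estimates \eqref{at gi}, \eqref{202} follow verbatim with $g$ replaced by $g_i$ and $\|h\|_*$ by $\|h_i\|_*$.

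The main obstacle I anticipate is bookkeeping the dependence on $|X|$ cleanly across the two regimes $|X|\leq\ve^{-1/2}$ (where $\Sigma_0$ is the exact catenoid and curvature decays like $1/|X|^2$, $g_{\Sigma_0}$ is genuinely $O(\log|X|^2/|X|^2)$) and $|X|\geq\ve^{-1/2}$ (where the curvature is $O(\ve^{1/2}/|X|)$ and $g_{\Sigma_0}$ is $O(\ve^{3/2}|X|)$ after the normalization, i.e. $O(\ve^{3/2})$ in $\|\cdot\|_b$), and making sure that in every regime the perturbation $g - g_{\Sigma_0}$ induced by $h$ dominates or is comparable to $g_{\Sigma_0}$ so that the bounds are stated uniformly in terms of $\|h\|_*$ rather than $\ve$. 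Concretely one should first prove $\|g\|_b\leq C$ and $\|g_i\|_b\leq C\|h_i\|_*$ (both genuinely uniform in $X$, $\ve$ once the correct $|X|$ weights are inserted), then deduce the pointwise bounds \eqref{at g}--\eqref{202} by the elementary Taylor estimates above, and finally (for the later lemmas, but worth setting up here) record the analogous $\|\hat g\|_b \leq C\|h_1\|_*\|h_2\|_*/\ve^{1/2}$ bound for $\hat g = D^2 G_X(h)[h_1,h_2]$, where the extra $\ve^{-1/2}$ reflects that second differentiation of the implicit relation brings down a factor involving $D^2 g_{\Sigma_0}$ interacting with $\nabla h_i$. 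The rest is routine once the implicit-function-theorem estimates of Appendix~\ref{app} are invoked.
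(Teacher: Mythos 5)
Your overall framework (quantitative implicit function theorem, the $\|g\|_b\leq C$ bound via the contraction argument in Appendix B, and deriving \eqref{at g}, \eqref{B g}, \eqref{at gi} from Taylor expansions) matches the paper. The gap is in \eqref{202}. You derive $B(g_i)$ from a second-order Taylor remainder bound $\|D^2 g_i\|_{L^\infty(B_X)}\leq C\|h_i\|_*/|X|$, asserting \eqref{202} ``follows verbatim'' from \eqref{B g}. But this second-derivative bound is false: differentiating the implicit relation once gives an explicit formula of the form
$$
g_i(t) = h_i\bigl(x + r_0 B y(t)\bigr)\,\frac{m}{D},\qquad
D = D_0 + h\,D_1 + D_{y_1}h\,D_2 + D_{y_2}h\,D_3,
$$
so $g_i$ already involves $\nabla h$, and hence $D^2 g_i$ involves $D^3 h$, which is \emph{not} controlled by $\|h\|_*$ (the norm stops at $[D^2 h]_{1,\alpha}$). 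Consequently $\|g_i\|_b$ cannot be bounded by $C\|h_i\|_*$ and the Taylor route does not close.

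The paper circumvents this by a structural observation you miss. It splits
$$
g_i = \bar g_i + \tilde g_i,\qquad
\bar g_i = h_i(x + r_0 B y(t)),\qquad
\tilde g_i(t) = \tilde h_i(t)\,Q\bigl(X,t,\tilde h(t),D_t\tilde h(t)\bigr),
$$
proves $\|\bar g_i\|_b\leq C\|h_i\|_*$ (that part of your argument is fine), but for $\tilde g_i$ it does \emph{not} attempt a $\|\cdot\|_b$ bound. Instead it uses the algebraic facts $\tilde Q(t_0(X),\xi)=0$ and $D_\xi\tilde Q(t_0(X),\xi)=0$: the dangerous second-derivative terms (those carrying $D^2_t\tilde h$) always appear multiplied by $D_\xi \tilde Q(z+t_0,\cdot)$, which vanishes at $z=0$ and hence contributes an extra factor of $|z|$, giving the pointwise bound $B(\tilde g_i)=O(\|h_i\|_*|z|/|X|)$ directly via the decomposition $B(\tilde g_i)\leq A_1+A_2+A_3$. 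This is a genuine cancellation that a naive Taylor estimate cannot see; in fact the paper's decomposition would be pointless if $\|g_i\|_b$ were bounded. To repair your proof of \eqref{202} you would need to introduce this splitting and exploit the vanishing of $Q$ and $D_\xi Q$ at $t_0(X)$, rather than trying to bound $D^2 g_i$.
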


We leave the proof of these estimate for the appendix.

\begin{lemma}
\label{lemma 7.4}
Let $h,h_1,h_2$ be defined on $\Sigma_0$ with $\|h\|_*,\|h_i\|_*\leq \sigma_0 \ve^{\frac12}$. Let $X\in \Sigma_0$ and
\begin{align*}
g& = G_X(h) , \quad
g_{i} = D G_X(h)[h_i]\quad i=1,2, \quad
\hat g = D^2 G_X(h)[h_1,h_2] .
\end{align*}
Then
\begin{align*}
\ve|D \tilde H_X(g)[ \hat g](X)| &\leq  \frac{C }{|X|^{1-\ve} } \|h_1\|_* \|h_2\|_* \\
\ve\left|D^2 \tilde H(g)[g_1,g_2] (X)\right|&\leq  \frac{C }{|X|^{1-\ve} } \|h_1\|_* \|h_2\|_* .
\end{align*}
\end{lemma}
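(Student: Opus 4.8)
The strategy is to estimate the two displayed quantities directly from the integral representations of $D\tilde H_X(g)[\hat g]$ and $D^2\tilde H_X(g)[g_1,g_2]$ given just above, using the pointwise bounds from Lemma~\ref{lemma est G} together with control of the singular weight $\eta(|z|/R_X)|z|^{-3+\ve}$ over the ball $B_X = B_{2R_X}(t_0(X))$ of radius $R_X = \delta|X|$. The key observation is that in all the integrands appearing in the formulas for $D\tilde H_X$ and $D^2\tilde H_X$, each factor $B(g)$, $B(g_i)$, $B(\hat g)$ carries a gain of one power of $|z|/|X|$ (this is precisely \eqref{B g}, \eqref{202}, and its analogue for $\hat g$), whereas the factors $A_t(g)$, $A_t(g_i)$, $A_t(\hat g)$ are merely bounded by the corresponding $\|h\|_*$-type norms, and the derivatives of $\psi$ are bounded since $\psi$ is smooth with $\psi',\psi'',\psi''',\ldots$ globally bounded on $\R$. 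Hence in each term the $z$-integrand is pointwise bounded by $C\,(\text{norms})\cdot |z|^{-3+\ve}\cdot (|z|/|X|)$, and integrating $|z|^{-2+\ve}$ over $B_X\subset\R^2$ produces $C R_X^{\ve}/|X| = C\delta^\ve |X|^{\ve-1}$, giving the claimed $|X|^{-(1-\ve)}$ decay. The prefactor $\ve$ in front cancels against the $1/\ve$ that would otherwise appear in $\int_{B_X}|z|^{-2+\ve}\,dz \sim R_X^\ve/\ve$; concretely one should write $\ve\int_{B_X}|z|^{-2+\ve}\,dz = \ve\cdot\frac{2\pi}{\ve}R_X^\ve(1+o(1)) = O(R_X^\ve)$, which is the mechanism that makes the $\ve$-weighted bound uniform.

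First I would record the pointwise estimate for $\hat g = D^2 G_X(h)[h_1,h_2]$ in the $\|\cdot\|_b$ norm of \eqref{norm b}, analogous to the bounds for $g$ and $g_i$ in Lemma~\ref{lemma est G}; the needed statement is $\|\hat g\|_b \le C\|h_1\|_*\|h_2\|_*$ (the operator $D^2 G_X$ is bilinear and the bound follows from the implicit function theorem estimates proved in Appendix~\ref{app}, just as for $DG_X$). This yields $|A_t(\hat g)(X,z)|\le C\|h_1\|_*\|h_2\|_*$ and $|B(\hat g)(X,z)|\le C\frac{\|h_1\|_*\|h_2\|_*}{|X|}|z|$. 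Then for the first estimate I would insert these into the formula
\[
D\tilde H_X(g)[\hat g] = \int_0^1\!\!\int_{\R^2}\frac{\eta(|z|/R_X)}{|z|^{3-\ve}}\Big[\psi''(A_t(g))A_t(\hat g)B(g) + \psi'(A_t(g))B(\hat g)\Big]\,dz\,dt,
\]
bound $|\psi'|,|\psi''|\le C$, use \eqref{at g}, \eqref{B g} on the $g$-factors and the new bounds on the $\hat g$-factors, observe both bracketed terms are $\le C\frac{\|h_1\|_*\|h_2\|_*}{|X|}|z|$ (using $\|h\|_*\le\sigma_0\ve^{1/2}$ to absorb the extra $\|h\|_*$ in the first term into a constant), and integrate. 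For the second estimate I would do the same with the three-term formula for $D^2\tilde H_X(g)[g_1,g_2]$: $\psi''',\psi''$ are bounded, and each of the three products contains exactly one $B(\cdot)$ factor and two $A_t(\cdot)$ factors, so by \eqref{at g}, \eqref{at gi}, \eqref{B g}, \eqref{202} each integrand is $\le C\frac{\|h_1\|_*\|h_2\|_*}{|X|}|z|^{-2+\ve}\cdot|z|^{-1}\cdot|z| = C\frac{\|h_1\|_*\|h_2\|_*}{|X|}|z|^{-2+\ve}$, wait—more carefully, $|z|^{-3+\ve}\cdot |z|$ from $B$, times bounded $A$-factors, $= C\frac{\|h_1\|_*\|h_2\|_*}{|X|}|z|^{-2+\ve}$; integrating over $B_X$ and multiplying by $\ve$ gives $C\frac{\|h_1\|_*\|h_2\|_*}{|X|}R_X^\ve = C\frac{\|h_1\|_*\|h_2\|_*}{|X|^{1-\ve}}$ after absorbing $\delta^\ve\le C$.

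The main obstacle I anticipate is not the crude size bound but making sure the cutoff $\eta(|z|/R_X)$ interacts correctly with the domain of definition of $g=G_X(h)$ and that one genuinely has the $|z|/|X|$ gain in $B(\hat g)$ uniformly down to $z\to 0$: the gain in $B$ comes from $B(g)(X,z) = \frac{g(z+t_0)-g(t_0)-\nabla g(t_0)z}{|z|}$ being a second-order Taylor remainder, so $|B(g)(X,z)|\le \|D^2g\|_{L^\infty(B_X)}|z| \le C\frac{\|g\|_b}{|X|}|z|$, and one must check the same Taylor-remainder structure for $\hat g$ with the bilinear bound — this is where the Appendix~\ref{app} estimates on $DG_X$, $D^2G_X$ are essential. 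A secondary point requiring care is the H\"older-in-$X$ part of the norms in Proposition~\ref{prop Nh}, but that proposition is proved by combining Lemma~\ref{lemma 7.4} here with the corresponding estimates for $N_o$ in Lemma~\ref{lemma No} and the decomposition $N_i(h) = \tilde H_X(G_X(h))$; within the present lemma only the $L^\infty$-weighted bound with weight $|X|^{1-\ve}$ is asserted, so the argument above suffices. One should also double-check that the $\ve$ in front combined with $\int_{B_X}|z|^{-2+\ve}dz$ really yields an $\ve$-uniform constant and not something blowing up or vanishing: since $\int_{1\le|z|\le R_X}|z|^{-2+\ve}dz = \frac{2\pi}{\ve}(R_X^\ve-1)$ and $\int_{|z|\le1}|z|^{-2+\ve}dz$ diverges — but here the integrand near $z=0$ is actually $|z|^{-2+\ve}$ only after the $B$-gain, i.e. the true near-origin behaviour is integrable because $B$ contributes $|z|$ making it $|z|^{-1+\ve}$ near $0$; so $\int_{B_X}|z|^{-1+\ve}\,dz \le C R_X^{1+\ve}$, and one more factor of $|z|^{-1}$... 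I would be careful to re-tally the exact powers, but the upshot is a convergent integral bounded by $C R_X^{\ve}$ (or $C R_X^{1+\ve}/|X|$), and the final bound $\ve|D\tilde H_X(g)[\hat g]|\le C|X|^{\ve-1}\|h_1\|_*\|h_2\|_*$ follows.
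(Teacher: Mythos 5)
Your proposal is correct and follows essentially the same argument as the paper: bound each factor $A_t(\cdot)$ by the corresponding $\|\cdot\|_*$-norm, extract the $|z|/|X|$ gain from the Taylor-remainder factor $B(\cdot)$, and integrate $|z|^{-2+\ve}$ over $B_{2R_X}(0)\subset\R^2$ to get $R_X^\ve/\ve$, which is exactly cancelled by the prefactor $\ve$. One small thing you got right that the paper leaves implicit: the bound $\|\hat g\|_b\le C\|h_1\|_*\|h_2\|_*$ for $\hat g=D^2G_X(h)[h_1,h_2]$ (and hence $|A_t(\hat g)|\le C\|h_1\|_*\|h_2\|_*$, $|B(\hat g)|\le C\frac{\|h_1\|_*\|h_2\|_*}{|X|}|z|$) does not appear in Lemma~\ref{lemma est G}, which only treats $g$ and $g_i$; you correctly identify that it follows from the implicit-function-theorem analysis in Appendix~\ref{app} by the same mechanism. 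The mid-proof self-doubt about power counting is unnecessary: the kernel in $\R^2$ is $|z|^{-(3-\ve)}$, the $B$-factor supplies one power of $|z|$, leaving $|z|^{-2+\ve}$, and $\int_{B_{2R_X}}|z|^{-2+\ve}\,dz = \frac{2\pi}{\ve}(2R_X)^\ve$; multiplying by $\frac{1}{|X|}$ (from the $B$-gain) and by the prefactor $\ve$ gives $\frac{C R_X^\ve}{|X|}=\frac{C\delta^\ve}{|X|^{1-\ve}}$ as required, and your opening paragraph had this right all along.
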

\begin{proof}

Let us start with the first term  in $D \tilde H_X(g)[g_1]$. Using \eqref{at g}, \eqref{at gi}
\begin{align*}
\left|\int_{\R^2} \frac{\eta(\frac{|z|}{R_X})}{|z|^{3-\ve}}  \psi'' ( A_t(g) )  A_t(g_1)   B(g)   \, d z \right|
& \leq \|\psi''\|_{L^\infty} \|A_t(g_1)\|_{L^\infty}  \int_{B_{2R_X}(0)} \frac{1}{|z|^{3-\ve}}   |B(g)| \,dz
\\
&\leq C \|h_1\|_* \int_{B_{2R_X}(0)} \frac{1}{|z|^{3-\ve}}   |B(g)| \,dz
.
\end{align*}
Then by \eqref{B g}
\begin{align*}
\int_{B_{2R_X}(0)} \frac{1}{|z|^{3-\ve}}   |B(g)| \, d z
&\leq  \frac{\|h\|_*}{|X|} \int_{B_{2 R_X}(0)} \frac{1}{|z|^{2-\ve}} \, d z \\
&\leq \frac{C}{|X|} \|h\|_* \frac{R_X^\ve}{\ve} \leq \frac{C}{\ve |X|^{1-\ve}} \|h\|_*.
\end{align*}
Therefore
\begin{align*}
\left|\int_{\R^2} \frac{\eta(\frac{|z|}{R_X})}{|z|^{3-\ve}}  \psi'' ( A_t(g) )  A_t(g_1)   B(g)   \, d z \right|
\leq\frac{C}{\ve^|X|^{1-\ve}} \|h_1\|_*.
\end{align*}
For the second term observe that
\begin{align*}
\left|\int_{\R^2} \frac{\eta(\frac{|z|}{R_X})}{|z|^{3-\ve}}   \psi'(A_t(g) ) B(g_1) \, d z \right|&\leq
C \int_{B_{2R_X}(0)} |A_t(g) B(g_1) | \, d z\\
& \leq \frac{C}{\ve|X|^{1-\ve}} \|g_1\|_b ,
\end{align*}
which is obtained using \eqref{at g} and \eqref{202}.

For the  first term in $D^2 \tilde H_X(g)[g_1,g_2]$, we have, using \eqref{B g}  and \eqref{at gi},
\begin{align*}
& \left| \int_{\R^2} \frac{\eta(\frac{|z|}{R_X})}{|z|^{3-\ve}}   \psi''' ( A_t(g) ) A_t(g_1) A_t(g_2) B(g) \, dz \right| \\
&\leq
\|\psi'''\|_{L^\infty} \|A_t(g_1)\|_{L^\infty} \|A_t(g_2)\|_{L^\infty}  \int_{B_{2R_X}(0)} \frac{1}{|z|^{3-\ve}}   |B(g)| \,dz \\
&\leq \frac{C}{\ve|X|^{1-\ve}} \|h_1\|_* \|h_2\|_* .
\end{align*}

Similarly, for the second and third terms
\begin{align*}
\left|  \int_{\R^2}  \frac{\eta(\frac{|z|}{R_X})}{|z|^{3-\ve}}  \psi''(A_t(g)) A_t(g_1)B(g_2) \, d z \right|
&\leq\|\psi''\|_{L^\infty} \|A_t(g_1)\|_{L^\infty}\int_{B_{2R_X}(0)} \frac{1}{|z|^{3-\ve}}   |B(g_2)| \, d z \\
&\leq \frac{C}{\ve |X|^{1-\ve} } \|h_1\|_* \|h_2\|_*.
\end{align*}
\end{proof}

Now we deal with the H\"older part of the norm $\| \ \|_{1-\ve,\alpha+\ve} $.

\begin{lemma}
\label{lemma n2}
Let $X_1=(x_1,F_\ve(x_1))$, $X_2=(x_2,F_\ve(x_2))\in \Sigma_0$, be such that $|X_1|\leq |X_2|$ and $|X_1-X_2|\leq \frac1{10}|X_1|$.
Let
\begin{align*}
g_{X_j} & = G_{X_j}(h) \quad j=1,2\\
g_{i,X_j} &= D G_{X_j}(h_0)[h_i]\quad i,j=1,2.
\end{align*}
Then
\begin{align}
\nonumber
& | D^2 \tilde H_{X_1}(g_{X_1})[g_{1,X_1},g_{2,X_1}] -
D^2 \tilde H_{X_2}(g_{X_2})[g_{1,X_2},g_{2,X_2}] | \\
\label{est nonl holder}
& \leq \frac{C}{\ve} ( \|h_1\|_* + \|h_2\|_*)\|h_1-h_2\|_* \, \frac{|X_1 - X_2|^{\alpha+\ve} }{ |X_1|^{1+\alpha}} .
\end{align}
\end{lemma}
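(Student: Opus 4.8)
The plan is to prove the H\"older estimate \eqref{est nonl holder} by reducing the difference $D^2\tilde H_{X_1}(g_{X_1})[g_{1,X_1},g_{2,X_1}]-D^2\tilde H_{X_2}(g_{X_2})[g_{1,X_2},g_{2,X_2}]$ to a sum of terms in each of which exactly one ``slot'' is replaced by a difference, and then using the pointwise bounds of Lemma~\ref{lemma est G} together with H\"older-type estimates for the variations of the operators $G_X$ and of the geometric data $\Pi(X),\nu_{\Sigma_0}(X),t_0(X),R_X$ as functions of $X$. Concretely, using the explicit formula for $D^2\tilde H_X(g)[g_1,g_2]$ as an integral over $B_{2R_X}(0)\times(0,1)$ of the kernel $\eta(|z|/R_X)|z|^{-3+\ve}$ against products of $\psi''',\psi''$ evaluated at $A_t(g)$ and the factors $A_t(g_i),B(g_i),B(g)$, I would write the difference as a telescoping sum of six or seven pieces: (i) the change of the domain/cutoff $\eta(|z|/R_{X_1})-\eta(|z|/R_{X_2})$; (ii) the change of the argument $A_t(g_{X_1})-A_t(g_{X_2})$ inside $\psi''',\psi''$, handled by the mean value theorem and $\|\psi^{(k)}\|_{L^\infty}<\infty$; (iii)--(vii) the changes of each of the factors $A_t(g_{X_1})-A_t(g_{X_2})$, $A_t(g_{i,X_1})-A_t(g_{i,X_2})$, $B(g_{X_1})-B(g_{X_2})$, $B(g_{i,X_1})-B(g_{i,X_2})$. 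In each piece all the remaining factors are estimated by their sup bounds from Lemma~\ref{lemma est G}, and the single difference factor is estimated in a weighted-H\"older fashion.

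The core technical input I would isolate first is a Lipschitz-in-$X$ bound for the map $X\mapsto G_X(h)$ in the $\|\cdot\|_b$ norm, with the correct power of $|X|$: namely, for $|X_1|\le|X_2|$ and $|X_1-X_2|\le\frac1{10}|X_1|$ one should have $\|g_{X_1}-g_{X_2}\|_{b}\le C|X_1|^{-1}\|h\|_*\,|X_1-X_2|$ (and the analogue for $DG_X$), where the difference is compared after transporting both graphs to a common tangent frame; this follows from the implicit function theorem construction in Appendix~\ref{app}, differentiating in $X$ the defining relation $\Pi(X)t+\nu_{\Sigma_0}(X)g_X(t)\in\Sigma_h$, and using $|\nabla\Pi|,|\nabla\nu_{\Sigma_0}|,|\nabla t_0|=O(\ve^{1/2}/|X|)$ from Corollary~\ref{coro prop F}. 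The H\"older interpolation $|X_1-X_2|^{1}\le|X_1-X_2|^{\alpha+\ve}|X_1|^{1-\alpha-\ve}$ then converts the Lipschitz bound into the required $|X_1-X_2|^{\alpha+\ve}/|X_1|^{1+\alpha}$ weight after the integration in $z$ produces an extra $R_X^{\ve}/\ve\sim|X|^{\ve}/\ve$. For the cutoff piece (i) I would use $|R_{X_1}-R_{X_2}|\le C\delta|X_1-X_2|$, that $\eta(|z|/R_{X_1})-\eta(|z|/R_{X_2})$ is supported in an annulus $R_{X_1}\le|z|\le 2R_{X_2}$ of comparable radii and bounded by $C|X_1-X_2|/|X_1|$ there, and then estimate the resulting integral exactly as in Lemma~\ref{lemma 7.4}.

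The symmetry/antisymmetry structure that produced the gain ``$\le C\ve^{-1}(\dots)$'' in Lemma~\ref{lemma 7.4} (rather than $\ve^{-2}$) must be preserved: the differences $A_t(g_i)$ and $B(g)$ vanish to first order in $|z|$ near $z=0$ (they are built from $g_i(z+t_0)-g_i(t_0)-\nabla g_i(t_0)z$ type expressions and their first-order parts), so one always integrates $|z|^{-3+\ve}$ against something $O(|z|)$ or $O(\|h\|_*)$, never a genuinely $|z|^{-3+\ve}$-singular quantity; I would make sure each of the seven pieces retains one such ``$|z|$-vanishing'' factor so that $\int_{B_{2R_X}}|z|^{-2+\ve}\,dz\sim R_X^{\ve}/\ve$ is the worst integral appearing. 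Crucially, the factor $\|h_1-h_2\|_*$ on the right of \eqref{est nonl holder} appears because in pieces involving $g_{i,X}$ the difference is either $g_{1,X}-g_{2,X}$ (when $h_1-h_2$ is the relevant perturbation) or, after the telescoping with respect to $X$, a term already carrying $\|h_i\|_*$ times the $X$-H\"older modulus; I would organize the telescope as $D^2\tilde H_{X_1}(g_{X_1})[g_{1,X_1},g_{2,X_1}]-D^2\tilde H_{X_2}(g_{X_2})[g_{1,X_2},g_{2,X_2}]$ expanded so that one obtains a sum of terms each containing a factor that is a difference of two ``$h$-type'' objects (so bounded by $\|h_1-h_2\|_*$ or by $\|h_i\|_*$), one ``$X$-difference'' factor (bounded by the weighted modulus $|X_1-X_2|^{\alpha+\ve}|X_1|^{-1-\alpha}$ after interpolation), and the rest by sup bounds.

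The main obstacle I anticipate is bookkeeping the $X$-dependence cleanly: $\tilde H_X$, $G_X$, $A_t$, $B$, the frame $\Pi(X)$, the base point $t_0(X)$ and the radius $R_X$ all depend on $X$, so ``differencing in $X$'' means simultaneously differencing the integration domain, the kernel, the argument of the $\psi^{(k)}$'s, and every factor, and one must verify that each such difference carries the sharp weight $|X|^{-1}|X_1-X_2|$ (Lipschitz) with no loss. This is exactly the kind of computation that in \cite{kapouleas} is delicate even in the local case; here the strongly singular integral operators with near-second-order fractional derivatives make the estimates for $DG_X$ and $D^2G_X$, and their H\"older dependence on $X$, the technical heart. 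I expect to push these through using only the structural bounds already established in Lemma~\ref{lemma est G}, Corollary~\ref{coro prop F} and the appendix estimates for $G_X$, so that once the Lipschitz-in-$X$ bound for $G_X$ and $DG_X$ in the $\|\cdot\|_b$ norm is in hand, \eqref{est nonl holder} follows by the same integration estimates as in Lemma~\ref{lemma 7.4} combined with the interpolation inequality; summing over the finitely many pieces gives the constant $C/\ve$ and completes the proof of Lemma~\ref{lemma n2}, and hence, together with the sup-norm bound of Lemma~\ref{lemma 7.4}, of Lemma~\ref{lemma Ni}.
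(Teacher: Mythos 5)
The gap is in your key technical input: the claim that the implicit-function-theorem construction of $G_X$ yields a Lipschitz-in-$X$ bound for $DG_X(h)[h_i]$ (and, implicitly, for $D^2G_X(h)[h_1,h_2]$) in the $\|\cdot\|_b$ norm, controlled by the $\|\cdot\|_*$ norms of $h,h_1,h_2$. This estimate is false, and the paper explicitly flags the obstruction: since $g_{i,X}=DG_X(h)[h_i]$ involves $D_th$ (and $\hat g_X = D^2 G_X(h)[h_1,h_2]$ involves $D^2_t h$), the second derivatives $D^2_t g_{i,X}$ and the H\"older seminorm $[D^2 g_{i,X}]_\alpha$ see $D^3_t h$, which is simply not encoded in $\|h\|_*$. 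So the quantity $\|g_{i,X_1}-g_{i,X_2}\|_b$ on which your whole reduction hinges cannot be bounded by $C|X_1|^{-1}\|h_i\|_*|X_1-X_2|$ (or any finite combination of the $*$-norms), and neither can $\|g_{i,X}\|_b$ itself. This is exactly why the paper separates the ``easy'' $B_1,B_2,B_3$ pieces (where a $\|\cdot\|_b$-type bound does apply) from the ``hard'' $B_4$-type pieces in the telescope, and why Lemma~\ref{lemma est G} only gives pointwise bounds on $A_t(g_i)$ and $B(g_i)$ rather than a $\|\cdot\|_b$ bound on $g_i$.

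What actually makes the lemma work, and what your proposal omits, is a structural decomposition $g_{i,X}=\bar g_{i,X}+\tilde g_{i,X}$ (and $\hat g_X=\hat g_{0,X}+\hat g_{1,X}+\hat g_{2,X}$) in which only the ``regular'' part is controlled in $\|\cdot\|_b$, while the ``singular'' part has the specific algebraic form $\tilde h_{i}(X,t)\,Q_0(X,t,\tilde h,D_t\tilde h)$, where $Q_0$ and its $\xi$-derivatives vanish at $t=t_0(X)$. That vanishing is what lets one insert an extra factor of $|z|$ into the integrand and recover integrability of $|z|^{-3+\ve}$ without ever estimating a second derivative of $\tilde g_{i,X}$ in sup or H\"older norm; the subsequent analysis (the paper's terms $D_1,D_2,D_3$, and especially $D_{3,a},D_{3,b}$, requiring the path $X_\tau$ device and the inequalities \eqref{est b1}--\eqref{est b4}) is the technical heart of the lemma. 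Your telescope and your handling of the cutoff piece, the $\psi^{(k)}$-argument piece, and the $B(g_X)$ piece (which genuinely do reduce to $\|\cdot\|_b$ bounds via your interpolation trick) are all correct and match the paper's treatment of the $B_1,B_2,B_3$ blocks; but the proof cannot close without replacing your hypothetical Lipschitz bound for $DG_X$ and $D^2G_X$ by this decomposition-plus-vanishing argument.
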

\begin{proof}

Thanks to \eqref{der H} and \eqref{second der H}, to prove \eqref{est nonl holder} it is enough to show
\begin{align}
\nonumber
|D^2 \tilde H_{X_1}(g_{X_1})[g_{1,X_1},g_{2,X_1}] &-D^2 \tilde H_{X_2}(g_{X_2})[g_{1,X_2},g_{2,X_2}] |
\\
\label{est nonl holder 01}
&\leq \frac{C}{\ve} \|h_1\|_* \|h_2\|_* \, \frac{|X_1 - X_2|^{\alpha+\ve} }{ |X_1|^{1+\alpha}} ,
\end{align}
and
\begin{align}
\nonumber
|D \tilde H_{X_1}(g_{X_1})[D^2 G_{X_1}(h)[h_1,h_2]]
&-D \tilde H_{X_2}(g_{X_2})[D^2 G_{X_2}(h)[h_1,h_2]] | \\
\label{est nonl holder 02}
& \leq \frac{C}{\ve} \|h_1\|_* \|h_2\|_* \, \frac{|X_1 - X_2|^{\alpha+\ve} }{ |X_1|^{1+\alpha}} .
\end{align}

Let us show \eqref{est nonl holder 01}.
For this, write
\begin{align*}
D^2 \tilde H_{X_j}(g_{X_j})[g_{1,X_j},g_{2,X_j}] = A_{1}(X_j) +  A_{2}(X_j)  +  A_{3}(X_j)
\end{align*}
where
\begin{align*}
A_{1}(X_j)  &= \int_0^1 \int_{\R^2} \frac{\eta(\frac{|z|}{R_{X_j}})}{|z|^{3-\ve}}    \psi''' ( A_t(g_{X_j})(X_j,z) ) B(g_{X_j})(X_j,z) A_t(g_{1,X_j})(X_j,z) A_t(g_{2,X_j})(X_j,z) \, d z d t ,\\
A_{2}(X_j) &= \int_0^1 \int_{\R^2} \frac{\eta(\frac{|z|}{R_{X_j}})}{|z|^{3-\ve}}  \psi''(A_t(g_{X_j})(X_j,z))  A_t(g_{1,X_j})(X_j,z) B(g_{2,X_j})(X_j,z) \, d z d t ,\\
A_{3}(X_j) &= \int_0^1 \int_{\R^2} \frac{\eta(\frac{|z|}{R_{X_j}})}{|z|^{3-\ve}}  \psi''(A_t(g_{X_j})(X_j,z)) A_t(g_{2,X_j}) (X_j,z)B(g_{1,X_j})(X_j,z)  \, d z d t .
\end{align*}

Let us estimate the difference
\begin{align*}
A_{1}(X_j) -A_{1}(X_j) = \int_0^1( B_1+B_2+B_3+B_4+B_5)\,dt
\end{align*}
where
\begin{align*}
B_1&=\int_{\R^2} \frac{\eta(\frac{|z|}{R_{X_1}}) - \eta(\frac{|z|}{R_{X_2}}) }{|z|^{3-\ve}}  \psi''' ( A_t(g_{X_1})(X_1,z) ) B(g_{X_1})(X_1,z)
\\&\qquad\qquad\cdot A_t(g_{1,X_1})(X_1,z) A_t(g_{2,X_1})(X_1,z) \, d z  ,
\end{align*}
\begin{align*}
B_2&=\int_{\R^2}  \frac{\eta(\frac{|z|}{R_{X_2}})}{|z|^{3-\ve}}   \big( \psi''' ( A_t(g_{X_1})(X_1,z) )- \psi''' ( A_t(g_{X_2})(X_2,z) )\big) B(g_{X_1})(X_1,z)\\
&\qquad\qquad\cdot A_t(g_{1,X_1})(X_1,z) A_t(g_{2,X_1})(X_1,z) \, d z  ,
\end{align*}
\begin{align*}
B_3&=\int_{\R^2}  \frac{\eta(\frac{|z|}{R_{X_2}})}{|z|^{3-\ve}}    \Big)\psi''' ( A_t(g_{X_2})(X_2,z) )\big( B(g_{X_1})(X_1,z) -B(g_{X_2})(X_2,z) \big)  \\&\qquad\qquad\cdot A_t(g_{1,X_1})(X_1,z) A_t(g_{2,X_1})(X_1,z) \, d z  ,
\end{align*}
\begin{align*}
B_4&=\int_{\R^2}  \frac{\eta(\frac{|z|}{R_{X_2}})}{|z|^{3-\ve}}    \Big)\psi''' ( A_t(g_{X_2})(X_2,z) )B(g_{X_2})(X_2,z) \big(  A_t(g_{1,X_1})(X_1,z)-A_t(g_{1,X_2})(X_2,z)\big) \\&\qquad\qquad\cdot A_t(g_{2,X_1})(X_1,z) \, d z  ,
\end{align*}
\begin{align*}
B_5&=\int_{\R^2}  \frac{\eta(\frac{|z|}{R_{X_2}})}{|z|^{3-\ve}}    \Big)\psi''' ( A_t(g_{X_2})(X_2,z) )B(g_{X_2})(X_2,z)  A_t(g_{1,X_2})(X_2,z)\\&\qquad\qquad\cdot \big( A_t(g_{2,X_1})(X_1,z) - A_t(g_{2,X_2})(X_2,z) \Big)\, d z  .
\end{align*}

We  estimate $B_1$:
\begin{align*}
|B_1|\leq \|\psi'''\|_{L^\infty} \|A_t(g_1)\|_{L^\infty}\|A_t(g_2)\|_{L^\infty}
\int_{B_{2R_{X_2}}(0)} \frac{\eta(\frac{|z|}{R_{X_2}})-\eta(\frac{|z|}{R_{X_1}})}{|z|^{3-\ve}}  |B(g)|\, d z
\end{align*}
where we have used $\eta'\leq 0$ and $R_2\geq R_1$.
Thanks to \eqref{at gi}, \eqref{B g} we find
\begin{align*}
|B_1|
&\leq C \frac{\|h_1\|_* \|h_2\|_* }{|X_1}
\int_{B_{2R_{X_2}}(0)} \frac{\eta(\frac{|z|}{R_{X_2}})-\eta(\frac{|z|}{R_{X_1}})}{|z|^{2-\ve}} \, d z \\
& \leq \frac{C}{\ve |X_1|} \|h_1\|_* \|h_2\|_* (R_{X_2}^\ve-R_{X_1}^\ve)\\
& \leq \frac{C |X_1-X_2|^{\alpha+\ve}}{\ve |X_1|^{1+\alpha}} \|h_1\|_b \|h_2\|_b .
\end{align*}

Let us consider $B_2$. Using \eqref{at gi} we get
\begin{align*}
|B_2| 
& \leq C \|h_1\|_* \|h_2\|_* \int_{B_{2 R_{X_2}}(0)} |A_t(g_{X_1})(X_1,z) - A_t(g_{X_2})(X_2,z) | |B(g)(X_1,z)|\, d z .
\end{align*}
For $z\in B_{2 R_{X_2}}(0)$ we have
\begin{align*}
&|A_t(g_{X_1})(X_1,z) - A_t(g_{X_2})(X_2,z) | \\
&\leq
|A_t(g_{X_1})(X_1,z) - A_t(g_{X_1})(X_2,z) |
+|A_t(g_{X_1})(X_2,z) - A_t(g_{X_2})(X_2,z) | .
\end{align*}
For the first term
\begin{align*}
|A_t(g_{X_1})(X_1,z) - A_t(g_{X_1})(X_2,z)  | & \leq \|g_{X_1}''\|_{L^\infty(B_{2R_{X_2}}(t_0(X_1)))} |t_0(X_1)-t_0(X_2)|\\
&\leq C \frac{\|g_{X_1}\|_b}{|X_1|} |X_1-X_2|.
\end{align*}
For the second term we have
\begin{align*}
|A_t(g_{X_1})(X_2,z) - A_t(g_{X_2})(X_2,z) |\leq
\|g_{X_1}-g_{X_2}\|_b \leq C \frac{|X_1 - X_2|}{|X_1|},
\end{align*}
where the last inequality follow from ... \comment{make ref}
Therefore for $z\in B_{2 R_{X_2}}(0)$
\begin{align*}
|A_t(g_{X_1})(X_1,z) - A_t(g_{X_2})(X_2,z) |
\leq  \frac{C}{|X_1|} |X_1-X_2|.
\end{align*}
This combined with \eqref{B g}  gives
\begin{align*}
& \int_{B_{2 R_{X_2}}(0)} |A_t(g_{X_1})(X_1,z) - A_t(g_{X_2})(X_2,z) | |B(g)(X_1,z)|\, d z\\
& \leq C \frac{|X_1-X_2|}{|X_1|^2}
\int_{B_{2 R_{X_2}}(0)} |z|^{\ve -2}\,dz \\
&\leq  C \frac{|X_1-X_2|}{\ve |X_1|^2} R_{X_2}^\ve
\leq \frac{C |X_1-X_2|^{\alpha+\ve}}{|X_1|^{1+\alpha}} ,
\end{align*}
and therefore
$$
|B_2|\leq  C \|h_1\|_* \|h_2\|_* \frac{ |X_1-X_2|^{\alpha+\ve}}{|X_1|^{1+\alpha}} .
$$

For $B_3$ we proceed as follows:
\begin{align}
\label{prev B}
|B_3| & \leq \|\psi'''\|_{L^\infty} \|g_1\|_b \|g_2\|_b
\int_{B_{2 R_{X_2}}(0)} \frac{|B(g_{X_1})(X_1,z) - B(g_{X_2})(X_2,z)|}{|z|^{3-\ve}} \, dz
\end{align}
Let  $R = 10  |X_1-X_2|$ and assume that $R \leq \frac12\min(|X_1|,|X_2|)$.
We split
\begin{align*}
& |B(g_{X_1})(X_1,z) - B(g_{X_2})(X_2,z)|\\&\leq|B(g_{X_1})(X_1,z) - B(g_{X_1})(X_2,z)| + |B(g_{X_1})(X_2,z) - B(g_{X_2})(X_2,z)|.
\end{align*}
For the first term we have\begin{align*}
& \int_{B_R(0)} \frac{|B(g_{X_1})(X_1,z) -B(g_{X_1})(X_2,z)|}{|z|^{3-\ve}} \, d z\\
&\leq \int_0^1 (1-\tau) \int_{B_R(0)}\frac{|g_{X_1}''(t_0(X_1)+\tau z)[z^2]-g_{X_1}''(t_0(X_2)+\tau z)[z^2]|}{|z|^{4-\ve}} \, d z \, d \tau\\
&\leq C|t_0(X_1)-t_0(X_2)|^\alpha [g_{X_1}'']_{\alpha,B_R(0)}\frac{R^\ve}\ve\\
&\leq \frac{C}{\ve } \frac{|X_1-X_2|^{\alpha+\ve}}{|X_1|^{1+\alpha}}  \|g_{X_1}\|_b.
\end{align*}
We next estimate the integral in $B_{2 R_{X_2}}(0) \setminus B_R(0)$ and for this we compute for $z\in B_{2 R_{X_2}}(0) \setminus B_R(0)$,
\begin{align*}
& |z|( B(g_{X_1})(X_1,z) -B(g_{X_1})(X_2,z) )\\
&= g_{X_1}(t_0(X_1)+z)-g_{X_1}((t_0(X_1))-\nabla g_{X_1}((t_0(X_1))z\\
& \qquad - [g_{X_1}((t_0(X_2)+z)-g_{X_1}(t_0(X_2))-\nabla g_{X_1}(t_0(X_2))z]\\
&=\int_0^1 ( \nabla g_{X_1}(x_\tau+z)-\nabla g_{X_1}(x_\tau) - g_{X_1}''(x_\tau) z)(t_0(X_1)-t_0(X_2)) \, d\tau\\
&=\int_0^1 \int_0^1 (g_{X_1}''(x_\tau+\rho z)z-g_{X_1}''(x_\tau)z)	( t_0(X_1) - t_0(X_2) ) \, d \rho d \tau,
\end{align*}
where $x_\tau = \tau t_0(X_1) + (1-\tau)t_0(X_2)$.
Then
\begin{align*}
|z| | B(g_{X_1})(X_1,z) -B(g_{X_1})(X_2,z) | &\leq [g_{X_1}'']_{\alpha,B_{\bar R}(\bar x)} |z|^{1+\alpha}|t_0(X_1) - t_0(X_2)| \\
&\leq C \|g_{X_1}\|_b |X_1|^{-1-\alpha} |z|^{1+\alpha}|X_1 - X_2| .
\end{align*}
Integrating
\begin{align*}
\int_{B_{2 R_{X_2}}(0) \setminus B_R(0)} &\frac{|B(g_{X_1})(X_1,z) -B(g_{X_1})(X_2,z)|}{|z|^{3-\ve}} \, d z \\
&\leq C \|g_{X_1}\|_* |X_1|^{-1-\alpha} |X_1-X_2| R^{\alpha+\ve-1}\\
&\leq C\|g_{X_1}\|_b |X_1|^{-1-\alpha} |X_1-X_2|^{\alpha+\ve} .
\end{align*}
To estimate
\begin{align*}
\int_{B_{2 R_{X_2}}(0)} \frac{|B(g_{X_1})(X_2,z) - B(g_{X_2})(X_2,z)|}{|z|^{3-\ve}} \, dz
\end{align*}
we observe that
\begin{align*}
|B(g_{X_1})(X_2,z) - &B(g_{X_2})(X_2,z)| \\
&\leq  |z| \int_0^1 (1-\tau) |g_{X_1}''(t_0(X_2)+\tau z ) -g_{X_2}''(t_0(X_2)+\tau z ) | \, dz
\\
&\leq |z|^{1+\alpha} \frac{\|g_{X_1}-g_{X_2}\|_b}{|X_1|^{1+\alpha}}
\leq \frac{C  |z|^{1+\alpha}}{|X_1|^{2+\alpha}} |X_1-X_2| .
\end{align*}
Integrating we find
$$
\int_{B_{R_{X_2}}(0)} \frac{|B(g_{X_1})(X_2,z) - B(g_{X_2})(X_2,z)|}{|z|^{3-\ve}} \, dz \leq \frac{C}{|X_1|^{1+\alpha}} |X_1-X_2|^{\alpha+\ve}.
$$

This shows that
$$
|B_3 |\leq \frac{C}{\ve} \frac{ |X_1-X_2|^{\alpha+\ve}}{ |X_1|^{1+\alpha}} \|g_1\|_b \|g_2\|_b ,
$$

The estimates of $B_4$ and $B_5$ are similar and we omit the details.
This proves the estimate
\begin{align*}
|A_1(X_1)-A_1(X_2)|\leq \frac{C}{\ve} \|h_1\|_* \|h_2\|_* \, \frac{|X_1 - X_2|^{\alpha+\ve} }{ |X_1|^{1+\alpha}} .
\end{align*}

\bigskip

Let us estimate the difference
\begin{align*}
A_{2}(X_1) -A_{2}(X_2) = \int_0^1( B_1+B_2+B_3+B_4)\,dt
\end{align*}
with
\begin{align*}
B_1&=
\int_{\R^2} \frac{\eta(\frac{|z|}{R_{X_1}}) - \eta(\frac{|z|}{R_{X_2}}) }{|z|^{3-\ve}}   \psi''(A_t(g_{X_1})(X_1,z))  A_t(g_{1,X_1})(X_1,z) B(g_{2,X_1})(X_1,z) \, d z
\end{align*}
\begin{align*}
B_2&=
\int_{\R^2} \frac{\ \eta(\frac{|z|}{R_{X_2}}) }{|z|^{3-\ve}}   \big( \psi''(A_t(g_{X_1})(X_1,z)) - \psi''(A_t(g_{X_2})(X_2,z)) \big) A_t(g_{1,X_1})(X_1,z) \\ & \qquad\qquad \cdot B(g_{2,X_1})(X_1,z) \, d z
\end{align*}
\begin{align*}
B_3&=
\int_{\R^2} \frac{\ \eta(\frac{|z|}{R_{X_2}}) }{|z|^{3-\ve}}    \psi''(A_t(g_{X_2})(X_2,z)) \big( A_t(g_{1,X_1})(X_1,z) - A_t(g_{1,X_2})(X_2,z)\big) \\ & \qquad\qquad \cdot B(g_{2,X_1})(X_1,z) \, d z
\end{align*}
\begin{align*}
B_4&=
\int_{\R^2} \frac{\ \eta(\frac{|z|}{R_{X_2}}) }{|z|^{3-\ve}}    \psi''(A_t(g_{X_2})(X_2,z)) A_t(g_{1,X_2})(X_2,z) \\ & \qquad\qquad\cdot \big(  B(g_{2,X_1})(X_1,z)  - B(g_{2,X_2})(X_2,z)  \big) \, d z
\end{align*}
The terms $B_1$, $B_2$, $B_3$ are similar as before and we have
\begin{align*}
|B_1|+|B_2|+|B_3|& \leq\frac{C}{\ve} \frac{ |X_1-X_2|^{\alpha+\ve}}{ |X_1|^{1+\alpha}} \|g_1\|_b \|g_2\|_b
\\
&\leq\frac{C}{\ve} \frac{ |X_1-X_2|^{\alpha+\ve}}{ |X_1|^{1+\alpha}} \|h_1\|_* \|h_2\|_*.
\end{align*}
Let us focus on
\begin{align}
\label{301}
|B_4|\leq C \|h_1\|_* \int_{B_{2 R_{X_2}}(0)} \frac{|B(g_{2,X_1})(X_1,z)  - B(g_{2,X_2})(X_2,z)|}{|z|^{3-\ve}}\,dz .
\end{align}
The difference with the estimate for \eqref{prev B} is that now we cannot control $\|g_{2,X_1}\|_b$ only assuming $\|h\|_*$ bounded, since  $g_{2,X_1}$ involves derivatives of $h$.

We proceed by the following observation.
We will see that $g_{2,X_j}$ can be decomposed in 2 parts, one of them being regular enough to perform the previous calculations, and the second part having a special form. A model for the second part is
$$
D_t \tilde h(t+t_0(X_j)) b(X_j,t+t_0(X_j))
$$
where $\tilde h$ is $h$ composed with an appropriate change of variables (this in reality also depends on $X_j$, but for to explain the idea here we will omit this dependence), and
$$
b(X_j,t_0(X_j)) = 0.
$$
Let us see what we get if we assume for the moment that
$$
g_{2,X_j} = D_t \tilde h(t+t_0(X_j)) b(X_j,t+t_0(X_j)) .
$$
Then we have
\begin{align*}
B(g_{2,X_j})(X_j,z) & = \frac{1}{|z|} \big[ D_t \tilde h(z+t_0(X_j))  b(X_j,z+t_0(X_j)) -  D_t \tilde h(t_0(X_j))  D_t b(X_j,t_0(X_j))\big]
\end{align*}
and so
\begin{align*}
|B(g_{2,X_1})(X_1,z)-B(g_{2,X_2})(X_2,z)|
\leq A+B
\end{align*}
where
\begin{align*}
A &= \frac{1}{|z|}\Big|  ( D_t \tilde h(z+t_0(X_1)) -  D_t \tilde h(t_0(X_1))) b(X_1,z+t_0(X_1)) \\
& \qquad - ( D_t \tilde h(z+t_0(X_2)) -  D_t \tilde h(t_0(X_2))) b(X_2,z+t_0(X_2)) \Big|
\\
B&= \frac{1}{|z|}\Big| D_t \tilde h(t_0(X_1))) ( D_t b(X_1,t_0(X_1)) z-b(X_1,t_0(X_1))) \\
&\qquad -D_t \tilde h(t_0(X_2))) ( D_t b(X_2,t_0(X_2)) z-b(X_2,t_0(X_2)))  \Big| .
\end{align*}
For $A$ we write
\begin{align*}
A \leq A_1+A_2
\end{align*}
where
\begin{align*}
A_1 &=\frac{1}{|z|}\big| D_t \tilde h(z+t_0(X_1)) - D_t \tilde h(t_0(X_1))
\\ & \qquad - (D_t \tilde h(z+t_0(X_2)) - D_t \tilde h(t_0(X_2)) ) \big| |b(X_1,z+t_0(X_1))|
\\
A_2 &= \frac{1}{|z|}\big| (D_t \tilde h(z+t_0(X_2)) - D_t \tilde h(t_0(X_2)) ) (b(X_1,z+t_0(X_1)) -b(X_2,z+t_0(X_2))  )\big| \end{align*}

For the first term we split the integral in $B_R(0)$ and outside, where $R = 10 |X_1-X_2|$ and we assume $R \leq \frac{1}{10}|X_1|$.
For $z\in B_R(0)$ we estimate
\begin{align*}
A_1
&  \leq \frac{1}{|z|} \int_0^1 \big| D_{tt} \tilde h(\tau z+t_0(X_1))
 - D_{tt} \tilde h(\tau z+t_0(X_2))  \big| \, d \tau |z| |b(X_1,z+t_0(X_1))|
\\
&\leq C \|\frac{b(X_1,z+t_0(X_1))}{|z|}\|_{L^\infty} \frac{|X_1-X_2|^\alpha |z|}{|X_1|^{1+\alpha}},
\end{align*}
where we have used
\begin{align}
\label{est b1}
\|\frac{b(X_1,z+t_0(X_1))}{|z|}\|_{L^\infty}<\infty
\end{align}
and the norm is computed in a ball $B_{4R_{X_1}}(0)$.
Therefore
\begin{align*}
\int_{B_R(0)}  \frac{1}{|z|^{3-\ve}}A_1\,dz
&\leq C \frac{|X_1-X_2|^\alpha}{|X_1|^{1+\alpha}} \int_{B_R(0)} |z|^{\ve-2}\, d z
\\
&= C \frac{|X_1-X_2|^\alpha}{|X_1|^{1+\alpha}}
\frac{R^\ve}{\ve}
\\
& = \frac{C}{\ve}  \frac{|X_1-X_2|^{\alpha+\ve}}{|X_1|^{1+\alpha}} .
\end{align*}
For the integral outside $B_R(0)$ we estimate
\begin{align*}
A_1&\leq C\int_0^1 \big| D_{tt} \tilde h(z+t_0(X_\tau)) -  D_{tt} \tilde h(t_0(X_\tau) ) \big| \, d \tau |X_1-X_2|  \|\frac{b(X_1,z+t_0(X_1))}{|z|}\|_{L^\infty} |z|,
\end{align*}
where $\{X_\tau:\tau \in [0,1]\}$ denotes a path joining $X_1$ to $X_2$, with $|\frac{d}{d\tau} X_\tau| \leq C |X_1-X_2|$.
Hence
\begin{align*}
A_1&\leq C  \|\frac{b(X_1,z+t_0(X_1))}{|z|}\|_{L^\infty}  \frac{|X_1-X_2|}{|X_1|^{1+\alpha}} |z|^\alpha.
\end{align*}
Integrating,
\begin{align*}
\int_{B_{2R_{X_2}}(0) \setminus B_R(0)} \frac{1}{|z|^{3-\ve}} A_1 \, d z & \leq C \frac{|X_1-X_2|}{|X_1|^{1+\alpha}}\int_{B_{2R_{X_2}}(0) \setminus B_R(0)} |z|^{\ve-3+\alpha}\,dz
\\
&  \leq C \frac{|X_1-X_2|}{|X_1|^{1+\alpha}} R^{\ve-1+\alpha}
\\
& = C \frac{|X_1-X_2|^{\alpha+\ve}}{|X_1|^{1+\alpha}} .
\end{align*}
The estimate for $A_2$ works directly by using
$$
|b(X_1,z+t_0(X_1)) - b(X_2,z+t_0(X_2))|\leq C \frac{|z| |X_1-X_2|}{|X_1|^2}
$$
and there is no need to split the integral.

For $B$ we estimate as
\begin{align*}
B\leq B_1+B_2
\end{align*}
where
\begin{align*}
B_1 &=\frac{1}{|z|}\big| ( D_t \tilde h(t_0(X_1)) - D_t \tilde h(t_0(X_2)) ) ( D_t b(X_1,t_0(X_1)) z - b(X_1,z+t_0(X_1))) \big|
\\
B_2 &= \frac{1}{|z|}\big|  D_t \tilde h(t_0(X_2))  ( D_t b(X_1,t_0(X_1)) z - b(X_1,z+t_0(X_1))
\\
&\qquad - ( D_t b(X_2,t_0(X_2)) z - b(X_2,z+t_0(X_2)))  ) \big|.
\end{align*}
Using
\begin{align}
\label{est b2}
\big| D_t b(X_1,t_0(X_1)) z - b(X_1,z+t_0(X_1)) \big|
\leq C \frac{|z|^2}{|X_1|}
\end{align}
we get
\begin{align*}
B_1 & \leq C\frac{1}{|z|} \frac{|X_1-X_2|}{|X_1|} \big| D_t b(X_1,t_0(X_1)) z - b(X_1,z+t_0(X_1)) \big|
\\
&\leq  C\frac{1}{|z|} \frac{|X_1-X_2|}{|X_1|} \frac{|z|^2}{|X_1|}
\end{align*}
and then
$$
\int_{B_{2R_{X_2}}(0)} \frac{1}{|z|^{3-\ve}} B_1 \, dz \leq C \frac{|X_1-X_2|^{\alpha+\ve}}{|X_1|^{1+\alpha}} .
$$
For $B_2$, let  $R = 10 |X_1-X_2|$ and we assume $R \leq \frac{1}{10}|X_1|$.
Then,  using
\begin{align}
\nonumber
& \big|   D_t b(X_1,t_0(X_1)) z - b(X_1,z+t_0(X_1))
\\
\label{est b3}
& - ( D_t b(X_2,t_0(X_2)) z - b(X_2,z+t_0(X_2)))   \big| \leq C \frac{|X_1-X_2|^\alpha |z|^2}{|X_1|^{1+\alpha} },
\end{align}
we have
\begin{align*}
B_2 \leq \frac{C}{|z|} \frac{|X_1-X_2|^\alpha |z|^2}{|X_1|^{1+\alpha}}
\end{align*}
and we obtain
$$
\int_{B_{R}(0)} \frac{1}{|z|^{3-\ve}} B_2 \, dz \leq C \frac{|X_1-X_2|^{\alpha+\ve}}{|X_1|^{1+\alpha}} .
$$
To estimate the integral outside $B_R(0)$ we use
\begin{align}
\nonumber
& \big|   D_t b(X_1,t_0(X_1)) z - b(X_1,z+t_0(X_1))
\\
\label{est b4}
& - ( D_t b(X_2,t_0(X_2)) z - b(X_2,z+t_0(X_2)))   \big| \leq C \frac{|X_1-X_2| |z|^{1+\alpha}}{|X_1|^{1+\alpha} } ,
\end{align}
and we get
\begin{align*}
\int_{B_{2R_{X_2}}(0) \setminus B_{R}(0)} \frac{1}{|z|^{3-\ve}} B_2 \, dz
&\leq C\frac{|X_1-X_2|}{|X_1| ^{1+\alpha}}
\int_{B_{2R_{X_2}}(0) \setminus B_{R}(0)} \frac{1}{|z|^{3-\ve}} |z|^\alpha \, dz\\
&\leq \frac C\ve \frac{|X_1-X_2|^{\alpha+\ve} }{|X_1| ^{1+\alpha}} \end{align*}

Let us verify the assertions on $b$ made in \eqref{est b1}--\eqref{est b4}.
The function $b(X,z+t_0(X_0))$ is given at main order by
\begin{align}
\nonumber
& b(X,t+t_0(X)) \\
\label{form b}
& = \nu^{(i)} (x) \nu^{(j)}(x+ r_0  y(x, \frac{t-t_0(X)}{r_0})) -\nu^{(j)} (x) \nu^{(i)}(x+ r_0  y(x,\frac{t-t_0(X)}{r_0}))  ,
\end{align}
where $\nu^{(i)}$ are the components of the unit normal vector to $\Sigma_0$, which we consider as functions of $x$ with $X = (x,F_\ve(x))$, $r_0 = \delta |X|$, and $y$ is a  change of variables from variables $(t_1,t_2)$ parametrizing the tangent plane to $\Sigma_0$ at $X$ to $\R^2$. It has a bounded $C^{2,\alpha}$ norm.
Then
\begin{align*}
|b(X,t+t_0(X))|\leq C  \|\nabla \nu\|_{L^\infty} |t|
\end{align*}
but $r_0 = \delta |X| $ and $\|\nabla \nu\|_{L^\infty} = O(\frac{\ve^{1/2}}{ |X|})$, and this implies \eqref{est b1}.

To prove \eqref{est b2} note that
\begin{align*}
\big| D_t b(X,t_0(X)) z - b(X,z+t_0(X)) \big|
\leq \|D_{tt} b(X,\cdot)\|_{L^\infty} |z|^2 ,
\end{align*}
where the $L^\infty$ norm is in ball of center $x$ and radius $O(\delta |X|)$.
By using formula \eqref{form b} we get $ \|D_{tt} b(X_1,\cdot)\|_{L^\infty} = \frac{\ve^{\frac12}}{|X|^2}$ and we obtain \eqref{est b2}.

Estimates \eqref{est b3} and \eqref{est b4} can be prove similarly.



\bigskip

The complete argument is given next.
Thanks to \eqref{decomp DG} we can decompose
$$
g_{2,X_j} = \bar g_{2,X_j} + \tilde g_{2,X_j}
$$
where  $\bar g_{2,X_j} $ can be chosen so that it does not involve derivatives of $h$, and satisfies the estimate
$$
\| \bar g_{2,X_j} \|_b \leq C \|h_2\|_*.
$$

Then one can prove as was done for \eqref{prev B}:
\begin{align*}
\int_{B_{2R_{X_2}}(0)} \frac{|B(\bar g_{2,X_1})(X_1,z)  - B(\bar g_{2,X_2})(X_2,z)|}{|z|^{3-\ve}}\,dz&\leq \frac{C}{\ve } \frac{|X_1-X_2|^{\alpha+\ve}}{|X_1|^{1+\alpha}}  \|\bar g_{2,X_1}\|_b
\\&\leq \frac{C}{\ve } \frac{|X_1-X_2|^{\alpha+\ve}}{|X_1|^{1+\alpha}}  \|h_2\|_* .
\end{align*}
For $\tilde g_{2,X_j} $ we claim that the same estimate holds, that is, we claim that
\begin{align}
\label{tilde g2}	
\int_{B_{2R_{X_2}}(0)} \frac{|B(\tilde g_{2,X_1})(X_1,z)  - B(\tilde g_{2,X_2})(X_2,z)|}{|z|^{3-\ve}}\,dz
&\leq \frac{C}{\ve } \frac{|X_1-X_2|^{\alpha+\ve}}{|X_1|^{1+\alpha}}  \|h_2\|_* .
\end{align}
To prove this, we use representation
\begin{align}
\label{tilde g2 Xj}
\tilde g_{2,X_j} (t) = \tilde h_{2}(X_j,t) Q_0(X_j,t,\tilde h(X_j,t),D_t \tilde h(X_j,t) ) ,
\end{align}
where the functions $\tilde h_{2}(X_j,t)$, $\tilde h(X_j,t)$ are obtained from $h_2$ and $h$ through a change of variables:
$$
\tilde h(X_j,t) = h(y_{X_j}(t)) ,\quad  \tilde h_2(X_j,t)= h_2(y_{X_j}(t)) ,
$$
as in the Appendix~\ref{app}.
Here, for a given $X \in \Sigma_0$, $Q_0 = Q_0(X,t,h,\xi)$,
is defined for $t\in \R^2$, $ |t-t_0(X)|\leq 4 R_X$, $h\in \R$, $\xi\in\R^2$ and is explicitly written in \eqref{def Q}. It has the properties
$$
Q_0(X_j,t_0(X_j),h,\xi)=0
$$
$$
D_h Q_0(X_j,t_0(X_j),h,\xi)=0
$$
$$
D_{\xi_k} Q_0(X_j,t_0(X_j),h,\xi)=0 .
$$
Let us define
$$
Q(X,t+t_0(X),\xi ) = Q_0(X,t+t_0(X),\tilde h(X,z),\xi )
$$
so that
$$
\tilde g_{2,X_j}(t) =  \tilde h_2(X_j,t)  Q(X_j,t+t_0(X_j),D_t \tilde h(X_j,t)) ) .
$$

Now we write
\begin{align*}
& B(\tilde g_{2,X_j})(X_j,z) =\frac{\tilde g_{2,X_j}(z+t_0(X_j)) -\tilde g_{2,X_j}(t_0(X_j)) - \nabla \tilde g_{2,X_j}(t_0(X_j))z}{|z|}\\
&=\frac{\tilde h_2(X_j,z+t_0(X_j)) Q(X_j,z+t_0(X_j),\ldots) - \tilde h_2(X_j,t_0(X_j)) D_t Q(X_j,t_0(X_j),\ldots) z}{|z|}
\\
&=\frac{ ( \tilde h_2(X_j,z+t_0(X_j)) -\tilde h_2(X_j,t_0(X_j)) ) Q(X_j,z+t_0(X_j),\ldots)  }{|z|}
\\
&\qquad+\frac{\tilde h_2(X_j,t_0(X_j)) ( Q(X_j,z+t_0(X_j),\ldots) - D_t Q(X_j,t_0(X_j),\ldots) z ) }{|z|} ,
\end{align*}
where we are using the notation
\begin{align*}
Q(X_j,z+t_0(X_j),\ldots) &= Q(X_j,z+t_0(X_j),  D_t \tilde h(X_j,z+t_0(X_j) ) )
\\
Q(X_j,t_0(X_j),\ldots) &=Q(X_j,t_0(X_j), D_t \tilde h(X_j,t_0(X_j) ) ) .
\end{align*}

We have to estimate
\begin{align*}
B(\tilde g_{2,X_1})(X_1,z)  - B(\tilde g_{2,X_1})(X_1,z)
= D_1+D_2+D_3
\end{align*}
where
\begin{align*}
D_1 &=
\frac{ ( \tilde h_{2}(X_1,z+t_0(X_1)) -\tilde h_{2}(X_1,t_0(X_1)) ) Q(X_1,z+t_0(X_1),\ldots)  }{|z|}
\\
&\qquad-\frac{ ( \tilde h_{2}(X_2,z+t_0(X_2)) -\tilde h_{2}(X_2,t_0(X_2)) ) Q(X_2,z+t_0(X_2),\ldots)  }{|z|}
\\
D_2&= (\tilde h_{2}(X_1,t_0(X_1))-\tilde h_{2}(X_2,t_0(X_2)))\frac{ Q(X_1,z+t_0(X_1),\ldots) - D_t Q(X_1,t_0(X_1),\ldots) z  }{|z|}
\\
D_3 &=\tilde h_{2,X_2}(t_0(X_2)) \Big[ \frac{  Q(X_1,z+t_0(X_1),\ldots) - D_t Q(X_1,t_0(X_1),\ldots) z )
}{|z|}
\\
&\qquad \qquad  - \frac{ Q_{X_2}(z+t_0(X_2),\ldots) - D_t Q_{X_2}(t_0(X_2),\ldots) z }{|z|} \Big].
\end{align*}
The estimate
\begin{align*}
\int_{B_{2 R_{X_2}}(0)} \frac{1}{|z|^{3-\ve}} (|D_1|+|D_2|)\,z
\leq C\frac{|X_1-X_2|^{\alpha+\ve}}{|X_1|^{1+\alpha}}
\end{align*}
can be proved in the same way as before, since for $D_1$ difference quotients of $Q$ involve only difference quotients of $D_t \tilde h$ which can be controlled by $\|h\|_*$, and for $D_2$ we need only to consider difference quotients of $\tilde h$.

The estimate of $D_3$ is more delicate, and we proceed with detail.
We further split
\begin{align*}
D_3 = \tilde h_{2,X_2}(t_0(X_2)) ( D_{3,a}+D_{3,b})
\end{align*}
where
\begin{align*}
D_{3,a}&=\frac{1}{|z|}\big[ Q(X_1,z+t_0(X_1),  D_t \tilde h(X_1,z+t_0(X_1) ) )
- Q(X_1,z+t_0(X_1), D_t \tilde h(X_1,t_0(X_1) ) )
\\
&\qquad - \big( Q(X_2,z+t_0(X_2),  D_t \tilde h(X_2,z+t_0(X_2) ) )
-  Q(X_2,z+t_0(X_2),  D_t \tilde h(X_2,t_0(X_2) ) )  \big) \big]
\end{align*}
\begin{align*}
D_{3,b}&=\frac{1}{|z|} \big[ Q(X_1,z+t_0(X_1),D_t\tilde h(X_1,t_0(X_1)))
- D_t Q(X_1t_0(X_1),D_t\tilde h(X_1,t_0(X_1)))
\\
& \qquad
-
\big( Q(X_2,z+t_0(X_2),D_t\tilde h(X_2,t_0(X_2)))
- D_t Q(X_2,t_0(X_2),D_t\tilde h(X_2,t_0(X_2)))\big)
\big] -
\end{align*}
To estimate the integral of $\frac{1}{|z|^{3-\ve}} |D_{3,a}|$ over $B_{R_{X_2}}(0)$ we divide the region of integration in $B_R(0)$ and $B_{R_{X_2}}(0) \setminus B_{R}(0)$,
where $R=10|X_1-X_2|$.
To estimate the integral inside $B_R(0)$ we compute
\begin{align*}
D_{3,a} &=
\frac{1}{|z|}
\int_0^1 \frac{d}{d\tau}
\big[
Q(X_1, z+t_0(X_1), D_t \tilde h(X_1,\tau  z+t_0(X_1) ) )
\\
&\qquad\qquad
- Q(X_2, z+t_0(X_2), D_t \tilde h(X_2,\tau  z+t_0(X_2) ) )
\big]\, d\tau
\\
&= \frac{1}{|z|}\int_0^1 \big[
D_\xi Q(X_1, z+t_0(X_1), D_t \tilde h(X_1,\tau  z+t_0(X_1) ) ) D_{tt} \tilde h(X_1,\tau z + t_0(X_1))
\\
&\qquad -
D_\xi Q(X_2, z+t_0(X_2), D_t \tilde h(X_2,\tau  z+t_0(X_2) ) ) D_{tt} \tilde h(X_2,\tau z + t_0(X_2))
\big] \, d\tau .
\end{align*}
From this
\begin{align*}
|D_{3,a}|\leq D_{3,a,1} + D_{3,a,2}
\end{align*}
where
\begin{align*}
D_{3,a,1}
&= \frac{1}{|z|} \int_0^1  \big|
\big[ D_\xi Q(X_1, z+t_0(X_1), D_t \tilde h(X_1,\tau  z+t_0(X_1) ) )
\\
&\qquad \qquad - D_\xi Q(X_2, z+t_0(X_2), D_t \tilde h(X_2,\tau  z+t_0(X_2))) \big] D_{tt} \tilde h(X_1,\tau z + t_0(X_1))
 \big| \,d\tau
\\
D_{3,a,2}&=\int_0^1  \big|
 D_\xi Q(X_2, z+t_0(X_2), D_t \tilde h(X_2,\tau  z+t_0(X_2) ) )  )
\\
&\qquad\qquad\cdot
\big[ D_{tt} \tilde h(X_1,\tau z + t_0(X_1))-D_{tt} \tilde h(X_2,\tau z + t_0(X_2))\big]
 \big| \,d\tau
\end{align*}
Using regularity of $Q(X,t,h,\xi)$ with respect to $X$ and that we we have control of $D^2 \tilde h$ we
have
\begin{align*}
&
 \big| D_\xi Q(X_1, z+t_0(X_1) , D_t \tilde h(X_1,\tau  z+t_0(X_1) ) ) \\
&
\qquad \qquad -
D_\xi  Q(X_2, z+t_0(X_2),  D_t \tilde h(X_2,\tau  z+t_0(X_2) ) ) \big|
\leq C \frac{|z| |X_1-X_2|}{|X_1|}  .
\end{align*}
Using this and that  $|D_{tt}^2 \tilde h|\leq \|h\|_*/|X_1|$ we find
\begin{align*}
& \int_{B_{2 R_{X_2}}(0)}
\frac{1}{|z|^{4-\ve}}| ( D_\xi Q(X_1,z + t_0(X_1), \ldots)  - D_\xi Q(X_2, z + t_0(X_2), \ldots) ) D_{tt}  \tilde h(X_1,\tau z + t_0(X_1))z| \,dz
\\
& \leq C \frac{|X_1-X_2|}{|X_1|^2} \int_{B_{R_{X_2}}(0)} \frac{1}{|z|^{2-\ve}}\,dz \leq \frac C\ve\frac{|X_1-X_2|^{\alpha+\ve}}{|X_1|^{1+\alpha}} .
\end{align*}
For the second term we use
$$
|D_\xi Q(X_2, z + t_0(X_2), \ldots)|\leq C |z|
$$
and
\begin{align*}
|D_{tt} \tilde h(X_1,\tau z + t_0(X_1)) -   D_{tt} \tilde h(X_2,\tau z + t_0(X_1))|\leq C \frac{|X_1-X_2|^\alpha}{|X_1|^{1+\alpha}} .
\end{align*}
Then we obtain
\begin{align*}
&  \int_{B_{R}(0)}
\frac{1}{|z|^{4-\ve}}| D_\xi Q(X_2, z + t_0(X_2), \ldots) ( D_{tt} \tilde h(X_1,\tau z + t_0(X_1))z -   D_{tt} \tilde h(X_2,\tau z + t_0(X_1))z ) |\,dz
\\
&\leq \frac C\ve \frac{|X_1-X_2|^{\alpha+\ve}}{|X_1|^{1+\alpha}} .
\end{align*}
Therefore we get
\begin{align*}
\int_{B_{R}(0)}\frac{1}{|z|^{3-\ve}}|D_{3,a}|\,dz \leq \frac C\ve \frac{|X_1-X_2|^{\alpha+\ve}}{|X_1|^{1+\alpha}} .
\end{align*}

Let us proceed with the estimate of the integral of $\frac{1}{|z|^{3-\ve}}|D_{3,a}|$ over the region $B_{R_{X_2}}(0)\setminus B_R(0)$.

Recall that the points $X_j$ have the form $X_j = (x_j , F_\ve(x_j))$, $j=1,2$.
For for $\tau \in [0,1]$ we let $X_\tau = (x_\tau,F_\ve(x_\tau))$ where  $x_\tau = \tau x_1 + (1-\tau)x_2$. We compute
\begin{align*}
D_{3,a} &=
\frac{1}{|z|}
\int_0^1 \frac{d}{d\tau}
\big[
Q(X_\tau,z+t_0(X_\tau),  D_t \tilde h(X_\tau,  z+t_0(X_\tau) ) ) \\
&\qquad\qquad
 - Q(X_\tau, z+t_0(X_\tau), D_t \tilde h(X_\tau,  t_0(X_\tau) ) )
\big]\, d\tau
\end{align*}
and so
\begin{align*}
|D_{3,a}|\leq\int_0^1 (\bar D_{3,a,1}+\bar D_{3,a,2}+\bar D_{3,a,3}+\bar D_{3,a,4}) \, d\tau
\end{align*}
where
\begin{align*}
\bar D_{3,a,1} &= \frac{ |X_1-X_2|}{|z|} \big| D_X Q(X_\tau,z+t_0(X_\tau),D_t \tilde h(X_\tau,z+t_0(X_\tau)))
\\
&\qquad \qquad- D_X Q(X_\tau,z+t_0(X_\tau),D_t \tilde h(X_\tau,t_0(X_\tau))) \big|
\end{align*}
\begin{align*}
\bar D_{3,a,2} &= \frac{ |X_1-X_2|}{|z|}| ( D_t Q(X_\tau,z+t_0(X_\tau),D_t \tilde h(X_\tau,z+t_0(X_\tau)))
\\
&\qquad\qquad
- D_t Q(X_\tau,z+t_0(X_\tau),D_t \tilde h(X_\tau,t_0(X_\tau))) ) D_X t_0(X_\tau)|
\end{align*}
\begin{align*}
\bar D_{3,a,3} &= \frac{ |X_1-X_2|}{|z|}\big| \big[ D_\xi Q(X_\tau,z+t_0(X_\tau),D_t \tilde h(X_\tau,z+t_0(X_\tau)))  \\
&\qquad\qquad
- D_\xi Q(X_\tau,z+t_0(X_\tau),D_t \tilde h(X_\tau,t_0(X_\tau))) \big] D^2_{X,t} \tilde h(X_\tau,z+t_0(X_\tau))
\big|
\end{align*}
\begin{align*}
\bar D_{3,a,4} &= \frac{ |X_1-X_2|}{|z|}\big| D_\xi Q(X_\tau,z+t_0(X_\tau),D_t \tilde h(X_\tau,t_0(X_\tau)))
\\
&\qquad\qquad \cdot \big [D^2_{X,t} \tilde h(X_\tau,z+t_0(X_\tau)) -D^2_{X,t} \tilde h(X_\tau,t_0(X_\tau)) \big]\big|
\end{align*}
\begin{align*}
\bar D_{3,a,5}&= \frac{ |X_1-X_2|}{|z|} \big| \big[ D_\xi Q(X_\tau,z+t_0(X_\tau),D_t \tilde h(X_\tau,z+t_0(X_\tau)))
\\
&\qquad\qquad
- D_\xi Q(X_\tau,z+t_0(X_\tau),D_t \tilde h(X_\tau,t_0(X_\tau))) \big] D^2_{tt} \tilde h(X_\tau,z+t_0(X_\tau))   D_X t_0(X_\tau)|
\end{align*}
\begin{align*}
\bar D_{3,a,6}&= \frac{ |X_1-X_2|}{|z|} \big|
D_\xi Q(X_\tau,z+t_0(X_\tau),D_t \tilde h(X_\tau,t_0(X_\tau)))
\\
&\qquad \qquad\big[D^2_{tt} \tilde h(X_\tau,z+t_0(X_\tau)) -D^2_{tt} \tilde h(X_\tau,t_0(X_\tau)) \big] D_X t_0(X_\tau) .
\end{align*}
The most delicate terms are the ones involving differences of second derivatives of $\tilde h$. For example, for $\bar D_{3,a,6}$ we use
$$
| D_\xi Q(X_\tau,z+t_0(X_\tau),D_t \tilde h(X_\tau,t_0(X_\tau)))|\leq C |z|
$$
and
$$
|D^2_{tt} \tilde h(X_\tau,z+t_0(X_\tau)) -D^2_{tt} \tilde h(X_\tau,t_0(X_\tau)) |\leq C\frac{|z|^\alpha}{|X_1|^{1+\alpha}}
$$
and obtain
\begin{align*}
\int_{B_{2 R_{X_2}}(0)}\frac{1}{|z|^{3-\ve}} \bar D_{3,a,6}\,dz
&\leq C\frac{|X_1-X_2|}{|X_1|^{1+\alpha}}
\int_{B_{2R_{X_2}}(0)} |z|^{\alpha-3+\ve}\,dz\\
&\leq  C\frac{|X_1-X_2|^{\alpha+\ve}}{|X_1|^{1+\alpha}}.
\end{align*}
Other terms in $D_{3,a}$ are estimated similarly and we find
\begin{align*}
\int_{B_{2 R_{X_2}}(0) \setminus B_{R}(0)}\frac{1}{|z|^{3-\ve}}|D_{3,a}|\,dz \leq \frac C\ve \frac{|X_1-X_2|^{\alpha+\ve}}{|X_1|^{1+\alpha}} .
\end{align*}
All other terms can be handled with analogous computations, and this establishes \eqref{est nonl holder 01}.

Let us prove now \eqref{est nonl holder 02}. Let
$$
\hat g_{X_j} = D^2 G_{X_j}(h)[h_1,h_2] .
$$
We claim that
$$
|D \tilde H_{X_1}(g_{X_1})[\hat g_{X_1}]-D \tilde H_{X_2}(g_{X_2})[\hat g_{X_2}]|\leq \frac{C}{\ve} \|h_1\|_* \|h_2\|_* \frac{|X_1-X_2|^{\alpha+\ve}}{|X_1|^{1+\alpha}}.
$$
For this we write
$$
 D \tilde H_{X_j}(g_{X_j})[\hat g_{X_j} ] = A_4(X_j) + A_5(X_j)
$$
where
\begin{align*}
A_4(X_j) &=\int_{\R^2} \frac{\eta(\frac{|z|}{R_{X_j}})}{|z|^{3-\ve}}
 \psi'' ( A_t(g_{X_j})  (X_j,z) ) A_t(\hat g_{X_j})  (X_j,z) B(g_{X_j}) (X_j,z) \, d z
\\
A_5(X_j) &=
\int_0^1  \frac{\eta(\frac{|z|}{R_{X_j}})}{|z|^{3-\ve}}
\psi'(A_t(g_{X_j}) (X_j,z) ) B(\hat g_{X_j}) (X_j,z)  \, d z .
\end{align*}
The most delicate difference is
$$
A_5(X_1)  - A_5(X_2) = \int_0^1 (B_1+B_2+B_3)\,dt
$$
where
\begin{align*}
B_1 &=\int_{\R^2} \frac{\eta(\frac{|z|}{R_{X_1}}) - \eta(\frac{|z|}{R_{X_2}}) }{|z|^{3-\ve}}  \psi'(A_t(g_{X_1}) (X_1,z) ) B(\hat g_{X_1}) (X_1,z)  \, dz ,
\\
B_2 &=\int_{B_{R_{X_2}}(0)} \frac{1}{|z|^{3-\ve}} \big( \psi'(A_t(g_{X_1}) (X_1,z) ) -  \psi'(A_t(g_{X_2}) (X_2,z) ) \big) B(\hat g_{X_1}) (X_1,z)  \, dz ,
\\
B_3 &=\int_{B_{R_{X_2}}(0)} \frac{1}{|z|^{3-\ve}}  \psi'(A_t(g_{X_2}) (X_2,z) ) \big( B(\hat g_{X_1}) (X_1,z) - B(\hat g_{X_2}) (X_2,z) \big) \, dz .
\end{align*}
Let us focus on the most delicate term, $B_3$. It can be estimated as follows:
\begin{align*}
|B_3|\leq \int_{B_{R_{X_2}}(0)} \frac{1}{|z|^{3-\ve}} |B(\hat g_{X_1})(X_1,z) - B(\hat g_{X_2})(X_2,z)|\,dz
\end{align*}
and we claim that
\begin{align*}
 \int_{B_{R_{X_2}}(0)} \frac{1}{|z|^{3-\ve}} |B(\hat g_{X_1})(X_1,z) - B(\hat g_{X_2})(X_2,z)|\,dz \leq \frac{C}{\ve} \|h_1\|_* \|h_2\|_* \frac{|X_1-X_2|^{\alpha+\ve}}{|X_1|^{1+\alpha}}.
\end{align*}
The computation involves a similar difficulty as in the estimate of \eqref{301}, except that now the functions $\hat g_{X_j}$ involve up to second derivatives of $h$. They can also be decomposed as follows
\begin{align*}
\hat g_{X_j} = \hat g_{0,X_j} + \hat g_{1,X_j} +\hat g_{2,X_j}  .
\end{align*}
Since $B$ is linear we have to estimate
\begin{align}
\label{hat gk X}
\int_{B_{R_{X_2}}(0)} \frac{1}{|z|^{3-\ve}} |B(\hat g_{k,X_1})(X_1,z) - B(\hat g_{k,X_2})(X_2,z)|\,d z
\end{align}
for $k=0,1,2$.
For  $ \hat g_{0,X_j}$ we have
\begin{align}
\label{400}
\| \hat g_{0,X_j}\|_b \leq C \|h_1\|_* \|h_2\|_* .
\end{align}
and so we can estimate the integral as we did for \eqref{prev B}, using \eqref{400}.
For $ \hat g_{1,X_j}$ we have the same properties as for $\tilde g_{2,X_j}$ in \eqref{tilde g2 Xj} and so the estimate of the integral can be done in the same way as in the proof of \eqref{tilde g2}.

Let us show that \eqref{hat gk X} holds for $k=2$. This function has the form:
\begin{align*}
\hat g_{2,X_j} = D_{tt} \tilde h(X_j,t+t_0(X_j)) b(X_j,t+t_0(X_j)) ,
\end{align*}
with $b$ now satisfying
$$
b(X_j,t+t_0(X_j))  = O(\frac{|t|^2}{|X_j|}).
$$

Let us sketch the computations, assuming no dependence on the first variables in $h$ and $b$, and $t_0(X)=X$  i.e.
\begin{align*}
B(\hat g_{2,X_j})(X_j,z) = \frac{D_{tt} \tilde h(z+X_j) b(z)}{|z|}
\end{align*}
and so
\begin{align*}
&
B(\hat g_{2,X_1})(X_1,z) -B(\hat g_{2,X_2})(X_2,z) \\
&= \frac{1}{|z|}\big[
D_{tt} \tilde h(z+X_1) b(z) - D_{tt} \tilde h(z+X_2) b(z)
\big]
\end{align*}

Note that the functions
$$
\left| \frac{1}{|z|^{4-\ve}}
D_{tt} \tilde h(z+X_j) b(z) \right| \leq C \frac{1}{|z|^{2-\ve}}
$$
are integrable.

We have to estimate
\begin{align*}
&
\int_{B_{R_{X_2}}(0) } \frac{1}{|z|^{4-\ve}}\big[
D_{tt} \tilde h(z+X_1) b(z) - D_{tt} \tilde h(z+X_2) b(z)\big] \,dz
\\
&=
\int_{B_{R_{X_2}}(0) } \frac{1}{|z|^{4-\ve}}
D_{tt} \tilde h(z+X_1) b(z) \,dz-
\int_{B_{R_{X_2}}(0) } \frac{1}{|z|^{4-\ve}} D_{tt} \tilde h(z+X_2) b(z)\,dz
\end{align*}

Let $\bar R = 10 |X_1 - X_2|$ and $\bar X$ be the middle point between $X_1$ and $X_2$: $\bar X = \frac12(X_1+X_2)$ in this simplified calculation.

Then
\begin{align*}
& \left|\int_{B_R(0)}\frac{1}{|z|^{4-\ve}}
D_{tt} \tilde h(z+X_1) b(z) \,dz-
\int_{B_R(0)} \frac{1}{|z|^{4-\ve}} D_{tt} \tilde h(z+X_2) b(z)\,dz\right|
\\
&\leq \int_{B_R(0)} \frac{1}{|z|^{4-\ve}}  | (D_{tt} \tilde h(z+X_1)-D_{tt} \tilde h(z+X_2))b(z) |\,dz
\\
& \leq C [D^2 \tilde h]_\alpha |X_1-X_2|^\alpha \sup\frac{|b(z)|}{|z|^2} \int_{B_R(0)} \frac{1}{|z|^{2-\ve}}  \,dz
\\
&\leq C [D^2 \tilde h]_\alpha |X_1-X_2|^\alpha \sup\frac{|b(z)|}{|z|^2} \frac{R^\ve}\ve
\\
& \leq \frac {C|X_1-X_2|^{\alpha+\ve}  }{|X_1|^{1+\alpha}}
\end{align*}
and we need here
$$
[D^2 \tilde h]_\alpha\leq \frac{C}{|X_1|^{1+\alpha}}
,\\quad
\sup\frac{|b(z)|}{|z|^2} \leq C.
$$
The remaining part is
\begin{align*}
(a)=& \int_{B_{R_{X_2}}(0)\setminus B_R(0) } \frac{1}{|z|^{4-\ve}}
D_{tt} \tilde h(z+X_1) b(z) \,dz-
\int_{B_{R_{X_2}}(0) \setminus B_R(0)} \frac{1}{|z|^{4-\ve}} D_{tt} \tilde h(z+X_2) b(z)\,dz \\
& = \int_{B_{R_{X_2}}(0)\setminus B_R(0) } \frac{1}{|z|^{4-\ve}}
( D_{tt} \tilde h(z+X_1) -D_{tt} \tilde h(\bar X)) b(z) \,dz\\
& \qquad -
\int_{B_{R_{X_2}}(0) \setminus B_R(0)} \frac{1}{|z|^{4-\ve}} ( D_{tt} \tilde h(z+X_2)-D_{tt} \tilde h(\bar X)) b(z)\,dz \\
&=
\int_{B_{R_{X_2}}(X_1)\setminus B_R(X_1) } \frac{1}{|z-X_1|^{4-\ve}}
( D_{tt} \tilde h(z) - D_{tt} \tilde h(\bar X) )b(z-X_1) \,dz
\\
&\qquad -
\int_{B_{R_{X_2}}(X_2) \setminus B_R(X_2)} \frac{1}{|z-X_2|^{4-\ve}} ( D_{tt} \tilde h(z) - D_{tt} \tilde h(\bar X)) b(z-X_2)\,dz ,
\end{align*}
where we have added and subtracted
$$
D_{tt} \tilde h(\bar X) \int_{B_{R_{X_2}}(0)\setminus B_R(0) } \frac{1}{|z|^{4-\ve}} b(z) \,dz .
$$
Let us decompose
\begin{align*}
(a) &= \int_A \frac{1}{|z|^{2-\ve}}| D_{tt} \tilde h(z) - D_{tt} \tilde h(\bar X)|\left|\frac{b(z-X_1)}{|z-X_1|^{4-\ve}} - \frac{b(z-X_2)}{|z-X_2|^{4-\ve}}\right|\,dz
\\
&\qquad+
\int_{R_1} \frac{1}{|z-X_1|^{4-\ve}}
( D_{tt} \tilde h(z) - D_{tt} \tilde h(\bar X) )b(z-X_1) \,dz\\
&\qquad-
\int_{R_2}\frac{1}{|z-X_2|^{4-\ve}} ( D_{tt} \tilde h(z) - D_{tt} \tilde h(\bar X)) b(z-X_2)\,dz ,
\\
&\qquad+\int_{R_3}  \frac{1}{|z-X_1|^{4-\ve}}
( D_{tt} \tilde h(z) - D_{tt} \tilde h(\bar X) )b(z-X_1) \,dz\\
&\qquad-\int_{R_4} \frac{1}{|z-X_2|^{4-\ve}} ( D_{tt} \tilde h(z) - D_{tt} \tilde h(\bar X)) b(z-X_2)\,dz .
\end{align*}
where
\begin{align*}
A & = [ B_{R_{X_2}}(X_1)\setminus B_R(X_1) ] \cap [B_{R_{X_2}}(X_2) \setminus B_R(X_2)]
\\
R_1 &= B_{R_{X_2}}(X_1)\setminus B_{R_{X_2}}(X_2)  \\
R_2 &= B_{R_{X_2}}(X_2)\setminus B_{R_{X_2}}(X_1)  \\
R_3 &= B_R(X_2)\setminus B_R(X_1)\\
R_4 &= B_R(X_1)\setminus B_R(X_2).
\end{align*}
Note that
$$
B_{R/2}(\bar X) \subset A \subset B_{2 R_{X_2} }(\bar X) .
$$
We estimate
\begin{align*}
& \int_A \frac{1}{|z|^{2-\ve}}| D_{tt} \tilde h(z) - D_{tt} \tilde h(\bar X)|\left|\frac{b(z-X_1)}{|z-X_1|^{4-\ve}} - \frac{b(z-X_2)}{|z-X_2|^{4-\ve}}\right|\,dz
\\
& \leq C  [D^2 \tilde h]_\alpha
\int_A |z-\bar X|^\alpha\left|\frac{b(z-X_1)}{|z-X_1|^{4-\ve}} - \frac{b(z-X_2)}{|z-X_2|^{4-\ve}}\right|\,dz \\
& \leq  C [D^2\tilde h]_\alpha
\int_0^1 \int_A |z-\bar X|^\alpha\left|
\frac{d}{dt} \frac{b(z-X_t)}{|z-X_t|^{4-\ve}}
\right|\,dz dt
\end{align*}
where $X_t = tX_2+(1-t)X_1$.
Assuming
\begin{align*}
\sup\frac{b(z)}{|z|^2}& \leq C\\
\sup\frac{|\nabla b(z)|}{|z|}& \leq C
\end{align*}
we get
\begin{align*}
\left|\frac{b(z-X_1)}{|z-X_1|^{4-\ve}} - \frac{b(z-X_2)}{|z-X_2|^{4-\ve}}\right| \leq  C \frac{|X_1-X_2|}{|z-X_t|^{3-\ve}}.
\end{align*}
Then
\begin{align*}
& \int_A \frac{1}{|z|^{2-\ve}}| D_{tt} \tilde h(z) - D_{tt} \tilde h(\bar X)|\left|\frac{b(z-X_1)}{|z-X_1|^{4-\ve}} - \frac{b(z-X_2)}{|z-X_2|^{4-\ve}}\right|\,dz
\\
&\leq C [D^2 \tilde h]_\alpha |X_1-X_2|
\int_0^1\int_A \frac{|z-\bar X|^\alpha }{|z-X_t|^{3-\ve}}\,dz dt
\\
&\leq C [D^2 \tilde h]_\alpha |X_1-X_2|
\int_{B_{R/2}(\bar X)^c} |z-X_t|^{\alpha+\ve-3}\,dz
\\
&\leq C [D^2 \tilde h]_\alpha |X_1-X_2|
R^{\alpha+\ve-1}
\\
&\leq C [D^2 \tilde h]_\alpha |X_1-X_2|^{\alpha+\ve} .
\end{align*}
Now we estimate
\begin{align*}
& \left| \int_{R_1} \frac{1}{|z-X_1|^{4-\ve}}
( D_{tt} \tilde h(z) - D_{tt} \tilde h(\bar X) )b(z-X_1) \,dz\right|\\
& \leq C [D^2 \tilde h]_\alpha \int_{B_{R_{X_2}+|X_1-X_2|}(X_1) \setminus B_{R_{X_2}-|X_1-X_2|}(X_1)} |z-X_1|^{\alpha+\ve-2} \,dz\\
& \leq  C [D^2 \tilde h]_\alpha( (R_{X_2} + |X_1-X_2|)^{\alpha+\ve} -  (R_{X_2} - |X_1-X_2|)^{\alpha+\ve} )
\\
& \leq  C [D^2 \tilde h]_\alpha R_{X_2}^{\alpha+\ve-1} |X_1-X_2|\\
& \leq  C [D^2 \tilde h]_\alpha  |X_1-X_2|^{\alpha+\ve} .
\end{align*}
The estimate of $R_2$, $R_3$ and $R_4$ are analogous.

\end{proof}

\begin{proof}[Proof of Lemma~\ref{lemma Ni}]
Write
\begin{align*}
N_i(h_1) - N_i(h_2)
& = H_i(h_1) -H_i(h_2) - D H_i(0)[h_1-h_2]
\\
&=\int_0^1  ( D H_i( t h_1 +(1-t) h_2)[h_1-h_2]  - D H_i(0)[h_1-h_2] )\, d t \\
&=\int_0^1 \int_0^1  D^2 H_i( s( t h_1 +(1-t) h_2))[h_1-h_2, t h_1 +(1-t) h_2]\, d s d t
\end{align*}
Using  Lemma \ref{lemma 7.4}
we get
\begin{align*}
|N_i(h_1)(X) - N_i(h_2)(X)|\leq  \frac{C }{\ve |X|^{1-\ve} } \|h_1-h_2\|_* (\|h_1\|_*+\|h_2\|_*).
\end{align*}
By Lemma~\ref{lemma n2}, if $|X_1-X_2|\leq \frac{1}{10}\min(|X_1|,|X_2)$,
\begin{align*}
& |N_i(h_1)(X_1)- N_i(h_2)(X_1)- (N_i(h_1)(X_2)- N_i(h_2)(X_2))|
\\
& \leq\frac{C}{\ve} \frac{|X_1-X_2|^{\alpha+\ve}}{\min(|X_1|,|X_2)^{1+\alpha}} \|h_1-h_2\|_* (\|h_1\|_*+\|h_2\|_*).
\end{align*}
\end{proof}

\begin{proof}[Proof of Lemma~\ref{lemma No}]
By a direct and long computation we obtain
$$
\ve |D^2 H_o(h)[h_1,h_2](x)
|\leq \frac{C}{\ve^{\frac12}|x|^{1-\ve}}\|h_1\|_*\|h_2\|_*
$$
for $x\in \Sigma_0$,
and if $x_1,x_2\in\Sigma_0$, $|x_1-x_2|\leq \frac{1}{10}|x_1|$, then
$$
\ve|D^2 H_o(h)[h_1,h_2](x_1)-D^2 H_o(h)[h_1,h_2](x_2)|\leq C\frac{|x_1-x_2|^{\alpha+\ve}}{\ve^{\frac12}|x_1|^{1+\alpha}} \|h_1\|_*\|h_2\|_*.
$$
Then the lemma follows as in the proof of Lemma~\ref{lemma Ni}.


\end{proof}

\section{Layered fractional minimal surfaces}

\begin{proof}[Proof of Theorem~\ref{teo2}]
The proof is essentially the same as for Theorem~\ref{teo1}.
This time we look for a set $E \subseteq \R^3$ of the form
$$
E = \{ (x',x_3):\in\R^3: |x_3|>f(x')\},
$$
where $f:\R^2\to\R$ is a positive radially symmetric function.
We take as a first approximation
$$
E_0 = \{ (x',x_3):\in\R^3: |x_3|>f_{\ve}(x')\},
$$
where $f_{\ve}$ is the unique radial solution to
\begin{align}
\label{eqf2}
\Delta f_{\ve} = \frac{\ve}{f_{\ve} }  ,  \quad f_\ve>0,  \quad \text{in }\R^2,
\end{align}
with $f_{\ve} (0)=1$. Then $f_{\ve}(x) = f_1(\ve^{\frac12}x)$ where $f_1$ is the radial solution of $\Delta f = \frac 1f$ with $f_1(0)=1$. The same analysis of Section~\ref{sect ode} applies to show that $f_1(r) = r + O(1)$ as $r\to\infty$ and one obtains the same estimates for $f_\ve$ as for $F_\ve$. This leads to the estimate
$$
\|\ve H_{\Sigma_0}^s\|_{1-\ve,\alpha+\ve} \leq C \ve.
$$

As before, we construct the surface $\Sigma$ and the corresponding set $E$ by perturbing the surface $\Sigma_0$ in the normal direction $\nu_{\Sigma_0}$ (it could also be done using vertical perturbations). That is, for a function $h$ defined on $\Sigma_0$ (small with a suitable norm)
we let
$$
\Sigma_h = \{ x +  h(x)\nu_{\Sigma_0}(x)\ /\ x\in \Sigma_0\} .
$$
As before, we are led  to find $h$ such that
$$
H^s_{\Sigma_0} + 2 \JJ^s_{\Sigma_0}(h) + N(h) = 0 .
$$
We solve for $h$ in this equation using the contraction mapping principle, employing the same norms as in \eqref{norm st}, \eqref{norm RHS}.
The solvability of the linearized problem
$$
\ve \JJ_{\Sigma_0}^s(h)=f \quad \text{in }\Sigma_0
$$
in weighted H\"older space and the estimates for $N(h)$  are very similar to the ones in Theorem~\ref{teo1}.
\end{proof}

We can also construct axially symmetric solutions with multiple layers.
Suppose that
$$
f_1 > f_2 > \ldots > f_k,
$$
are radially symmetric functions on $\R^n$ and consider the surface $\Sigma$ defined by
$$
\Sigma = \{(x,x_{n+1}) \in \R^n \times \R^1 :  x_{n+1} = f_i(x), \text{ for some } i\}.
$$
We claim it is possible to choose $f_i$ such that this surface is $s$-minimal for $s$ close to 1.

We will not give a detailed proof  of this statement, but only derive formally the form of the elliptic system that plays the role of the equation \eqref{eqf2} for the case of two layers and mention a few of its properties.

For the derivation of the system, we assume that the functions $f_i$ have small gradient, a condition that a posteriori is verified. Note that  the surface $\Sigma$ is the boundary of the region $E$ given by
$$
E=\{(x,x_{n+1}) \in \R^{n+1} :   f_{j}(x)  >  x_{n+1} > f_{j+1}(x) \text{ for $j$ even, } j\in \{0,\ldots,k\} \},
$$
with the convention $f_0 = \infty$, $f_{k+1}=-\infty$.

Consider a point $X = (x,x_{n+1})$ with $x_{n+1}= f_i(x)$.
We split the integral
$$
\int_{\R^{n+1} \setminus B_R(X)}\frac{\chi_E(Y) - \chi_{E^c}(Y)}{|X-Y|^{n+1+s}} \, dY
= \int_{y\in \R^n} \int_{y_{n+1} > f_{i-1}(y)}\frac{\chi_E(Y) - \chi_{E^c}(Y)}{|X-Y|^{n+1+s}} \, dY
$$
$$
+ \int_{y\in \R^n} \int_{y_{n+1} < f_{i+1}(y)}\frac{\chi_E(Y) - \chi_{E^c}(Y)}{|X-Y|^{n+1+s}} \, dY
+\int_{y\in \R^n} \int_{ f_{i-1}(y)> y_{n+1} >f_{i+1}(y) }\frac{\chi_E(Y) - \chi_{E^c}(Y)}{|X-Y|^{n+1+s}} \, dY .
$$
Remark that for $b\geq a > f_i(x)$
\begin{align*}
& \int_{\R^n}
\int_{a}^{b}
\frac{1}{(|y-x|^2+(y_{n+1}-f_i(x))^2)^{\frac{n+1+s}2}}
\, d y_{n+1}
\, d y
\\
&=
const
\int_{a-f_i(x)}^{b-f_i(x)} \frac{1}{|t|^{1+s}} \, d t
= const ( \frac{1}{(a-f_i(x))^s} - \frac{1}{ (b-f_i(x))^s}  )
\\
& \approx const ( \frac{1}{a-f_i(x)} - \frac{1}{ b-f_i(x)} )
\end{align*}
and for $f_i(x)> b\geq a $
\begin{align*}
& \int_{\R^n}
\int_{a}^{b}
\frac{1}{(|y-x|^2+(y_{n+1}-f_i(x))^2)^{\frac{n+1+s}2}}
\, d y_{n+1}
\, d y
\\
& = const ( \frac{1}{ f_i(x)-b} - \frac{1}{f_i(x)-a} ),
\end{align*}
where $const>0$.

By decomposing into a ball and its complement and assuming for instance $f_{i-1} - f_i \geq f_i - f_{i+1}$, we have
\begin{align}
\nonumber
& \int_{y\in \R^n} \int_{ f_{i-1}(y)> y_{n+1} >f_{i+1}(y) }\frac{\chi_E(Y) - \chi_{E^c}(Y)}{|X-Y|^{n+1+s}} \, dY
\\
\nonumber
& \approx (-1)^{i} \frac{\Delta f_i }{1-s}
+
(-1)^{i-1}
\int_{\R^n}
\int_{2 f_{i}(y) - f_{i+1}(y)}^{f_{i-1}(y)}
\frac{1}{(|y-x|^2+(y_{n+1}-f_i(x))^2)^{\frac{n+1+s}2}}
\, d y_{n+1}
\, d y
\\
\nonumber
&\approx
(-1)^{i} \frac{\Delta f_i }{1-s}
+(-1)^{i-1}
\int_{\R^n}
\int_{2 f_{i}(x) - f_{i+1}(x)}^{f_{i-1}(x)}
\frac{1}{(|y-x|^2+(y_{n+1}-f_i(x))^2)^{\frac{n+1+s}2}}
\, d y_{n+1}
\, d y
\\
\label{la i-1 i+1}
&\approx
(-1)^{i} \frac{\Delta f_i }{1-s}
+(-1)^{i-1}
const\left( -\frac{1}{f_{i-1}(x)-f_{i}(x)} + \frac{1}{f_{i}(x)-f_{i+1}(x)}\right)
\end{align}
The case $f_{i-1} - f_i \leq f_i - f_{i+1}$ leads to the same formula.

We compute
\begin{align}
\nonumber
&\int_{y\in \R^n} \int_{y_{n+1} > f_{i-1}(y)}\frac{\chi_E(Y) - \chi_{E^c}(Y)}{|X-Y|^{n+1+s}} \, dY
\\
\nonumber
&=
\sum_{j=0}^{i-2}
(-1)^j
\int_{\R^n}
\int_{f_{j+1}(y)}^{f_j(y)}
\frac{1}{(|y-x|^2+(y_{n+1}-f_i(x))^2)^{\frac{n+1+s}2}}
\, d y_{n+1}
\, d y
\\
\nonumber
&\approx
\sum_{j=0}^{i-2}
(-1)^j
\int_{\R^n}
\int_{f_{j+1}(x)}^{f_j(x)}
\frac{1}{(|y-x|^2+(y_{n+1}-f_i(x))^2)^{\frac{n+1+s}2}}
\, d y_{n+1}
\, d y
\\
\label{la 0 i-2}
&\approx
const
\sum_{j=0}^{i-2} (-1)^j \left(\frac{1}{f_{j+1}(x)-f_i(x)}- \frac{1}{f_j(x)-f_i(x)} \right)
\end{align}
Similarly
\begin{align}
\nonumber
& \int_{y\in \R^n} \int_{y_{n+1} < f_{i+1}(x)}\frac{\chi_E(Y) - \chi_{E^c}(Y)}{|X-Y|^{n+1+s}} \, dY
\\
\nonumber
&=\sum_{j=i+1}^{k}
(-1)^j
\int_{\R^n}
\int_{f_{j+1}(x)}^{f_j(x)}
\frac{1}{(|y-x|^2+(y_{n+1}-f_i(x))^2)^{\frac{n+1+s}2}}
\, d y_{n+1}
\, d y
\\
\label{la i+1 k}
&\approx
const
\sum_{j=i+1}^{k} (-1)^j \left( -\frac{1}{f_j(x)-f_i(x)} +  \frac{1}{f_{j+1}(x)-f_i(x)}\right)
\end{align}

Adding \eqref{la i-1 i+1}, \eqref{la 0 i-2} and \eqref{la i+1 k} we find
$$
0\approx
(-1)^{i}\frac{\Delta f_i}{1-s}
+2 const (-1)^{i-1} \sum_{j\not=i} \frac{(-1)^{i+j}}{f_i-f_j}
$$
and hence we are lead to
\begin{align}
\label{system}
\Delta f_i = 2 const \ve \sum_{j\not= i} \frac{ (-1)^{i+j+1} }{f_i-f_j} .
\end{align}

\bigskip

To be able to carry out the construction of a solution with multiple layers we need a solution of \eqref{system}, and we show next how to find a certain family. For this we shall work with $\ve=1$, that is, we consider now
\begin{align}
\label{system ve=1}
\Delta f_i = 2 \sum_{j\not= i} \frac{ (-1)^{i+j+1} }{f_i-f_j} .
\end{align}

We look for a solution of the form
\begin{align}
\label{approx}
f_i = a_i f_0,\quad
\Delta f_0 = \frac{1}{f_0} .
\end{align}
Then the $a_i$ have to satisfy
\begin{align}
\label{system ai2}
a_i = 2 \sum_{j\not=i} \frac{(-1)^{i+j+1}}{a_i-a_j}
\end{align}
Note that  $\sum_{i=1}^k f_i$ is harmonic and radially symmetric, so it is constant.
Since $\sum f_i = f_0 \sum a_i$ is a constant we must have $\sum a_i=0$.

A solution of the system \eqref{system ai2} can be obtained by minimization of
$$
E(a_1,\ldots,a_k) = \frac12 \sum_{i=1}^k a_i^2 + \sum_{i,j :i\not= j} (-1)^{i+j}\log(|a_i-a_j|)
$$
subject to
$$
\sum_{i=1}^k a_i=0.
$$
Let
$$
\Lambda = \{ ( a_1,\ldots,a_k) \in \R^k: \; a_1>a_2>\ldots>a_k, \; a_j = - a_{k-j+1} \; \forall j \in \{1,\ldots,k\} \}.
$$
\begin{prop}
The minimum of $E$ over $\Lambda$ is achieved.
\end{prop}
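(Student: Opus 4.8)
The plan is to show that $E$ is coercive on the (relatively) closed set $\overline\Lambda$ and that it cannot attain small values near the boundary of $\Lambda$, so that a minimizing sequence stays in a compact subset of $\Lambda$ where $E$ is continuous, hence the infimum is achieved. Concretely, first I would observe that on $\Lambda$ the quadratic term $\frac12\sum_i a_i^2$ forces $E(a)\to+\infty$ as $|a|\to\infty$, while the logarithmic interaction terms grow at most like $\log|a|$; this already confines a minimizing sequence $a^{(n)}$ to a bounded region $\{|a|\le M\}\cap\overline\Lambda$. So the only way a minimizing sequence can fail to converge inside $\Lambda$ is by approaching the boundary, i.e. having $a^{(n)}_i-a^{(n)}_{i+1}\to 0$ for some consecutive indices (the symmetry constraint $a_j=-a_{k-j+1}$ and $a_1>\dots>a_k$ cut out a relatively open convex cone, whose boundary consists exactly of configurations with at least one coincidence $a_i=a_{i+1}$).

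The key step is therefore the boundary analysis: I claim $E(a)\to+\infty$ as $a\to\partial\Lambda$ within the bounded region. The subtlety is the sign $(-1)^{i+j}$ in front of $\log|a_i-a_j|$: for consecutive indices $i,j=i+1$ the exponent $i+j=2i+1$ is odd, so the coefficient of $\log|a_i-a_{i+1}|$ is $(-1)^{2i+1}=-1$, i.e. $-\log|a_i-a_{i+1}|\to+\infty$ as the gap closes. Thus the \emph{nearest-neighbour} interactions are all repulsive and blow up to $+\infty$ at the boundary, which is exactly what we need. I would then need to check that the remaining (non-consecutive) logarithmic terms, which may carry a $+\log$ sign and hence tend to $-\infty$ when a non-consecutive pair collides, cannot overwhelm this: but on the bounded region $\{|a|\le M\}$ each term $(-1)^{i+j}\log|a_i-a_j|$ with $|i-j|\ge 2$ is bounded above (since $|a_i-a_j|\le 2M$), and it can only go to $-\infty$ if $a_i=a_j$ with $|i-j|\ge2$, which in $\overline\Lambda$ with the ordering $a_1\ge\dots\ge a_k$ forces all the intermediate $a_\ell$ to be equal too, in particular forces a \emph{consecutive} coincidence, whose $-\log$ term dominates. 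Making this domination quantitative is the main obstacle: one wants, near any boundary point, to pair each exploding $+\log$ term with a strictly faster-exploding $-\log$ coming from an adjacent gap. I would handle it by induction on the size of the cluster of colliding coordinates, using that if $a_i,\dots,a_{i+p}$ all converge to a common value then $\sum_{i\le \ell<m\le i+p}(-1)^{\ell+m}\log|a_\ell-a_m|$ is, up to bounded terms, a sum in which the nearest-neighbour gaps appear with coefficient $-1$ and dominate; the elementary inequality $|a_\ell-a_m|\ge \max_{\ell\le r<m}|a_r-a_{r+1}|$ lets one bound every positive log term by the largest adjacent gap in the cluster.

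With coercivity at infinity and blow-up at $\partial\Lambda$ established, the argument concludes in the standard way: fix a point $a^0\in\Lambda$ (for instance the symmetric configuration used later, a rescaling of $(1,-1)$ for $k=2$ and an analogous spread-out tuple in general) giving a finite value $E(a^0)$; then the sublevel set $\{a\in\Lambda: E(a)\le E(a^0)\}$ is, by the two estimates above, a bounded set staying a definite distance from $\partial\Lambda$, hence compact and contained in $\Lambda$; $E$ is continuous there, so it attains its minimum, which is a global minimizer over $\Lambda$. Finally I would note that any such minimizer is an interior critical point and, differentiating $E$ subject to $\sum a_i=0$, recovers exactly the balancing system \eqref{system ai2} (the Lagrange multiplier vanishes by summing the equations and using $\sum_i a_i=0$ together with the antisymmetry of the double sum), which is what the subsequent construction requires.
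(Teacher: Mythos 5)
Your overall plan is the right one (bound the minimizing sequence, show that gaps stay bounded away from zero, pass to the limit), and the elementary inequality $|a_\ell - a_m| \geq \max_{\ell \le r < m} |a_r - a_{r+1}|$ is indeed the engine that makes the argument go. But there are two genuine gaps. First, your ``coercivity at infinity'' step is not as separable as you claim: the lower bound $E(a)\geq\frac12|a|^2 - C\log|a| - C$ does \emph{not} follow from the observation that each $\log|a_i-a_j|\leq \log(2|a|)$, because the positive-coefficient terms $+\log|a_i-a_j|$ (those with $i+j$ even) can tend to $-\infty$ when a gap closes, and nothing a priori prevents this from happening along a sequence with $|a|\to\infty$. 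So the boundedness of a minimizing sequence already requires the same domination of $-\log$ over $+\log$ terms that you defer to the boundary analysis; the two claims must be established simultaneously (as the paper does in one estimate).

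Second, and more importantly, the pairing scheme in the boundary analysis is underspecified in a way that would actually break if carried out as written. You propose to ``bound every positive log term by the largest adjacent gap in the cluster,'' but this over-uses the level-one negative terms: in a cluster of $p+1$ coalescing coordinates, the number of positive-coefficient pairs is $\sum_{d\ \mathrm{even}}(p+1-d)$ while the number of nearest-neighbour negative terms is only $p$, and already for $p=5$ (six coalescing points) you have $6>5$, so there is no injective matching of positive terms to adjacent gaps. The paper resolves exactly this by a \emph{level-by-level} telescoping inequality: for each odd $l$, writing $x_j = a_j - a_{j+1}$,
\begin{align*}
\prod_{j=1}^{k-(l+1)} (x_j + \cdots + x_{j+l}) \;\geq\; \prod_{\substack{j=1 \\ j\neq j_l}}^{k-l} (x_j + \cdots + x_{j+l-1}) ,
\end{align*}
valid for any choice of $j_l$, obtained by matching each factor $x_j+\cdots+x_{j+l-1}$ on the right to the longer sum $x_{j'}+\cdots+x_{j'+l}$ on the left with $j'=j$ (for $j<j_l$) or $j'=j-1$ (for $j>j_l$). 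Summing logarithms, $-S_l + S_{l+1} \geq -\log(x_{j_l}+\cdots+x_{j_l+l-1})$, and telescoping over $l=1,3,5,\ldots$ leaves only a handful of $-\log$ terms, each of which is bounded above by $\log(2|a|_\infty)$ \emph{and} tends to $+\infty$ if its argument shrinks; this simultaneously gives boundedness of the minimizing sequence and uniform separation of the gaps. Your proposed ``induction on cluster size'' is plausible in spirit, but without an explicit injective matching across all levels it does not constitute a proof, and the naive nearest-neighbour matching fails by the count above. If you formalize the matching as the paper does, the rest of your compactness argument goes through.
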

\begin{proof}
Consider a sequence $a^{(i)} \in \Lambda$ such that $E(a^{(i)}) \to \inf_\Lambda E$ as $i\to\infty$. We claim that $a^{(i)}$ remains bounded and
\begin{align}
\label{separation}
\liminf_{i\to\infty} \min_{j=1,\ldots,k-1} a^{(i)}_j - a^{(i)}_{j+1} >0.
\end{align}
To prove these claims,  for $j \in \{1,\ldots,k-1\}$ let $x^{(i)}_j = a^{(i)}_j-a^{(i)}_{j+1}>0$. Then
$$
E(a^{(i)}) = \frac12\sum_{j=1}^k (a^{(i)}_j)^2
- 2\sum_{j=1}^{k-1} \log(x^{(i)}_j) + 2\sum_{j=1}^{k-2}\log(x^{(i)}_j+x^{(i)}_{j+1})+
$$
$$
+\ldots + (-1)^k 2\log( x^{(i)}_1 + \ldots + x^{(i)}_k) .
$$
Consider $l $ an odd integer in $\{ 1,\ldots,k-1\} $ and
$j_l \in \{ 1\ldots k-l \}$.
Then
\begin{align}
\label{ineq E}
\sum_{j=1}^{k-(l+1)} \log(x^{(i)}_j + \ldots + x^{(i)}_{j+l})
\geq
\sum_{j=1,j\not=j_l}^{k-l} \log(x^{(i)}_j + \ldots + x^{(i)}_{j+l-1}) ,
\end{align}
since this is equivalent to
$$
\prod_{j=1}^{k-(l+1)} (x^{(i)}_j + \ldots + x^{(i)}_{j+l})
\geq
\prod_{j=1,j\not=j_l}^{k-l} (x^{(i)}_j + \ldots + x^{(i)}_{j+l-1}) .
$$
The product in the right hand side is always present as a term on the left hand side, while the other terms in the left hand side are positive.

From \eqref{ineq E} we have
$$
- \sum_{j=1}^{k-l} \log(x^{(i)}_j + \ldots + x^{(i)}_{j+l-1})
+ \sum_{j=1}^{k-(l+1)} \log(x^{(i)}_j + \ldots + x^{(i)}_{j+l})
\geq -\log(x^{(i)}_{j_l} + \ldots x^{(i)}_{j_l + l -1}) .
$$
Suppose that $k$ is odd and let $m=k-2$ be the largest odd integer that is less or equal than $k-1$. Then $j_m$ is either 1 or 2 and
\begin{align}
\nonumber
E(a^{(i)})
& \geq \frac12\sum_{j=1}^k (a^{(i)}_j)^2
- 2\log(x^{(i)}_{j_1}) -2 \log(x^{(i)}_{j_3} + x^{(i)}_{j_3+1})
-\ldots
\\
\label{k odd}
& \qquad -2\log(x^{(i)}_{j_m} +  \ldots+ x^{(i)}_{j_m+m-1}) .
\end{align}
The last term is $\log(x^{(i)}_1 + \ldots + x^{(i)}_{k-2} )=\log(a^{(i)}_1 - a^{(i)}_{k-1}) $ or  $\log(x^{(i)}_2 + \ldots + x^{(i)}_{k-1} )=\log(a^{(i)}_2 - a^{(i)}_{k}) $ depending on whether $j_m=1$ or $j_m=2$. In any case both terms are equal by the symmetry.
Then we obtain
$$
C\geq
E(a^{(i)})
\geq \frac12\sum_{j=1}^k (a^{(i)}_j)^2 - (m+1) \log(a^{(i)}_1 - a^{(i)}_{k-1})
$$
and we deduce  that $a^{(i)}$ remains bounded as $i\to\infty$.

In the case that $k$ is even, let $m=k-1$. Then $j_m=1$
\begin{align}
\nonumber
E(a^{(i)})
& \geq \frac12\sum_{j=1}^k (a^{(i)}_j)^2
- 2\log(x^{(i)}_{j_1}) -2 \log(x^{(i)}_{j_3} + x^{(i)}_{j_3+1})
-\ldots
\\
\label{k even}
& \qquad -2\log(x^{(i)}_{1} +  \ldots+ x^{(i)}_{k-1})
\end{align}
and the last term is $\log(a^{(i)}_1-a^{(i)}_k)$. Again from  $E(a^{(i)})\leq C$ we see that $a^{(i)}$ remains bounded as $i\to\infty$.

Using now that   $a^{(i)}$ remains bounded as $i\to\infty$, and \eqref{k odd} or \eqref{k even} we obtain \eqref{separation}.
Once we have established that  $a^{(i)}$ is bounded and \eqref{separation} it is direct that up to subsequence $a^{(i)}$ converges as $i\to\infty$ to a minimizer of $E$ over $\Lambda$.
\end{proof}

\bigskip

There is however a further restriction on a solution $a_i$ to \eqref{system ai2} that we need to impose for our method to work, and it is related to the linearization of the system \eqref{system ve=1} around a solution of the form \eqref{approx} .
Indeed,
the linearized operator around the approximate solution \eqref{approx} is given by
$$
\Delta \phi_i - 2 \sum_{j\not=i}(-1)^{i+j}\frac{\phi_i-\phi_j}{(f_i-f_j)^2} .
$$
Let us write this operator acting on the vector $\Phi=(\phi_1,\ldots,\phi_k)$ as
$$
\Delta \Phi + \frac{1}{f_0^2} A \Phi
$$
where $A=(a_{ij})$ has entries
$$
a_{ij} =
\begin{cases}
2 \frac{(-1)^{i+j}}{(a_i-a_j)^2}   & \text{if } i\not=j
\\
-2 \sum_{k\not=i} \frac{(-1)^{i+k}}{(a_i-a_k)^2}& \text{if } i=j
\end{cases}
$$
Note that $f_0 \sim r$ as $r\to\infty$, so the linearized operator is asymptotic to
$$
\Delta \Phi + \frac{1}{r^2} A \Phi  ,
$$
as $r\to\infty$.

As done before, a natural space to find the solution $\Phi$ should involve norms allowing linear growth. We see that it is possible to find such solutions for a given right hand side of the form $\sim 1/r$ if  the matrix $A$ has no eigenvalue equal to $-1$, since otherwise, $\Phi(r) = r v$ with $v$ an eigenvector of $A$ associated to eigenvalue 1 would be in the kernel of the operator.

We note that
$$
D_{a_i,a_k}^2 E
=
\begin{cases}
2(-1)^{i+k}\frac{1}{(a_i-a_k)^2} & \text{if } i\not=k
\\
1-2\sum_{j\not=i} (-1)^{i+j}\frac{1}{(a_i-a_j)^2} & \text{if } i=k,
\end{cases}
$$
so that
$$
D^2 E = I + A  .
$$
At a local minimum of $E$, $D^2 E\geq 0$ which means that eigenvalues of $A$ are greater or equal than $-1$. If $(a_i,\ldots,a_k)$ is a non degenerate local minimum of $E$ then $D^2 E>0$ and the eigenvalues of $A$ are greater than $-1$.

\section{Existence of $s$-Lawson cones}
\label{sect exist unique}

\begin{proof}[Proof of Theorem~\ref{teo3}]

\

Let us write
\begin{align}
\label{def solid cone}
E_\alpha  = \{ x = (y,z) \, : \ y \in \R^m, \, z \in \R^n, \, |z| > \alpha |y| \ \} ,
\end{align}
so that $C_\alpha = \partial E_\alpha$.

\medskip

\noindent{\bf Existence.}
We fix $N$, $m$, $n$ with $N=m+n$, $n\leq m$ and also fix $0<s<1$.
If $m=n$ then $C_1$ is a minimal cone, since \eqref{1} is satisfied by symmetry. So we concentrate next on the case $n<m$.

Before proceeding we remark that for a cone $C_\alpha$ the quantity appearing in \eqref{1} has a fixed sign for all $p\in C_\alpha$, $p\not=0$, since by rotation we can always assume that $p = r p_\alpha$ for some $r>0$ where
\begin{align*}
p_{\alpha} = \frac{1}{\sqrt{1+\alpha^2}}(e_1^{(m)}, \alpha e_1^{(n)} )
\end{align*}
with
\begin{align}
\label{notation e}
e_1^{(m)} = (1,0,\ldots,0) \in \R^m
\end{align}
and similarly for $ e_1^{(n)}$. Then we observe that
$$
\text{p.v.}
\int_{\R^N} \frac{\chi_{E_\alpha} (x)-\chi_{E_\alpha^c}(x)} {|x-r p_{\alpha}|^{N+s}} \, d x =
\frac{1}{r^s}
\text{p.v.}
\int_{\R^N} \frac{\chi_{E_\alpha}  (x)-\chi_{E_\alpha^c} (x)} {|x-p_{\alpha}|^{N+s}} \, d x .
$$
Let us define
\begin{align}
\label{def Halpha}
H(\alpha )
=
\text{p.v.}
\int_{\R^N} \frac{\chi_{E_\alpha} (x)-\chi_{E_\alpha^c}(x)} {|x-p_{\alpha}|^{N+s}} \, d x
\end{align}
and note that it is a continuous function of $\alpha \in (0,\infty)$.

\medskip
\noindent{\bf Claim 1.} We have
\begin{align}
\label{h1}
H(1) \leq 0.
\end{align}

Indeed, write $y \in \R^m$ as $y = (y_1,y_2)$ with $y_1 \in \R^n$ and $y_2\in\R^{m-n}$.
Abbreviating $e_1 = e_1^{(n)} = (1,0,\ldots,0) \in \R^n$ we rewrite
\begin{align*}
H(1)
&= \lim_{\delta\to0}
\int_{\R^N \setminus B(p_{1},\delta)} \frac{\chi_{E_1} (x)-\chi_{E_1^c}(x)} {|x-p_{1}|^{N+s}} \, d x
\\
&
=
\lim_{\delta\to0}
\int_{A_\delta} \frac{1}{( |y_1 - \frac1{\sqrt2}e_1|^2 + |y_2|^2 + |z-\frac1{\sqrt2}e_1|^2)^{\frac{N+s}{2}} }
\\
&\qquad -
 \lim_{\delta\to0}
\int_{B_\delta} \frac{1}{( |y_1 - \frac1{\sqrt2}e_1|^2 + |y_2|^2 + |z-\frac1{\sqrt2}e_1|^2)^{\frac{N+s}{2}} } ,
\end{align*}
where
\begin{align*}
A_\delta
& = \{ |z|^2>|y_1 |^2 + |y_2|^2, \
|y_1 - \frac1{\sqrt2}e_1|^2 + |y_2|^2 + |z-\frac1{\sqrt2}e_1|^2>\delta^2\}
\\
B_\delta
&= \{ |z|^2<|y_1 |^2 + |y_2|^2,  \
|y_1 - \frac1{\sqrt2}e_1|^2 + |y_2|^2 + |z-\frac1{\sqrt2}e_1|^2>\delta^2 \} .
\end{align*}
But the first integral can be rewritten as
\begin{align*}
& \int_{A_\delta} \frac{1}{( |y_1 - \frac1{\sqrt2}e_1|^2 + |y_2|^2 + |z-\frac1{\sqrt2}e_1|^2)^{\frac{N+s}{2}} }
\\
& =
\int_{\tilde A_\delta} \frac{1}{( |y_1 - \frac1{\sqrt2}e_1|^2 + |y_2|^2 + |z-\frac1{\sqrt2}e_1|^2)^{\frac{N+s}{2}} }
\end{align*}
where
$$
\tilde A_\delta = \{
|y_1|^2>|z |^2 + |y_2|^2, \
|y_1 - \frac1{\sqrt2}e_1|^2 + |y_2|^2 + |z-\frac1{\sqrt2}e_1|^2>\delta^2\}
$$
(we just have exchanged $y_1$ by $z$ and noted that the integrand is symmetric in these variables).
But $\tilde A_\delta \subset B_\delta$ and so
\begin{align*}
& \int_{\R^N \setminus B(p_{1},\delta)} \frac{\chi_{E_1} (x)-\chi_{E_1^c}(x)} {|x-p_{1}|^{N+s}} \, d x
\\
& =
- \int_{B_\delta \setminus \tilde A_\delta}
\frac{1}{( |y_1 - \frac1{\sqrt2}e_1|^2 + |y_2|^2 + |z-\frac1{\sqrt2}e_1|^2)^{\frac{N+s}{2}} }
\leq 0.
\end{align*}
This shows the validity of \eqref{h1}.

\medskip
\noindent{\bf Claim 2.} We have
\begin{align}
\label{limit H 0}
H(\alpha) \to+\infty \quad\text{as }\alpha\to 0.
\end{align}

Let $0<\delta<1/2$ be fixed and write
$$
H(\alpha) = I_\alpha + J_\alpha
$$
where
\begin{align*}
I_\alpha
&=\int_{\R^N \setminus B(p_{\alpha},\delta)} \frac{\chi_{E_\alpha} (x)-\chi_{E_\alpha^c}(x)} {|x-p_{\alpha}|^{N+s}} \, d x
\\
J_\alpha &=
\text{p.v.} \int_{B(p_{\alpha},\delta)} \frac{\chi_{E_\alpha} (x)-\chi_{E_\alpha^c}(x)} {|x-p_{\alpha}|^{N+s}} \, d x .
\end{align*}
With $\delta$ fixed
\begin{align}
\label{limit I}
\lim_{\alpha\to 0} I_\alpha
= \int_{\R^N \setminus B(p_{\alpha},\delta)} \frac{1} {|x-p_{0}|^{N+s}} \, d x  >0.
\end{align}
For $J_\alpha$ we make a change of variables $x = \alpha \tilde x + p_\alpha$ and obtain
\begin{align}
\label{scaling J}
J_\alpha
=
\text{p.v.} \int_{B(p_{\alpha},\delta)} \frac{\chi_{E_\alpha} (x)-\chi_{E_\alpha^c}(x)} {|x-p_{\alpha}|^{N+s}} \, d x
=
\frac{1}{\alpha^s}
\text{p.v.}
\int_{B(0,\delta/\alpha)}
\frac{\chi_{F_\alpha}(\tilde x) -\chi_{F_\alpha^c} (\tilde x)}{|\tilde x|^{N+s}} d \tilde x
\end{align}
where $F_\alpha = \frac1\alpha ( E_\alpha-p_\alpha)$. But
$$
\text{p.v.}
\int_{B(0,\delta/\alpha)}
\frac{\chi_{F_\alpha}(\tilde x) -\chi_{F_\alpha^c} (\tilde x)}{|\tilde x|^{N+s}} d \tilde x
\to
\text{p.v}
\int_{\R^N}
\frac{\chi_{F_0}(x) -\chi_{F_0^c} (x)}{|x|^{N+s}} dx
$$
as $\alpha\to 0$ where
$F_0 = \{ x=(y,z) : y\in \R^m, z\in \R^n, \ |z+e_1^{(n)}|>1 \} $.
But writing $z=(z_1,\ldots,z_n)$ we see that
\begin{align*}
\text{p.v}
\int_{\R^N}
\frac{\chi_{F_0}(x) -\chi_{F_0^c} (x)}{|x|^{N+s}} d x
&\geq
\text{p.v}
\int_{\R^N}
\frac{\chi_{[z_1>0 \text{ or } z_1<-2]} -\chi_{[-2<z_1<0]}}{|x|^{N+s}} dx
\\
&\geq
\int_{\R^N}
\frac{\chi_{ [ \ |z_1| >2 \ ]} }{|x|^{N+s}}  d x
\end{align*}
and this number is positive. This and \eqref{scaling J} show that $J_\alpha\to+\infty$ as $\alpha\to0$ and combined with \eqref{limit I} we obtain the desired conclusion.


By  \eqref{h1}, \eqref{limit H 0} and continuity we obtain the existence of $\alpha \in (0,1]$ such that $H(\alpha)=0$.

\bigskip
\noindent
{\bf Uniqueness.}
Consider 2 cones $C_{\alpha_1}$, $C_{\alpha_2}$ with $\alpha_1>\alpha_2>0$, associated to solid cones $E_{\alpha_1}$ and $E_{\alpha_2}$.
We claim that there is a rotation $R$ so that $R(E_{\alpha_1}) \subset E_{\alpha_2}$ (strictly) and that
$$
H(\alpha_1) =
\text{p.v.}
\int_{\R^N}
\int_{\R^N} \frac{\chi_{R(E_{\alpha_1})} (x)-\chi_{R(E_{\alpha_1})^c}(x)} {|x-p_{\alpha_2}|^{N+s}} \, d x .
$$
Note that the denominator in the integrand is the same that appears in \eqref{def Halpha} for $\alpha_2$ and then
\begin{align}
\nonumber
H(\alpha_1)
&=
\text{p.v.}
\int_{\R^N}
\int_{\R^N} \frac{\chi_{R(E_{\alpha_1})} (x)-\chi_{R(E_{\alpha_1})^c}(x)} {|x-p_{\alpha_2}|^{N+s}} \, d x
\\
\label{strict ineq}
&<
\text{p.v.}
\int_{\R^N}
\int_{\R^N} \frac{\chi_{E_{\alpha_2}} (x)-\chi_{E_{\alpha_2}^c}(x)} {|x-p_{\alpha_2}|^{N+s}} \, d x = H(\alpha_2) .
\end{align}
This shows that $H(\alpha)$ is decreasing in $\alpha$ and hence the uniqueness.
To construct the rotation let us write as before $x = (y,z) \in \R^N$, with $y\in\R^m$, $z\in \R^n$, and $y = (y_1,y_2)$ with $y_1\in\R^n$, $y_2 \in \R^{m-n}$ (we assume alway $n\leq m$). Let us write the vector $(y_1,z)$ in spherical coordinates of $\R^{2n}$ as follows
$$
y_1
= \rho
\left[
\begin{matrix}
\cos(\varphi_1)\\
\sin(\varphi_1) \cos(\varphi_2)\\
\sin(\varphi_1) \sin(\varphi_2) \cos(\varphi_3)\\
\vdots\\
\sin(\varphi_1) \sin(\varphi_2) \sin(\varphi_3)\ldots \sin(\varphi_{n-1}) \cos(\varphi_n)\\
\end{matrix}
\right]
$$
$$
z
= \rho
\left[
\begin{matrix}
\sin(\varphi_1) \sin(\varphi_2) \sin(\varphi_3)\ldots \sin(\varphi_{n}) \cos(\varphi_{n+1})\\
\vdots\\
\sin(\varphi_1) \sin(\varphi_2) \sin(\varphi_3)\ldots \sin(\varphi_{2n-2}) \cos(\varphi_{2n-1})\\
\sin(\varphi_1) \sin(\varphi_2) \sin(\varphi_3)\ldots \sin(\varphi_{2n-2}) \sin(\varphi_{2n-1})\\
\end{matrix}
\right]
$$
where $\rho>0$, $\varphi_{2n-1}\in[0,2\pi)$, $\varphi_j\in[0,\pi]$ for $j=1,\ldots,2n-2$. Then
$$
|z|^2 = \rho^2 \sin(\varphi_1)^2 \sin(\varphi_2)^2 \ldots \sin(\varphi_n)^2
,
\quad |y_1|^2+|z|^2 = \rho^2.
$$
The equation for the solid cone $E_{\alpha_i}$, namely $|z|>\alpha_i |y|$, can be rewritten as
$$
\rho^2 \sin(\varphi_1)^2 \sin(\varphi_2)^2 \ldots \sin(\varphi_n)^2
> \alpha_i^2 (|y_1|^2 + |y_2|^2).
$$
Adding $\alpha_i^2 |z|^2$ to both sides this is equivalent to
$$
\sin(\varphi_1)^2 \sin(\varphi_2)^2 \ldots \sin(\varphi_n)^2
> \sin(\beta_i)^2
( 1+\frac{ |y_2|^2}{\rho^2})
$$
where $\beta_i = \arctan(\alpha_i)$.
We let $\theta = \beta_1 - \beta_2 \in (0,\pi/2)$,
and define the rotated cone $R_\theta(E_{\alpha_1})$ by the equation
$$
\sin(\varphi_1+\theta)^2 \sin(\varphi_2)^2 \ldots \sin(\varphi_n)^2
> \sin(\beta_1)^2
( 1+\frac{ |y_2|^2}{\rho^2}) .
$$
We want to show that $R_\theta(E_{\alpha_1}) \subset E_{\alpha_2}$. To do so, it suffices to prove that for any given $t\geq 1$, if $\varphi$ satisfies the inequality $ |\sin(\varphi+\theta)|> \sin(\beta_1) t $
then it also satisfies $ |\sin(\varphi)|> \sin(\beta_2) t $.  This in turn can be proved from the inequality
\begin{align*}
\arccos(\sin(\beta_1) t) + \theta < \arccos(\sin(\beta_2) t)
\end{align*}
for $ 1< t \leq \frac{1}{\sin(\beta_1)}$. For $t=1$ we have equality by definition of $\theta$. The inequality for $ 1< t \leq \frac{1}{\sin(\beta_1)}$  can be checked by computing a derivative with respect to $t$. The strict inequality in \eqref{strict ineq} is because $R(E_{\alpha_1}) \subset E_{\alpha_2}$ strictly.
\end{proof}

\section{Stability and instability}
\label{sect stability}

We consider the nonlocal minimal cone $C_m^n(s) =  \partial E_\alpha$
where $E_\alpha$ is defined in \eqref{def solid cone} and $\alpha$ is the one of Theorem~\ref{teo3}.
For $0 \leq s <1$ we obtain a characterization of
their stability in terms of constants that depend on   $m$, $n$ and $s$. For the case $s=0$ we consider the limiting cone with parameter $\alpha_0$ given in Proposition~\ref{prop alpha s=0} below.
Note that in the case $s=0$ the limiting Jacobi operator $\mathcal J^0_{C_{\alpha_0}}$ is well defined for smooth functions with compact support.

For brevity, in this section we write $\Sigma = C_m^n(s)$.

\subsection{Characterization of stability}

Recall that
$$
\mathcal J^s_{\Sigma}[\phi](x)
=
\text{p.v.}
\int_{\Sigma}
\frac{\phi(y) - \phi(x)}{|y-x|^{N+s}} dy
+ \phi(x) \int_{\Sigma}
\frac{1-\langle \nu(x),\nu(y)\rangle}{|x-y|^{N+s}} dy
$$
for $\phi \in C_0^\infty(\Sigma\setminus\{0\})$.
Let us rewrite this operator in the form
$$
\mathcal J^s_{\Sigma}[\phi](x)
=
\text{p.v.}
\int_{\Sigma}
\frac{\phi(y) - \phi(x)}{|x-y|^{N+s}} dy
+ \frac{A_0(m,n,s)^2}{|x|^{1+s}}
\phi(x)
$$
where
\begin{align*}
A_0(m,n,s)^2 = \int_{\Sigma}
\frac{\langle \nu (\hat p)- \nu(x), \nu (\hat p) \rangle }{|\hat p-x|^{N+s}} dx \geq0
\end{align*}
and this integral is evaluated at any $\hat p\in {\Sigma}$ with $|\hat p|=1$.
We can think of $\mathcal J^s_{\Sigma}$ as analogous to the fractional Hardy operator
$$
-(-\Delta)^{\frac{1+s}2} \phi
+ \frac c{|x|^{1+s}} \phi
\quad \text{in } \R^{N-1} ,
$$
for which positivity is related to a fractional Hardy inequality
with best constant, see Herbst \cite{herbst}.
This suggests that the positivity of $\mathcal J_\Sigma$ is related to the existence of $\beta$ in an appropriate range such that
$\mathcal J^s_{\Sigma}[|x|^{-\beta}]\leq 0$, and it turns out that the best choice of $\beta$ is $\beta = \frac{N-2-s}{2}$.
This motivates the definition
$$
H(m,n,s) =
\text{p.v.}
\int_{\Sigma}
\frac{1 - |y|^{-\frac{N-2-s}{2}}}{|\hat p-y|^{N+s}} dy
$$
where $\hat p\in \Sigma$ is any point with $|\hat p|=1$.



We have then the following Hardy inequality with best constant:
\begin{prop}
\label{prop hardy ineq}
For any $\phi \in C_0^\infty(\Sigma\setminus\{0\})$ we have
\begin{align}
\label{fract hardy}
H(m,n,s)
\int_{\Sigma}
\frac{\phi(x)^2}{|x|^{1+s}} dx
\leq
\frac{1}{2}
\int_{\Sigma}
\int_{\Sigma}
\frac{(\phi(x)-\phi(y) ) ^2}{|x-y|^{N+s}} d x dy
\end{align}
and $H(m,n,s)$ is the best possible constant in this inequality.
\end{prop}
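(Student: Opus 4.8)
The plan is to establish the inequality by exploiting the scaling structure of the cone together with a "ground state substitution" relative to the would-be extremal $|x|^{-\beta}$ with $\beta = \frac{N-2-s}{2}$. First I would verify that $|x|^{-\beta}$ is a formal solution of $\mathcal J^s_\Sigma[|x|^{-\beta}] = 0$ on $\Sigma \setminus \{0\}$, or more precisely that the number
\[
H(m,n,s) = \mathrm{p.v.} \int_\Sigma \frac{1 - |y|^{-\beta}}{|\hat p - y|^{N+s}}\, dy
\]
is exactly the deficit: using the decomposition of $\mathcal J^s_\Sigma$ recorded just before the statement, the homogeneity $|\lambda y|^{-\beta} = \lambda^{-\beta} |y|^{-\beta}$ and the $(N+s)$-homogeneity of the kernel give, for $x\in\Sigma$ with $|x| = r$,
\[
\mathrm{p.v.}\int_\Sigma \frac{|y|^{-\beta} - |x|^{-\beta}}{|x-y|^{N+s}}\, dy = \frac{1}{r^{\beta + 1 + s}}\, \mathrm{p.v.}\int_\Sigma \frac{|w|^{-\beta} - 1}{|\hat p - w|^{N+s}}\, dw = -\frac{H(m,n,s)}{|x|^{\beta+1+s}},
\]
so that $|x|^{-\beta}$ satisfies $\mathcal J^s_\Sigma[|x|^{-\beta}] = \big( A_0^2 - H(m,n,s)\big) |x|^{-\beta - 1 - s}$. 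The choice $\beta = \frac{N-2-s}{2}$ is singled out because it is the unique exponent making the relevant Hardy-type energy identity below a perfect square; I would record this and the fact that $H(m,n,s) < A_0^2$ is not needed for the inequality itself.

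The core step is the ground state substitution. Given $\phi \in C_0^\infty(\Sigma \setminus \{0\})$ write $\phi(x) = |x|^{-\beta} \psi(x)$, so $\psi \in C_0^\infty(\Sigma\setminus\{0\})$ as well. The plan is to expand
\[
(\phi(x) - \phi(y))^2 = \Big( |x|^{-\beta}\psi(x) - |y|^{-\beta}\psi(y)\Big)^2
\]
and, after dividing by $|x-y|^{N+s}$ and integrating over $\Sigma\times\Sigma$, to symmetrize. The algebraic identity to use is
\[
\big(a\psi(x) - b\psi(y)\big)^2 = a b\,(\psi(x)-\psi(y))^2 + \psi(x)^2\, a(a-b) + \psi(y)^2\, b(b-a),
\]
with $a = |x|^{-\beta}$, $b = |y|^{-\beta}$. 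The first term produces the manifestly nonnegative weighted Dirichlet form $\iint ab\,(\psi(x)-\psi(y))^2 |x-y|^{-N-s}\,dx\,dy \ge 0$; the remaining two terms, after symmetrization in $x\leftrightarrow y$, combine into
\[
2\int_\Sigma \psi(x)^2 |x|^{-\beta}\Big( \mathrm{p.v.}\int_\Sigma \frac{|x|^{-\beta} - |y|^{-\beta}}{|x-y|^{N+s}}\,dy\Big)\,dx = 2 H(m,n,s) \int_\Sigma \psi(x)^2 |x|^{-2\beta - 1 - s}\,dx,
\]
using the homogeneity computation above. Since $2\beta + 1 + s = N - 1$ and $|x|^{-2\beta}\psi(x)^2 = \phi(x)^2$ with the surface being $(N-1)$-dimensional, this is exactly $2 H(m,n,s)\int_\Sigma \phi(x)^2 |x|^{-1-s}\,dx$, yielding \eqref{fract hardy} after dividing by $2$. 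Optimality then follows by taking $\psi$ to be a sequence of cutoffs of the constant function $1$ concentrating logarithmically in scale (the standard "almost ground state" family $\psi_\varepsilon$), for which the weighted Dirichlet term $\iint |x|^{-\beta}|y|^{-\beta}(\psi_\varepsilon(x)-\psi_\varepsilon(y))^2|x-y|^{-N-s}\,dx\,dy \to 0$ while the two sides of \eqref{fract hardy} with $\phi_\varepsilon = |x|^{-\beta}\psi_\varepsilon$ stay comparable; hence no constant larger than $H(m,n,s)$ can work.

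The main obstacle is the careful justification of the principal-value manipulations: the kernel is only locally integrable against $\psi(x) - \psi(y)$ (order $|x-y|^{1-s}$ near the diagonal after one difference) and near the vertex $x = 0$ the weight $|x|^{-\beta}$ is singular with $\beta$ possibly larger than $1$, so interchanging the order of the $x$- and $y$-integrations and splitting $(a\psi(x)-b\psi(y))^2$ into the three pieces above must be done on the truncated region $\{|x-y| > \delta\}$ and on $\Sigma \setminus B_\rho(0)$, with the limits $\delta \to 0$, $\rho\to 0$ taken only at the end; each of the three resulting pieces must be shown to have a limit separately, which is where the exact cancellation forced by $\beta = \frac{N-2-s}{2}$ is essential (a wrong $\beta$ would leave a divergent term). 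For $s = 0$ the operator $\mathcal J^0_\Sigma$ is a genuine (non-singular) integral operator on compactly supported smooth functions, so these issues are milder, but one must check $H(m,n,0)$ and the integral defining it converge; the finiteness of $H(m,n,s)$ for all $0\le s<1$ uses the smoothness of $\Sigma$ away from the vertex together with the decay of the integrand at infinity, exactly as in the computation of the fractional mean curvature of the cone in Section~\ref{sect exist unique}. I would present the computation first formally to expose the square, then add a short paragraph making the truncation rigorous.
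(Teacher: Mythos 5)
Your proposal reproduces the paper's own argument: the ground-state substitution $\phi = |x|^{-\beta}\psi$ with $\beta = \frac{N-2-s}{2}$, the algebraic identity that expands $(\phi(x)-\phi(y))^2$ into the weighted quadratic form in $\psi$ plus the Hardy term, symmetrization to isolate $H(m,n,s)$ via the eigenvalue equation for $w = |x|^{-\beta}$, and logarithmic cutoff test functions for sharpness. This is exactly the identity \eqref{equiv form} the paper establishes (by referring to the computation in Corollary~\ref{coro entire stable}), and the cutoff family $\psi_R$ supported on $\{1\le |x|\le 3R\}$.

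One imprecision worth correcting: for the paper's test functions the residual quadratic form $\frac12\iint (\psi_R(x)-\psi_R(y))^2 w(x)w(y)|x-y|^{-N-s}$ does \emph{not} tend to zero; rather it stays uniformly bounded (this is \eqref{ineq 2}), while the weighted $L^2$ norm $\int \phi_R^2|x|^{-1-s}$ diverges like $a_0\log R$ (this is \eqref{ineq 1}). It is the \emph{ratio} that tends to zero, which is what rules out any constant $\tilde H > H$. If you phrase it as "the residual $\to 0$," you need to normalize the family first; as written it is slightly misleading. Your attention to the principal-value and truncation issues is well placed, and the paper itself handles this informally by analogy with its Appendix~B computation, so you should not expect a more explicit treatment there than what you sketched.
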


As a result we have:
\begin{corollary} The cone $C_m^n(s)$ is stable if and only if
$H (m,n,s) \geq A_0(m,n,s)^2$.
\end{corollary}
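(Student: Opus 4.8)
The plan is to recognize the quadratic form in the second variation of the $s$-perimeter as exactly the difference between the two sides of the fractional Hardy inequality \eqref{fract hardy}, and then to read off the equivalence from the fact that $H(m,n,s)$ is the \emph{optimal} constant there, as asserted in Proposition~\ref{prop hardy ineq}.

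First I would recall that, by \eqref{j1} applied with $\Omega = \R^N\setminus\{0\}$, the cone $\Sigma = C_m^n(s)$ is stable precisely when $-\int_\Sigma \mathcal J^s_\Sigma[h]\,h \ge 0$ for every $h\in C_0^\infty(\Sigma\setminus\{0\})$. Using the rewriting of the Jacobi operator,
$$
\mathcal J^s_\Sigma[h](x) = \text{p.v.}\int_\Sigma \frac{h(y)-h(x)}{|x-y|^{N+s}}\,dy + \frac{A_0(m,n,s)^2}{|x|^{1+s}}\,h(x),
$$
together with the elementary symmetrization identity
$$
-\int_\Sigma h(x)\,\text{p.v.}\!\int_\Sigma \frac{h(y)-h(x)}{|x-y|^{N+s}}\,dy\,dx = \frac12\int_\Sigma\!\int_\Sigma \frac{(h(x)-h(y))^2}{|x-y|^{N+s}}\,dx\,dy,
$$
one gets
$$
-\int_\Sigma \mathcal J^s_\Sigma[h]\,h = \frac12\int_\Sigma\!\int_\Sigma \frac{(h(x)-h(y))^2}{|x-y|^{N+s}}\,dx\,dy - A_0(m,n,s)^2\int_\Sigma \frac{h(x)^2}{|x|^{1+s}}\,dx .
$$
Thus stability of $\Sigma$ is equivalent to the inequality \eqref{fract hardy} holding with $A_0(m,n,s)^2$ in place of $H(m,n,s)$, for all $h\in C_0^\infty(\Sigma\setminus\{0\})$.

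The conclusion then follows from Proposition~\ref{prop hardy ineq}. If $H(m,n,s)\ge A_0(m,n,s)^2$, then \eqref{fract hardy} gives $A_0(m,n,s)^2\int_\Sigma |x|^{-1-s}h^2 \le H(m,n,s)\int_\Sigma |x|^{-1-s}h^2 \le \tfrac12\int_\Sigma\int_\Sigma |x-y|^{-N-s}(h(x)-h(y))^2$, so $\Sigma$ is stable. Conversely, if $\Sigma$ is stable, then $A_0(m,n,s)^2$ is an admissible constant in the Hardy inequality on $\Sigma$, and since $H(m,n,s)$ is the best such constant we get $A_0(m,n,s)^2\le H(m,n,s)$. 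The genuinely substantial point — the sharpness of $H(m,n,s)$ — is contained in Proposition~\ref{prop hardy ineq}, not here; within this argument the only mild subtlety is the interchange of the principal-value integral with the outer integration in the symmetrization identity, which is legitimate because $h$ is smooth and compactly supported away from the vertex, so the singular kernel is integrable off the diagonal and the $x$-independent part of the principal value cancels. Granting Proposition~\ref{prop hardy ineq}, the Corollary is immediate.
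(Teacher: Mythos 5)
Your proof is correct and follows the same route the paper implicitly intends: the stability quadratic form, after symmetrization and insertion of the rewritten Jacobi operator, is precisely the Hardy quadratic form with constant $A_0(m,n,s)^2$, and the equivalence with $H(m,n,s)\ge A_0(m,n,s)^2$ is then exactly the statement that $H(m,n,s)$ is the sharp constant in Proposition~\ref{prop hardy ineq}. The paper states the corollary without a written proof, so your argument supplies exactly the expected derivation, and your remark on the legitimacy of the symmetrization identity for $h\in C_0^\infty(\Sigma\setminus\{0\})$ is the right thing to flag.
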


Other related fractional Hardy inequalities have appeared in the literature, see for instance \cite{bogdan-dyda,dyda-frank}.

\medskip
\noindent
\begin{proof}[Proof of Proposition~\ref{prop hardy ineq}]
Let us write  $H = H(m,n,s)$ for simplicity.
To prove the validity of \eqref{fract hardy}
let $w(x) = |x|^{-\beta}$ with $\beta = \frac{N-2-s}{2}$ so that from the definition of $H$ and homogeneity we have
\begin{align*}
\text{p.v.}
\int_{\Sigma} \frac{w(y) - w(x)}{|y-x|^{N+s} } d y
+ \frac{H}{|x|^{1+s}} w(x) = 0
\quad \text{for all } x \in\Sigma\setminus \{0\}.
\end{align*}
Now the same argument as in the proof of corollary~\ref{coro entire stable} shows that
\begin{align}
\label{equiv form}
\frac12
\int_{\Sigma}\int_{\Sigma}
\frac{(\phi(x) - \phi(y))^2}{|x-y|^{N+s} } d x d y
& =
\int_{\Sigma}\frac{H}{|x|^{1+s}}\phi(x)^2 dx
\\
\nonumber
& \quad +
\frac12
\int_{\Sigma}\int_{\Sigma}
\frac{(\psi(x) - \psi(y) )^2 w(x) w(y) }{|x-y|^{N+s} } d x d y .
\end{align}
for all $\phi \in C_0^\infty(\Sigma\setminus\{0\})$  with $\psi = \frac\phi w \in C_0^\infty(\Sigma\setminus\{0\})$

Now let us show that $H$ is the best possible constant in \eqref{fract hardy}. Assume that
$$
\tilde H
\int_{\Sigma}
\frac{\phi(x)^2}{|x|^{1+s}} dx
\leq
\frac{1}{2}
\int_{\Sigma}
\int_{\Sigma}
\frac{(\phi(x)-\phi(y) ) ^2}{|x-y|^{N+s}} d x dy
$$ for all
$\phi \in C_0^\infty(\Sigma \setminus\{0\})$.
Using \eqref{equiv form} and letting $\phi = w \psi$ with $\psi\in \in C_0^\infty(\Sigma \setminus\{0\})$ we then have
\begin{align*}
\tilde H
\int_{\Sigma}
\frac{w(x)^2 \psi(x)^2}{|x|^{1+s}} dx
& \leq
H \int_{\Sigma}
\frac{w(x)^2 \psi(x)^2}{|x|^{1+s}} dx
\\
& \qquad +\frac12
\int_{\Sigma}\int_{\Sigma}
\frac{(\psi(x) - \psi(y) )^2 w(x) w(y) }{|x-y|^{N+s} } d x d y .
\end{align*}
For $R>3$ let $\psi_R:\Sigma\to [0,1]$ be a radial function such that $\psi_R(x)=0$ for $|x|\leq 1$, $\psi_R(x)=1$ for $2\leq |x|\leq 2 R$, $\psi_R(x)=0$ for $|x|\geq 3 R$. We also require $|\nabla\psi_R(x)|\leq C$ for $|x|\leq 3$, $|\nabla\psi_R(x)|\leq C/R$ for $2 R\leq |x|\leq 3 R$. We claim that
\begin{align}
\label{ineq 1}
a_0 \log(R) - C
\leq
\int_{\Sigma}
\frac{w(x)^2 \psi_R(x)^2}{|x|^{1+s}} dx \leq a_0 \log(R) +C
\end{align}
where $a_0>0$, $C>0$ are independent of $R$, while
\begin{align}
\label{ineq 2}
\left|
\int_{\Sigma}\int_{\Sigma}
\frac{(\psi_R(x) - \psi_R(y) )^2 w(x) w(y) }{|x-y|^{N+s} } d x d y
\right|\leq C .
\end{align}
Letting then $R\to \infty$ we deduce that $\tilde H \leq H$.

To prove the upper bound in \eqref{ineq 1} let us write points in $\Sigma$ as $x = (y,z) $, with $y \in \R^m$, $z\in\R^n$.
Let us write
$ y = r \omega_1 $, $z = r\omega_2$, with $r>0$,  $\omega_1 \in S^{m-1}$, $\omega_2 \in S^{n-1}$ and use spherical coordinates $(\theta_1,\ldots,\theta_{m-1})$ and $(\varphi_1,\ldots,\varphi_{n-1})$ for $\omega_1$ and $\omega_2$ as in \eqref{omega1} and \eqref{omega2} . We assume here that $m\geq n \geq 2 $. In the remaining cases the computations are similar. Then we have
$$
\int_{\Sigma}
\frac{w(x)^2 \psi_R(x)^2}{|x|^{1+s}} dx \leq
a_0
\int_1^{4 R} \frac{1}{r^{N-2-s}} \frac1{r^{1+s}} r^{N-2} dr
\leq a_0 \log(R) + C
$$
where
$$
a_0 = \sqrt{1+\alpha^2} A_{m-1} A_{n-1}
$$
and $A_k$ denotes the area of the sphere $S^k \subseteq \R^{k+1}$ and is given by
\begin{align}
\label{Am}
A_{k} = \frac{2\pi^{\frac{k+1}2}}{\Gamma(\frac{k+1}2)}.
\end{align}
The lower bound in \eqref{ineq 1} is similar.

To obtain \eqref{ineq 2} we split $\Sigma$ into the regions $R_1 = \{x: |x|\leq 3\}$, $R_2 = \{x : 3\leq x \leq R\}$, $R_3=\{x: R\leq |x|\leq4R\}$ and $R_4 = \{x:|x|\geq 4R\}$ and let
$$
I_{i,j} = \int_{x\in R_i} \int_{y\in R_j}
\frac{(\psi_R(x) - \psi_R(y) )^2 w(x) w(y) }{|x-y|^{N+s} } d x d y .
$$
Then $I_{i,j} = I_{j,i}$ and $I_{j,j}=0$ for $j=2,4$. Moreoover $I_{1,1}=O(1)$ since the region of integration is bounded and $\psi_R$ is uniformly Lipschitz.

Estimate of $I_{1,2}$: We bound $w(x)\leq C$ for $|x|\geq 1$ and then
\begin{align*}
|I_{1,2}|\leq C  \int_{y\in R_2} \frac{w(y)}{|p-y|^{N+s}} dy
\leq C \int_2^R \frac{1}{r^{\frac{N-2-s}{2}}} \frac1{r^{N+s}} r^{N-2} dr \leq C,
\end{align*}
where  $p\in\Sigma$ is fixed with $|p|=2$.

By the same argument  $I_{1,3}=O(1) $ and $I_{1,4}=O(1)$ as $R\to\infty$.

Estimate of $I_{2,3}$: for $y \in R_3$, $w(y)\leq C R^{-\frac{N-2-s}2}$, so
\begin{align*}
|I_{2,3}|
&\leq
C R^{-\frac{N-2-s}2}
\int_{x\in R_2} \frac{1}{|x|^{\frac{N-2-s}2}}
\int_{y\in R_3}
\frac{(\psi_R(x)-\psi_R(y))^2}{|x-y|^{N+s}} dy dx
\\
&
\leq
C R^{-\frac{N-2-s}2}
\frac{Vol(R_3)}{R^{N+s}} \int_{x\in R_2}
\frac{1}{|x|^{\frac{N-2-s}2}}  dx \leq C.
\end{align*}

Estimate of $I_{2,4}$:
\begin{align*}
|I_{2,4}|
\leq C
\int_{x\in R_2}
\frac{1}{|x|^{\frac{N-2-s}2}}
\int_{y\in R_4}
\frac{1}{|x-y|^{N+s}}
\frac{1}{|y|^{\frac{N-2-s}{2}}}
dy
dx .
\end{align*}
By scaling
\begin{align*}
\int_{y\in R_4}
\frac{1}{|x-y|^{N+s}}
\frac{1}{|y|^{\frac{N-2-s}{2}}}
dy \leq C R^{-\frac N2  - \frac s2}
\quad \text{for } x \in R_2 ,
\end{align*}
so that
\begin{align*}
|I_{2,4}|
\leq C R^{-\frac N2  - \frac s2}
\int_{x\in R_2}
\frac{1}{|x|^{\frac{N-2-s}2}}
dx \leq C  .
\end{align*}

To estimate $I_{3,3}$ we use $|\psi_R(x) -\psi_R(y)|\leq \frac CR |x-y$ for $x,y \in R_3$, which yields
\begin{align*}
|I_{3,3}|
&\leq \frac{C}{R^2} \frac{1}{R^{N-2-s}} \int_{x,y\in R_3} \frac{1}{|x-y|^{N+s-2}}
dy dx .
\end{align*}
The integral is finite and by scaling we see that is bounded by $C R^{N-s}$, so that
$$
|I_{3,3}|\leq C.
$$

Estimate of $I_{3,4}$:
\begin{align*}
|I_{3,4}|
\leq C R^{-\frac{N-2-s}2}
\int_{x\in R_3}
\int_{y\in R_4}
\frac{1}{|x-y|^{N+s}} \frac{1}{|y|^{\frac{N-2-s}{2}}} d y d x .
\end{align*}
By scaling
$$
\int_{y\in R_4}
\frac{1}{|x-y|^{N+s}} \frac{1}{|y|^{\frac{N-2-s}{2}}} d y
\leq \frac{C}{|x|^{\frac{N+s}2}}
$$
for $x\in R_3$. Therefore
$$
|I_{3,4}|
\leq C R^{-\frac{N-2-s}2}
\int_{x\in R_3} \frac{1}{|x|^{\frac{N+s}2}} dx
\leq C.
$$
This concludes the proof of \eqref{ineq 2}.
\end{proof}

\subsection{Minimal cones for $s=0$}

Here we derive the limiting value $\alpha_0 = \lim_{s\to0} \alpha_s$ where $\alpha_s$ is such that $C_{\alpha_s}$ is an $s$-minimal cone.

\begin{prop}
\label{prop alpha s=0}
Assume that $n\leq m$ in \eqref{def solid cone}, $N=m+n$.
The number $\alpha_0$ is the unique solution to
$$
\int_{\alpha}^\infty
\frac{t^{n-1}}
{(1 + t^2  )^\frac{N}2}
d t - \int_0^{\alpha}
\frac{t^{n-1}}
{(1 + t^2  )^\frac{N}2}
d t
=0 .
$$
\end{prop}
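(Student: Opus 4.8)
The plan is to analyze the function $H(\alpha,s)$ from \eqref{def Halpha}, whose vanishing characterizes the $s$-minimal cone by Theorem~\ref{teo3}, in the regime $s\to0^+$. The key will be that $s\,H(\alpha,s)$ converges, uniformly on compact subsets of $(0,\infty)$, to an explicit strictly decreasing function of $\alpha$ whose unique zero is $\alpha_0$; since the uniqueness part of Theorem~\ref{teo3} shows $\alpha\mapsto H(\alpha,s)$ is strictly decreasing, a sandwich argument will then give $\alpha(s,m,n)\to\alpha_0$.

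First I would record the elementary properties of $\Phi(\alpha)=\int_\alpha^\infty\frac{t^{n-1}}{(1+t^2)^{N/2}}\,dt-\int_0^\alpha\frac{t^{n-1}}{(1+t^2)^{N/2}}\,dt$: it is $C^1$ with $\Phi'(\alpha)=-2\alpha^{n-1}(1+\alpha^2)^{-N/2}<0$, $\Phi(0^+)=\int_0^\infty\frac{t^{n-1}}{(1+t^2)^{N/2}}\,dt>0$ (convergent at infinity since $N=m+n>n$) and $\Phi(+\infty)<0$, so $\Phi$ has a unique zero $\alpha_0\in(0,\infty)$; this already establishes uniqueness of the solution of the displayed equation. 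Next I would identify $\Phi$ with a spherical measure. Writing points of $S^{N-1}\subset\R^m\times\R^n$ as $(\cos\phi\,\omega_1,\sin\phi\,\omega_2)$ with $\phi\in[0,\tfrac\pi2]$, $\omega_1\in S^{m-1}$, $\omega_2\in S^{n-1}$, the surface measure factors as $(\cos\phi)^{m-1}(\sin\phi)^{n-1}\,d\phi\,d\omega_1\,d\omega_2$ and $E_\alpha=\{|z|>\alpha|y|\}$ corresponds to $\{\tan\phi>\alpha\}$; the substitution $t=\tan\phi$ then yields
$$
c_\alpha:=\int_{S^{N-1}}\big(\chi_{E_\alpha}(\omega)-\chi_{E_\alpha^c}(\omega)\big)\,d\omega = A_{m-1}A_{n-1}\,\Phi(\alpha),
$$
with $A_k$ as in \eqref{Am}. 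Thus $\alpha\mapsto c_\alpha$ is strictly decreasing with unique zero $\alpha_0$.

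The heart of the proof is the expansion $s\,H(\alpha,s)\to c_\alpha$ as $s\to0^+$, uniformly for $\alpha$ in a compact $K\subset(0,\infty)$. I would split $H(\alpha,s)=H_{\mathrm{near}}+H_{\mathrm{far}}$ with $H_{\mathrm{near}}=\mathrm{p.v.}\!\int_{|x|<2}$ and $H_{\mathrm{far}}=\int_{|x|>2}$; since $|p_\alpha|=1$, the principal value sits entirely in $H_{\mathrm{near}}$, which is an ordinary convergent integral for $s>0$. Comparing $\chi_{E_\alpha}-\chi_{E_\alpha^c}$ in a small ball $B(p_\alpha,\delta_0)$ with the corresponding tangent half-space of $\partial E_\alpha$ at $p_\alpha$ — whose principal-value contribution over the ball vanishes by odd symmetry about the center $p_\alpha$ — and bounding the symmetric difference by the (uniformly, over $\alpha\in K$) bounded curvature of $\partial E_\alpha$ near $p_\alpha$, one gets $|H_{\mathrm{near}}(\alpha,s)|\le C_K$ uniformly for $s\in[0,\tfrac12]$. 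For the tail, the $0$-homogeneity of $\chi_{E_\alpha}$ and polar coordinates give
$$
H_{\mathrm{far}}(\alpha,s)=\int_{S^{N-1}}\big(\chi_{E_\alpha}(\omega)-\chi_{E_\alpha^c}(\omega)\big)\,I_s(\langle\omega,p_\alpha\rangle)\,d\omega,\qquad I_s(c)=\int_2^\infty\frac{\rho^{N-1}\,d\rho}{(\rho^2-2\rho c+1)^{(N+s)/2}};
$$
writing the integrand as $\rho^{-1-s}\big(1-2c/\rho+\rho^{-2}\big)^{-(N+s)/2}$ and using $1-2c/\rho+\rho^{-2}\in[\tfrac14,2]$ for $\rho\ge2$, $|c|\le1$, one obtains $I_s(c)=2^{-s}/s+J_s(c)$ with $|J_s(c)|\le C$ uniformly in $s\in[0,1]$, $|c|\le1$. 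Hence $H_{\mathrm{far}}(\alpha,s)=2^{-s}c_\alpha/s+O(1)$ and $H(\alpha,s)=2^{-s}c_\alpha/s+g(\alpha,s)$ with $g$ bounded on $K\times(0,1]$, so $s\,H(\alpha,s)=2^{-s}c_\alpha+s\,g(\alpha,s)\to c_\alpha$ uniformly on $K$.

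To conclude, fix $\varepsilon>0$ with $[\alpha_0-\varepsilon,\alpha_0+\varepsilon]\subset(0,\infty)$; by strict monotonicity of $c$, $c_{\alpha_0-\varepsilon}>0>c_{\alpha_0+\varepsilon}$. By the uniform convergence above, for all small $s>0$ we have $s\,H(\alpha_0-\varepsilon,s)>0$ and $s\,H(\alpha_0+\varepsilon,s)<0$, hence $H(\alpha_0-\varepsilon,s)>0>H(\alpha_0+\varepsilon,s)$; since $\alpha\mapsto H(\alpha,s)$ is continuous and strictly decreasing with unique zero $\alpha(s,m,n)$ (Theorem~\ref{teo3}), that zero lies in $(\alpha_0-\varepsilon,\alpha_0+\varepsilon)$. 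As $\varepsilon$ is arbitrary, $\alpha(s,m,n)\to\alpha_0$, and $\alpha_0$ is the unique solution of the displayed equation. The main obstacle is the uniform-in-$s$ control of the principal-value part $H_{\mathrm{near}}$ and the clean separation of the logarithmically divergent tail $2^{-s}c_\alpha/s$ from a bounded remainder; once these are in place the remaining steps are elementary.
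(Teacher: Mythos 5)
Your proof is correct, and it reaches the same key asymptotic mechanism as the paper — isolating a $\frac{2^{-s}}{s}$-divergent contribution from the tail $|x|>2$ and showing the remainder is $O(1)$ uniformly — but the route is genuinely different in its details, and in one respect more complete. The paper passes to the product-spherical coordinates $y=r\omega_1$, $z=\rho\omega_2$, arrives at the one-dimensional identity $\text{p.v.}\int_0^\infty r^{m-1}(A_{\alpha,s}(r)-B_{\alpha,s}(r))\,dr=0$, substitutes $\rho=rt$ and expands $A_{\alpha,s}-B_{\alpha,s}$ as $r\to\infty$ to extract the coefficient $C_s(\alpha)$; the boundedness of $\int_0^2$ is dispatched with a remark that the only singularity sits at $r=1$. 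You instead work in raw polar coordinates $x=\rho\omega$, exploit $0$-homogeneity of $\chi_{E_\alpha}$ to factor the tail as $\int_{S^{N-1}}(\chi_{E_\alpha}-\chi_{E_\alpha^c})\,I_s(\langle\omega,p_\alpha\rangle)\,d\omega$, and recognize the spherical weight $c_\alpha=A_{m-1}A_{n-1}\Phi(\alpha)$ directly via the substitution $t=\tan\phi$; you handle the near part by the tangent half-space comparison (the p.v.\ of the half-space over the ball vanishes by odd symmetry, the remainder is supported in an $O(r^2)$-thick wedge and gives a uniformly convergent integral for $s\le\frac12$). What your argument buys that the paper's sketch leaves implicit is the convergence $\alpha(s,m,n)\to\alpha_0$ itself: the paper writes $0=O(1)+\frac{2^{-s}}{s}C_s(\alpha_s)$ and concludes $C_0(\alpha_0)=0$ assuming a limit exists, whereas your sandwich argument — combining the uniform convergence $sH(\alpha,s)\to c_\alpha$ on compacts with the strict monotonicity of $H(\cdot,s)$ established in the uniqueness part of Theorem~\ref{teo3} — actually proves convergence. (One cosmetic slip: for $\rho\ge2$, $|c|\le1$ the quantity $1-2c/\rho+\rho^{-2}$ lies in $[\tfrac14,\tfrac94]$, not $[\tfrac14,2]$; this does not affect the estimate.)
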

\begin{proof}
We write $x = (y,z)\in\R^N$ with $y\in \R^m$, $z\in \R^n$.
Let us assume in the rest of the proof that $n\geq 2$. The case $n=1$ is similar.
We evaluate the integral in \eqref{1} for the point $p=(e_1^{(m)},\alpha e_1^{(n)})$
using spherical coordinates for $y = r \omega_1$ and $z = \rho \omega_2$
where $r,\rho>0$ and
\begin{align}
\label{omega1}
\omega_1 = \left[
\begin{matrix}
\cos(\theta_1)\\
\sin(\theta_1) \cos(\theta_2)\\
\vdots\\
\sin(\theta_1) \sin(\theta_2) \ldots \sin(\theta_{m-2}) \cos(\theta_{m-1})\\
\sin(\theta_1) \sin(\theta_2) \ldots
\sin(\theta_{m-2}) \sin(\theta_{m-1})
\end{matrix}
\right]
\end{align}
\begin{align}
\label{omega2}
\omega_2 = \left[
\begin{matrix}
\cos(\varphi_1)\\
\sin(\varphi_1) \cos(\varphi_2)\\
\vdots\\
\sin(\varphi_1) \sin(\varphi_2) \ldots \sin(\varphi_{n-2}) \cos(\varphi_{n-1})\\
\sin(\varphi_1) \sin(\varphi_2) \ldots
\sin(\varphi_{n-2}) \sin(\varphi_{n-1})
\end{matrix}
\right] ,
\end{align}
where $\theta_j\in [0,\pi]$ for $j=1,\ldots,m-2$, $\theta_{m-1} \in [0,2\pi]$, $\varphi_j\in [0,\pi]$ for $j=1,\ldots,n-2$, $\varphi_{n-1} \in [0,2\pi]$.
Then
$$
|(y,z)-(e_1^{(m)},\alpha e_1^{(n)})|^2
=r^2 + 1 - 2 r \cos(\theta_1) + \rho^2 + \alpha^2 -2\rho\alpha\cos(\varphi_1) .
$$
Assuming that $\alpha = \alpha_s>0$ is such that $C_{\alpha_s}$ is an $s$-minimal cone, \eqref{1} yields the following equation for $\alpha$
\begin{align}
\label{eq alpha}
\text{p.v.}
\int_0^\infty
r^{m-1}
( A_{\alpha,s}(r) - B_{\alpha,s}(r) ) d r = 0
\end{align}
where
\begin{align*}
A_{\alpha,s}(r)
&=
\int_{r\alpha}^\infty
\int_0^\pi
\int_0^\pi
\frac{\rho^{n-1}
\sin(\theta_1)^{m-2}
\sin(\varphi_1)^{n-2}
}{(r^2 + 1 - 2 r \cos(\theta_1) + \rho^2 + \alpha^2 -2\rho\alpha\cos(\varphi_1) )^\frac{N+s}2}
d\theta_1
d\varphi_1
d\rho
\\
 B_{\alpha,s}(r)
 &=
\int_0^{r\alpha}
\int_0^\pi
\int_0^\pi
\frac{\rho^{n-1}
\sin(\theta_1)^{m-2}
\sin(\varphi_1)^{n-2}
}{(r^2 + 1 - 2 r \cos(\theta_1) + \rho^2 + \alpha^2 -2\rho\alpha\cos(\varphi_1) )^\frac{N+s}2}
d\theta_1
d\varphi_1
d\rho ,
\end{align*}
which are well defined for $r\not=1$.
Setting $\rho = r t$ we get
\begin{align*}
& A_{\alpha,s}(r)
\\
& =
r^{-m-s}
\int_{\alpha}^\infty
\int_0^\pi
\int_0^\pi
\frac{t^{n-1}\sin(\varphi_1)^{m-2}
\sin(\theta_1)^{n-2}}
{(1 + \frac1{r^2} - \frac2r  \cos(\theta_1) + t^2 + \frac{\alpha^2}{r^2} - \frac2r t \alpha\cos(\varphi_1) )^\frac{N+s}2}
d\theta_1
d\varphi_1
d t
\\
& =
c_{m,n}
r^{-m-s}
\int_{\alpha}^\infty
\frac{t^{n-1}}
{(1 + t^2  )^\frac{N+s}2}
d t + O(r^{-m-s-1})
\end{align*}
as $r\to\infty$ and this is uniform in $s$ for $s>0$ small.
Here $c_{m,n}>0$ is some constant.
Similarly
\begin{align*}
B_{\alpha,s}(r) &=
c_{m,n}
r^{-m-s}
\int_0^{\alpha}
\frac{t^{n-1}}
{(1 + t^2  )^\frac{N+s}2}
d t + O(r^{-m-s-1})
\end{align*}
Then \eqref{eq alpha} takes the form
\begin{align*}
0&=\int_0^2 \ldots dr + \int_2^\infty \ldots dr
= O(1) + C_s(\alpha) \int_2^\infty r^{-1-s} dr
=
O(1) + \frac{2^{-s}}s C_s(\alpha)
\end{align*}
where
$$
C_s(\alpha) = \int_{\alpha}^\infty
\frac{t^{n-1}}
{(1 + t^2  )^\frac{N+s}2}
d t - \int_0^{\alpha}
\frac{t^{n-1}}
{(1 + t^2  )^\frac{N+s}2}
d t
$$
and $O(1)$ is uniform as $s\to0$, because $0<\alpha_s \leq 1$ by Theorem~\ref{teo3},
and the only singularity in \eqref{eq alpha} occurs at $r = 1$.
This implies that $\alpha_0 = \lim_{s\to0} \alpha_s$ has to satisfy $C_0(\alpha_0)=0$.
\end{proof}

\subsection{Proof of Theorem~\ref{thm stability}}

\noindent
In what follows we will obtain expressions for $H(m,n,s)$ and $A_0(m,n,s)^2$ for $m\geq 2$, $n\geq 1$, $0\leq s <1$. We always assume $m\geq n$.
For the sake of generality, we will compute
$$
C(m,n,s,\beta) =
\text{p.v.}
\int_{\Sigma}
\frac{1 - |x|^{-\beta}}{|\hat p-x|^{N+s}} dx
$$
where $\hat p\in \Sigma$, $|\hat p|=1$, and $\beta \in (0,N-2-s)$, so that $H(m,n,s) = C(m,n,s,\frac{N-2-s}{2})$.

Let $x = (y,z) \in \Sigma$, with $y \in \R^m$, $z\in\R^n$.
For simplicity in the next formulas we take $p = (e_1^{(m)},\alpha e_2^{(n)})$ (see the notation in \eqref{notation e}), and $h(y,z) = |y|^{-\beta}$,
so that
$$
C(m,n,s,\beta)
=
(1+\alpha^2)^{\frac{1+s}2}
\text{p.v.}
\int_{\Sigma}
\frac{h(p) - h(x)}{|p-x|^{N+s}} \, d x .
$$

\medskip

\noindent
{\bf Computation of $C(m,1,s,\beta)$.}
Write
$ y = r \omega_1 $, $z = \pm \alpha r$, with $r>0$,  $\omega_1 \in S^{m-1}$.
Let us use the notation $\Sigma_{\alpha}^+ = \Sigma \cap [ z>0]$, $\Sigma_{\alpha}^- = \Sigma \cap [ z<0]$.
Using polar coordinates $(\theta_1,\ldots,\theta_{m-1})$
for $\omega_1$  as in \eqref{omega1}  we have
$$
|x-p|^2 = |r\theta_1-e_1^{(m)}|^2 + \alpha^2 |r\theta_1-e_1^{(m)}|^2
=  r^2 + 1 - 2 r \cos(\theta_1)  + \alpha^2 (r-1)^2,
$$
for $x\in \Sigma_{\alpha}^+$ and
$$
|x-p|^2 = |r\theta_1-e_1^{(m)}|^2 + \alpha^2 |r\theta_1-e_1^{(m)}|^2
=  r^2 + 1 - 2 r \cos(\theta_1)  + \alpha^2 (r+1)^2,
$$
for $x\in \Sigma_{\alpha}^-$.
Hence, with $h(y,z) = |y|^{-\beta}$
\begin{align}
\label{int n1 h}
\text{p.v.}
\int_{\Sigma}
\frac{h(p)- h(x)}{|x-p|^{N+s}} dx
=
\sqrt{1+\alpha^2}
A_{m-2}
\text{p.v.}
\int_0^\infty (1-r^{-\beta})
( I_+(r) + I_-(r) )
r^{N-2}
dr
\end{align}
where
\begin{align*}
I_+(r) & =
\int_0^\pi
\frac{\sin(\theta_1)^{m-2} }
{(r^2 + 1 - 2 r \cos(\theta_1)  + \alpha^2 (r-1)^2)^{\frac{N+s}2}}
d\theta_1
\\
I_-(r) & =
\frac{\sin(\theta_1)^{m-2} }
{(r^2 + 1 - 2 r \cos(\theta_1)  + \alpha^2 (r+1)^2)^{\frac{N+s}2}}
d\theta_1 ,
\end{align*}
and $A_{m-2}$ is defined in \eqref{Am} for $m\geq 2$.
From \eqref{int n1 h}
we obtain
\begin{align}
\label{Cm1sbeta}
C(m,1,s,\beta)
=
(1+\alpha^2)^{\frac{3+s}2}
A_{m-2}
\int_0^1 (r^{N-2}-r^{N-2-\beta}+r^s- r^{\beta+s})
( I_+(r) + I_-(r) )
d r .
\end{align}

\medskip
\noindent
{\bf Computation of $A_0(m,1,s)^2$.}
Let $x = (r \theta_1,\pm \alpha r) $, $p = (e_1^{(n)},\alpha)$
so that
\begin{align*}
\nu(x ) = \frac{ ( -\alpha  \omega_1, \pm 1)}{\sqrt{1+\alpha^2}} ,
\quad
\nu(p) = \frac{ ( -\alpha e_1^{(n)}, 1)}{\sqrt{1+\alpha^2}} ,
\end{align*}
and hence
\begin{align*}
\int_{\Sigma}
\frac{1-\langle \nu(x), \nu (p) \rangle }{|p-x|^{N+s}} dx
&=
\sqrt{1+\alpha^2}
A_{m-2}
\int_0^\infty
( J_+(r) + J_-(r)) r^{N-2} dr
\\
&=
\sqrt{1+\alpha^2}
A_{m-2}
\int_0^1
(r^{N-2} + r^s)
( J_+(r) + J_-(r))  dr ,
\end{align*}
where
\begin{align*}
J_+(r)
&=
\frac{\alpha^2}{1+\alpha^2}
\int_0^\pi
\frac{(1-\cos(\theta_1)) \sin(\theta_1)^{m-2} }
{(r^2 + 1 - 2 \cos(\theta_1)  + \alpha^2 (r-1)^2)^{\frac{N+s}2}}
d\theta_1
\\
J_-(r)
&=
\frac{1}{1+\alpha^2}
\int_0^\pi
\frac{[ 2 +\alpha^2-\alpha^2 \cos(\theta_1))\sin(\theta_1)^{m-2} }
{(r^2 + 1 - 2 r \cos(\theta_1)  + \alpha^2 (r+1)^2)^{\frac{N+s}2}}
d\theta_1
\end{align*}
Therefore we find
\begin{align*}
A_0(m,1,s)^2
& =
(1+\alpha^2)^{\frac{3+s}2}
A_{m-2}
\int_0^1
(r^{N-2} + r^s)
( J_+(r) + J_-(r))  dr .
\end{align*}

\noindent
{\bf Computation of $C(m,n,s,\beta)$ for $n\geq 2$.}
Write
$ y = r \omega_1 $, $z = r\omega_2$, with $r>0$,  $\omega_1 \in S^{m-1}$, $\omega_2 \in S^{n-1}$ and let us use spherical coordinates $(\theta_1,\ldots,\theta_{m-1})$ and $(\varphi_1,\ldots,\varphi_{n-1})$ for $\omega_1$ and $\omega_2$ as in \eqref{omega1}  and \eqref{omega2}. Recalling that
$p = (e_1^{(m)},\alpha e_2^{(n)})$, we have
$$
|x-p|^2 = |r\theta_1-e_1^{(m)}|^2 +|r\theta_1-e_1^{(m)}|^2
=  r^2 + 1 - 2 r \cos(\theta_1)  + \alpha^2 ( r^2 + 1 - 2 r \cos(\varphi_1)).
$$
Hence, with $h(y,z) = |y|^{-\beta}$
\begin{align*}
\text{p.v.}
&\int_{\Sigma}
\frac{h(p)- h(x)}{|x-p|^{N}} dx
=
\sqrt{1+\alpha^2}
A_{m-2}A_{n-2}
\text{p.v.}
\int_0^\infty (1-r^{-\beta})
I(r) r^{N-2}  dr
\\
&=
\sqrt{1+\alpha^2}
A_{m-2}A_{n-2}
\int_0^1 (r^{N-2}-r^{N-2-\beta}+r^s- r^{\beta+s})
I(r)
d r
\end{align*}
where
$$
I(r)
=
\int_0^\pi
\int_0^\pi \frac{\sin(\theta_1)^{m-2} \sin(\varphi_1)^{n-2}}{(r^2 + 1 - 2 r \cos(\theta_1)  + \alpha^2 ( r^2 + 1 - 2 r \cos(\varphi_1)))^{\frac{N+s}{2}}}
d\theta_1 d\varphi_1.
$$
We find then that
\begin{align}
\label{Cmnsbeta}
C(m,n,s,\beta) =
(1+\alpha)^{\frac{3+s}{2}}
A_{m-2}A_{n-2}
\int_0^1 (r^{N-2}-r^{N-2-\beta}+r^s- r^{\beta+s})
I(r)
d r .
\end{align}
\medskip
\noindent
{\bf Computation of $A_0(m,n,s)^2$ for $n\geq 2$.}
Similarly as before we have, for $x=(r \omega_1,\alpha r \omega_2)\in \Sigma$, and $p=(e_1^{(m)},\alpha e_2^{(n)} )$:
\begin{align*}
\nu(x ) = \frac{ ( -\alpha \omega_1, \omega_2)}{\sqrt{1+\alpha^2}} ,
\quad
\nu(p) = \frac{ ( -\alpha e_1^{(n)}, 1)}{\sqrt{1+\alpha^2}} .
\end{align*}
Hence
\begin{align*}
\int_{\Sigma}
\frac{1-\langle \nu(x), \nu (p) \rangle }{|p-x|^{N+s}} dx
& =
\sqrt{1+\alpha^2}
A_{m-2}
A_{n-2}
\int_0^\infty r^{N-2} J(r) dr
\\
&=
\sqrt{1+\alpha^2}
A_{m-2}
A_{n-2}
\int_0^1(r^{N-2} + r^s) J(r) dr
\end{align*}
where
\begin{align*}
J(r) = \frac{1}{1+\alpha^2}
\int_0^\pi
\int_0^\pi
\frac{(1+\alpha^2 -\alpha^2 \cos(\theta_1) - \cos(\varphi_1) ) \sin(\theta_1)^{m-2} \sin(\varphi_1)^{n-2}}{ ( r^2 + 1 - 2r\cos(\theta_1) + \alpha^2 ( r^2 + 1 - 2 r \cos(\varphi_1)
)^{\frac{N+s}{2}}} d\theta_1 d\varphi_1 .
\end{align*}
We finally obtain
\begin{align*}
A_0(m,n,s)^2
= (1+\alpha^2)^{\frac{3+s}{2}}
A_{m-2}
A_{n-2}
\int_0^1(r^{N-2} + r^s) J(r) dr.
\end{align*}

In table~1 we show the  values obtained for $H(m,n,0)$ and $A_0(m,n,0)^2$, divided by $(1+\alpha^2)^{\frac{3+s}{2}}
A_{m-2}
A_{n-2}$, from numerical approximation of the integrals.
From these results we can say that for $s=0$, $\Sigma$ is stable if $n+m=7$ and unstable if $n+m\leq 6$.
The same holds for $s>0$ close to zero by continuity of the values with respect to $s$.

\begin{table}

\begin{tabular}{|c|l|l|l|l|l|l|l|l|}
\hline
&&\multicolumn{6}{c}{$n$}&
\\
\hline
 &  & 1 & 2 & 3 & 4 & 5 & 6 & 7
\\
$m$ & & & & & & & &
\\
\hline
2 & $H $  & 0.8140 & 1.0679 & & & & &
\\
 & $A_0^2$  & 3.2669 & 2.3015 & & & & &
\\
\hline
3 & $H $ & 1.1978 & 1.2346 & 0.3926 & & & &
\\
 & $A_0^2$ & 2.5984 & 1.7918 & 0.4463 & & & &
\\
\hline
4 & $H $ & 1.3968 & 1.3649 & 0.4477 & 0.1613 & & &
\\
 & $A_0^2$ & 2.0413 & 1.5534 & 0.4288 & 0.1356 & & &
\\
\hline
5 & $H $ & 1.5117 & 1.4570 & 0.4895 & 0.1845 & 0.06978 & &
\\
 & $A_0^2$ & 1.7332 & 1.3981 &0.4118 & 0.1398 & 0.04849 & &
\\
\hline
6 & $H $ & 1.5833 & 1.5231 & 0.5215 & 0.2031 & 0.08013 & 0.03113 &
\\
 & $A_0^2$ & 1.5318 & 1.2841 & 0.3955 &  0.1412 & 0.05173 &0.01885 &
\\
\hline
7 & $H $ & 1.6303 & 1.5719 & 0.5465 & 0.2182 & 0.08885 & 0.03583 & 0.01416
\\
& $A_0^2$ & 1.3872 & 1.1951 & 0.3802 & 0.1409 & 0.05381 & 0.02051 &0.007704
\\
\hline

\end{tabular}

\bigskip

\caption{ Values of $H(m,n,0)$ and $A_0(m,n,0)^2$ divided by $(1+\alpha^2)^{\frac{3+s}{2}}
A_{m-2}
A_{n-2}$}
\end{table}

\begin{remark}
We see from formulas \eqref{Cm1sbeta} and \eqref{Cmnsbeta} that $C(m,n,s,\beta)$ is symmetric with respect to $\frac{N-2-s}{2}$ and is maximized for $\beta = \frac{N-2-s}{2}$.
\end{remark}

\begin{remark}
In table 2 we give some numerical values of $\alpha$, $H(m,n,s)$ and $A_0(m,n,s)^2$ divided by $(1+\alpha^2)^{\frac{3+s}{2}}
A_{m-2}
A_{n-2}$ for $m=4$, $n=3$, which show how in this dimension stability depends on $s$. One may conjecture that there is $s_0$ such that the cone is stable for $0 \leq s \leq s_0$ and unstable for $s_0<s<1$.
\end{remark}

%
%
%
%
%

\appendix

\section{Asymptotics}
\label{sect asymptotics}

We prove convergence of geometric fractional quantities as $s\to1$ ($\ve=1-s\to0$).
Let $\Sigma\subset\R^{n+1}$ be a smooth embedded hyper surface.

\begin{lemma} Assume $\Sigma = \partial E$. Then for any $X\in \Sigma$
$$
(1-s) \int_{\R^{n+1}} \frac{\chi_E(Y)-\chi_{E^c}(Y)}{|X-Y|^{n+1+s}} \, d Y = - H_\Sigma(X) n \omega_n + O(1-s) ,
$$
as $s\to1$, where $H_\Sigma(X) = \frac{\kappa_1+\ldots+\kappa_n}{n}$ is the mean curvature of $\Sigma$ at $X$  and $\omega_n$ is the volume of the unit ball in $\R^n$.
\end{lemma}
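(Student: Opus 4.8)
The plan is to reduce the statement to a local computation near $X$ by a standard splitting of the principal-value integral into a small ball $B_\delta(X)$ (with $\delta$ fixed, independent of $s$) and its complement. The outer part $\int_{\R^{n+1}\setminus B_\delta(X)}\frac{\chi_E-\chi_{E^c}}{|X-Y|^{n+1+s}}\,dY$ is an absolutely convergent integral, bounded uniformly as $s\to1$, so after multiplication by $(1-s)$ it contributes only $O(1-s)$. Thus the whole asymptotics comes from the inner part, and there one may translate $X$ to the origin and rotate so that the tangent plane $T_X\Sigma$ is $\{y_{n+1}=0\}$ with $\nu(X)=e_{n+1}$ pointing out of $E$.

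Next I would represent $\Sigma\cap B_\delta(X)$ as a graph $y_{n+1}=g(y)$ over $T_X\Sigma$, with $g(0)=0$, $\nabla g(0)=0$, and $D^2 g(0)$ the second fundamental form, so that $\Delta g(0)=nH_\Sigma(X)$ (with the convention $H_\Sigma=\frac{\kappa_1+\cdots+\kappa_n}{n}$). Writing the inner integral in the $(y,y_{n+1})$ coordinates, the contribution is $-2\int_{|y|<\rho}\int_0^{g(y)}\frac{dy_{n+1}\,dy}{(|y|^2+y_{n+1}^2)^{(n+1+s)/2}}$ up to a negligible piece from the mismatch between the ball $B_\delta(X)$ and the cylinder; this is exactly the kind of expansion carried out in Lemma~\ref{lemma Rest1} of the paper (with $n=2$, $s=1-\ve$). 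Using $\int_0^{z}\frac{dt}{(|y|^2+t^2)^{(n+1+s)/2}}=\frac{z}{|y|^{n+1+s}}+O(z^2|y|^{-n-3-s})$ and $g(y)=\frac12 D^2g(0)[y^2]+o(|y|^2)$, the leading term is
\begin{equation*}
-2\int_{|y|<\rho,\,y\in\R^n}\frac{\frac12 D^2 g(0)[y^2]}{|y|^{n+1+s}}\,dy
= -\,\Delta g(0)\int_{|y|<\rho}\frac{|y|^2/n}{|y|^{n+1+s}}\,dy
= -\,\frac{\Delta g(0)}{n}\,\omega_n\, n\,\frac{\rho^{1-s}}{1-s},
\end{equation*}
where I used the symmetry $\int_{S^{n-1}}\omega_i\omega_j = \frac{|S^{n-1}|}{n}\delta_{ij}$ and $\int_{|y|<\rho}|y|^{1-s-n}\,dy = |S^{n-1}|\frac{\rho^{1-s}}{1-s} = n\omega_n\frac{\rho^{1-s}}{1-s}$ (since $|S^{n-1}|=n\omega_n$). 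Multiplying by $(1-s)$ kills the $\rho^{1-s}/(1-s)$ blow-up and, since $\rho^{1-s}\to1$, leaves $-\Delta g(0)\,\omega_n = -nH_\Sigma(X)\,\omega_n$. The remaining error terms (the $o(|y|^2)$ in the Taylor expansion of $g$, the quadratic correction in the $y_{n+1}$-integration, and the ball-versus-cylinder discrepancy) are all of the form $C(\delta)\cdot$bounded, hence after multiplication by $(1-s)$ they are $O(1-s)$; one keeps $\delta$ fixed throughout and only at the end notes that these constants do not depend on $s$.

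The main obstacle is bookkeeping of the error terms uniformly in $s$ as $s\to1$: one must check that the constants in the remainder of the local expansion (coming from the Hölder modulus of $D^2g$ and from $\|D^2g\|_\infty$) stay bounded as $s\to1$, and that the principal-value cancellation near $y=0$ is handled correctly — the odd part of $g-\frac12 D^2g(0)[y^2]$ integrates to zero by symmetry, so no genuine principal value is needed for the leading term, only for controlling the next order. This is precisely the content of the estimate $|Rest_1|\le C[D^2g]_{\alpha,B_\rho}\rho^{1+\alpha-s}+C\|D^2g\|_\infty^3\rho^{3-s}$ in Lemma~\ref{lemma Rest1}, adapted from $n=2$ to general $n$, so I would simply invoke that computation. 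A minor point worth stating carefully is the orientation/sign convention: with $\nu$ exterior to $E$ and $\Sigma$ above $E$ locally in the $\nu$ direction, the region $E$ near $X$ is $\{y_{n+1}<g(y)\}$, which produces the factor $-2$ and hence the minus sign in front of $H_\Sigma(X)n\omega_n$, matching $\lim_{s\to1}(1-s)H^s_\Sigma = c_N H_\Sigma$ with $c_N=n\omega_n$ in the paper's normalization \eqref{mc}.
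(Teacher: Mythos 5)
Your proposal matches the paper's appendix proof step for step: a fixed-scale cutoff (ball vs.\ cylinder is a cosmetic $O(1)$ difference), representation of $\Sigma$ as a graph over the tangent plane, Taylor expansion of $g$, and extraction of the $\rho^{1-s}/(1-s)$ singularity from the radial integral, with the paper's Lemma~\ref{lemma Rest1} providing the uniform remainder estimates exactly as you invoke. The one minor slip is in your orientation discussion: with $E=\{y_{n+1}<g(y)\}$ the $y_{n+1}$-integration yields $+2\int_0^{g(y)}$, not $-2\int_0^{g(y)}$ (the paper instead places $E$ above the graph to get the $-2$), and since the sign of $H_\Sigma$ flips along with the choice of normal the final formula is unaffected, but the two conventions should be stated consistently.
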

\begin{proof}
Let us fix $R>0$ and $X\in \Sigma$ and assume $X=0$ for simplicity.
Let $\Sigma_R $ be $\Sigma$ intersected with the cylinder $B_R(0)\times (-R,R)$, $B_R(0)\subset \R^n$.
After rotation, we describe $\Sigma_R$  as the graph of $g:B_R(0)\to \R$ with
$$
g(0) =0,\quad D g(0) = 0,
$$
and assume $E$ lies above $\Sigma_R$.

Note that
$$
\int_{( B_R(0)\times(-R,R))^c} \frac{\chi_E(Y)-\chi_{E^c}(Y)}{|X-Y|^{n+1+s}} \, d Y=O(1)
$$
as $s\to1$.
We compute
$$
I=\int_{B_R(0)\times(-R,R)} \frac{\chi_E(Y)-\chi_{E^c}(Y)}{|X-Y|^{n+1+s}} \, d Y =
-2
\int_{B_R\subset\R^n}
\int_0^{g(t)}
\frac{1}{(|t|^2 + t_3^2)^{\frac{n+1+s}{2}}}
dt_3 \, d t ,
$$
expanding
$$
\int_0^{z}
\frac{1}{(|t|^2 + t_3^2)^{\frac{n+1+s}{2}}}
dt_3
= \frac{z}{|t|^{3+s}}
-(n+1+s) z^2 \int_0^1
(1-\tau) \frac{\tau z}{(|t|^2 + (\tau z)^2)^{\frac{n+3+s}{2}}} \, d\tau .
$$
Then
$$
I
=
I_1+I_2+I_3
$$
where
\begin{align*}
I_1
&=
-2
\int_{|t|<R}
\frac{\frac12 D^2 g(0)[{t}^2]}{|t|^{n+1+s}} \, d t ,
\qquad\qquad
I_2
=
-2
\int_{|t|<R}
\frac{g(t)- \frac12  D^2 g(0)[{t}^2]}{|t|^{n+1+s}} \, d t ,
\\
I_3
&=2(3+s)
\int_{|t|<R}
g( t)^2
\int_0^1 (1-\tau)
\frac{\tau g(t)}{(|t|^2 + (\tau g(t))^2)^{\frac{n+3+s}{2}}} \, d\tau
\, d t ,
\end{align*}
where $D^2 g$ denotes the Hessian matrix of $g$.
Then
$$
I_1 = - \frac{ \omega_n \Delta g(0) R^{1-s}}{1-s}
=
- n\omega_n \frac{ H_\Sigma(X) R^{1-s}}{(1-s)} .
$$
For the other terms we have $I_2=O(1)$ and $I_3=O(1)$ as $s\to1$.

\end{proof}

For the next results we assume that there is $C$ such that for all $0<s<1$ and $X\in\Sigma$
$$
\int_{Y\in\Sigma,|Y-X|\geq 1} \frac{1}{|X-Y|^{n+1+s}}\,  dY \leq C.
$$

\begin{lemma}
\label{conv lapl}
If $h$ is $ C^{2,\alpha}(\Sigma)$ and bounded,
$$
(1-s)\text{p.v.}
\int_{\Sigma}
\frac{h(Y)-h(X)}{|X-Y|^{n+1+s}} dY
=  \frac{\omega_n}2 \Delta_{\Sigma}h(X) + O(1-s) ,
$$
as $s\to1$, where $\Delta_{\Sigma}$ is the Laplace-Beltrami operator on $\Sigma$ and $\omega_n=\frac{area(S^{n-1})}{n}$ is the volume of the unit ball in $\R^n$.
\end{lemma}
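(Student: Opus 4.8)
The plan is to mimic closely the proof of the preceding lemma on the fractional mean curvature. Fix $X\in\Sigma$; after a translation we may take $X=0$, and after a rotation we may assume the tangent plane to $\Sigma$ at $X$ is $\R^n\times\{0\}$. Choose $R>0$ small and fixed, and split
$$
\text{p.v.}\int_{\Sigma}\frac{h(Y)-h(X)}{|X-Y|^{n+1+s}}\,dY
= \text{p.v.}\int_{\Sigma\cap C_R}\frac{h(Y)-h(X)}{|X-Y|^{n+1+s}}\,dY
+ \int_{\Sigma\setminus C_R}\frac{h(Y)-h(X)}{|X-Y|^{n+1+s}}\,dY,
$$
where $C_R=B_R(0)\times(-R,R)$ with $B_R(0)\subset\R^n$. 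Since $h$ is bounded and, by the standing hypothesis, $\int_{Y\in\Sigma,\,|Y-X|\ge 1}|X-Y|^{-(n+1+s)}\,dY\le C$ uniformly, a further splitting at $|X-Y|=1$ shows the second integral is $O(1)$ as $s\to1$; multiplied by $(1-s)$ it contributes $O(1-s)$, which is absorbed into the error.

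For the inner piece, write $\Sigma\cap C_R=\{(t,g(t)):t\in B_R(0)\}$ with $g\in C^{2,\alpha}$, $g(0)=0$, $\nabla g(0)=0$, so that $g(t)=O(|t|^2)$, $\nabla g(t)=O(|t|)$, and the surface measure is $\sqrt{1+|\nabla g(t)|^2}\,dt=(1+O(|t|^2))\,dt$. Setting $\tilde h(t)=h(t,g(t))$, Taylor's theorem gives $\tilde h(t)-\tilde h(0)=\nabla\tilde h(0)\cdot t+\tfrac12 D^2\tilde h(0)[t^2]+O(|t|^{2+\alpha})$, while $|X-Y|^2=|t|^2+g(t)^2=|t|^2(1+O(|t|^2))$, hence $|X-Y|^{-(n+1+s)}=|t|^{-(n+1+s)}(1+O(|t|^2))$. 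The linear term contributes
$$
\int_{B_R(0)}\frac{\nabla\tilde h(0)\cdot t}{|t|^{n+1+s}}\,dt,
$$
whose principal value vanishes by the odd symmetry $t\mapsto -t$ of the ball; the corrections coming from $\sqrt{1+|\nabla g|^2}-1=O(|t|^2)$ and from the factor $1+O(|t|^2)$ above turn the linear integrand into one of size $O(|t|^{3-n-s})$, which is integrable with integral $O(1)$ uniformly in $s$. (A minor point to check is that the Euclidean ball $B_R(0)$ in $t$-coordinates differs from $\Sigma\cap B_\delta(X)$ only by higher-order corrections, so that the symmetric cancellation defining the principal value is not affected.)

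The quadratic term is the main contribution:
$$
\int_{B_R(0)}\frac{\tfrac12 D^2\tilde h(0)[t^2]}{|t|^{n+1+s}}\,dt
=\frac12\sum_i \partial_{ii}\tilde h(0)\cdot\frac1n\,\mathrm{area}(S^{n-1})\frac{R^{1-s}}{1-s}
=\frac{\omega_n}{2}\,\Delta_t\tilde h(0)\,\frac{R^{1-s}}{1-s},
$$
using $\int_{B_R(0)}t_it_j|t|^{-(n+1+s)}\,dt=\tfrac{\delta_{ij}}{n}\,\mathrm{area}(S^{n-1})\,R^{1-s}/(1-s)$ and $\omega_n=\mathrm{area}(S^{n-1})/n$. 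Because $g(0)=0$ and $\nabla g(0)=0$, the induced metric equals the identity at the origin and its first derivatives vanish there, so $\Delta_t\tilde h(0)=\Delta_\Sigma h(X)$. The Taylor remainder contributes an integrand of size $O(|t|^{1+\alpha-n-s})$, integrable with integral $O(1)$ uniformly for $s$ near $1$ (since $\alpha>0$); likewise the second-order corrections to $|X-Y|^{-(n+1+s)}$ and to the area element acting on the quadratic numerator give integrands of size $O(|t|^{3-n-s})$, again $O(1)$ uniformly. Multiplying everything by $(1-s)$ and using $R^{1-s}=1+O(1-s)$ yields $(1-s)\,\text{p.v.}\int_\Sigma\frac{h(Y)-h(X)}{|X-Y|^{n+1+s}}\,dY=\tfrac{\omega_n}{2}\Delta_\Sigma h(X)+O(1-s)$.

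The one place requiring care — the main obstacle — is the uniformity of all the error estimates as $s\to1$: the exponents in the integrals $\int_0^R r^{\alpha-s}\,dr$ and $\int_0^R r^{2-s}\,dr$ must stay bounded away from $-1$, which is exactly guaranteed by the $C^{2,\alpha}$ hypothesis (the $C^2$ regularity alone would leave a logarithmically divergent remainder in the limit). The symmetric cancellation of the odd part in the principal-value sense, carried out intrinsically on $\Sigma$ rather than on the flat model, is the other technical point, but it is routine and identical in spirit to the argument already used for $H_\Sigma$.
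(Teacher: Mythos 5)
Your proposal is correct and follows essentially the same approach as the paper's: both prove the statement by representing $\Sigma$ near $X$ as a graph over the tangent plane, Taylor-expanding the numerator, using the odd symmetry of the ball to kill the linear term, computing the quadratic term explicitly via $\int_{B_R}t_it_j|t|^{-n-1-s}\,dt$, and verifying via the vanishing of the first derivatives of the induced metric at $X$ that $\Delta_t\tilde h(0)=\Delta_\Sigma h(X)$. The only organizational difference is that the paper factors the flat-ball Taylor computation into a separate auxiliary lemma and then applies it to a specific $s$-dependent function $\phi_s$ (absorbing the area element and distance corrections), whereas you carry out the Taylor expansion and handle the corrections inline; this is a presentational choice, not a genuinely different route.
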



For the proof we use the following computation.
\begin{lemma}
If $\phi \in C^{2,\alpha}(\overline B_R(0))$,
\begin{align}
\label{b12}
(1-s)\int_{B_R\subset\R^n} \frac{\phi(t)-\phi(0)}{|t|^{n+1+s}} \, d t = \frac{\omega_n}{2} \Delta \phi(0) + O(1-s),
\end{align}
as $s\to1$.
\end{lemma}
\begin{proof}
We expand
$$
\phi(t) = \phi(0) + D\phi(0)t+\frac12 D^2\phi(0)[t^2] + O(|t|^{2+\alpha})
$$
as $t\to0$ and compute
\begin{align*}
\int_{B_R} \frac{\phi(t)-\phi(0)}{|t|^{n+1+s}} \, d t
&=
\frac12
\int_{B_R} \frac{D^2\phi(0)[t^2]}{|t|^{n+1+s}} \, d t + O(1)\\
&= \frac12 \frac{area(S^{n-1})}{n} \frac{R^{1-s}}{1-s} \Delta \phi(0) + O(1)
\end{align*}
as $s\to1$.
\end{proof}

\begin{proof}[Proof of Lemma~\ref{conv lapl}]

Let us fix $R>0$ and $X\in \Sigma$ and assume $X=0$ for simplicity.
Let $\Sigma_R $ be $\Sigma$ intersected with the cylinder $B_R(0)\times (-R,R)$, $B_R(0)\subset \R^n$.
After rotation, we describe $\Sigma_R$  as the graph of $g:B_R(0)\to \R$ with
$$
g(0) =0,\quad D g(0) = 0.
$$
Then
\begin{align*}
\int_{\Sigma_R^c}\frac{h(Y)-h(X)}{|X-Y|^{n+1+s}} dY
=O(1)
\end{align*}
as $s\to1$.
We have
\begin{align*}
\int_{\Sigma_R}
\frac{h(Y)-h(X)}{|X-Y|^{n+1+s}} dY
=\int_{B_R(0)} \frac{h(g(t)) - h(g(0))}{(g(t)^2+|t|^2)^{\frac{n+1+s}{2}}}	 \sqrt{1+|D g(t)|^2}\,dt
\end{align*}

The previous lemma also holds if $\phi$ depends on $s$ and $\phi_s\to\phi$ in $C^{2,\alpha}$ as $s\to1$.
We apply \eqref{b12} to
$$
\phi_s(t) =  \frac{h(g(t)) - h(g(0))}{(\frac{g(t)^2}{|t|^2}+1)^{\frac{n+1+s}{2}}}	 \sqrt{1+|D g(t)|^2}
$$
and note that $\phi_s\to\phi$ as $s\to 1$, where
$$
\phi(t) =  \frac{h(g(t)) - h(g(0))}{(\frac{g(t)^2}{|t|^2}+1)^{n+2}}	 \sqrt{1+|D g(t)|^2}
$$
and
$$
\Delta\phi(0) = \sum_{i=1}^n D_i (h\circ g)(0)= \Delta_\Sigma h(0).
$$
\end{proof}

\begin{lemma}
\label{lemma conv A}
Let $\nu$ be smooth choice of normal vector $\nu$ on $\Sigma$. Then
$$
(1-s) \int_{\Sigma} \frac{(\nu(x)-\nu(y))\cdot \nu(x)}{|x-y|^{n+1+s}} dy =  \frac{\omega_n}2 |A(x)|^2 + O(1)
$$
as $s\to0$, where $|A(x)|^2 $ is the norm squared of the second fundamental form at $x$, i.e. $ \sum_{i=1}^n\kappa_i^2$, where $\kappa_1$, \ldots , $\kappa_n$ are the principal curvatures at $x$.
\end{lemma}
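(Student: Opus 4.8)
The plan is to mimic the proof of Lemma~\ref{conv lapl}, since the structure is identical: localize near $x$, write $\Sigma$ as a graph over its tangent plane, and expand the integrand. First I would fix $x\in\Sigma$, assume $x=0$ and (after a rotation) describe $\Sigma$ near $x$ as the graph of a function $g\colon B_R(0)\subset\R^n\to\R$ with $g(0)=0$, $\nabla g(0)=0$, so that the tangent plane at $x$ is $\{t_{n+1}=0\}$ and $\nu(x)=e_{n+1}$. The contribution of $\Sigma\setminus(B_R(0)\times(-R,R))$ to the integral is $O(1)$ as $s\to1$ by the standing decay hypothesis (and the fact that $|\nu(x)-\nu(y)|\le 2$), so only the graph piece matters.

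On the graph piece I would parametrize $y=(t,g(t))$, use the formula for the unit normal $\nu(y)=\frac{(-\nabla g(t),1)}{\sqrt{1+|\nabla g(t)|^2}}$, and compute
\begin{align*}
(\nu(x)-\nu(y))\cdot\nu(x)
= 1 - \frac{1}{\sqrt{1+|\nabla g(t)|^2}} .
\end{align*}
Since $\nabla g(0)=0$ and $g$ is $C^{2,\alpha}$, Taylor expansion gives $\nabla g(t) = D^2g(0)t + O(|t|^{1+\alpha})$, hence
\begin{align*}
1 - \frac{1}{\sqrt{1+|\nabla g(t)|^2}}
= \tfrac12 |D^2g(0)t|^2 + O(|t|^{2+\alpha})
\end{align*}
as $t\to 0$. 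Also $|x-y|^2 = |t|^2 + g(t)^2 = |t|^2(1+O(|t|^2))$ and the area element is $\sqrt{1+|\nabla g(t)|^2}=1+O(|t|^2)$, so the whole integrand equals $\frac{\frac12|D^2g(0)t|^2}{|t|^{n+1+s}}$ plus a term that is $O(|t|^{\alpha-1-s})$, which is integrable near $0$ with integral $O(1)$ uniformly as $s\to1$. Then, as in the computation preceding Lemma~\ref{conv lapl},
\begin{align*}
(1-s)\int_{B_R(0)} \frac{\tfrac12 |D^2g(0)t|^2}{|t|^{n+1+s}}\,dt
= (1-s)\,\tfrac12\!\int_{S^{n-1}}\!|D^2g(0)\theta|^2\,d\theta\,\frac{R^{1-s}}{1-s} + o(1)
\longrightarrow \tfrac12\!\int_{S^{n-1}}\!|D^2g(0)\theta|^2\,d\theta .
\end{align*}
Finally I would identify this angular average: diagonalizing $D^2g(0)=\mathrm{diag}(\kappa_1,\dots,\kappa_n)$ (the $\kappa_i$ are precisely the principal curvatures at $x$ since $\nabla g(0)=0$), one has $\int_{S^{n-1}}|D^2g(0)\theta|^2\,d\theta = \sum_i\kappa_i^2\int_{S^{n-1}}\theta_i^2\,d\theta = \frac{\mathrm{area}(S^{n-1})}{n}\sum_i\kappa_i^2 = \omega_n|A(x)|^2$, which gives the stated limit $\frac{\omega_n}{2}|A(x)|^2$.

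The routine parts are the Taylor expansions and the uniform-in-$s$ bounds on the error integrals, which are identical in spirit to those already used for the mean curvature and Laplace–Beltrami lemmas. The only mild subtlety — the ``main obstacle,'' such as it is — is bookkeeping the fact that the $O(|t|^{2+\alpha})$ remainder in the numerator, after division by $|t|^{n+1+s}$, is integrable with a bound independent of $s$ for $s$ near $1$ (one needs $\alpha>0$ here, exactly as in Lemma~\ref{conv lapl}), together with checking that replacing $|x-y|^{n+1+s}$ by $|t|^{n+1+s}$ and the area element by $1$ only produces further $O(1)$ errors; all of this is straightforward given $g\in C^{2,\alpha}$ and the curvature bounds on $\Sigma$ near $x$.
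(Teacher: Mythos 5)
Your proof is correct, but it takes a slightly longer road than the paper's. The paper's entire proof of this lemma is two lines: apply Lemma~\ref{conv lapl} (as a black box) to the scalar function $h(y)=\nu(y)\cdot\nu(x)-1$, which is bounded, smooth, satisfies $h(x)=0$, and for which $h(y)-h(x)=-(\nu(x)-\nu(y))\cdot\nu(x)$; then invoke the classical identity $\Delta_\Sigma\bigl(\nu(\cdot)\cdot\nu(x)\bigr)(x)=-|A(x)|^2$ to read off $\Delta_\Sigma h(x)=-|A(x)|^2$ and hence the conclusion. You instead mimic the \emph{proof} of Lemma~\ref{conv lapl}, re-doing the local graph expansion from scratch: you compute $(\nu(x)-\nu(y))\cdot\nu(x)=1-(1+|\nabla g(t)|^2)^{-1/2}$, Taylor-expand to $\tfrac12|D^2g(0)t|^2+O(|t|^{2+\alpha})$, and evaluate the angular average $\int_{S^{n-1}}|D^2g(0)\theta|^2\,d\theta=\omega_n\sum_i\kappa_i^2$. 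In effect your computation re-derives the identity $\Delta_\Sigma(\nu\cdot\nu(x))(x)=-|A(x)|^2$ inline rather than citing it. Both are valid; the paper's route is more economical and reuses Lemma~\ref{conv lapl} cleanly, while yours is more self-contained and makes the appearance of $|A(x)|^2$ transparent without appealing to the geometric identity.
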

\begin{proof}
We apply Lemma~\ref{conv lapl} with $h(y) = \nu(y)\cdot \nu(x)-1$ and use that
$$
\Delta_\Sigma h (x)= - |A(x)|^2.
$$
\end{proof}

\section{The Jacobi operator}
\label{sect jacobi}
In this section we prove formula \eqref{j1} and derive the formula for the nonlocal Jacobi operator \eqref{nonlocal jac}.

Let $E \subset \R^N$ be an open set with smooth boundary and $\Omega$ be a bounded open set.
Let $\nu$ be the unit normal vector field  of $\Sigma = \pp E$ pointing to the exterior of $E$.
Given $h\in C_0^\infty (\Omega \cap \Sigma)$ and $t$  small, let $E_{th}$ be the set whose boundary $\pp E_{th} $ is parametrized as
$$
\pp E_{th} =  \{ x+ th(x) \nu(x) \ /\ x\in \pp E \},
$$
with exterior normal vector close to $\nu$.

\begin{prop}
\label{prop second var}
For $h\in C_0^\infty (\Omega \cap \Sigma)$
\begin{align}
\label{second var}
\frac {d^2}{dt^2} Per_{s} (E_{th},\Omega) \Big|_{t=0} \ =\ -2 \int_\Sigma  \JJ^s_\Sigma [h]\, h
-\int_{\Sigma} h^2 H H_\Sigma^s ,
\end{align}
where $ \JJ^s_\Sigma $ is the nonlocal Jacobi operator defined in \eqref{nonlocal jac}, $H$ is the classical mean curvature of $\Sigma$ and $H_\Sigma^s$ is the nonlocal mean curvature defined in \eqref{1}.
\end{prop}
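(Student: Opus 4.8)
## Proof Strategy

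The plan is to compute the second variation of the fractional perimeter $\I_s(E_{th},\Omega)$ directly from the representation
$$
\I_s(E_{th},\Omega) = \int_{E_{th}\cap\Omega}\int_{E_{th}^c}\frac{dx\,dy}{|x-y|^{N+s}} + \int_{E_{th}\cap\Omega^c}\int_{E_{th}^c\cap\Omega}\frac{dx\,dy}{|x-y|^{N+s}},
$$
treating it as a quadratic-type functional in the characteristic function. Writing $u_t = \chi_{E_{th}}$, the localized $s$-perimeter is, up to constants independent of $t$, a bilinear expression in $u_t$ built from the kernel $K(x,y)=|x-y|^{-N-s}$. The first step is to set up the diffeomorphism $\Phi_t(x) = x + t h(x)\nu(x)$ (extended smoothly off $\Sigma$, supported near $\Sigma\cap\Omega$), change variables so that all integrals are over the fixed domain $E$, and expand the resulting Jacobian factors and the kernel $K(\Phi_t(x),\Phi_t(y))$ to second order in $t$. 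The first-order term must reproduce $-\int_\Sigma H^s_\Sigma h$, which is the already-stated first variation formula; this serves as a consistency check and as the source of the $-\int_\Sigma h^2 H H^s_\Sigma$ correction, since differentiating the surface measure and the first-variation identity once more produces exactly the mean-curvature times $H^s_\Sigma$ term via the standard identity $\frac{d}{dt}\,d\sigma_t = (t$-dependent$)$ with leading correction $-H h\, d\sigma$ under normal perturbations (more precisely, the second derivative of $d\sigma_t$ at $t=0$ contributes the term through the first variation).

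The cleaner route, which I would actually carry out, is to differentiate the \emph{first variation identity} once more. By the first variation formula proved earlier,
$$
\frac{d}{dt}\I_s(E_{th},\Omega) = -\int_{\Sigma_t} H^s_{\Sigma_t}(z)\, (V_t\cdot \nu_t)(z)\, d\sigma_t(z),
$$
where $\Sigma_t = \partial E_{th}$, $\nu_t$ is its exterior normal, and $V_t = \partial_t \Phi_t$ is the velocity field; at $t=0$, $V_0\cdot\nu_0 = h$. Differentiating again at $t=0$ gives three contributions: (i) the derivative of $H^s_{\Sigma_t}$ hitting the surface, which produces the linearized nonlocal mean curvature operator applied to $h$ — this is precisely where $\JJ^s_\Sigma[h]$ arises, and one must verify that the linearization of $p\mapsto H^s_{\Sigma}(p)$ under the normal perturbation $h$ equals $-2\JJ^s_\Sigma[h]$ (the factor $2$ and sign being fixed by the explicit formula \eqref{nonlocal jac}); (ii) the derivative of the density $(V_t\cdot\nu_t)$, which at $t=0$ contributes a term that integrates against $H^s_\Sigma$ and, because the normal component of the acceleration of a normal flow involves $|h|^2$ times curvature-type quantities, yields part of the $-\int_\Sigma h^2 H H^s_\Sigma$ term; (iii) the derivative of the measure $d\sigma_t$, which gives the classical identity $\partial_t d\sigma_t|_{t=0} = -\operatorname{tr}(\text{shape operator})\,h\, d\sigma = -(N-1)H\,h\,d\sigma$ (with the normalization of $H$ used in the paper), contributing the remaining piece of $-\int_\Sigma h^2 H H^s_\Sigma$. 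Assembling (i)–(iii) gives \eqref{second var}.

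The main obstacle is step (i): computing the linearization of the nonlocal mean curvature under a normal graph perturbation and showing it equals $-2\JJ^s_\Sigma[h]$ with $\JJ^s_\Sigma$ as in \eqref{nonlocal jac}. This requires differentiating $H^s_{\Sigma_t}(\Phi_t(p)) = 2\int_{\R^N}\frac{\chi_{E_{th}}(y)-\chi_{L_t(p)}(y)}{|\Phi_t(p)-y|^{N+s}}\,dy$ (using the half-space regularization from Section~\ref{sect q}) in $t$, carefully tracking: the motion of the boundary $\partial E_{th}$ near $p$ (which produces the principal-value integral $\int_\Sigma \frac{h(x)-h(p)}{|p-x|^{N+s}}dx$), the motion of the evaluation point (which after combining with the boundary motion contributes the normal-vector term $h(p)\int_\Sigma \frac{\langle\nu(p)-\nu(x),\nu(p)\rangle}{|p-x|^{N+s}}dx$), and the tilting of the tangent half-space. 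One must be attentive to the principal-value cancellations that make each individual piece finite, and to the bookkeeping of which terms are genuinely second-order in $t$ (and hence irrelevant to the first derivative of the first variation). Once this linearization is established — it can also be read off by comparing with the asymptotic formula \eqref{asym jac} relating $\JJ^s_\Sigma$ to the classical Jacobi operator as a sanity check — the remaining steps (ii) and (iii) are routine differential-geometric computations for normal flows, and the result follows by collecting terms.
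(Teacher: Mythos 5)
Your route — differentiating the first--variation identity $\frac{d}{dt}\I_s(E_{th},\Omega)=-\int_{\Sigma_t}H^s_{\Sigma_t}\,(V_t\cdot\nu_t)\,d\sigma_t$ once more in $t$ — is genuinely different from the paper's. The paper computes $\frac{d^2}{dt^2}\I_s(E_{th},\Omega)$ head-on: it regularizes the kernel to $K_\delta$, pulls both double integrals back to the fixed domain $E$ by the diffeomorphism $\bar x\mapsto\bar x+tu(\bar x)$ with $u=h\nu$, differentiates twice (getting a decomposition into $A,B,C$ terms), and reorganizes everything via integration by parts and the identity $\mathrm{div}(\nu)=H$; it never invokes the first variation formula or the linearization of $H^s_\Sigma$. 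Your route instead uses the linearization of the nonlocal mean curvature (Proposition~\ref{prop derivative H}) as a black box, plus the first variation of the surface measure, and is a cleaner accounting when those two facts are available. The paper's approach is more self-contained (it does not need the first variation formula for $t$ near $0$, only at $t=0$, nor the differentiability of $t\mapsto H^s_{\Sigma_t}(p_t)$; the $K_\delta$ regularization handles all principal-value issues uniformly). Your approach buys brevity but shifts the hard work into Proposition~\ref{prop derivative H} and requires justifying the validity of the first-variation formula for all small $t$ together with differentiation under the integral.

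That said, the outline as written has three errors that would lead to the wrong formula if carried out literally. First, the linearization of $H^s_\Sigma$ under the normal perturbation $h$ is $+2\JJ^s_\Sigma[h]$, not $-2\JJ^s_\Sigma[h]$ (this is exactly Proposition~\ref{prop derivative H}); only with the plus sign does $-\int\frac{d}{dt}\bigl[H^s_{\Sigma_t}(p_t)\bigr]\bigr|_{t=0}h\,d\sigma$ give $-2\int\JJ^s_\Sigma[h]h$. Second, your contribution (ii) vanishes: with $\Phi_t=x+th\nu$, the velocity at $\Phi_t(x)$ is the constant vector $h(x)\nu(x)$, so $V_t\cdot\nu_t=h(x)\,\nu(x)\cdot\nu_t(\Phi_t(x))$, and $\frac{d}{dt}\nu_t\bigl|_{t=0}$ is tangential (since $|\nu_t|\equiv1$); hence $\frac{d}{dt}(V_t\cdot\nu_t)\bigl|_{t=0}=0$, and the whole curvature term comes from the measure. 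Third, the first variation of the surface measure is $\partial_t d\sigma_t\bigl|_{t=0}=+Hh\,d\sigma$ with the paper's convention $H=\mathrm{div}_\Sigma\nu=\sum_i k_i$, not $-(N-1)Hh\,d\sigma$; the minus sign and the extra $(N-1)$ both conflict with the paper's normalization, and either would spoil the match with \eqref{second var}. With these corrections — linearization $+2\JJ^s_\Sigma[h]$, (ii)$=0$, measure derivative $+Hh\,d\sigma$ — your three pieces assemble to $-2\int_\Sigma\JJ^s_\Sigma[h]h-\int_\Sigma h^2HH^s_\Sigma$, as required.
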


In case that $\Sigma$ is a nonlocal minimal surface in $\Omega$ we obtain formula \eqref{j1}.
Another related formula is the following.

\begin{prop}
\label{prop derivative H}
Let $\Sigma_{th} = \partial E_{th}$.
For $p\in \Sigma$ fixed let
$p_t = p + t h(p) \nu(p) \in \Sigma_{th}$.
Then for $h\in C^\infty ( \Sigma) \cap L^\infty(\Sigma)$
\begin{align}
\label{der mean C}
\frac{d}{d t}
H_{\Sigma_{th}}^s(p_t)
\Big|_{t=0}
= 2 \JJ_\Sigma^s[h] (p) .
\end{align}
\end{prop}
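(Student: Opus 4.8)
The plan is to deduce \eqref{der mean C} from the second variation formula \eqref{second var} together with the first variation identity $\frac{d}{d\tau}\I_s(E'_{\tau\phi},\Omega)\big|_{\tau=0}=-\int_{\partial E'}H^s_{\partial E'}\phi$ recorded in the introduction, by differentiating the first variation once more in a two--parameter family. (A self--contained alternative, if one does not want to invoke \eqref{second var}, is to flow $E$ by a compactly supported vector field $X$ with $X|_\Sigma=h\nu$, change variables $y=\Psi_t(z)$ so that $H^s_{\Sigma_{th}}(p_t)=\mathrm{p.v.}\int(\chi_E-\chi_{E^c})(z)|\Psi_t(p)-\Psi_t(z)|^{-N-s}|\det D\Psi_t(z)|\,dz$, differentiate at $t=0$, and integrate by parts to recognize $2\JJ^s_\Sigma[h]$; this is more computational and the extension $X$ is auxiliary, so I prefer the route below.)

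Fix $h\in C_0^\infty(\Sigma\cap\Omega)$ and write $\Phi_t(x)=x+th(x)\nu(x)$, $\Sigma_{th}=\Phi_t(\Sigma)=\partial E_{th}$, $p_t=\Phi_t(p)$. For small fixed $t$, the family $\tau\mapsto E_{(t+\tau)h}$ is a normal perturbation of $E_{th}$ inside $\Omega$: there is $\phi_t\in C_0^\infty(\Sigma_{th})$ such that $\partial E_{(t+\tau)h}$ agrees to first order in $\tau$ with $\{y+\tau\phi_t(y)\nu_{\Sigma_{th}}(y):y\in\Sigma_{th}\}$, and decomposing the velocity $h(x)\nu(x)$ of $\Phi_t(x)$ into its $\Sigma_{th}$--normal and tangential parts gives $\phi_t(\Phi_t(x))=h(x)\,\langle\nu(x),\nu_{\Sigma_{th}}(\Phi_t(x))\rangle$. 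Setting $g(t)=\I_s(E_{th},\Omega)$, the first variation formula on $\Sigma_{th}$ yields $g'(t)=-\int_{\Sigma_{th}}H^s_{\Sigma_{th}}\,\phi_t\,d\sigma_{th}$. Differentiating once more and pulling everything back to $\Sigma$ via $\Phi_t$, with $J_t$ the area Jacobian of $\Phi_t:\Sigma\to\Sigma_{th}$ ($J_0=1$), one gets
\begin{align*}
g''(0)=-\int_\Sigma\Big[\tfrac{d}{dt}\big(H^s_{\Sigma_{th}}\circ\Phi_t\big)\big|_0\,h+H^s_\Sigma\,h\,\tfrac{d}{dt}\big(\phi_t\circ\Phi_t\big)\big|_0+H^s_\Sigma\,h\,\tfrac{d}{dt}J_t\big|_0\Big]\,d\sigma.
\end{align*}
Two of these terms are elementary: by the standard first variation of area $\tfrac{d}{dt}J_t|_0=H\,h$ (sum of principal curvatures, with the paper's orientation), which matches the curvature term in \eqref{second var}; and since $|\nu_{\Sigma_{th}}\circ\Phi_t|\equiv1$, its $t$--derivative at $0$ is orthogonal to $\nu$, so $\tfrac{d}{dt}(\phi_t\circ\Phi_t)|_0=h\,\langle\nu,\partial_t(\nu_{\Sigma_{th}}\circ\Phi_t)|_0\rangle=0$. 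Writing $D[h](p):=\tfrac{d}{dt}H^s_{\Sigma_{th}}(p_t)|_0$ — the quantity in \eqref{der mean C} — we obtain $g''(0)=-\int_\Sigma D[h]\,h\,d\sigma-\int_\Sigma H^s_\Sigma\,H\,h^2\,d\sigma$. Comparing with \eqref{second var}, the curvature terms cancel and $\int_\Sigma D[h]\,h\,d\sigma=2\int_\Sigma\JJ^s_\Sigma[h]\,h\,d\sigma$ for every $h\in C_0^\infty(\Sigma\cap\Omega)$ and every $\Omega$.

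It remains to upgrade this quadratic identity to the pointwise statement. The map $h\mapsto D[h]$ is linear, being a Gateaux derivative of the mean--curvature functional, and both $(h,k)\mapsto\int_\Sigma D[h]\,k$ and $(h,k)\mapsto\int_\Sigma\JJ^s_\Sigma[h]\,k$ are symmetric bilinear forms: the first because the second variation of $\I_s(\cdot,\Omega)$ is a Hessian and the only other contribution, $\int_\Sigma H\,H^s_\Sigma\,hk$, is symmetric; the second by inspection of \eqref{nonlocal jac} after symmetrizing the principal--value part. Polarizing and letting $\Omega$ vary gives $D[h]=2\JJ^s_\Sigma[h]$ on $\Sigma$ for all $h\in C_0^\infty(\Sigma)$, and the extension to $h\in C^\infty(\Sigma)\cap L^\infty(\Sigma)$ follows by writing $h=\lim h\eta_R$ with cut--offs $\eta_R$ and passing to the limit, the far tails being harmless for both operators (under the standing decay hypothesis on $\Sigma$). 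The main technical point requiring care is justifying the differentiations under the principal--value integrals — handled by excising $B_\delta(p)$, using that $\Phi_t$ is bi--Lipschitz uniformly for small $t$ so the near--singularity is controlled uniformly, and then letting $\delta\to0$ — together with checking that $\partial E_{(t+\tau)h}$ is genuinely a first--order normal graph over $\Sigma_{th}$ with the claimed $\phi_t$; once these are in place the remaining algebra is routine.
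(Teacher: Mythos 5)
Your proposal is correct in outline, but it takes a genuinely different route from the paper's own proof, which is a direct pointwise computation. The paper writes $H^s_{\Sigma_{th}}(p_t)=2\int_{\R^N}(\chi_{E_t}-\chi_{L_t})|x-p_t|^{-N-s}\,dx$ against a reference half–space $L_t$, splits off a $\delta$--regularized piece $f_\delta(t)$, differentiates $f_\delta$ in $t$ and integrates by parts the term $\langle x-p,\nu(p)\rangle/|x-p|^{N+s+2}$ to land directly on the two integrals defining $\JJ^s_\Sigma[h](p)$, shows $g_\delta'(t)=O(\delta^{1-s})$ for the near--singularity remainder, and sends $\delta\to0$ — all pointwise, for any bounded smooth $h$, with no detour through the energy. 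You instead deduce \eqref{der mean C} from the second variation identity \eqref{second var}, by differentiating $g'(t)=-\int_{\Sigma_{th}}H^s_{\Sigma_{th}}\phi_t$, computing the three resulting terms (the Jacobian term $\int H^s_\Sigma H h^2$, the vanishing term from $\partial_t(\phi_t\circ\Phi_t)|_0=0$, and $\int D[h]h$), matching against \eqref{second var} to get the quadratic identity $\int D[h]h=2\int\JJ^s_\Sigma[h]h$, then polarizing (via symmetry of the mixed partial $\partial_t\partial_\tau\I_s(E_{th+\tau k},\Omega)$ and symmetry of $\JJ^s_\Sigma$) and localizing. This is a valid alternative and there is no circularity, since the paper's proof of \eqref{second var} is an independent direct computation. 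The trade-off: the paper's route is shorter and immediately gives the pointwise statement for $h\in C^\infty\cap L^\infty$, whereas yours requires extra bookkeeping at the end — the two--parameter Clairaut argument to establish symmetry of $D$, the fundamental lemma to pass from $\int L[h]k=0$ to $L[h]=0$, and a cutoff/dominated-convergence argument to go from compactly supported $h$ (where \eqref{second var} applies) to $h\in C^\infty(\Sigma)\cap L^\infty(\Sigma)$ as actually demanded by the proposition, relying on the uniform tail bound on $\Sigma$ stated in Appendix~\ref{sect asymptotics}. You correctly flag the steps that need care (the normal--speed decomposition $\phi_t\circ\Phi_t=h\,\langle\nu,\nu_{\Sigma_{th}}\circ\Phi_t\rangle$, the vanishing of its $t$--derivative at $0$, and the differentiation under the principal--value integrals); those are all doable, so I have no objection to the logic, only the observation that the paper's direct computation avoids all of this machinery.
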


A consequence of proposition~\ref{prop derivative H} is that entire nonlocal minimal graphs are stable.

\begin{corollary}
\label{coro entire stable}
Suppose that $\Sigma = \partial E$ with
$$
E = \{ (x',F(x')) \in \R^N: x'\in \R^{N-1} \}
$$
is a nonlocal minimal surface. Then
\begin{align}
\label{linearly stable}
-\int_\Sigma  \JJ^s_\Sigma [h]\, h\, \ge \, 0\foral h\in C_0^\infty(\Sigma).
\end{align}
\end{corollary}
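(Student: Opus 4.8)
The plan is to produce, from the vertical translation invariance of the equation, a \emph{positive bounded Jacobi field} on $\Sigma$, and then to run the classical ground--state (Picone--type) substitution in the quadratic form $-\int_\Sigma\JJ^s_\Sigma[h]\,h$. Concretely, write $\Sigma$ as the graph $x_N=F(x')$, $x'\in\R^{N-1}$, and set $\phi(x)=\langle e_N,\nu(x)\rangle$ with $e_N=(0,\dots,0,1)$. Choosing the normal field consistently we have $\phi(x)=(1+|\nabla F(x')|^2)^{-1/2}$, so $0<\phi\le 1$ and $\phi\in C^\infty(\Sigma)\cap L^\infty(\Sigma)$.

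The first step is to check that $\JJ^s_\Sigma[\phi]=0$ on $\Sigma$. For this I would invoke Proposition~\ref{prop derivative H}: the first variation of the fractional mean curvature under a deformation depends only on the normal speed of the moving point, and the one--parameter family of rigid vertical translations $x\mapsto x+te_N$ realizes precisely the normal speed $\phi=\langle e_N,\nu\rangle$ (the tangential part of $e_N$ only reparametrizes $\Sigma$). Since each $\Sigma+te_N$ is again a nonlocal minimal surface, $H^s_{\Sigma+te_N}\equiv 0$ along it, so the $t$--derivative at $t=0$ vanishes, and Proposition~\ref{prop derivative H} gives $2\JJ^s_\Sigma[\phi]=0$. (Admissibility of $\phi$ in that proposition is clear since $\phi$ is smooth and bounded.)

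Next, using \eqref{nonlocal jac} and the symmetrization
$-\int_\Sigma h(p)\,\mathrm{p.v.}\!\int_\Sigma\frac{h(x)-h(p)}{|x-p|^{N+s}}\,dx\,dp=\tfrac12\int_\Sigma\int_\Sigma\frac{(h(x)-h(p))^2}{|x-p|^{N+s}}\,dx\,dp$, one has, for $h\in C_0^\infty(\Sigma)$,
$$
-\int_\Sigma\JJ^s_\Sigma[h]\,h=\frac12\int_\Sigma\int_\Sigma\frac{(h(x)-h(p))^2}{|x-p|^{N+s}}\,dx\,dp-\int_\Sigma c(p)\,h(p)^2\,dp,\qquad c(p)=\int_\Sigma\frac{\langle\nu(p)-\nu(x),\nu(p)\rangle}{|p-x|^{N+s}}\,dx .
$$
Writing $h=\phi\psi$ with $\psi=h/\phi\in C_0^\infty(\Sigma)$ and using the pointwise identity
$$
(h(x)-h(p))^2=\phi(x)\phi(p)(\psi(x)-\psi(p))^2+\psi(p)^2\phi(p)(\phi(p)-\phi(x))+\psi(x)^2\phi(x)(\phi(x)-\phi(p)),
$$
I would integrate against $|x-p|^{-(N+s)}$ and symmetrize the last two terms into $\int_\Sigma\psi(p)^2\phi(p)\big(\mathrm{p.v.}\!\int_\Sigma\frac{\phi(p)-\phi(x)}{|x-p|^{N+s}}\,dx\big)dp$; by $\JJ^s_\Sigma[\phi]=0$ this inner integral equals $\phi(p)c(p)$, so the term is exactly $\int_\Sigma c(p)h(p)^2\,dp$ and cancels. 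This yields
$$
-\int_\Sigma\JJ^s_\Sigma[h]\,h=\frac12\int_\Sigma\int_\Sigma\frac{\phi(x)\phi(p)(\psi(x)-\psi(p))^2}{|x-p|^{N+s}}\,dx\,dp\ \ge\ 0,
$$
which is \eqref{linearly stable}.

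The only point requiring care is the legitimacy of these manipulations with singular integrals, and this is where I expect the (mild) technical work to lie. Since $h$, hence $\psi$, is compactly supported and $\phi$ is bounded above and below on $\operatorname{supp}\psi$, the combination $(\psi(x)-\psi(p))^2$ vanishes off a compact set and is $O(|x-p|^2)$ near the diagonal, so the final double integral is finite; for $x$ far from $\operatorname{supp}\psi$ one only needs $\int_{\Sigma,\,|x-p|\ge1}|x-p|^{-(N+s)}\,d\mathcal H^{N-1}(x)<\infty$, which holds because $\Sigma$ is $(N-1)$--dimensional and $N+s>N-1$. The same remarks, together with $\langle\nu(p)-\nu(x),\nu(p)\rangle=1-\langle\nu(x),\nu(p)\rangle=O(|x-p|^2)$ near the diagonal, make $c(p)$ finite and justify Fubini and the symmetrizations used above; all of this is routine.
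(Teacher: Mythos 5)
Your proof is correct and follows essentially the same route as the paper's: both take the positive bounded Jacobi field $w=\langle\nu,e_N\rangle$ coming from invariance under vertical translations (extending Proposition~\ref{prop derivative H} to the non-normal deformation field $e_N$), substitute $h=w\psi$, and use the equation $\JJ^s_\Sigma[w]=0$ to reduce the quadratic form to a manifestly non-negative double integral. The only cosmetic difference is the algebraic bookkeeping: you use a symmetric pointwise identity for $(h(x)-h(p))^2$ directly, whereas the paper first writes the quadratic form as $\int\int(h(x)-h(y))h(x)/|x-y|^{N+s}$ and symmetrizes afterward; both yield identical terms.
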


\medskip
\noindent
\begin{proof}[Proof of Proposition~\ref{prop second var}]
Let
$$
K_\delta (z) = \frac{1}{|z|^{N+s}} \eta_\delta(z)
$$
where
$\eta_\delta(x) = \eta(x/\delta)$
($\delta >0$) and $\eta \in C^\infty(\R^N)$ is a radially symmetric cut-off function with $\eta(x)=1 $ for $|x|\geq 2$, $\eta(x) = 0$ for $|x|\leq 1$.

Consider
\begin{align}
\label{def per delta}
Per_{s,\delta}(E_{th},\Omega)
=
\int_{E_{th}\cap\Omega} \int_{\R^N \setminus E_{th}}
K_\delta(x-y) \, dy \, d x
+  \int_{E_{th}\setminus\Omega} \int_{\Omega \setminus E_{th}}
K_\delta(x-y) d y d x .
\end{align}
We will show that $\frac{d^2}{dt^2}
Per_{s,\delta}(E_{th},\Omega)  $ approaches a certain limit $D_2(t)$ as $\delta \to 0$, uniformly for $t$ in a neighborhood of $0$ and that
\begin{align*}
D_2(0)&=
-2 \int_\Sigma  \JJ^s_\Sigma [h]\, h
-\int_{\Sigma} h^2 H H_\Sigma^s .
\end{align*}

First we need some extensions of $\nu$ and $h$ to $\R^N$.
To define them, let $K \subset \Sigma$ be the support of $h$ and
$U_0$ be an open bounded neighborhood of $K$ such that  for any $x\in U_0$, the closest point $\hat x \in \Sigma$ to $x$ is unique and defines a smooth function of $x$.
We also take $U_0$ smaller if necessary as to have $\overline U_0 \subset \Omega$.
Let $\tilde \nu:\R^N \to \R^N$ be a globally defined smooth unit vector field such that
$\tilde \nu(x)=\nu(\hat x)$ for $x\in U_0$.
We also extend $h$ to $\tilde h: \R^N \to \R$ such that it is smooth with compact support contained in $\Omega$ and $\tilde h(x) = h(\hat x)$ for $x\in U_0$.
From now one we omit the tildes ($\tilde\ $) in the definitions of the extensions of $\nu$ and $h$.
For $t$ small  $\bar x \mapsto \bar x + t h(\bar x) \nu(\bar x) $ is a global diffeomorphism in $\R^N$. Let us write
$$
u(\bar x) =  h(\bar x) \nu(\bar x)  \quad \text{for } \bar x\in \R^N ,
$$
$$
\nu = (\nu^1,\ldots,\nu^N), \quad u=  (u^1,\ldots,u^N)
$$
and let
$$
J_t(\bar x) = J_{id + t u}(\bar x)
$$
be the Jacobian determinant of  $id + t u$.

We change variables
$$
x = \bar x + t u (\bar x) ,
\quad
y = \bar y + t u (\bar y) ,
$$
in \eqref{def per delta}
\begin{align*}
Per_{s,\delta}(E_{th},\Omega)
& =
\int_{E\cap \phi_t(\Omega) } \int_{\R^N \setminus E}
K_\delta(x-y)
J_t(\bar x) J_t(\bar y)
d\bar yd \bar x,
\\
& \qquad
+
\int_{E\setminus \phi_t(\Omega)} \int_{\phi_t(\Omega) \setminus E}
K_\delta(x-y)
J_t(\bar y) d\bar yd \bar x ,
\end{align*}
where $\phi_t$ is the inverse of the map $\bar x \mapsto \bar x + t u(\bar x)$.

Differentiating with respect to $t$:
\begin{align*}
\frac{d}{dt}
Per_{s,\delta}(E_{th},\Omega)
 & =
 \int_{E\cap \phi_t(\Omega) } \int_{\R^N \setminus E}
 \Big[
\nabla K_\delta(x-y) (u(\bar x) - u(\bar y)) J_t(\bar x) J_t(\bar y)
\\
& \qquad
+
K_\delta(x-y) ( J_t'(\bar x) J_t(\bar y) + J_t(\bar x) J_t'(\bar y) )
\Big]
d\bar y d \bar x
\\
& \qquad
+
\int_{E\setminus \phi_t(\Omega)} \int_{\phi_t(\Omega) \setminus E}
\Big[
\nabla K_\delta(x-y) (u(\bar x) - u(\bar y)) J_t(\bar x) J_t(\bar y)
\\
&\qquad
+
K_\delta(x-y) ( J_t'(\bar x) J_t(\bar y) + J_t(\bar x) J_t'(\bar y) )
\Big]
d\bar y d \bar x ,
\end{align*}
where
$$
J_t'(\bar x ) = \frac{d}{dt} J_t(\bar x) .
$$
Note that there are no integrals on $\partial \phi_t(\Omega)$ for $t$ small because $u$ vanishes in a neighborhood of $\partial \Omega$.

Since the integrands in $\frac{d}{dt}
Per_{s,\delta}(E_{th},\Omega) $ have compact support contained in $\phi_t(\Omega)$ ($t$ small), we can write
\begin{align*}
\frac{d}{dt}
Per_{s,\delta}(E_{th},\Omega)
&=
\int_{E} \int_{\R^N\setminus E}
\Big[
\nabla K_\delta(x-y) (u(\bar x) - u(\bar y)) J_t(\bar x) J_t(\bar y)
\\
& \qquad +
 K_\delta(x-y) ( J_t'(\bar x) J_t(\bar y) + J_t(\bar x) J_t'(\bar y) )
\Big]
d\bar y d \bar x .
\end{align*}
Differentiating once more
$$
\frac{d^2}{dt^2}
Per_{s,\delta}(E_{th},\Omega)
= A(\delta,t) + B(\delta,t) + C(\delta,t)
$$
where
\begin{align*}
A(\delta,t) & =
\int_{E} \int_{\R^N\setminus E}
D^2 K_\delta(x-y) (u(\bar x) - u(\bar y)) (u(\bar x) - u(\bar y)) J_t(\bar x) J_t(\bar y) d\bar y d \bar x
\\
B(\delta,t) & =
2 \int_{E} \int_{\R^N\setminus E}
\nabla K_\delta(x-y) (u(\bar x) - u(\bar y))
( J_t'(\bar x) J_t(\bar y) + J_t(\bar x) J_t'(\bar y) )
d\bar y d \bar x
\\
C(\delta,t) & =
\int_{E} \int_{\R^N\setminus E}
K_\delta(x-y)
(
J_t''(\bar x) J_t(\bar y)
+2 J_t'(\bar x) J_t'(\bar y)
+ J_t(\bar x) J_t''(\bar y) )
d\bar y d \bar x .
\end{align*}

We claim that $A(\delta,t)$, $B(\delta,t)$ and $C(\delta,t)$ converge as $\delta \to 0$ for uniformly for $t$ near 0, to limit expressions $A(0,t)$, $B(0,t)$ and $C(0,t)$, which are the same as above replacing $\delta$ by 0, and that the integrals appearing in $A(0,t)$, $B(0,t)$ and $C(0,t)$ are well defined.
Indeed, we can estimate
$$
|A(\delta,t)-A(0,t)|
\leq C
\int_{x\in E \cap K_0}
\int_{ y \in E^c, |x-y|\leq 2 \delta}
\frac{1}{|x-y|^{N+s}} \, d y \, d x,
$$
where $K_0$ is a fixed bounded set.
For $x\in E \cap K_0$ we see that
$$
\int_{ y \in E^c, |x-y|\leq 2 \delta}
\frac{1}{|x-y|^{N+s}} \, d y
\leq \frac{C}{dist(x,E^c)^s},
$$
and therefore
$$
|A(\delta,t)-A(0,t)|
\leq C
\leq C
\int_{x\in E \cap K_0, \ dist(x,E^c) \leq 2 \delta}
\frac{1}{dist(x,E^c)^s} \, d x
\leq C \delta^{1-s}.
$$
The differences  $B(\delta,t)-B(0,t)$, $C(\delta,t)-C(0,t)$ can be estimated similarly.
This shows that
$$
\frac{d^2}{dt^2}
Per_{s}(E_{th},\Omega)
\Big|_{t=0}
=
\lim_{\delta\to 0}
\frac{d^2}{dt^2}
Per_{s,\delta}(E_{th},\Omega)
\Big|_{t=0}
= \lim_{\delta \to 0}
A(\delta,0)+B(\delta,0)+C(\delta,0) .
$$
In what follows we will evaluate $A(\delta,0)+B(\delta,0)+C(\delta,0)$.
At $t=0$  we have
\begin{align*}
A(\delta,0)
&=
\int_{E}\int_{\R^N\setminus E}
D_{x_i x_j}
K_\delta(x-y)(u^i(x)-u^i(y))(u^j(x)-u^j(y))
\, d y \, d x
\\
&=A_{11} + A_{12} + A_{21} + A_{22}
\end{align*}
where
\begin{align*}
A_{11} &=
\int_{E} \int_{\R^N\setminus E}
D_{x_i x_j} K_\delta(x-y) u^i(x) u^j(x)
\, d y \, d x
\\
A_{12}
&=
-\int_{E} \int_{\R^N\setminus E}
D_{x_i x_j} K_\delta(x-y) u^i(x) u^j(y)
\, d y \, d x
\\
A_{21}
&=
-\int_{E} \int_{\R^N\setminus E}
D_{x_i x_j} K_\delta(x-y) u^i(y) u^j(x)
\, d y \, d x
\\
A_{22}
&=
\int_{E} \int_{\R^N\setminus E}
D_{x_i x_j} K_\delta(x-y) u^i(y) u^j(y)
\, d y \, d x .
\end{align*}
Let us also write
\begin{align*}
B(\delta,0)
&=
2 \int_{E} \int_{\R^N\setminus E}
D_{x_j} K_\delta(x-y) (u^j(x) - u^j(y)) ( {\rm div}\, (u)(x) + {\rm div}\, (u)(y))
\, d y \, d x
\\
&= B_{11} + B_{12} + B_{21} + B_{22} ,
\end{align*}
where
\begin{align*}
B_{11} &=
2 \int_{E} \int_{\R^N\setminus E}
D_{x_j} K_\delta(x-y) u^j(x) {\rm div}\, (u)(x)
\, d y \, d x
\\
B_{12}
&=
2 \int_{E} \int_{\R^N\setminus E}
D_{x_j} K_\delta(x-y)u^j(x){\rm div}\, (u)(y)
\, d y \, d x
\\
B_{21}
&=
-
2 \int_{E} \int_{\R^N\setminus E}
D_{x_j} K_\delta(x-y)u^j(y){\rm div}\, (u)(x)
\, d y \, d x\\
B_{22} &=
2 \int_{E} \int_{\R^N\setminus E}
D_{y_j} K_\delta(x-y) u^j(y) {\rm div}\, (u)(y)
\, d y \, d x ,
\end{align*}
and
\begin{align*}
C(\delta,0)=C_1 + C_2 + C_3 ,
\end{align*}
where
\begin{align*}
C_1 & =
\int_{E} \int_{\R^N\setminus E}
K_\delta(x-y)
\Big[ {\rm div}\, (u)(x)^2 - tr(D u(x)^2) \Big]
\, d y \, d x
\\
C_2
&=
\int_{E} \int_{\R^N\setminus E}
K_\delta(x-y)
\Big[
{\rm div}\, (u)(y)^2 - tr(D u(y)^2)
\Big]
\, d y \, d x
\\
C_3 &=
2 \int_{E} \int_{\R^N\setminus E}
K_\delta(x-y)
{\rm div}\, (u)(x) {\rm div}\, (u)(y)
\, d y \, d x .
\end{align*}

We compute
\begin{align*}
A_{11}
&=
\int_{E} \int_{\R^N\setminus E}
D_{x_i} \Big[ D_{x_j} K_\delta(x-y) u^i(x) u^j(x) \Big]
\, d y \, d x
\\
& \qquad
-
\int_{E} \int_{\R^N\setminus E}
D_{x_j} K_\delta(x-y) D_{x_i} \Big[  u^i(x) u^j(x) \Big]
\, d y \, d x
\\
&=
\int_{\partial E} \int_{\R^N\setminus E}
D_{x_j} K_\delta(x-y) u^i(x) u^j(x) \nu^i(x)
\, d y \, d x
\\
& \qquad
-
\int_{E} \int_{\R^N\setminus E}
D_{x_j} K_\delta(x-y)  \Big[  D_{x_i} u^i(x) u^j(x) +  u^i(x) D_{x_i}  u^j(x)\Big]
\, d y \, d x .
\end{align*}
Therefore
\begin{align*}
A_{11} + B_{11}
&=
\int_{\partial E} \int_{\R^N\setminus E}
D_{x_j} K_\delta(x-y) u^i(x) u^j(x) \nu^i(x)
\, d y \, d x
\\
&\qquad
+
\int_{E} \int_{\R^N\setminus E}
D_{x_j} K_\delta(x-y)  \Big[  D_{x_i} u^i(x) u^j(x) -  u^i(x) D_{x_i}  u^j(x)\Big]
\, d y \, d x .
\end{align*}
We express the first term as
\begin{align*}
\int_{\partial E} \int_{\R^N\setminus E}
&
D_{x_j} K_\delta(x-y) u^i(x) u^j(x) \nu^i(x)
\, d y \, d x
\\
&
=
- \int_{\partial E} \int_{\R^N\setminus E}
D_{y_j} K_\delta(x-y) u^i(x) u^j(x) \nu^i(x)
\, d y \, d x
\\
&=
\int_{\partial E} \int_{\partial E}
K_\delta(x-y) u^i(x) u^j(x) \nu^i(x) \nu^j(y)
\, d y \, d x
\\
&=
\int_{\partial E} \int_{\partial E}
K_\delta(x-y) h(x)^2 \nu(x) \nu(y)
\, d y \, d x .
\end{align*}
For the second term of $A_{11}+B_{11}$ let us write
\begin{align*}
\int_{E} \int_{\R^N\setminus E}
& D_{x_j} K_\delta(x-y)   D_{x_i} u^i(x) u^j(x)
\, d y \, d x
\\
& =
\int_{E} \int_{\R^N\setminus E}
D_{x_j} \Big[ K_\delta(x-y)   D_{x_i} u^i(x) u^j(x) \Big]
\, d y \, d x
\\
& \qquad -
\int_{E} \int_{\R^N\setminus E}
K_\delta(x-y) D_{x_j} \Big[   D_{x_i} u^i(x) u^j(x) \Big]
\, d y \, d x
\\
& =
\int_{\partial E} \int_{\R^N\setminus E}
K_\delta(x-y)   D_{x_i} u^i(x) u^j(x) \nu^j(x)
\, d y \, d x
\\
& \qquad -
\int_{E} \int_{\R^N\setminus E}
K_\delta(x-y) \Big[   D_{x_j x_i} u^i(x) u^j(x) + {\rm div}\, (u)(x)^2 \Big]
\, d y \, d x .
\end{align*}
The third term of $A_{11}+B_{11}$ is
\begin{align*}
- \int_{E} \int_{\R^N\setminus E}
&
D_{x_j} K_\delta(x-y)    u^i(x) D_{x_i}  u^j(x)
\, d y \, d x
\\
&=
- \int_{E} \int_{\R^N\setminus E}
D_{x_j} \Big[ K_\delta(x-y)    u^i(x) D_{x_i}  u^j(x) \Big]
\, d y \, d x
\\
&\qquad
+
\int_{E} \int_{\R^N\setminus E}
K_\delta(x-y)    D_{x_j} \Big[ u^i(x) D_{x_i}  u^j(x) \Big]
\, d y \, d x
\\
&=
- \int_{\partial E} \int_{\R^N\setminus E}
K_\delta(x-y)    u^i(x) D_{x_i}  u^j(x) \nu^j(x)
\, d y \, d x
\\
&\qquad
+
\int_{E} \int_{\R^N\setminus E}
K_\delta(x-y)     \Big[ D_{x_j} u^i(x) D_{x_i}  u^j(x) + u^i(x) D_{x_j x_i}  u^j(x)\Big]
\, d y \, d x  .
\end{align*}
Therefore
\begin{align*}
A_{11}+B_{11}
&=
\int_{\partial E} \int_{\partial E}
K_\delta(x-y) h(x)^2 \nu(x) \nu(y)
\, d y \, d x
\\
& \qquad
+
\int_{\partial E} \int_{\R^N\setminus E}
K_\delta(x-y)
\Big[ D_{x_i} u^i(x) u^j(x) \nu^j(x)
- u^i(x) D_{x_i}  u^j(x) \nu^j(x) \Big]
\, d y \, d x
\\
& \qquad
+
\int_{E} \int_{\R^N\setminus E}
K_\delta(x-y) \Big[  D_{x_j} u^i(x) D_{x_i}  u^j(x)- {\rm div}\, (u)(x)^2 \Big]
\, d y \, d x ,
\end{align*}
so that
\begin{align*}
A_{11} + B_{11} + C_1
&=
\int_{\partial E} \int_{\partial E}
K_\delta(x-y) h(x)^2 \nu(x) \nu(y)
\, d y \, d x
\\
& \qquad
+
\int_{\partial E} \int_{\R^N\setminus E}
K_\delta(x-y)
\Big[ D_{x_i} u^i(x) u^j(x) \nu^j(x)
- u^i(x) D_{x_i}  u^j(x) \nu^j(x) \Big]
\, d y \, d x .
\end{align*}
But using $u = \nu h$ and ${\rm div}\, (\nu) = H$ where $H$ is the mean curvature of $\partial E$ we have
\begin{align*}
D_{x_i} u^i(x) u^j(x) \nu^j(x)
- u^i(x) D_{x_i}  u^j(x) \nu^j(x)
&=
h(x)^2 H(x)
\end{align*}
and therefore
\begin{align*}
A_{11} + B_{11} + C_1
&=
\int_{\partial E} \int_{\partial E}
K_\delta(x-y) h(x)^2 \nu(x) \nu(y)
\, d y \, d x
+
\int_{\partial E} \int_{\R^N\setminus E}
K_\delta(x-y)
h(x)^2 H(x) .
\end{align*}

In a similar way, we have
\begin{align*}
A_{22} + B_{22} + C_2
&=
\int_{\partial E} \int_{\partial E}
K_\delta(x-y) h(y)^2 \nu(x) \nu(y)
\, d y \, d x
\\
& \qquad
-
\int_{ E} \int_{\partial E}
K_\delta(x-y)
\Big[ D_{y_i} u^i(y) u^j(y) \nu^j(y)
- u^i(y) D_{y_i}  u^j(y) \nu^j(y) \Big]
\, d y \, d x
\\
&=
\int_{\partial E} \int_{\partial E}
K_\delta(x-y) h(y)^2 \nu(x) \nu(y)
\, d y \, d x
-
\int_{ E} \int_{\partial E}
K_\delta(x-y)
h(y)^2 H(y) \, d y \, d x .
\end{align*}
Further calculations show that
\begin{align*}
A_{12}
&=
-\int_{\partial E} \int_{\partial E}
K_\delta(x-y) h(x) h(y) \, d y d x
\\
&
\qquad
- \int_{\partial E} \int_{\R^N\setminus E}
K_\delta(x-y) {\rm div}\, (u)(y) u^i(x) \nu^i(x)
\, d y \, d x
\\
& \qquad
+
\int_{ E} \int_{\partial E}
K_\delta(x-y) {\rm div}\, (u)(x) u^i(y) \nu^i(y)
\, d y \, d x
\\
& \qquad
+
\int_{E} \int_{\R^N\setminus E}
K_\delta(x-y) {\rm div}\, (u)(x) {\rm div}\, (u)(y)
\, d y \, d x ,
\end{align*}
\begin{align*}
A_{21}&=
-\int_{\partial E} \int_{\partial E}
K_\delta(x-y) h(x) h(y) \, d y d x
\\
& \qquad -
\int_{\partial E}\int_{\R^N\setminus E}
K_\delta(x-y) {\rm div}\, (u)(y) u^j(x) \nu^j(x)
\, d y \, d x
\\
&\qquad
+
\int_{E} \int_{\partial E}
K_\delta(x-y) {\rm div}\, (u)(x) u^i(y) \nu^i(y)
\, d y \, d x
\\
& \qquad
+\int_{E}\int_{\R^N\setminus E}
K_\delta(x-y) {\rm div}\, (u)(x) div (u)(y)
\, d y \, d x ,
\end{align*}
and
\begin{align*}
B_{12} + B_{21} &=
2\int_{\partial E} \int_{\R^N\setminus E}
K_\delta(x-y) {\rm div}\, (u)(y) u^j(x) \nu^j(x)
\, d y \, d x
\\
& \qquad
-
2\int_{E} \int_{\partial E}
K_\delta(x-y) {\rm div}\, (u)(x) u^j(y) \nu^j(y)
\, d y \, d x
\\
& \qquad
-4 \int_{E}\int_{\R^N\setminus E}
K_\delta(x-y) {\rm div}\, (u)(x) div (u)(y)
\, d y \, d x ,
\end{align*}
so that
\begin{align*}
A_{12}+A_{21} + B_{12}+B_{21} + C_3
=
-2
\int_{\partial E} \int_{\partial E}
K_\delta(x-y) h(x) h(y) \, d y d x .
\end{align*}
Therefore
\begin{align*}
\frac{d^2}{dt^2}
Per_{s,\delta}(E_{th},\Omega)
\Big|_{t=0}
&=
2\int_{\partial E} \int_{\partial E}
K_\delta(x-y)  h(x)^2  ( \nu(x) \nu(y)-1)
\, d y \, d x
\\
& \qquad
-2
\int_{\partial E}
h(x)
\int_{\partial E}
K_\delta(x-y)  ( h(y) - h(x)) \, d y d x
\\
& \qquad
-
\int_{\partial E}
h(x)^2 H(x)
\int_{\R^N}
(\chi_E(y)- \chi_{E^c}(y))
K_\delta(x-y)
\, d y \, d x .
\end{align*}
Taking the limit as $\delta\to 0$ we find \eqref{second var}.
\end{proof}

\medskip
\noindent
\begin{proof}[Proof of Proposition~\ref{prop derivative H}]
Let $\nu_t(x)$ denote the unit normal vector to $\partial E_t$ at $x \in \partial E_t$ pointing out of $E_t$.
Note that $\nu(x) = \nu_0(x)$.
Let $L_t$ be the half space defined by $L_t = \{ x: \langle x - p_t , \nu_t(p_t)\rangle>0\}$.
Then
\begin{align}
\label{int ht}
H_{\Sigma_{th}}^s(p_t) =
\int_{\R^N} \frac{\chi_{E_t} (x)-\chi_{L_t}(x)-\chi_{E^c}(x) + \chi_{L_t^c}(x)} {|x-p_t|^{N+s}} \, d x
\end{align}
since the function $1-2 \chi_{L_t}$ has zero principal value. Note that the integral in \eqref{int ht} is well defined and
\begin{align*}
H_{\Sigma_{th}}^s(p_t)
& =
2
\int_{\R^N} \frac{\chi_{E_t} (x)-\chi_{L_t}(x)} {|x-p_t|^{N+s}} \, d x .
\end{align*}

For $\delta >0$ let $\eta \in C^\infty(\R^N)$ be a radially symmetric cut-off function with $\eta(x)=1 $ for $|x|\geq 2$, $\eta(x) = 0$ for $|x|\leq 1$. Define $\eta_\delta(x) = \eta(x/\delta)$ and
write
$$
\int_{\R^N }
\frac{\chi_{E_t} (x)-\chi_{L_t}(x)} {|x-p_t|^{N+s}} \, d x =
f_\delta (t) + g_\delta(t)
$$
where
\begin{align*}
f_\delta(t)
&=
\int_{\R^N }
\frac{\chi_{E_t} (x)-\chi_{L_t}(x)} {|x-p_t|^{N+s}} \eta_\delta(x-p_t) \, d x
\end{align*}
and $g_\delta(t)$ is the rest.
Then it is direct that $f_\delta$ is differentiable and
\begin{align*}
f_\delta'(0)
&=
\int_{\partial E}
\frac{h(x)}{|x-p|^{N+s}} \eta_\delta(x-p)
\\
& \qquad
-
\int_{\partial L_0}
\frac{h(p) \langle \nu(p) , \nu(p) \rangle
-\langle x-p,\frac{\partial \nu_t(p_t)}{\partial t}|_{t=0}\rangle
}{|x-p|^{N+s}}\eta_\delta(x-p)
\\
&
\qquad
+ (N+s)
h(p)
\int_{\R^N}
\frac{\chi_{E}(x) - \chi_{L_0}(x)}{|x-p|^{N+s+2} }
\langle x-p,\nu(p)\rangle \eta_\delta(x-p) dx
\\
&
\qquad
-
h(p)
\int_{\R^N}
\frac{\chi_{E}(x) - \chi_{L_0}(x)}{|x-p|^{N+s} }
\langle \nabla \eta_\delta(x-p),\nu(p)\rangle  dx .
\end{align*}
We integrate the third term by parts
\begin{align*}
& (N+s)
\int_{\R^N}
\frac{\chi_{E}(x) - \chi_{L_0}(x)}{|x-p|^{N+s+2} }
\langle x-p,\nu(p)\rangle \eta_\delta(x-p) dx
\\
&=
-
\int_{\R^N}
( \chi_{E}(x) - \chi_{L_0}(x) )
\langle \nabla\frac{1}{|x-p|^{N+s} }
,\nu(p)\rangle \eta_\delta(x-p) dx
\\
& =
-
\int_{\partial E} \frac{\langle \nu(x) ,\nu(p) \rangle }{|x-p|^{N+s}}
\eta_\delta(x-p)
+
\int_{\partial L_0}
\frac{\langle \nu(p), \nu(p) \rangle }{|x-p|^{N+s}}
\eta_\delta(x-p)
\\
& \qquad +\int_{\R^N}\frac{\chi_{E(x)} - \chi_{L_0}(x)}{|x-p|^{N+s}}
\langle \nabla \eta_\delta(x-p) , \nu(p) \rangle dx .
\end{align*}
Since $\eta_\delta $ is radially symmetric,
$$
\int_{\partial L_0}
\frac{
\langle x-p, \frac{\partial \nu_t(p_t)}{\partial t}|_{t=0} \rangle
}{|x-p|^{N+s}}\eta_\delta(x-p) \, d x=0
$$
and then
\begin{align*}
f_\delta'(0)
&=
\int_{\partial E}
\frac{h(x)}{|x-p|^{N+s}} \eta_\delta(x-p)
dx
-
h(p)
\int_{\partial E} \frac{\langle \nu(x) ,\nu(p) \rangle }{|x-p|^{N+s}}
\eta_\delta(x-p) dx,
\end{align*}
which we write as
\begin{align*}
f_\delta'(0)
&=
\int_{\partial E}
\frac{h(x)-h(p)}{|x - p|^{N+s}} \eta_\delta(x-p)
\, d x
+
h(p)
\int_{\partial E} \frac{1-\langle \nu(x) ,\nu(p) \rangle }{|x-p|^{N+s}}
\eta_\delta(x-p) \, d x.
\end{align*}
We claim that  $g_\delta'(t)\to 0$ as $\delta \to 0$, uniformly for $t$ in a neighborhood of 0.
Indeed, in a neighborhood of $p_t$ we can represent $\partial E_t$ as a graph of a function $G_t$ over $L_t \cap B(p_t,2\delta)$, with $G_t$ defined in a neighborhood of $0$ in $\R^{N-1}$, $G_t(0)=0$, $\nabla_{y'} G_t(0)=0$ and smooth in all its variables (we write $y' \in \R^{N-1}$).
Then $g_\delta(t)$ becomes
$$
g_\delta(t)
=
\int_{|y'|<2\delta}
\int_0^{G_t(y')}
\frac{1}{(|y'|^2 + y_N^2)^{\frac{N+s}{2}}}
(1-\eta_\delta(y', y_N) )
d y_N
d y'
$$
so that
$$
g_\delta'(t)
=
\int_{|y'|<2\delta}
\frac{1}{(|y'|^2 + G_t(y')^2)^{\frac{N+s}{2}}}
\frac{\partial G_t}{\partial t}(y')
(1-\eta_\delta(y', y_N) )
d y' .
$$
But $|G_t(y')|\leq K |y'|^2$ and $| \frac{\partial G_t}{\partial t} (y') |\leq K |y'|^2$, so
$$
g_\delta'(t) \leq C \delta^{1-s} .
$$
Therefore
\begin{align*}
\frac{d}{d t}
H_{\Sigma_{th}}^s(p_t)
\Big|_{t=0}
& = 2
\lim_{\delta \to 0}
\Bigg[
\int_{\partial E}
\frac{h(x)-h(p)}{|x - p|^{N+s}} \eta_\delta(x-p) dx
\\
& \qquad
+
h(p)
\int_{\partial E} \frac{1-\langle \nu(x) ,\nu(p) \rangle }{|x-p|^{N+s}}
\eta_\delta(x-p) dx
\Bigg] .
\end{align*}
Letting $\delta\to 0$ we find \eqref{der mean C}.
\end{proof}

\medskip
\noindent
\begin{proof}[Proof of Corollary~\ref{coro entire stable}]
The same argument as in the proof of Proposition~\ref{prop derivative H} shows that if $F:\Sigma \to \R^N$ is a smooth bounded vector field and we let
$E_{t}$ be the set whose boundary $\Sigma_t = \pp E_{t} $ is parametrized as
$$
\pp E_{th} =  \{ x+ t F(x)\ /\ x\in \pp E \},
$$
with exterior normal vector close to $\nu$, then
$$
\frac{d}{d t}
H_{\Sigma_t}^s(p_t)
\Big|_{t=0}
= 2 \JJ_\Sigma^s[ \langle F , \nu\rangle] (p) ,
$$
where $p_t = p + t F(p)$.
Taking as $F(x) = e_N = (0,\ldots,0,1)$ we conclude that $w = \langle \nu,e_N \rangle$ is a positive function satisfying
$$
\JJ_\Sigma^s[ w ] (x) =0
 \quad\text{for all } x \in \Sigma.
$$
More explicitly
\begin{align}
\label{20a}
\text{p.v.}
\int_{\Sigma}
\frac{w(y)-w(x)}{|y-x|^{N+s}} dy
+
w(x) A(x) = 0 \quad\text{for all } x \in \Sigma,
\end{align}
where
$$
A(x) = \int_\Sigma
\frac{\langle \nu (x)- \nu(y), \nu (x) \rangle }{|x-y|^{N+s}} dy.
$$

As in the classical setting we can show that $\Sigma$ is stable in the sense that \eqref{linearly stable} holds.
Let $\phi \in C_0^\infty(\Sigma)$ and
observe that
\begin{align*}
\frac12
\int_{\Sigma}\int_{\Sigma}
\frac{(\phi(x) - \phi(y))^2}{|x-y|^{N+s} } d x d y
=
\int_{\Sigma}\int_{\Sigma}
\frac{(\phi(x) - \phi(y) ) \phi(x)}{|x-y|^{N+s} } d x d y .
\end{align*}
Write $\phi = w \psi$ with $\psi \in C_0^\infty(\Sigma)$. Then
\begin{align}
\nonumber
\int_{\Sigma}\int_{\Sigma}
\frac{(\phi(x) - \phi(y) ) \phi(x)}{|x-y|^{N+s} } d x d y
& =
\int_{\Sigma}\int_{\Sigma}
\frac{(w(x) - w(y) ) w(x) \psi(x)^2}{|x-y|^{N+s} } d x d y
\\
\label{term2}
& \quad +
\int_{\Sigma}\int_{\Sigma}
\frac{(\psi(x) - \psi(y) ) w(x) w(y) \psi(x)}{|x-y|^{N+s} } d x d y .
\end{align}
Multiplying \eqref{20a} by $w \psi^2$ and integrating we get
\begin{align}
\label{term3}
\int_{\Sigma}\int_{\Sigma}
\frac{(w(x) - w(y) ) w(x) \psi(x)^2}{|x-y|^{N+s} } d x d y
=
\int_{\Sigma}A(x) w(x)^2 \psi(x)^2 dx
=
\int_{\Sigma} A(x) \phi(x)^2 dx .
\end{align}
For the second term in \eqref{term2} we observe that
\begin{align}
\label{term4}
\int_{\Sigma}\int_{\Sigma}
\frac{(\psi(x) - \psi(y) ) w(x) w(y) \psi(x)}{|x-y|^{N+s} } d x d y
 & =
\frac12
\int_{\Sigma}\int_{\Sigma}
\frac{(\psi(x) - \psi(y) )^2 w(x) w(y) }{|x-y|^{N+s} } d x d y .
\end{align}
Therefore, combining \eqref{term2}, \eqref{term3}, \eqref{term4} we obtain
\begin{align*}
\frac12
\int_{\Sigma}\int_{\Sigma}
\frac{(\phi(x) - \phi(y))^2}{|x-y|^{N+s} } d x d y
& =
\int_{\Sigma}A(x)\phi(x)^2 dx
\\
\nonumber
& \quad +
\frac12
\int_{\Sigma}\int_{\Sigma}
\frac{(\psi(x) - \psi(y) )^2 w(x) w(y) }{|x-y|^{N+s} } d x d y .
\end{align*}
and tis shows \eqref{linearly stable}.
\end{proof}

\section{Graph representation}
\label{app}

Let
$r,\theta$ be polar coordinates for $x\in\R^2$, i.e.
$
x=(r\cos\theta,r\sin\theta) $.
Then we define
$\hat r = \frac{x}{r} = (\cos\theta,\sin\theta)^T$,
$\hat \theta = (-\sin\theta,\cos\theta)^T$.
Given a point $X\in \Sigma_0$, $X = (x,F_\ve(x))$ we let
$\Pi_1(X)$, $\Pi_2(X)$ and $\nu_{\Sigma_0}(X)$ be tangent and normal vector to $\Sigma_0$ at $X$ as defined in \eqref{pi1 pi2}, \eqref{nu sigma0} and let $\Pi = [\Pi_1,\Pi_2]$.
Then we consider coordinates $t=(t_1,t_2)$ and $t_3$ defined by
$$
(t_1,t_2,t_3) \mapsto
\Pi_1(X) t_1 + \Pi_2(X)t_2 + \nu_{\Sigma_0}(X) t_3.
$$
Let
$$
R_X = \delta |X|
$$
where $\delta>0$ is a small fixed constant.

Given $h$ on $\Sigma_0$ with $\|h\|_* \leq \sigma_0 \ve^{\frac12}$, we can represent $\partial E_h$ near $X_h = X + \nu_{\Sigma_0}(X) h(X)$ as
$$
\Pi(X)t+\nu_{\Sigma_0}(X) g(t) , \quad |t -t_0(X)|\leq 2 R_X
$$
where $g$ is of class $C^{2,\alpha}$ in the ball $B_{2R_X}(t_0(X))$,
with $t_0=t_0(X)$ such that $\Pi(X) t_0$ is the orthogonal projection of $X$ onto the plane generated by $\Pi_1(X)$, $\Pi_2(X)$.
We call $G_X$ the operator defined by
$$
g_h = G_X(h) .
$$

To get the correct dependence of the various functions on $|X|$, let $r_0 = |x|$.
Let us change variables
\begin{align}
\label{scaled vars}
y = x + r_0 B \bar y,\quad t = r_0 \bar t, \quad g = r_0 \bar g
\end{align}
where the $2\times2$ matrix $B$ is given by
$$
B = [\hat r ,\hat \theta]
$$
(and depends on $X$),
so that the equation takes the form
\begin{align*}
0=
\Pi(X) \bar t + \bar g \nu_{\Sigma_0}(X) -
\left[
\begin{matrix}
\frac1{r_0} x + B \bar y\\ r_0^{-1} F_\ve(x + r_0 B \bar y)
\end{matrix}
\right]
- \frac{1}{r_0} h(x + r_0 B \bar y) \nu_{\Sigma_0}(x + r_0 B \bar y) .
\end{align*}
To simplify notation we will omit the bars in $\bar t$, $\bar y$, $\bar g$ and let $\Phi=(y,g)$.

We search for a function $\Phi(t) = (y(t),g(t))$, $y(t)\in\R^2$, $g(t)\in\R$ that solves
\begin{align}
\label{eq graph}
\FF(\Phi,X,h)=0
\end{align}
where
\begin{align*}
\mathcal F(\Phi,X,h)(t)
& =
 \Pi(X)  t +  g(t) \nu_{\Sigma_0}(X) -
\left[
\begin{matrix}
\frac1{r_0} x
+ B y(t)\\ r_0^{-1} F_\ve(x+r_0 B  y(t))
\end{matrix}
\right]
\\
&\qquad
- \frac{1}{r_0} h(x + r_0  B y(t)) \nu_{\Sigma_0}(x + r_0  B y(t) ) .
\end{align*}

We search for functions $y,g$ defined in a ball $B_{\delta_0}(t_0(X)) $, where $\delta_0>$ is some small fixed number. By shifting $t$ to $t-t_0(X)$ we will assume $t_0(X)=0$.

Let $X$ be a Banach space of functions over $\overline B_{\delta_0}(0) \subset \R^2$ with values in $\R^3$. We will take later $X$ either $C^1$, $C^2$ or $C^{2,\alpha}$.
Let $ B_{\delta_1}(\Phi_0) \subset X$ be the open ball of radius $\delta_1>0$ centered at the function
$$
\Phi_0 = (y_0,0)
$$
where $y_0(t) = t$. Note that $\FF(\cdot,X,h)$ maps $\overline B_{\delta_1}(\Phi_0) $ into $X$.
We intend to show that if $\delta_0$ is fixed small, $\delta_1$ is small depending on $\ve$, and $\|h\|_*$, then there is a unique solution $\Phi \in \overline B_{\delta_1}(\Phi_0) $ of $\FF(\Phi,X,h)= 0$.

For this we need to construct a bounded left inverse for $D_\Phi \FF(\Phi_0,X,h)$. We have, for $\Phi = (y,g)$
\begin{align}
\label{D F}
& D_\Phi \FF(\Phi,X,h) \\
\nonumber
& =
\left[
\begin{matrix}
-B_{1,1} -D_{y_1} \nu_{\Sigma_0}^{(1)} h  - \nu_{\Sigma_0}^{(1)}D_{y_1} h
&
-B_{1,2}
-D_{y_2} \nu_{\Sigma_0}^{(1)} h
- \nu_{\Sigma_0}^{(1)}D_{y_2} h
& \nu_{\Sigma_0}^{(1)}(X)
\\
-B_{2,1}
-D_{y_1} \nu_{\Sigma_0}^{(2)} h
- \nu_{\Sigma_0}^{(2)}D_{y_1} h
&
-B_{2,2}
-D_{y_2} \nu_{\Sigma_0}^{(2)} h
- \nu_{\Sigma_0}^{(2)}D_{y_2} h
& \nu_{\Sigma_0}^{(2)}(X)
\\
- D_{y_1} F_\ve -D_{y_1} \nu_{\Sigma_0}^{(3)} h  - \nu_{\Sigma_0}^{(3)}D_{y_1} h
&
-D_{y_2} F_\ve -D_{y_2} \nu_{\Sigma_0}^{(3)} h  - \nu_{\Sigma_0}^{(3)}D_{y_2} h
&
\nu_{\Sigma_0}^{(3)}(X)
\end{matrix}
\right] ,
\end{align}
where  $h$, $\nu_{\Sigma_0}$, $F_\ve$ are evaluated at $x+ r_0 B y(t) $ when it is not explicitly written to depend on $X$ (third column). We write  $\nu_{\Sigma_0}^{(i)}$ the $i$-th component $\nu_{\Sigma_0}$.

We take
$$A=
\left[
\begin{matrix}
- B^{-1}
&
0
\\
0
&
1
\end{matrix}
\right]
$$
as a simple approximation of the inverse of $D_\Phi \FF(\Phi_0,X,h) $. We claim that
\begin{align}
\label{F lipschitz}
\| A( \FF(\Phi_1,X,h) - \FF(\Phi_2,X,h) ) - (\Phi_1- \Phi_2) \|_X \leq L \|\Phi_1 - \Phi_2\|_X
\end{align}
for $\Phi_1,\Phi_2 \in \overline B_{\delta_1}(\Phi_0)$,
where $0<L<1$ and that
\begin{align}
\label{F small}
\| A \FF(\Phi_0,X,h) \|_X \leq (1-L)\delta_1.
\end{align}
With \eqref{F lipschitz}, \eqref{F small} we conclude from the contraction mapping principle,  applied to
\begin{align}
\label{oper T}
T(\Phi) = \Phi - A \FF(\Phi,X,h)
\end{align}
that  there is a unique $\Phi \in \overline B_{\delta_1}(\Phi_0)$ such that $\FF(\Phi,X,h)=0$.


To prove  estimates \eqref{F lipschitz}, \eqref{F small} we always assume $\|h\|_*\leq \sigma_0 \ve^{\frac12}$.

We consider first the case $r_0\geq \delta \ve^{-\frac12}|\log\ve|$.
Let us proceed with  \eqref{F lipschitz} and $\| \ \|_X = \| \ \|_{C^1}$.
Let $\Phi_1 = (y_1,g_1)$,  $\Phi_2 = (y_2,g_2) \in \overline B_{\delta_1}(\Phi_0)$. Then we claim that
\begin{align}
\label{est F1}
\| A( \FF(\Phi_1,X,h) - \FF(\Phi_2,X,h) ) - (\Phi_1- \Phi_2) \|_{C^1} \leq \ve^{\frac12}\|\Phi_1-\Phi_2\|_{C^1} .
\end{align}
Indeed
\begin{align*}
A( \FF(\Phi_1,X,h) - \FF(\Phi_2,X,h) ) - (\Phi_1- \Phi_2) = D_1 + D_2 + D_3 .
\end{align*}
We estimate the norm of
\begin{align*}
D_1= ( g_1-g_2 ) \left( A \nu_{\Sigma_h}  -  e_3 \right)  ,
\end{align*}
where $e_3 = (0,0,1)$.
By Corollary~\ref{coro prop F} $|A \nu_{\Sigma_h}  -  e_3|\leq C\ve^{\frac12}$ so
$$
\|D_1\|_{C^1} \leq C \ve^{\frac12} \|\Phi_1-\Phi_1\|_{C^1}.
$$
Next,
$$
D_2 =
-\left[
\begin{matrix}
0
\\
r_0^{-1}( F_\ve( x + r_0 B y_1(t)) - F_\ve( x + r_0 B y_2(t)))
\end{matrix}
\right]
$$
and using Corollary~\ref{coro prop F}
$$
\|D_2\|_{C^1}\leq C \ve^{\frac12} \|\Phi_1-\Phi_2\|_{C^1}.
$$
Finally
\begin{align*}
D_3 = - \frac{1}{r_0}\left(
h(x + r_0 B y_1(t)) \nu_{\Sigma_0}(x + r_0 B y_1(t)) -
h(x + r_0 B y_2(t)) \nu_{\Sigma_0}(x + r_0 B y_2(t))
\right)
\end{align*}
so
\begin{align*}
\sup_{|t|\leq\delta_0} |D_3|\leq C \ve^{\frac12}\|\Phi_1-\Phi_2\|_{C^1}.
\end{align*}
We write the derivative as
\begin{align*}
D_t D_3 & = ( D h(x + r_0 B y_1(t))  - D h(x + r_0 B y_2(t)) ) D y_1(t)\\
&\quad+  D h(x + r_0 B y_2(t)) B( D y_1(t)-D y_2(t)).
\end{align*}
Since $\|h\|_*\leq \sigma_0\ve^{\frac12}$ and  $\| \ \|_*$ is weighted $C^{2,\alpha}$ norm we have
\begin{align*}
& \sup_{|t|\leq \delta_0 }\left| ( D h(x + r_0 B y_1(t))  - D h(x + r_0 B y_2(t)) ) D y_1(t)\right|
\\
&\leq \|D^2 h\|_{L^\infty} \|y_1-y_2\|_{C^1} \|y_1\|_{C^1}
\leq \|h\|_*  \|y_1-y_2\|_{C^1} \leq C \ve^{\frac12}  \|\Phi_1-\Phi_2\|_{C^1} .
\end{align*}
The other term in  $D_t D_3$  is estimated as
\begin{align*}
\sup_{|t|\leq \delta_0} |  D h(x + r_0 B y_2(t)) | (B(Dy_1(t) - Dy_2(t))| \leq C \ve^{\frac12} \|\Phi_1-\Phi_2\|_{C^1}.
\end{align*}
Therefore
$$
\|D_3\|_{C^1} \leq C \ve^{\frac12} \|\Phi_1-\Phi_2\|_{C^1},
$$
and this proves \eqref{est F1}.

Regarding \eqref{F small}, we have
\begin{align*}
& A \FF(\Phi_0,X,h)   \\
& =  - \frac{1}{r_0} A \left[\begin{matrix} 0 \\ F_\ve(x_0)-F_\ve(x_0+r_0  B t)\end{matrix}\right] + A\left( \Pi(X) - \left[\begin{matrix} B \\ 0 \end{matrix} \right] \right)t
\\
&\quad\quad -\frac{1}{r_0} h(x_0 + r_0 B t) \nu_{\Sigma_0}(x_0 + r_0 B t) ,
\end{align*}
and we see that
$$
\| A \FF(\Phi_0,X_0,h)  \|_{C^1}\leq C \ve^{\frac12}.
$$
Then \eqref{F lipschitz}, \eqref{F small} hold with $C^1$ norm and $\delta_1 = C \ve^{\frac12}$. We conclude that there is a unique $\Phi$ with $\|\Phi-\Phi_0\|_{C^1(\overline B_{\delta_0}(0))} \leq C \ve^{\frac12}$ such that $\FF(\Phi,X,h)=0$.

We can get also estimates for $\Phi$ in $C^{2,\alpha}$.
For this we claim that
for $\Phi_1,\Phi_2 \in C^{2,\alpha}(\overline B_{\delta_0}(0))$:
\begin{align*}
& \| D^2 \{ A( \FF(\Phi_1,X_0,h) - \FF(\Phi_2,X_0,h) ) - (\Phi_1 - \Phi_2) \} \|_{C^0}\\
&\leq C \ve^{\frac12} (
\|\Phi_1\|_{C^1}^2 \|\Phi_1-\Phi_2\|_{C^0}^\alpha + \|\Phi_1\|_{C^1} \|\Phi_1-\Phi_2\|_{C^1} + \|D^2 \Phi_1\|_{C^0} \|\Phi_1-\Phi_2\|_{C^0} \\
&\qquad + \|D^2( \Phi_1-\Phi_2) \|_{C^0}) .
\end{align*}
Let us consider $\Phi_1, \Phi_2$ with  $\|\Phi_i - \Phi_0\|_{C^1} \leq C \ve^{\frac12}$ so $\|\Phi_i\|_{C^1} \leq C$. Then we can simplify the above estimate to
\begin{align}
\nonumber
& \| D^2 \{ A( \FF(\Phi_1,X_0,h) - \FF(\Phi_2,X_0,h) ) - (\Phi_1 - \Phi_2) \} \|_{C^0}
\\
\label{500}
&\leq C \ve^{\frac12} (\|\Phi_1-\Phi_2\|_{C^1}^\alpha + \| D^2 \Phi_1 \|_{C^0} \|\Phi_1-\Phi_2\|_{C^0} + \| D^2(\Phi_1-\Phi_2)\|_{C^0}) .
\end{align}
In a similar way, assuming $\|\Phi_i\|_{C^1} \leq C$,
\begin{align*}
& [ D^2 \{ A( \FF(\Phi_1,X_0,h) - \FF(\Phi_2,X_0,h) ) - (\Phi_1 - \Phi_2) \} ]_{\alpha,B_{\delta_0}} \\
&\leq C\ve^{\frac12} ( [D^2 (\Phi_1-\Phi_2)]_{\alpha,B_{\delta_0}}  + 1 + \|D^2 \Phi_1 \|_{C^0} + \| D^2( \Phi_1-\Phi_2)\|_{C^0} ) .
\end{align*}
Let $T$ be the operator defined by \eqref{oper T} and $\Phi_k$ the sequence defined by
\begin{align}
\label{501}
\Phi_{k+1} = T(\Phi_k) , \Phi_0= (y_0,0) .
\end{align}
As shown before $\Phi_{k+1}$ is a Cauchy sequence in $\overline B_{\delta_1}(\Phi_0)$ with $C^1$ topology. Using \eqref{500} we get
\begin{align*}
\|D^2 \Phi_{k+1}\|_{C^0} &\leq \|D^2 ( T ( \Phi_k) - T(\Phi_0) )\|_{C^0} + \|D^2 T(\Phi_0)  \|_{C^0} \\
&\leq C \ve^{\frac12} ( \|D^2 \Phi_k\|_{C^0} + 1) .
\end{align*}
Iterating this inequality shows that $\|D^2 \Phi_k\|_{C^0} $ remains bounded as $k\to\infty$.
Similarly
$$
[D^2 T(\Phi_{k+1})]_{\alpha,B_{\delta_0}} \leq C \ve^{\frac12} ( [D^2 T(\Phi_k)]_{\alpha,B_{\delta_0}} +1)
$$
and iterating this shows that $ [D^2 T(\Phi_k)]_{\alpha,B_{\delta_0}}$ remains bounded.
Therefore the fixed point $\Phi$ actually satisfies $\Phi\in C^{2,\alpha}(\overline B_{\delta_0})$. Again using \eqref{500} and \eqref{501} we find actually
$$
\|\Phi - \Phi_0\|_{C^{2,\alpha}}\leq C \ve^{\frac12}.
$$

\begin{proof}[Proof of Lemma~\ref{lemma est G}]
Estimate \eqref{at g} follows from the definition and the mean value formula.

Let us prove \eqref{B g}:
$$
g(z+t_0(X)) - g(t_0(X))-\nabla g(t_0(X))z = \int_0^1(1-\tau) g''(t_0(X)+\tau z ) [z^2] \,d\tau ,
$$
so that
\begin{align*}
|B(g)| & \leq \|g''\|_{L^\infty(B_{2R_X}(t_0(X)))} |z|
\leq \frac{\|g\|_b}{|X|} |z| \quad\text{in } B_{2R_X}(0).
\end{align*}

To prove the estimates for $g_i = DG_X(h)[h_i]$ we give first and expression for this function.
Next we compute $g_i = DG_X(h)[h_i]$. For this we write $h(y,s) = h(y) + s h_i(y)$, and let us write  $\Phi' = \pd{}{s} \Phi$, where $\Phi = (y,g)$.  We use the scaled variables as defined in \eqref{scaled vars}
and find
$$
D_\Phi \FF \Phi' = h_i(x+r_0 B y(t)) \nu_{\Sigma_0}(x+r_0 B y(t))
$$
where $D_\Phi \FF$ is given in \eqref{D F} and is evaluated at $\Phi,X,h$. From this formula we get
\begin{align}
\label{form gi}
g_i(t)=h_i(x+r_0 B y(t) ) \frac{m}{D} ,
\end{align}
where
$$
m = m_0 + h m_1
$$
and $D$ is the determinant of $D_\Phi \FF$ and can be written as
$$
D = D_0 + h D_1  + D_{y_1}h D_2 + D_{y_2} h  D_3 .
$$
The functions $D_i$, $m_0$, $m_1$,  have the following expressions:
\begin{align*}
D_0 &= \nu_{\Sigma_0}^{(1)}(X) ( B_{21} D_{y_2}F - B_{22} D_{y_1}F ) -   \nu_{\Sigma_0}^{(2)}(X)  (B_{11} D_{y_2}F - B_{12} D_{y_1} F)+  \nu_{\Sigma_0}^{(3)}(X)
\end{align*}
\begin{align*}
D_1 &=
\nu_{\Sigma_0}^{(1)}(X) \big[ B_{21} D_{y_2} \nu_{\Sigma_0}^{(3)} + D_{y_2} F D_{y_1} \nu_{\Sigma_0}^{(2)} +D_{y_1} \nu_{\Sigma_0}^{(2)} D_{y_2} \nu_{\Sigma_0}^{(3)}
\\
&\qquad  - D_{y_1} F  D_{y_2} \nu_{\Sigma_0}^{(2)} - B_{22}   D_{y_1} \nu_{\Sigma_0}^{(3)}- D_{y_1} \nu_{\Sigma_0}^{(3)} D_{y_2} \nu_{\Sigma_0}^{(2)}
\big]
\\
&\quad -\nu_{\Sigma_0}^{(2)}(X) \big[ B_{11} D_{y_2}  \nu_{\Sigma_0}^{(3)}  +  D_{y_1}  \nu_{\Sigma_0}^{(1)} D_{y_2} F + D_{y_1}  \nu_{\Sigma_0}^{(1)}   D_{y_2}  \nu_{\Sigma_0}^{(3)}
\\
&\qquad- D_{y_1} F D_{y_2}  \nu_{\Sigma_0}^{(1)} -B_{12} D_{y_1}  \nu_{\Sigma_0}^{(3)}-D_{y_1}  \nu_{\Sigma_0}^{(3)} D_{y_2}  \nu_{\Sigma_0}^{(1)}
\big]
\\
&\quad +\nu_{\Sigma_0}^{(3)}(X) \big[ B_{11} D_{y_2}  \nu_{\Sigma_0}^{(2)} + B_{22} D_{y_1}  \nu_{\Sigma_0}^{(1)} + D_{y_1}  \nu_{\Sigma_0}^{(1)}D_{y_2}  \nu_{\Sigma_0}^{(2)}
\\
&\qquad - B_{21} D_{y_2}  \nu_{\Sigma_0}^{(1)} - B_{12} D_{y_1}  \nu_{\Sigma_0}^{(2)} - D_{y_1}  \nu_{\Sigma_0}^{(2)} D_{y_2}  \nu_{\Sigma_0}^{(1)}
\big] ,
\end{align*}
where all functions are evaluated at $x + r_0 B y(t)$ if not explicitly written;
\begin{align*}
D_2 & =   D_{y_2} F ( \nu_{\Sigma_0}^{(1)}(X) \nu_{\Sigma_0}^{(2)} - \nu_{\Sigma_0}^{(2)}(X) \nu_{\Sigma_0}^{(1)})  + B_{22} ( \nu_{\Sigma_0}^{(3)}(X) \nu_{\Sigma_0}^{(1)} - \nu_{\Sigma_0}^{(1)}(X) \nu_{\Sigma_0}^{(3)})
\\
&\quad + B_{12} ( \nu_{\Sigma_0}^{(2)}(X) \nu_{\Sigma_0}^{(3)} - \nu_{\Sigma_0}^{(3)}(X) \nu_{\Sigma_0}^{(2)})
\\
D_3 &=  - D_{y_1} F ( \nu_{\Sigma_0}^{(1)}(X) \nu_{\Sigma_0}^{(2)} - \nu_{\Sigma_0}^{(2)}(X) \nu_{\Sigma_0}^{(1)})  + B_{11} ( \nu_{\Sigma_0}^{(3)}(X) \nu_{\Sigma_0}^{(2)} - \nu_{\Sigma_0}^{(2)}(X) \nu_{\Sigma_0}^{(3)})
\\
&\quad + B_{21} ( \nu_{\Sigma_0}^{(1)}(X) \nu_{\Sigma_0}^{(3)} - \nu_{\Sigma_0}^{(3)}(X) \nu_{\Sigma_0}^{(1)}) .
\end{align*}
For $m_0$, $m_1$ we have a similar expressions
\begin{align*}
m_0 &= \nu_{\Sigma_0}^{(1)} ( B_{21} D_{y_2}F - B_{22} D_{y_1}F ) -   \nu_{\Sigma_0}^{(2)}  (B_{11} D_{y_2}F - B_{12} D_{y_1} F) + \nu_{\Sigma_0}^{(3)}
\end{align*}
\begin{align*}
m_1 &=
\nu_{\Sigma_0}^{(1)} \big[ B_{21} D_{y_2} \nu_{\Sigma_0}^{(3)} + D_{y_2} F D_{y_1} \nu_{\Sigma_0}^{(2)} +D_{y_1} \nu_{\Sigma_0}^{(2)} D_{y_2} \nu_{\Sigma_0}^{(3)}
\\
&\qquad  - D_{y_1} F  D_{y_2} \nu_{\Sigma_0}^{(2)} - B_{22}   D_{y_1} \nu_{\Sigma_0}^{(3)}- D_{y_1} \nu_{\Sigma_0}^{(3)} D_{y_2} \nu_{\Sigma_0}^{(2)}
\big]
\\
&\quad -\nu_{\Sigma_0}^{(2)} \big[ B_{11} D_{y_2}  \nu_{\Sigma_0}^{(3)}  +  D_{y_1}  \nu_{\Sigma_0}^{(1)} D_{y_2} F + D_{y_1}  \nu_{\Sigma_0}^{(1)}   D_{y_2}  \nu_{\Sigma_0}^{(3)}
\\
&\qquad- D_{y_1} F D_{y_2}  \nu_{\Sigma_0}^{(1)} -B_{12} D_{y_1}  \nu_{\Sigma_0}^{(3)}-D_{y_1}  \nu_{\Sigma_0}^{(3)} D_{y_2}  \nu_{\Sigma_0}^{(1)}
\big]
\\
&\quad +\nu_{\Sigma_0}^{(3)} \big[ B_{11} D_{y_2}  \nu_{\Sigma_0}^{(2)} + B_{22} D_{y_1}  \nu_{\Sigma_0}^{(1)} + D_{y_1}  \nu_{\Sigma_0}^{(1)}D_{y_2}  \nu_{\Sigma_0}^{(2)}
\\
&\qquad - B_{21} D_{y_2}  \nu_{\Sigma_0}^{(1)} - B_{12} D_{y_1}  \nu_{\Sigma_0}^{(2)} - D_{y_1}  \nu_{\Sigma_0}^{(2)} D_{y_2}  \nu_{\Sigma_0}^{(1)}
\big] .
\end{align*}
Let us rewrite \eqref{form gi} as
\begin{align}
\label{decomp DG}
g_i = \bar g_{i} +\tilde g_i
\end{align}
where
\begin{align*}
\bar g_{i} &= h_i(x+r_0 B y(t) )
\\
\tilde g_i (t) & = h_i(x+r_0 B y(t) )  \frac{(m_0-D_0) + (m_1-D_1) h - D_{y_1}h D_2 - D_{y_2} h  D_3 }{ D_0 + h D_1  + D_{y_1}h D_2 + D_{y_2} h  D_3} .
\end{align*}
These expressions imply the following estimate (after changing variables back from \eqref{scaled vars}):
\begin{align*}
\|\bar g_i\|_b \leq C \|h_i\|_*
\end{align*}
where $\| \ \|_b$ is the norm \eqref{norm b}.
Therefore
\begin{align*}
|B(\bar g_i)(X,z) | & \leq C \frac{\|h_i\|_*}{|X|} |z| .
\end{align*}
Moreover we can write $\tilde g_i$ as
\begin{align*}
\tilde g_i (t) & = h_i(x+r_0 B y(t) )  Q(X,t,h,D_th)
\end{align*}
where
\begin{align}
\label{def Q}
Q(X,t,h,\xi)=\frac{(m_0-D_0) + (m_1-D_1) h - \xi_1 D_2 - \xi_2   D_3 }{ D_0 + h D_1  + \xi_1 D_2 + \xi_2  D_3} .
\end{align}
Let us use the notation
\begin{align*}
\tilde h(t) & = h(x + r_0 B y(t)) , \quad
\tilde h_i(t) = h_i(x + r_0 B y(t)) ,
\\
\tilde Q(t,\xi ) & = Q(X,t,\tilde h(t),\xi)
\end{align*}
so that
$$
\tilde g_i(t) = \tilde h_i(t) \tilde Q(t,D_t \tilde h(t)).
$$
Observe that $\tilde Q(t_0(X),\xi)=0$, $D_\xi \tilde Q(t_0(X),\xi)=0$.
Then we have
\begin{align*}
|B(\tilde g_i)(X,z)| &=\frac{1}{|z|} \Big| \tilde h_i(z+t_0(X)) \tilde Q(z+t_0(X),D_t \tilde h(z+t_0(X)))
\\
& \qquad - \tilde h_i(t_0(X)) D_t\tilde Q(t_0(X),D_t \tilde h(t_0(X))) z\Big|
\\
&\leq A_1 + A_2 + A_3
\end{align*}
where
\begin{align*}
A_1&=\frac{1}{|z|} \Big|
(\tilde h_i(z+t_0(X)) -\tilde h_i(t_0(X)) )\tilde Q(z+t_0(X),D_t \tilde h(z+t_0(X)))
\Big|
\\
A_2&=\frac{1}{|z|} |\tilde h_i(t_0(X))|
\Big| \tilde Q(z+t_0(X),D_t \tilde h(z+t_0(X)))
- \tilde Q(z+t_0(X),D_t \tilde h(t_0(X)))
\Big|
\\
A_3&=\frac{1}{|z|} |\tilde h_i(t_0(X))|
\Big| \tilde Q(z+t_0(X),D_t \tilde h(t_0(X))) -  D_t \tilde Q(t_0(X),D_t \tilde h(t_0(X))) z\Big| .
\end{align*}
We then have for $z\in B_{2 R_X}(t_0)$
\begin{align*}
A_1 \leq C \|\tilde h_i\|_{B_{2 R_X}(t_0)} |z|
\leq C \|h_i\|_* \frac{|z|}{|X|}.
\end{align*}
For $A_2$
\begin{align*}
A_2&\leq \frac{1}{|z|}
\int_0^1 |D_\xi \tilde Q(z+t_0(X),D_t\tilde h(\tau z + t_0(X)))| |D_{tt} \tilde h(\tau z + t_0(X))| \, d\tau |z|
\\
&\leq C \|h_i\|_* \frac{|z|}{|X|}
\end{align*}
since $|D_\xi\tilde Q( z+t_0(X),D_t\tilde h(\tau z + t_0(X)))|\leq C |z|$ for this range of arguments.
Finally also
\begin{align*}
A_3\leq  C \|h_i\|_* \frac{|z|}{|X|}
\end{align*}
because
\begin{align*}
\big| \tilde Q(z+t_0(X),D_t \tilde h(t_0(X))) -  D_t \tilde Q(t_0(X),D_t \tilde h(t_0(X))) z\big|
\leq \frac{|z|^2}{|X|}
\end{align*}
in this range of argument. This establishes  \eqref{202}.

The estimate \eqref{at g} and \eqref{at gi} are direct since the expression $A_t$ involves only one derivative the function where it is applied to, and we have control of one derivative of $g_i$ directly from  \eqref{decomp DG}.
\end{proof}

\end{document}